\documentclass[11pt,a4paper]{amsart}

\usepackage{amssymb,amscd,amsmath,amsthm}
\usepackage{microtype}
\usepackage{graphicx}
\usepackage{scalerel}
\usepackage{mathtools}
\usepackage{hyperref}
\usepackage[shortlabels]{enumitem}
\usepackage{tikz-cd}
\usepackage{csquotes}
\usepackage{thmtools}

\newtheorem{thm}{Theorem}[section]
\newtheorem{cor}[thm]{Corollary}
\newtheorem{lem}[thm]{Lemma}
\newtheorem{prop}[thm]{Proposition}
\newtheorem*{thm*}{Theorem}
\newtheorem*{prop*}{Proposition}
\theoremstyle{definition}
\newtheorem{defn}[thm]{Definition}
\newtheorem*{defn*}{Definition}
\theoremstyle{remark}
\newtheorem{rem}[thm]{Remark}
\newtheorem{exmp}[thm]{Example}
\newtheorem{ques}[thm]{Question}
\newtheorem*{ques*}{Question}

\newfont{\cusfont}{alnorm10}
\newcommand{\alnorm}{\text{\;\cusfont Q\; }}
\newcommand{\QSym}{\mathrm{QSym}(S^1)}
\newcommand{\id}{\mathrm{id}}
\newcommand{\Fix}[1]{\mathrm{Fix}(#1)}
\DeclareMathOperator{\Aut}{Aut}
\newcommand{\cC}{\mathcal{C}}
\DeclareMathOperator{\im}{im}
\DeclareMathOperator{\QI}{QI}
\DeclareMathOperator{\Out}{Out}
\DeclareMathOperator{\PSL}{PSL}
\DeclareMathOperator{\MCG}{MCG}
\DeclareMathOperator{\Norm}{Norm}

\DeclareMathOperator{\Isom}{Isom}
\DeclareMathOperator{\Homeo}{Homeo}
\DeclareMathOperator{\Comm}{Comm}
\DeclareMathOperator{\QIsom}{QIsom}
\newcommand{\ol}{\overline}
\newcommand{\Solv}{\textsc{Solv}}
\newcommand{\Haus}{\text{Haus}}
\newcommand{\bs}{\backslash}
\newcommand{\bZ}{\mathbb{Z}}
\newcommand{\bR}{\mathbb{R}}
\newcommand{\cT}{\mathcal{T}}
\newcommand{\cF}{\mathcal{F}}
\newcommand{\cO}{\mathcal{O}}
\newcommand{\bN}{\mathbb{N}}
\newcommand{\cH}{\mathcal{H}}
\newcommand{\cW}{\mathcal{W}}
\newcommand{\cS}{\mathcal{S}}
\newcommand{\Hyp}{\mathbb{H}^2}
\newcommand{\MF}{\mathcal{MF}}
\newcommand{\PMF}{\mathbf{P}\mathcal{MF}}
\newcommand{\bP}{\mathbf{P}}
\newcommand{\bX}{\mathbf{X}}

\newcommand\geodesic[2]{\overleftrightarrow{(#1,#2)}}

\title{The geometry of groups containing almost normal subgroups}
\author{Alexander  Margolis}
\address{Alexander J. Margolis, Mathematics Department, Technion - Israel Institute of Technology, Haifa, 32000, Israel}
\email{amargolis@campus.technion.ac.il}
\thanks{This research was supported by the Israel Science Foundation (grant No. \texttt{1026/15}).}

\begin{document}

\begin{abstract}
A subgroup $H\leq G$ is said to be \emph{almost normal} if every conjugate of $H$ is commensurable to $H$. If $H$ is almost normal, there is a well-defined \emph{quotient space} $G/H$. We show that if a group $G$ has type $F_{n+1}$ and contains an almost normal coarse $PD_n$ subgroup $H$ with $e(G/H)=\infty$, then whenever $G'$ is quasi-isometric to $G$ it contains an almost normal subgroup $H'$ that is quasi-isometric to $H$. Moreover,  the quotient spaces $G/H$ and $G'/H'$ are quasi-isometric. This generalises a theorem of  Mosher--Sageev--Whyte, who prove the case in which  $G/H$ is quasi-isometric to a finite valence bushy tree.  
 Using work of Mosher, we generalise a result of Farb--Mosher to show that for many surface group extensions $\Gamma_L$,  any group quasi-isometric to  $\Gamma_L$ is virtually isomorphic to $\Gamma_L$.  We also prove quasi-isometric rigidity for the class of finitely presented $\mathbb{Z}$-by-($\infty$ ended) groups.
\end{abstract}
\maketitle

\section{Introduction}

A central idea in geometric group theory is that a finitely generated group equipped with the word metric is a geometric object in its own right. This metric is well-defined up to quasi-isometry.  A typical question is the following: given a class $\cC$ of finitely generated groups, is it true that any finitely generated group quasi-isometric to an element of  $\cC$ is also in $\cC$?  Positive answers to this question occur surprisingly often, and this phenomenon is  called  \emph{quasi-isometric rigidity}.
A related problem is \emph{quasi-isometric classification}:  when are groups in $\cC$ quasi-isometric? These questions comprise Gromov's program of studying finitely generated groups up to quasi-isometry.

One approach to these questions is to show that if a group $G$ can be decomposed into ``smaller'' or ``simpler'' groups, then any group quasi-isometric to $G$ also decomposes in a similar way.  The aim of this article is to investigate instances in which the following question and generalisations of it are true:
\begin{ques}\label{ques:normal subgp qiinvariant}
Suppose a finitely generated group $G$ contains an infinite normal subgroup $H$.
If  $G'$ is a finitely generated group  quasi-isometric to $G$, does it contain a normal subgroup $H'$ such that $H$ is  quasi-isometric to $H'$ and $G/H$ is quasi-isometric to $G'/H'$?
\end{ques}
\noindent Although this question is  false in full generality,  we can nonetheless give several instances for which there is a  positive answer. These are applications of Theorem \ref{thm:main intro}, the main result of this article.
We say that a  group is \emph{$\mathbb{Z}$--by--($\infty$ ended)} if it has an infinite cyclic normal subgroup such that the quotient is infinite ended. Our first result says that the class of finitely presented $\mathbb{Z}$--by--($\infty$ ended) groups is quasi-isometrically rigid:
\begin{restatable*}{thm}{zfibre}\label{thm:zfibre}
Let $G$ be a finitely presented $\mathbb{Z}$--by--($\infty$ ended) group.  If $G'$ is any finitely generated group quasi-isometric to $G$, it is also $\mathbb{Z}$--by--($\infty$ ended). Moreover, any quasi-isometry $f:G\rightarrow G'$ induces  a quasi-isometry $G/\mathbb{Z}\rightarrow G'/\mathbb{Z}$ between  quotient groups.
\end{restatable*}

A group $G$ is  of type $F_{n}$ if it has a $K(G,1)$ with finite $n$-skeleton.
We say that  $G$ is \emph{$\mathbb{Z}^n$--by--($\infty$ ended) with almost injective quotient (AIQ)} if  $G$ has a normal subgroup $H\cong \mathbb{Z}^n$ such that the quotient is infinite ended and  the natural homomorphism $G/H\rightarrow \mathrm{Aut}(H)\cong GL(n,\mathbb{Z})$ has finite kernel. We show that this class of groups is quasi-isometrically rigid:
\begin{restatable*}{thm}{znfibre}\label{thm:zn fibre with aiq}
Let $G$ be a group of type $F_{n+1}$ that is $\mathbb{Z}^n$--by--($\infty$ ended) with AIQ.  If $G'$ is any finitely generated group quasi-isometric to $G$, then it is also $\mathbb{Z}^n$--by--($\infty$ ended) with AIQ. Moreover, any quasi-isometry $f:G\rightarrow G'$ induces  a quasi-isometry $G/\mathbb{Z}^n\rightarrow G'/\mathbb{Z}^n$ between the infinite ended quotients.
\end{restatable*}

However, Theorem \ref{thm:zn fibre with aiq} does not hold if we relax the AIQ hypothesis. For instance, in Example \ref{exmp:qitofreeXz2} we describe a group quasi-isometric to $\mathbb{Z}^2\times F_2$  that does not (virtually) contain a free abelian normal subgroup of rank 2. Similar examples are also considered in work of  Leary--Minasyan \cite{learyminasyan19}. This resolves a  question of \cite[Section 12.2]{frigerio2015graphmflds}.

Using work of Mosher \cite{mosher2003fiber}, we can also prove a much stronger form of  quasi-isometric rigidity for certain surface group extensions. The Dehn--Nielsen--Baer theorem says that   $\Out(\pi_1(S))\cong \MCG(S)$, where  $\MCG(S)$ is the extended mapping class group of a  closed hyperbolic surface $S$. Given a  subgroup $L\leq \mathrm{MCG}(S)$, let $\Gamma_L$ be the associated surface group extension $1\rightarrow \pi_1(S)\rightarrow \Gamma_L\rightarrow L\rightarrow 1$. We say that $L$ is \emph{irreducible} if it doesn't preserve a finite collection of disjoint simple closed curves. 

\begin{restatable*}{thm}{qirsurfex}\label{thm:qirigidity surface group extension}
Suppose $L$ is an irreducible subgroup  of $\mathrm{MCG}(S)$ that is of type $F_3$ and has infinitely many ends. Let $\Gamma_L$ be the associated surface group extension. If $G$ is any finitely generated group quasi-isometric to $\Gamma_L$, then there is a finite normal subgroup $N\vartriangleleft G$ such that $G/N$ is abstractly commensurable to $\Gamma_L$, i.e. $\Gamma_L$ and $G/N$ have isomorphic finite index subgroups. 
\end{restatable*}

This generalises a result of Farb--Mosher \cite{farbmosher2002surfacebyfree}, who prove the  case in which $L$ is a convex-cocompact infinite-ended free group. A theorem of Koberda gives an abundance of suitable subgroups  $L\leq \mathrm{MCG}(S)$ to which Theorem \ref{thm:qirigidity surface group extension} can be applied   \cite{koberda2012RAAGSinMCGS}. Koberda shows that such subgroups are generic: if $n>1$ and $g_1,\dots, g_n$ are infinite order elements of $\MCG(S)$ that do not share a common power, at least one of which is a pseudo-Anosov, then $\langle g_1^N,\dots g_n^N\rangle$ is an infinite-ended, irreducible subgroup of $\MCG(S)$ that is of  type $F_3$ for $N$ sufficiently large.

However, there are many situations in which Question \ref{ques:normal subgp qiinvariant} fails to be true. In addition to Example \ref{exmp:qitofreeXz2} described above, we recall that Burger and Mozes constructed simple groups that are quasi-isometric to $F_2\times F_2$ \cite{burgermozes97simple}. Theorem \ref{thm:main intro} demonstrates that a weakening of the notion of normal subgroup is frequently a quasi-isometry invariant.  We now describe this property.

Given an ambient group $G$, two subgroups $H$ and $K$ are said to be \emph{commensurable} if the intersection $H\cap K$ has finite index in both $H$ and $K$. We define  $\mathrm{Comm}_G(H)$ to be the subgroup of  all $g\in G$ such that  $H$ and $gHg^{-1}$ are commensurable. A subgroup $H\leq G$ is said to be \emph{almost normal} if $G=\mathrm{Comm}_G(H)$, i.e. $H$ and $gHg^{-1}$ are commensurable for all $g\in G$. We denote this by $H \alnorm  G$. Almost normal subgroups have been studied extensively and are also known as inert subgroups, near normal subgroups and commensurated subgroups, e.g. \cite{belyaev93inert,kropholler2006spectral,shalomwillis13}.

Given a finitely generated group $G$ and a subgroup $H\alnorm G$, the set $G/H$ of left  $H$-cosets carries a proper metric, unique up to quasi-isometry, such that the natural left $G$ action is isometric. We refer to Section \ref{sec:coarse bundles} for a definition of this metric.   This space $G/H$ is called the \emph{quotient space}.  Indeed, in the case where $H$ is normal, the quotient space is quasi-isometric to the quotient group equipped with the word metric. 
 
The map $p:G\rightarrow G/H$ taking $g$ to the left coset $gH$ is a \emph{coarse bundle} in the sense of \cite{whyte2010coarse}.   
 Showing that quasi-isometries preserve this coarse bundle structure is a key step in several celebrated quasi-isometric rigidity results, e.g. \cite{farbmosher1999bs2},  \cite{farbmosher2000abelianbycyclic}, \cite{whyte2001baumslag}, \cite{farbmosher2002surfacebyfree}, \cite{mosher2003quasi} and \cite{eskinfisherwhyte12coarse}. This property is also called  horizontal--respecting or height--respecting in the literature.
 
We consider the following variant of Question \ref{ques:normal subgp qiinvariant}:
\begin{ques}\label{ques:alnormal subgp qiinvariant}
Let  $G$ be a finitely generated group containing an infinite almost normal subgroup $H$. If  $G'$ is a finitely generated group  quasi-isometric to $G$, does $G'$ also contain an almost normal subgroup $H'\alnorm G'$ such that $H$ is  quasi-isometric to $H'$  and $G/H$ is quasi-isometric to $G'/H'$?
\end{ques}
The Burger--Mozes groups, which were counterexamples to Question \ref{ques:normal subgp qiinvariant}, are not counterexamples to Question \ref{ques:alnormal subgp qiinvariant}. Indeed, the groups considered in \cite{burgermozes97simple} are of the form $\Gamma= F*_HK$, where $H$, $F$ and $K$ are finitely generated free groups such that $H$ has finite index in both $F$ and $K$. It is easy to see that  $H$ is almost normal in $\Gamma$. The following proposition, which can be deduced from Theorem \ref{thm:qi to product contains alnorm},   says that this is true, up to finite index, for all groups quasi-isometric to a product of non-abelian free groups. 
\begin{prop}\label{prop:gps qi to TxT}
Suppose $G$ is a finitely generated group quasi-isometric to $F_2\times F_2$. Then $G$ contains a finite index subgroup $G'$ and a  finitely generated non-abelian free subgroup $H$ such that  $H\alnorm G'$ and  the quotient space $G'/H$ is quasi-isometric to $F_2$. 
\end{prop}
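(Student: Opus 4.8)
The plan is to deduce Proposition~\ref{prop:gps qi to TxT} from the (more general) Theorem~\ref{thm:qi to product contains alnorm} referenced in the text, using the fact that $F_2\times F_2$ is visibly a group of type $F_\infty$ containing the almost normal coarse $PD_1$ subgroup $H_0=F_2\times\{1\}$, whose quotient space is quasi-isometric to $F_2$, and that $F_2$ has infinitely many ends, so $e((F_2\times F_2)/H_0)=\infty$. This places $F_2\times F_2$ squarely in the setting of Theorem~\ref{thm:main intro}/Theorem~\ref{thm:qi to product contains alnorm}. First I would verify these hypotheses carefully: $F_2\times\{1\}$ is in fact \emph{normal} in $F_2\times F_2$, hence a fortiori almost normal; it is a non-abelian free group, hence a one-ended-free... no --- it is a coarse $PD_1$ group only if we take the infinite cyclic one, so I should instead note that the relevant theorem allows $H$ quasi-isometric to a tree (a coarse $PD_n$ hypothesis replaced by the Mosher--Sageev--Whyte bushy-tree case), and $F_2\times\{1\}\cong F_2$ is quasi-isometric to the $4$-valent bushy tree; the quotient space is $F_2$ itself, which is quasi-isometric to a bushy tree as well, so $e=\infty$.

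Given a finitely generated group $G$ quasi-isometric to $F_2\times F_2$, applying Theorem~\ref{thm:qi to product contains alnorm} produces a subgroup $H\leq G$ with $H\alnorm G$, with $H$ quasi-isometric to $F_2$ (hence $H$ is a finitely generated group quasi-isometric to a non-abelian free group), and with $G/H$ quasi-isometric to $F_2$. By Stallings' theorem together with the fact that a group quasi-isometric to a free group is virtually free, $H$ is virtually free; replacing $H$ by a finite-index free subgroup changes neither the almost-normality (commensuration classes are unaffected by passing to a finite-index subgroup of $H$) nor the quasi-isometry type of $H$ or of the quotient space, so we may take $H$ itself to be a finitely generated non-abelian free group. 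This already gives everything except that $H$ need not be almost normal in $G$ after this replacement --- but it is: if $H_0\alnorm G$ and $H\leq H_0$ has finite index, then for any $g\in G$, $gHg^{-1}$ is commensurable with $gH_0g^{-1}$, which is commensurable with $H_0$, which is commensurable with $H$, so $H\alnorm G$.

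The remaining point is to pass from $G$ to a finite-index subgroup $G'$. This is needed because the conclusion asks for $H$ to be \emph{finitely generated non-abelian free} and almost normal with quotient space quasi-isometric to $F_2$; in fact the previous paragraph already yields this for $G'=G$, so I expect that one can take $G'=G$ outright, and the finite-index language in the statement is simply there for uniformity with the Burger--Mozes discussion (where one naturally passes to a finite-index subgroup to realise $H$ as a genuine vertex/edge group of a splitting). If a genuine passage to finite index is required --- for instance to arrange that the quotient space $G/H$, a priori only \emph{quasi-isometric} to $F_2$, is itself the Cayley graph of a virtually free group acting on $G/H$ --- one invokes that $G$ acts on the quotient space $G/H$ by isometries with the stabiliser of a point being (a conjugate of) $H$, so $G/H$ is quasi-isometric to $G$ acting... more precisely, $G$ acts properly coboundedly on $G\times_{\text{coarse}} (G/H)$, and the induced quasi-action of $G$ on $G/H\simeq F_2$ can, by quasi-isometric rigidity of free groups (Mosher--Sageev--Whyte), be conjugated to an honest action on a tree with finite edge and vertex stabilisers; pulling back a cocompact subgroup of that action and intersecting with $G$ yields the desired $G'$ of finite index. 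The main obstacle, then, is entirely contained in correctly invoking Theorem~\ref{thm:qi to product contains alnorm} with the right choice of $(G,H)$ and checking its hypotheses; once that is done the proposition is essentially a formal consequence plus the standard fact that groups quasi-isometric to $F_2$ are virtually non-abelian free.
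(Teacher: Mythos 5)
Your overall route is the paper's route: Proposition~\ref{prop:gps qi to TxT} is deduced from Theorem~\ref{thm:qi to product contains alnorm}, applied to $F_2\times F_2$ viewed (up to quasi-isometry) as $T_4\times T_4$ with $T_4$ a bounded-valence bushy tree. You correctly abandon the attempt to invoke Theorem~\ref{thm:main intro} directly (indeed $F_2$ is not coarse $PD_n$ for any $n$, so that theorem does not apply), and the step of replacing a virtually free $H$ by a finite-index genuinely free subgroup is fine, since by Proposition~\ref{prop:alnorm is comm inv} almost normality is preserved when passing to a commensurable subgroup.

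There is, however, a genuine error in the middle: you misquote the conclusion of Theorem~\ref{thm:qi to product contains alnorm} as producing $H\alnorm G$, and from that you infer that one may take $G'=G$ and that the finite-index clause in the statement is merely cosmetic. In fact the theorem only produces $H\alnorm G_0$ for a finite-index subgroup $G_0\leq G$, and this passage to $G_0$ is unavoidable in general. The mechanism, visible in Lemma~\ref{lem:qa on direct factor}, is that the quasi-action of $G$ on $T_4\times T_4$ coming from conjugating the left regular action by a quasi-isometry $G\to T_4\times T_4$ can permute the two (quasi-isometric) tree factors; one must pass to the kernel $G_0$ of the induced homomorphism $G\to\mathrm{Sym}(2)$ before applying Mosher--Sageev--Whyte (Theorem~\ref{thm:mswthm1}) to a single factor. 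Concretely, take $G=(F_2\times F_2)\rtimes\bZ/2\bZ$ with $\bZ/2\bZ$ swapping the factors: $G$ is quasi-isometric to $F_2\times F_2$, but the conjugates of $F_2\times\{1\}$ under $G$ include $\{1\}\times F_2$, and these two have trivial intersection, so $F_2\times\{1\}$ is not almost normal in $G$. Thus a finite-index subgroup is genuinely needed, and your proposed shortcut (and the speculative alternative justification for the finite-index passage in your final paragraph) should be replaced by a direct appeal to the finite-index subgroup $G_0$ supplied by Theorem~\ref{thm:qi to product contains alnorm}, taking $G'=G_0$.
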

\noindent  This motivates the study of almost normal subgroups and their quotient spaces  as a means to understanding the coarse geometry of finitely generated groups.

To show quasi-isometric rigidity of almost normal subgroups, we apply the coarse topological techniques developed in  \cite{margolis2018quasi}. To do this, we need to assume that the almost normal subgroup is a \emph{coarse Poincar\'e duality group}. 
These were defined in \cite{kapovich2005coarse} and used extensively in the work of Mosher--Sageev--Whyte \cite{mosher2003quasi,mosher2011quasiactions} and Papasoglu \cite{papasoglu2007group}. We do not give a definition here, but note that the archetypal example of a coarse Poincar\'e duality group of dimension $n$, or coarse $PD_n$ group, is the fundamental group of a closed aspherical $n$-manifold. In particular, they include finitely generated free abelian groups and more generally, virtually polycyclic groups. The class of coarse Poincar\'e duality groups is closed under quasi-isometries.

We demonstrate that under suitable hypotheses, if a group contains an almost normal coarse $PD_n$ subgroup, then any group quasi-isometric to it also contains such a subgroup, and  the associated coarse bundle structure is preserved by quasi-isometries. 
\begin{thm}\label{thm:main intro}
Fix $n\geq 1$. Let $G$ be a group of type $F_{n+1}$ and  $H\alnorm G$ be a coarse $PD_n$ subgroup with $e(G/H)\geq 3$. Suppose  $G'$ is a finitely generated group quasi-isometric to $G$. Then  $G'$ contains an almost normal coarse $PD_n$ subgroup   $H'$ such that $H$ is quasi-isometric to $H'$ and  $G/H$ is quasi-isometric to  $G'/H'$.
\end{thm}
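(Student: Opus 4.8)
The plan is to prove that any quasi-isometry $G\to G'$ preserves the coarse bundle structure of the coset projection $p\colon G\to G/H$, transporting it to a coarse bundle on $G'$ over a space quasi-isometric to $G/H$, and then to recover $H'$ from this transported structure. This runs parallel to the argument of Mosher--Sageev--Whyte, whose coarse-topological input (which applies when $G/H$ is quasi-isometric to a bushy tree) is replaced by the coarse separation and coarse Alexander duality techniques of \cite{margolis2018quasi}, which apply whenever $e(G/H)=\infty$.

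First I would describe the structure present on $G$. Because $H$ is a coarse $PD_n$ group, the fibres of $p$ are coarse $PD_n$ subspaces and $p$ is a coarse bundle. Because $e(G/H)=\infty$ and $G$ is finitely presented (type $F_{n+1}$ implies type $F_2$, as $n\geq 1$), there is a $G$-invariant family $\mathcal{W}$ of coarse $PD_n$ subspaces of $G$ --- each a fibre of $p$, and each the preimage of an edge of a bushy simplicial tree on which $G$ acts cocompactly --- such that every $W\in\mathcal{W}$ coarsely separates $G$ into deep complementary components. It is here that $e(G/H)=\infty$ is essential: this is exactly what guarantees that $\mathcal{W}$ is nonempty with deep sides. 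Finally, type $F_{n+1}$ ensures $G$ has enough coarse homology in degree $n$ for coarse Alexander duality to apply, and the coarse finiteness properties involved are inherited by every group quasi-isometric to $G$.

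The crux --- and the step I expect to be the main obstacle --- is to transport this structure across a quasi-isometry $f\colon G\to G'$. Using \cite{margolis2018quasi} one would show: (i) coarse separation of $G$ by a coarse $PD_n$ subspace with deep complementary components is a quasi-isometry invariant; (ii) by a coarse Alexander duality argument, the $f$-image of each $W\in\mathcal{W}$ lies within finite Hausdorff distance of a genuine coarse $PD_n$ subspace of $G'$; and (iii) the coarse finiteness properties of $G'$ force the resulting family $\mathcal{W}'$ of subspaces of $G'$ to be discrete and to inherit the crossing and nesting pattern of $\mathcal{W}$, so that $\mathcal{W}'$ realises $G'$ as the total space of a coarse bundle $G'\to Z'$ with coarse $PD_n$ fibres and $Z'$ quasi-isometric to $G/H$. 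Because $\mathcal{W}'$ is characterised intrinsically in terms of the coarse geometry of $G'$, it is invariant under the isometric action of $G'$ on itself by left translation, so $G'$ acts on $Z'$ preserving this bundle; equivalently, conjugation through $f$ turns the $G'$-action on itself into a proper cobounded quasi-action of $G'$ on the bushy tree underlying $Z'$ that respects the bundle structure. The difficulty is exactly that one must squeeze enough out of coarse separation and coarse Alexander duality to recover not just each wall but the whole combinatorial bundle, and to verify genuine (not merely coarse) discreteness of the wall family.

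Finally I would recover $H'$. Since the tree underlying $Z'$ is bushy, the Mosher--Sageev--Whyte theorem on quasi-actions on bushy trees \cite{mosher2003quasi,mosher2011quasiactions} makes the quasi-action of $G'$ quasiconjugate to a genuine cocompact action on a bushy simplicial tree, and straightening the transported bundle along it (again using the coarse $PD_n$ rigidity) identifies the fibre over a basepoint with a subset of $G'$ within bounded Hausdorff distance of a finitely generated subgroup $H'\leq G'$, for which the partition of $G'$ into fibres becomes the partition into cosets of $H'$. Thus $Z'$ is $G'$-equivariantly quasi-isometric to the coset space $G'/H'$; since $G'\to Z'$ is a coarse bundle, this forces $H'$ to be commensurated, i.e.\ $H'\alnorm G'$. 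As $H'$ is quasi-isometric to a fibre of $p$, hence to $H$, and coarse $PD_n$ is a quasi-isometry invariant, $H'$ is a coarse $PD_n$ group quasi-isometric to $H$; and $G'/H'$ is quasi-isometric to $Z'$, hence to $G/H$. This gives all the assertions.
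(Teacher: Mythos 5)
Your outline captures the right high-level shape of the argument---coarse separation and duality techniques from \cite{margolis2018quasi} show that the fibres of $p\colon G\to G/H$ are, collectively, a quasi-isometry invariant, and one then recovers $H'\leq G'$ from the transported fibre structure---but the mechanism you propose for that final recovery step does not work, and it bypasses the theorem the paper actually relies on.

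The paper's key input is Theorem \ref{thm:essential 3-sep} (from \cite{margolis2018quasi}): if $W\subseteq G'$ is a coarse $PD_n$ subspace that coarsely separates a group $G'$ of type $FP_{n+1}$ into three \emph{essential} complementary components, then $W$ is at finite Hausdorff distance from a subgroup of $G'$. This is precisely the step that converts ``coarse $PD_n$ subspace that $3$-separates'' into ``coset of a subgroup,'' and nothing in your proposal replaces it. You instead invoke the Mosher--Sageev--Whyte theorem on quasi-actions on bushy trees, but this requires a cobounded quasi-action of $G'$ on a \emph{tree}, and the space $Z'\simeq G/H$ that you construct is not in general quasi-isometric to a tree (e.g.\ for surface group extensions $\Gamma_L$, the quotient is $L$ itself). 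You speak of ``the bushy tree underlying $Z'$,'' but this conflates the Bass--Serre tree of the splitting in Theorem \ref{thm:gog} with the quotient space $G/H$; those are different objects, the tree being much coarser, and a fibre-preserving quasi-isometry of $G'$ gives you a quasi-action on $Z'$, not on that tree. The tree appears in the paper's proof of Proposition \ref{prop:deep imp essential} and Corollary \ref{cor:ends tree vs space} only to manufacture $PD_{n+1}$ half-spaces (establishing that deep complementary components are essential), not as the carrier of the quasi-action you need; and MSW's theorem is used in the paper only in Appendix B for a separate result. Without Theorem \ref{thm:essential 3-sep} or a genuine substitute, the passage from ``$f$-image of a fibre'' to ``coset of a subgroup $H'\leq G'$'' is a real gap.

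A secondary inaccuracy: in your step (ii), no coarse Alexander duality is needed to see that $f(W)$ is coarse $PD_n$---this is automatic, as coarse $PD_n$ is a QI invariant. Where duality \emph{is} needed, and where the paper does the hard quantitative work you gloss over, is in the uniform cohomological arguments (Lemma \ref{lem:cohom class and non-crossing}, Lemma \ref{lem:uniform set parameters}, Lemma \ref{lem:coarsely characteristic}, Lemma \ref{lem:Wfinitecrit}) that prove the fibre family is ``coarsely characteristic'' with uniform constants, so that a single subgroup $H'$ works for all left translates simultaneously. Your sketch asserts the ``discreteness and inherited crossing pattern'' of $\mathcal{W}'$ without an argument; this is where those lemmas are doing the work.
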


Theorem \ref{thm:main intro} is the main result of this article. We hope that this theorem and the methods used to prove it  will have many more applications in quasi-isometric rigidity and classification results.
Theorem \ref{thm:main intro} will be deduced from  Theorem  \ref{thm:main fib bundle}, a more technical and quantitative statement that is needed to prove the preceding applications.  Theorem \ref{thm:main intro} and many of its applications actually hold under the weaker assumption that $G$ is only of type $FP_{n+1}$, a homological analogue of type $F_{n+1}$.

Recall that a subgroup $H\leq G$ is said to be characteristic if it is preserved by every automorphism of $G$. Analogously, we say that a subgroup $H\leq G$ of a finitely generated group is \emph{coarsely characteristic} if it is coarsely preserved by every quasi-isometry of $G$, i.e. for every quasi-isometry $f:G\rightarrow G$, $H$ and $f(H)$ are at finite Hausdorff distance. Every coarsely characteristic subgroup is easily seen to be almost normal, see Proposition \ref{prop:coset characterisation}. A key step in our proof of Theorem \ref{thm:main intro} is a partial converse:
\begin{thm}[c.f. Lemma \ref{lem:coarsely characteristic}]
If $G$ is a group of type $F_{n+1}$ and  $H\alnorm G$ is a coarse $PD_n$ subgroup with $e(G/H)\geq 3$, then $H$ is coarsely characteristic.
\end{thm}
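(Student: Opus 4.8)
The plan is to show that an arbitrary quasi-isometry $f\colon G\to G$ must carry $H$ to within finite Hausdorff distance of \emph{some} coset $gH$, and then to use that $H$ is almost normal to upgrade this to $d_{\Haus}(f(H),H)<\infty$. The second half is elementary: since $\mathrm{Comm}_G(H)=G$, for every $g\in G$ the subgroups $H$ and $gHg^{-1}$ are commensurable, hence at finite Hausdorff distance, and because right translation by $g$ changes distances in $G$ by at most $2|g|$ one deduces $d_{\Haus}(gH,H)<\infty$ for every $g$ (this is contained in Proposition \ref{prop:coset characterisation}). So it suffices to produce $g_0$ with $d_{\Haus}(f(H),g_0H)<\infty$.

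To do this I would characterise the coset family $\cH=\{gH:g\in G\}$ in purely quasi-isometry-invariant terms. First I would observe that $H$, and hence every $gH$ (these being isometric images of $H$ under left translation, so with uniform constants), is a coarse $PD_n$ subspace of $G$ which \emph{coarsely separates} $G$ into at least three deep coarse complementary components: $G$ acts isometrically and coboundedly on the proper metric space $G/H$, which has at least three — and hence, by a cutting argument using the $G$-action, infinitely many — ends, so deleting a large ball from $G/H$ leaves at least three deep components; pulling this back along the coarse bundle $p\colon G\to G/H$, whose preimage of that ball lies at finite Hausdorff distance from a metric neighbourhood of $H$ while the preimages of the deep components remain deep, exhibits the claimed coarse separation of $G$ by $H$. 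Both properties are quasi-isometry invariants — ``coarse $PD_n$'' because the class of coarse Poincar\'e duality groups is closed under quasi-isometry, and ``coarsely separates into at least three deep components'' because $f$ carries metric neighbourhoods of $H$ into metric neighbourhoods of $f(H)$ and back, and sends deep components to deep components. Hence $f(H)$ is again a coarse $PD_n$ subspace of $G$ coarsely separating it into at least three deep coarse complementary components, with constants controlled by those of $f$ and $H$.

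The heart of the matter — and the step I expect to be the main obstacle — is the converse: every coarse $PD_n$ subspace $W\subseteq G$ that coarsely separates $G$ into at least three deep coarse complementary components lies at finite Hausdorff distance from some coset $gH$, with the distance bounded in terms of the constants involved. This is a rigidity statement to the effect that, up to uniformly finite Hausdorff distance, $\cH$ is \emph{exactly} the family of separating coarse $PD_n$ subspaces of $G$, and it is here that the hypotheses are essential: being of type $F_{n+1}$ supplies, via coarse Alexander/Poincar\'e duality, the one extra dimension of coarse topology needed for $H$ to behave like a coarse hypersurface, while $e(G/H)\geq 3$ makes the transverse structure genuinely bushy and rules out the degenerate configurations (the coarse analogues of the one- and two-ended cases) in which exotic separating subspaces could arise. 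This is the coarse-topological content drawn from \cite{margolis2018quasi}; when $G/H$ is quasi-isometric to a finite-valence bushy tree it specialises to the main technical theorem of Mosher--Sageev--Whyte \cite{mosher2003quasi}. Applying the converse to $W=f(H)$ yields $g_0\in G$ with $d_{\Haus}(f(H),g_0H)<\infty$, and combining this with the first paragraph gives $d_{\Haus}(f(H),H)<\infty$. As $f$ was arbitrary, $H$ is coarsely characteristic.
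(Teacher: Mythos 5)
Your first and second paragraphs are fine: reducing to ``$f(H)$ is within finite Hausdorff distance of some coset $gH$'' and then invoking Proposition \ref{prop:coset characterisation} is exactly right, and the observation that $f(H)$ is again a coarse $PD_n$ subspace coarsely separating $G$ into at least three deep (hence, by Proposition \ref{prop:deep imp essential}, essential) coarse complementary components is correct and is Corollary \ref{cor:3 separating}.

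The problem is the third paragraph. You reduce the theorem to a ``converse'' --- every separating coarse $PD_n$ subspace $W$ of $G$ lies at uniformly bounded Hausdorff distance from a coset of $H$ --- and then assert this converse on the authority of \cite{margolis2018quasi} and \cite{mosher2003quasi}. But that converse is not what those references prove, and it is strictly stronger than the statement you are trying to establish. What \cite{margolis2018quasi} gives (Theorem \ref{thm:essential 3-sep} in this paper) is that such a $W$ lies at finite Hausdorff distance from \emph{some subgroup} $K\leq G$; nothing there tells you that $K$ is commensurable to $H$ (equivalently, that $W$ is close to a coset of $H$). Closing that gap --- ruling out that $G$ might contain a second, non-commensurable, almost normal coarse $PD_n$ subgroup hiding as a separating subspace --- is precisely the content that needs a proof, and your sketch supplies none. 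The uniformity clause (``with the distance bounded in terms of the constants involved'') is also not available for an arbitrary separating $W$; the uniform constants in the paper's Lemma \ref{lem:uniform set parameters} come from $W$ ranging over a uniform $PD_n$-set, i.e.\ over images of fibres under quasi-isometries with fixed constants, not over all separating subspaces.

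The paper's actual argument (Lemma \ref{lem:coarsely characteristic}) never invokes such a converse. Instead it works directly with the cohomological data of Lemma \ref{lem:cohom class and non-crossing}/\ref{lem:uniform set parameters} and, crucially, exploits the coherence of a single quasi-isometry $f$ acting on the whole family of fibres: all fibres $D_b$ are pairwise at finite Hausdorff distance, hence so are all $f(D_b)$. One chooses $b_1,b_2$ so that $f(D_{b_i})$ enters the essential component $C_i$ of $D_{b_0}$; the non-crossing condition then forces $f(D_{b_i})\subseteq C_i\cup N_{D+E}(D_{b_0})$ for $i=1,2$; and since $f(D_{b_1})$ and $f(D_{b_2})$ are at finite Hausdorff distance while $C_1\cap C_2$ is shallow, both must lie in a bounded neighbourhood of $D_{b_0}$. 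The Packing Theorem (\ref{thm:packing}) then pins down $f(D_{b_0})$. This simultaneous control of $f(D_{b_0})$, $f(D_{b_1})$, $f(D_{b_2})$ --- available because $f$ is one quasi-isometry, not because $f(H)$ happens to be a separating $PD_n$ subspace --- is what does the work, and it is exactly what your abstract converse fails to capture.
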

The $e(G/H)\geq 3$ hypothesis in Theorem \ref{thm:main intro} is needed to apply the coarse topological techniques of \cite{margolis2018quasi} and deduce that $H$ is coarsely characteristic. Indeed, both  $\mathbb{Z}^2$ and $\mathbb{Z}^3$ have an infinite cyclic normal subgroup that is not coarsely characteristic. Similarly, if $M$ is a fibred hyperbolic 3-manifold, then the normal subgroup $\pi_1(S)\vartriangleleft \pi_1(M)$ is not coarsely characteristic.

Theorem \ref{thm:main intro}  generalises  \cite[Theorem 2]{mosher2003quasi}, which proves the special case where the quotient space $G/H$ is quasi-isometric to a finite valence tree with infinitely many ends. To see this, we explain how to reformulate the hypotheses of Theorem \ref{thm:main intro} in terms of graphs of groups. 

We first make the following observation: if a finitely generated group $G$ contains a normal subgroup $H\vartriangleleft G$ such that the quotient $K=G/H$ has more than one end, then $G$ splits as an amalgamated free product or HNN extension over a finite extension of $H$. This can be seen by applying Stallings' theorem to the quotient. If $K$ is assumed to be finitely presented, then by applying 
\cite{dunwoody1985accessibility} to $K$, we see that $G$ splits as a finite graph of groups whose edge groups are finite extensions of $H$ and   no vertex group splits over a finite extension of $H$. A similar statement holds in the situation when $H$ is only assumed to be almost normal:

\begin{restatable*}{thm}{reldunaccessibility}\label{thm:gog}
If $G$ is finitely presented  and  $H\alnorm G$ is finitely generated, then  $G$ is the fundamental group of a finite graph of groups such that: 
\begin{enumerate}
\item every edge group is commensurable to $H$;
\item every vertex group is finitely generated and doesn't split over a subgroup commensurable to $H$. 
\end{enumerate}
\end{restatable*}

This can be deduced by applying Dunwoody accessibility  to the quotient space $G/H$ \cite{dunwoody1985accessibility}. Theorem \ref{thm:gog} demonstrates a form of accessibility  over a family of subgroups that are not necessarily small in the sense of \cite{bestvinafeighn91accessibility}. Similar  applications of Dunwoody's accessibility theorem to non-proper actions appear in the setting of totally disconnected locally compact groups \cite{kronmoller08roughcayley}. 
 We can use  Theorem \ref{thm:gog} to reformulate Theorem \ref{thm:main intro}: 
\begin{restatable*}{cor}{gogs}\label{cor:qirigidity of gogs}
Let $G$ be a group of type $F_{n+1}$. Suppose that $G$ is the fundamental group of a graph of groups $\mathcal{G}$ with the following properties:
\begin{enumerate}
\item \label{item:condition bushy} the associated Bass-Serre tree has at least three ends;
\item \label{item:conjugates are commensurable} all conjugates of all edge groups are coarse $PD_n$ groups and are commensurable to one another;
\item \label{item:condition vertex group doesn't split} each vertex group is finitely generated and doesn't split over a subgroup commensurable to one of its incident edge groups.
\end{enumerate}
If $G'$ is a finitely generated  group  quasi-isometric to $G$, it is also the fundamental group of a graph of groups satisfying (\ref{item:condition bushy})--(\ref{item:condition vertex group doesn't split}).
\end{restatable*}

We can also prove a partial quasi-isometric classification theorem. If $\mathcal{G}$ is a graph of groups  satisfying (\ref{item:condition bushy})--(\ref{item:condition vertex group doesn't split}) as above, a vertex group is said to be \emph{essential} if it is not commensurable to an incident edge group. If a vertex group $G_v$ is essential, any incident edge group $G_e$ is almost normal in $G_v$ with $e(G_v/G_e)\geq 1$. A quasi-isometry $f:G_v\rightarrow G'_{v'}$ is \emph{fibre-preserving} if whenever $G_e$ and $G'_{e'}$ are edge groups incident to $G_v$ and $G'_{v'}$, $f$ sends left cosets of $G_e$ to within uniform finite Hausdorff distance of left cosets of $G'_{e'}$. In particular, if  $f$ is fibre-preserving it induces a quasi-isometry $G_v/G_e \rightarrow G'_{v'}/G'_{e'}$ between quotient spaces.
\begin{restatable*}{thm}{qiclassification}\label{thm:qiclassification of gogs}
Let $G$ and $G'$  be groups of type $F_{n+1}$  that are fundamental groups of  finite graphs of groups $\mathcal{G}$ and $\mathcal{G'}$ satisfying conditions (\ref{item:condition bushy})--(\ref{item:condition vertex group doesn't split}) of Corollary \ref{cor:qirigidity of gogs}. 
Suppose that $G$ and $G'$ are quasi-isometric. Then for each essential vertex group  $G_v$ of $\mathcal{G}$, there is an essential vertex group  $G'_{v'}$ of $\mathcal{G}'$ and a fibre-preserving quasi-isometry $G_v\rightarrow G'_{v'}$. Conversely, for each essential vertex $G'_{v'}$ of $\mathcal{G}'$ there exists an essential vertex group $G_v$ of $\mathcal{G}$ and a fibre-preserving quasi-isometry $G_v\rightarrow G'_{v'}$.
\end{restatable*}
\noindent A complete quasi-isometric  classification for such groups  is likely to be very difficult and is beyond the scope of this article. However,  Farb--Mosher and  Whyte have obtained  quasi-isometric classification results in special cases \cite{farbmosher1998bs1,farbmosher2000abelianbycyclic, farbmosher2002surfacebyfree,whyte2001baumslag,whyte2010coarse}. 

A \emph{solvable Baumslag--Solitar group} is a group of the form $BS(1,n)=\langle a,t\mid tat^{-1}=a^n\rangle$ for some $n>1$.  An important application of Theorem \ref{thm:main intro} is quasi-isometric rigidity for solvable Baumslag--Solitar groups, originally proven by Farb and Mosher \cite{farbmosher1999bs2}.
\begin{thm}[\cite{farbmosher1999bs2}]
 A finitely generated group quasi-isometric to a solvable Baumslag--Solitar group  has a finite index subgroup isomorphic to a solvable Baumslag--Solitar group.
\end{thm}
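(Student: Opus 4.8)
The plan is to recognise the hypotheses of Theorem~\ref{thm:main intro} inside $BS(1,n)$, transport the resulting coarse bundle structure to any quasi-isometric group $G'$, and then pin $G'$ down up to finite index. Write $G=BS(1,n)=\langle a,t\mid tat^{-1}=a^{n}\rangle$ with $n>1$ and put $H=\langle a\rangle\cong\mathbb{Z}$. Since $tHt^{-1}=\langle a^{n}\rangle$ has index $n$ in $H$ and $H$ has index $n$ in $t^{-1}Ht$, every conjugate of $H$ is commensurable to $H$, so $H\alnorm G$; and $H\cong\mathbb{Z}$ is a coarse $PD_{1}$ group. Realising $G$ as the ascending HNN extension $\langle a\rangle\,\ast_{\phi}$ with $\phi\colon\langle a\rangle\hookrightarrow\langle a\rangle$, $a\mapsto a^{n}$, the Bass--Serre tree $T$ is the $(n+1)$-regular tree, on which $G$ acts with vertex stabiliser $H$; hence the quotient space $G/H$ is quasi-isometric to $T$, which is a bushy tree because $n>1$, so $e(G/H)=\infty\geq 3$. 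Finally $G$ is finitely presented, hence of type $F_{2}$. Thus Theorem~\ref{thm:main intro} applies to the pair $(G,H)$.

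Now let $G'$ be finitely generated and quasi-isometric to $G$. By Theorem~\ref{thm:main intro}, $G'$ contains an almost normal coarse $PD_{1}$ subgroup $H'\alnorm G'$ that is quasi-isometric to $H\cong\mathbb{Z}$ and with $G'/H'$ quasi-isometric to $G/H$, i.e.\ to the $(n+1)$-regular tree $T$. Being quasi-isometric to $\mathbb{Z}$, the group $H'$ is two-ended, hence virtually infinite cyclic (in particular finitely generated); and $G'$ is finitely presented, since finite presentability is a quasi-isometry invariant.

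Next I would feed this into the graph-of-groups machinery. By Theorem~\ref{thm:gog}, $G'$ is the fundamental group of a finite graph of groups $\mathcal{G}'$ whose edge groups are commensurable to $H'$ (hence coarse $PD_{1}$ and virtually $\mathbb{Z}$) and whose vertex groups are finitely generated and do not split over a subgroup commensurable to $H'$; since $G'/H'$ is quasi-isometric to the bushy tree $T$, the Bass--Serre tree of $\mathcal{G}'$ also has infinitely many ends (a routine consequence of the Dunwoody-accessibility construction in Theorem~\ref{thm:gog}), so $\mathcal{G}'$ satisfies conditions (\ref{item:condition bushy})--(\ref{item:condition vertex group doesn't split}) of Corollary~\ref{cor:qirigidity of gogs}. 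The HNN decomposition of $G$ from the first paragraph also satisfies these conditions, and it has \emph{no} essential vertex group, since its single vertex group $H$ is (equal to, hence) commensurable to an incident edge group. Applying the quasi-isometric classification, Theorem~\ref{thm:qiclassification of gogs}, to the quasi-isometry $G'\to G$: every essential vertex group of $\mathcal{G}'$ would admit a fibre-preserving quasi-isometry onto an essential vertex group of the decomposition of $G$, of which there are none. Hence $\mathcal{G}'$ has no essential vertex group, so every vertex group of $\mathcal{G}'$ is commensurable to an incident edge group and is therefore virtually infinite cyclic. Consequently $G'$ acts cocompactly on a bushy tree with all vertex and edge stabilisers virtually infinite cyclic and pairwise commensurable.

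It remains to upgrade this to an isomorphism on a finite-index subgroup, and this is where the particular structure of solvable Baumslag--Solitar groups must enter; this is the main obstacle, because the machinery above only produces a group acting on a bushy tree with commensurable virtually-$\mathbb{Z}$ stabilisers, not an honest isomorphism. Here I would argue as in the second half of Farb--Mosher \cite{farbmosher1999bs2} (whose first, coarse-topological half has been replaced by Theorem~\ref{thm:main intro}): one checks that $G'$ is virtually torsion-free, so that a torsion-free finite-index subgroup $G''\leq G'$ acts cocompactly on a tree with infinite cyclic vertex and edge groups, i.e.\ $G''$ is a generalised Baumslag--Solitar group; and since amenability is a quasi-isometry invariant of finitely generated groups and $G=BS(1,n)$ is amenable, so is $G''$. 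The classification of solvable (equivalently, amenable) generalised Baumslag--Solitar groups --- namely $\mathbb{Z}$, $\mathbb{Z}^{2}$, the Klein bottle group, and $BS(1,m)$ for $|m|\geq 2$ --- together with the fact that the Bass--Serre tree of $G''$ has infinitely many ends (which rules out the first three) forces $G''\cong BS(1,m)$ for some $|m|\geq 2$; passing to a further index-two subgroup if $m$ is negative, $G'$ has a finite-index subgroup isomorphic to a solvable Baumslag--Solitar group, as claimed.
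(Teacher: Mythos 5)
Your proof is correct and takes essentially the same approach as the paper's: both reduce, via Corollary~\ref{cor:qirigidity of gogs} and Theorem~\ref{thm:qiclassification of gogs}, to the statement that $G'$ is the fundamental group of a finite graph of two-ended groups whose Bass--Serre tree is bushy, and both then invoke amenability (as a QI invariant transported from the solvable group $BS(1,n)$) together with the structure theory of such graphs of groups to force $G'$ to be virtually $BS(1,m)$. You spell out the graph-of-groups reduction more carefully --- in particular the observation that the $BS(1,n)$ decomposition has no essential vertex, so Theorem~\ref{thm:qiclassification of gogs} forces every vertex group of $\mathcal{G}'$ to be commensurable to an edge group, hence virtually $\mathbb{Z}$ --- whereas the paper compresses this to a single sentence. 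In the last step the paper uses exponential growth (another QI invariant) to rule out $\mathbb{Z}^2$ and then jumps directly to ``ascending HNN extension of a two-ended group,'' while you pass through the classification of amenable generalised Baumslag--Solitar groups; these are minor cosmetic differences. The one place you assert without argument is that $G'$ is virtually torsion-free: this is true, but is not immediate from ``graph of virtually cyclic groups'' alone. The clean route is the one the paper implicitly takes --- amenability plus the bushy Bass--Serre tree forces the action to fix a unique end, so $G'$ is an ascending HNN extension of a two-ended vertex group $K$; the maximal finite normal subgroup $F\vartriangleleft K$ is characteristic, hence preserved by the defining monomorphism $\phi$, so $F\vartriangleleft G'$ and $G'/F$ is an ascending HNN of $\mathbb{Z}$ or $D_\infty$, from which the finite-index $BS(1,m)$ falls out --- and you may as well make this explicit rather than deferring it to a phrase like ``one checks.''
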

\begin{proof}
Using Corollary \ref{cor:qirigidity of gogs} and Theorem \ref{thm:qiclassification of gogs} we deduce that as $BS(1,n)$ is the fundamental group of a graph of two-ended groups, so is $G$.  Moreover,  $BS(1,n)$ has exponential growth and is solvable. Thus $G$ has exponential growth and is amenable. In particular, $G$ is not virtually $\bZ^2$ and cannot contain a non-abelian free group.  Thus $G$ is an ascending HNN extension of a 2-ended group, so contains a finite index solvable Baumslag-Solitar subgroup.
\end{proof} 
We claim no originality for the preceding proof, which is well-known to experts and can be easily be deduced  from either \cite{mosher2003quasi}, \cite{papasoglu2005quasi} or \cite{papasoglu2007group} without the use of Corollary \ref{cor:qirigidity of gogs}. However, it is included to illustrate the power of the coarse topological techniques in this paper.

An important object in geometric group theory is the quasi-isometry group of a space, i.e. the group of all quasi-isometries modulo an appropriate equivalence relation. We can use the results of this paper to deduce the following:
\begin{cor}
Let $G$ and $H$ be as in Theorem \ref{thm:main intro}. Then there is a homomorphism $\mathrm{QI}(G)\rightarrow \mathrm{QI}(G/H)$.
\end{cor}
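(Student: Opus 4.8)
The plan is to produce the homomorphism by pushing self-quasi-isometries of $G$ down through the coarse bundle $p\colon G\to G/H$. Throughout, $\QI(X)$ denotes the group of quasi-isometries $X\to X$ modulo the relation $\phi\approx\psi\iff \sup_{x}d(\phi(x),\psi(x))<\infty$, with composition as the group law; recall that $p$ is coarsely Lipschitz and surjective by the construction of the metric on $G/H$ in Section \ref{sec:coarse bundles}. The one substantive ingredient is the following, which is exactly what Theorem \ref{thm:main intro} and its quantitative refinement Theorem \ref{thm:main fib bundle} provide when applied with $G'=G$: since $H$ is coarsely characteristic in $G$ (Lemma \ref{lem:coarsely characteristic}), every self-quasi-isometry $f$ of $G$ is \emph{fibre-preserving}, i.e. there is $R=R(f)\geq 0$ such that $f$ carries each left coset $gH$ to within Hausdorff distance $R$ of some left coset; hence $f$ descends to a self-quasi-isometry $\bar f$ of $G/H$, characterised up to the relation $\approx$ by $p\circ f\approx \bar f\circ p$ (uniqueness of $\bar f$ up to $\approx$ being immediate from surjectivity of $p$).

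Given this, I would define $\Phi\colon \QI(G)\to\QI(G/H)$ by $\Phi([f])=[\bar f]$ and check it is well defined. First, if $\bar f,\bar f'$ both satisfy $p\circ f\approx\bar f\circ p$ and $p\circ f\approx\bar f'\circ p$, then $\bar f\circ p\approx\bar f'\circ p$, so $\bar f\approx\bar f'$ since $p$ is surjective; thus $[\bar f]$ is unambiguous given $f$. Next, if $f\approx f'$ then $p\circ f\approx p\circ f'$ because $p$ is coarsely Lipschitz, whence $\bar f\circ p\approx\bar f'\circ p$ and again $\bar f\approx\bar f'$; so $\Phi([f])$ depends only on $[f]$.

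Then I would verify $\Phi$ is a homomorphism using the characterising relation: since $\bar f$ is coarsely Lipschitz,
\[ (\bar f\circ\bar g)\circ p\approx \bar f\circ p\circ g\approx p\circ f\circ g=p\circ(f\circ g)\approx \overline{f\circ g}\circ p, \]
and surjectivity of $p$ gives $\overline{f\circ g}\approx\bar f\circ\bar g$, i.e. $\Phi([f\circ g])=\Phi([f])\Phi([g])$. From $p\circ\id_G=p\approx\overline{\id_G}\circ p$ we get $\overline{\id_G}\approx\id_{G/H}$, so $\Phi$ sends the identity to the identity and consequently inverses to inverses; in particular each $\bar f$ has coarse inverse $\overline{f^{-1}}$ and is a genuine quasi-isometry, so $\Phi$ indeed takes values in $\QI(G/H)$.

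The main obstacle is concentrated entirely in the first paragraph: the fibre-preserving property of self-quasi-isometries of $G$ is not formal — the mere fact that $H$ is coarsely characteristic does not suffice, since the Hausdorff-distance bound in that definition need not be uniform over quasi-isometries of fixed constants — and it is precisely the coarse-topological rigidity packaged in Theorem \ref{thm:main fib bundle} that supplies it. Once that is granted, the construction and verification of $\Phi$ are the routine coarse-bundle bookkeeping carried out above.
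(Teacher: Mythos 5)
Your proof is correct and is precisely the argument the paper intends: the paper states this corollary without proof, remarking only that it follows from the results of the article, and your use of Theorem \ref{thm:main fib bundle} (applied with $G'=G$) to make every self-quasi-isometry of $G$ fibre-preserving, followed by the coarse-bundle bookkeeping implicit in Lemma \ref{lem:fibre-preserving qi imp qi between base spaces}, is exactly the intended route. You are also right to flag in your last paragraph that coarse characteristicity of $H$ alone (Lemma \ref{lem:coarsely characteristic}) is insufficient, since it gives no bound uniform over cosets, and that the needed uniformity is what Theorem \ref{thm:main fib bundle} supplies.
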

This sort of observation is the starting point in the calculation of quasi-isometry groups of solvable Baumslag--Solitar groups and surface-by-free groups in \cite{farbmosher1999bs2} and \cite{farbmosher2002surfacebyfree}.

We give an outline of the paper. Section \ref{sec:preliminaries} consists of background and preliminaries. In Section \ref{sec:coarse bundles} we introduce the notion of coarse bundles and investigate their coarse geometric properties.   In Section \ref{sec:mainthm} we prove Theorems \ref{thm:main intro} and \ref{thm:qiclassification of gogs}. Section \ref{sec:applications} introduces the \emph{fibre distortion function}, a quasi-isometry invariant that is used to prove Theorems \ref{thm:zfibre} and \ref{thm:zn fibre with aiq}. In Appendix \ref{sec:qirigidity surfaces} we give an account of Mosher's work on ``fibre-respecting quasi-isometries'' of surface group extensions \cite{mosher2003fiber}. We then combine Mosher's results with the main result of this article to deduce  Theorem \ref{thm:qirigidity surface group extension}.

Our results build on work of Vavrichek, who uses \cite{papasoglu2005quasi} to prove part of Theorem \ref{thm:main intro} in the case where $H$ is two-ended. Many of the ideas in this article are inspired by work of  Whyte \cite{whyte2001baumslag, whyte2010coarse}, particularly in Section  \ref{sec:applications}. 

The author would like to thank Panos Papasoglu,  Michah Sageev and Ian Leary for helpful conversations.
\section{Preliminaries}\label{sec:preliminaries}
Much of this material is discussed in greater detail in \cite{margolis2018quasi}.
\subsection*{Coarse geometry}
Let $(X,d)$ be a metric space. For $\emptyset \neq A\subseteq X$ and $x\in X$, we set $d(x,A)\coloneqq \inf_{a\in A}d(x,a)$.   We define $N_r(A)\coloneqq\{x\in X\mid d(x,A)\leq r\}$. If $A=\{a\}$, we also denote $N_r(A)$ by  $N_r(a)$. We say that $X$ has \emph{bounded geometry} if there is a function $M:\mathbb{R}_{\geq 0}\rightarrow \mathbb{R}_{\geq 0}$ such that for all $r\in \mathbb{R}_{\geq 0}$ and $x\in X$, $\lvert N_r(x)\rvert\leq M(r)$.  
 The \emph{Hausdorff distance} between two subsets $A,B\subseteq X$ is defined to be \[d_\mathrm{Haus}(A,B)\coloneqq \inf\{r\in \mathbb{R}\mid A\subseteq N_r(B) \textrm{ and } B\subseteq N_r(A)\}.\] In general, this infimum is not achieved. However, the infimum  is achieved when $X$ is a finitely generated group equipped with the word metric, since then $d(X\times X)\subseteq \mathbb{N}$ is discrete.

Recall that a function $\phi:\mathbb{R}_{\geq 0} \rightarrow \mathbb{R}_{\geq 0}$ is \emph{proper} if the inverse images of compact sets are compact.

\begin{defn}
Let $(X,d_X)$ and $(Y,d_Y)$ be metric spaces and let $\eta,\phi:\mathbb{R}_{\geq 0} \rightarrow \mathbb{R}_{\geq 0}$ be proper non-decreasing functions. A function $f:X\rightarrow Y$ is a \emph{$(\eta,\phi)$-coarse embedding} if for all $x,x'\in X$, 
\[\eta(d_X(x,x'))\leq d_Y(f(x),f(x'))\leq \phi(d_X(x,x')).\] We say that $f$ is a \emph{coarse embedding}  if it is an $(\eta,\phi)$-coarse embedding for some  $\eta$ and $\phi$. We say that $\eta$ and $\phi$ are the \emph{distortion functions} of $f$.
A \emph{coarse equivalence} is a  coarse embedding $f:X\rightarrow Y$ such that $N_A(f(X))=Y$ for some $A\geq 0$.
\end{defn}

\begin{rem}\label{rem:properinv}
For each proper non-decreasing function $\phi:\mathbb{R}_{\geq 0}\rightarrow \mathbb{R}_{\geq 0}$, we define another proper non-decreasing function $\widetilde{\phi}:\mathbb{R}_{\geq 0}\rightarrow \mathbb{R}_{\geq 0}$ by $\widetilde{\phi}(R):=\sup(\phi^{-1}([0,R]))$.  This can be thought of as a sort of inverse to $\phi$ in the sense that if $\phi(S)\leq R$, then $S\leq \widetilde \phi(R)$, and if $R<\phi(S)$, then $\widetilde \phi(R)\leq S$.
\end{rem}

We are particularly interested in the following subclass of coarse embeddings:
\begin{defn}
Let $(X,d_X)$ and $(Y,d_Y)$ be metric spaces and let $K\geq 1$ and $A\geq 0$. A function $f:X\rightarrow Y$ is a \emph{$(K,A)$-quasi-isometry} if the following hold:
\begin{enumerate}
\item  for all $x,x'\in X$, 
$\frac{1}{K}d_X(x,x')-A \leq d_Y(f(x),f(x'))\leq K d_X(x,x')+A$;
\item for all $y\in Y$, there exists an $x\in X$ with $d_Y(f(x),y)\leq A$.
\end{enumerate} We say that $f$ is a \emph{quasi-isometry}  if there exist $K\geq 1$ and $A\geq 0$ such that $f$ is a $(K,A)$-quasi-isometry.
\end{defn}

We say that $f,g:X\rightarrow Y$ are \emph{$A$-close} if $\sup_{x\in X}d(f(x),g(x))\leq A$, and we say that $f$ and $g$ are \emph{close} if they are $A$-close for some $A<\infty$.  A \emph{coarse inverse} to $f:X\rightarrow Y$ is a function $\overline f:Y\rightarrow X$ such that $f\circ \overline f$ and $\overline f\circ f$ are close to $\id_Y$ and $\id_X$. Every quasi-isometry has a coarse inverse. Being close defines an equivalence relation on the set of all quasi-isometries from $X$ to $X$,  and we let $[f]$ denote the equivalence class containing $f$. We can thus define a group $\QI(X)\coloneqq \{[f]\mid f:X\rightarrow X \text{ is a quasi-isometry}\}$ with  group operation $[f].[g]=[f\circ g]$. 

A key idea in geometric group theory is that  a finitely generated group can be equipped with the word metric with respect to a finite generating set. It is an easy exercise to see that if we equip a group with two different word metrics with respect to two finite generating sets, these metrics are quasi-isometric. This motivates the study of groups up to quasi-isometry. Throughout this article, whenever a finitely generated group is considered as a metric space, it will always be equipped with the word metric with respect to some finite generating set unless explicitly stated.

The following lemma motivates our interest in more general coarse embeddings whose distortion functions are not necessarily affine.
\begin{lem}[e.g. see {\cite[Remark 1.20]{roe2003lectures}}]\label{lem:sbgpis coarsely embedding}
Let $G$ be a finitely generated group containing a finitely generated subgroup $H\leq G$. Let $d_H$  and $d_G$ be word metrics of $H$ and $G$ with respect to finite generating sets. Then the inclusion $(H,d_H)\rightarrow (G,d_G)$ is a coarse embedding.
\end{lem}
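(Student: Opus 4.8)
The plan is to exhibit explicit distortion functions for the inclusion $\iota\colon(H,d_H)\to(G,d_G)$. By left-invariance of the word metrics, $d_G(x,x')=d_G(e,x^{-1}x')$ and $d_H(x,x')=d_H(e,x^{-1}x')$ for $x,x'\in H$, with $x^{-1}x'\in H$, so it suffices to find proper non-decreasing $\eta,\phi\colon\mathbb{R}_{\geq 0}\to\mathbb{R}_{\geq 0}$ with
\[
\eta\big(d_H(e,h)\big)\ \leq\ d_G(e,h)\ \leq\ \phi\big(d_H(e,h)\big)\qquad\text{for all }h\in H.
\]
If $H$ is finite the statement is immediate (take $\phi$ affine and $\eta(t)=\max(0,t-\operatorname{diam}_{d_H}H)$), so I assume $H$ is infinite. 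For the upper bound, fix a finite generating set $S_H$ of $H$ and set $K\coloneqq\max\big(1,\ \max_{s\in S_H}d_G(e,s)\big)$; expanding a $d_H$-geodesic word for $h$ in $S_H$ and applying the triangle inequality in $G$ gives $d_G(e,h)\leq K\,d_H(e,h)$, so $\phi(t)=Kt$ serves as the (affine) upper distortion function.

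The content is in the lower bound, and the one external fact I need is that $G$, being a finitely generated group with the word metric, has bounded geometry, so every ball $B_G(e,R)$ is finite. Hence $B_G(e,R)\cap H$ is finite and
\[
\psi(R)\ \coloneqq\ \sup\{\,d_H(e,h)\mid h\in H,\ d_G(e,h)\leq R\,\}\ <\ \infty
\]
for every $R\geq 0$; moreover $\psi$ is non-decreasing, and since $H$ is infinite, $\psi(R)\to\infty$ as $R\to\infty$. Then $\psi'\coloneqq\psi+1$ is proper and non-decreasing, so by Remark~\ref{rem:properinv} its ``inverse'' $\widetilde{\psi'}$ is also proper and non-decreasing, and I claim $\eta\coloneqq\widetilde{\psi'}$ works. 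Indeed, for any $h\in H$ the definition of $\psi$ gives $d_H(e,h)\leq\psi\big(d_G(e,h)\big)<\psi'\big(d_G(e,h)\big)$, and the property of $\widetilde{\psi'}$ recorded in Remark~\ref{rem:properinv} (if $R<\psi'(S)$ then $\widetilde{\psi'}(R)\leq S$) then yields $\eta\big(d_H(e,h)\big)=\widetilde{\psi'}\big(d_H(e,h)\big)\leq d_G(e,h)$, as required.

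The only genuine obstacle is the lower bound: a priori $H$ could contain elements that are long in $d_H$ but short in $d_G$, which would preclude the existence of any proper $\eta$. Bounded geometry of $G$ is exactly what rules this out, by making the preimage in $H$ of each ball of $G$ finite, hence $d_H$-bounded; the remaining step is the purely formal one of inverting the resulting non-decreasing ``distortion'' function $\psi$ into a bona fide lower distortion function, for which the construction in Remark~\ref{rem:properinv} is tailor-made. Note that finite generation of $H$ is used only for the affine upper bound, and finite generation of $G$ only for the lower bound.
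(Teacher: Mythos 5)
Your proof is correct and is the standard argument (the paper itself supplies no proof, deferring to the cited remark of Roe). The affine upper bound from generator lengths, the finiteness of $B_G(e,R)\cap H$ via bounded geometry to make $\psi$ well-defined, the observation that $\psi\to\infty$ because $H$ is infinite, and the inversion via Remark~\ref{rem:properinv} — with the shift $\psi'=\psi+1$ correctly inserted so that the strict-inequality clause of that remark applies — together give exactly what is needed.
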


A space is said to be \emph{quasi-geodesic} if it is quasi-isometric to a geodesic metric space. For example, a finitely generated group $G$ equipped with the word metric is quasi-geodesic but not geodesic (unless $G$ is the trivial group). If a metric space is quasi-geodesic, it can be approximated by a simplicial complex known as the Rips complex.
\begin{defn}
Given a metric space $(X,d)$ and a parameter $r\geq 0$, the \emph{Rips complex} $P_r(X)$ is a simplicial complex with vertex set $X$ such that $\{x_0,\dots, x_n\}$ spans a simplex if  $d(x_i,x_j)\leq r$ for all $1\leq i,j\leq n$. The \emph{Rips graph} $P^1_r(X)$ is the 1-skeleton of $P_r(X)$.
\end{defn}

If $P^1_r(X)$  is connected, it can be equipped with the induced path metric in which all edges have length 1. The following proposition relates this metric to the original metric on $X$. A \emph{$t$-chain} of length $n$ from $x$ to $y$ is a sequence of points $x=x_0, x_1, \dots, x_n=y$ such that $d(x_{i-1},x_{i})\leq t$ for $1\leq i\leq n$.

\begin{prop}[special case of Proposition 2.5 of \cite{margolis2018quasi}]\label{prop:quasi geodesic}
Let $(X,d)$ be a metric space. The following are equivalent:
\begin{enumerate}
\item $X$ is a quasi-geodesic metric space;
\item there exist constants  $K\geq 1$ and $A,t\geq 0$ such that any $x,y\in X$ can be joined by a $t$-chain of length at most $Kd(x,y)+A$;
\item there exists a $t\geq 0$ such that for all $r\geq t$, the Rips graph $P_r^1(X)$ is connected and the inclusion $X\rightarrow P_r^1(X)$ is a quasi-isometry.
\end{enumerate}
\end{prop}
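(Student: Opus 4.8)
The plan is to establish the cycle of implications $(1)\Rightarrow(2)\Rightarrow(3)\Rightarrow(1)$; each implication amounts to tracking quasi-isometry constants carefully, and no step requires a genuinely new idea.

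For $(1)\Rightarrow(2)$, fix a $(K,A)$-quasi-isometry $f\colon X\to Y$ with $Y$ geodesic. Given $x,y\in X$, let $\gamma$ be a geodesic in $Y$ from $f(x)$ to $f(y)$ and choose sample points $p_0=f(x),p_1,\dots,p_m=f(y)$ along $\gamma$ with $d_Y(p_{i-1},p_i)\leq 1$ for all $i$ and $m\leq d_Y(f(x),f(y))+1\leq K\,d_X(x,y)+A+1$. By coarse surjectivity of $f$, pick $x_i\in X$ with $d_Y(f(x_i),p_i)\leq A$, taking $x_0=x$ and $x_m=y$. The triangle inequality gives $d_Y(f(x_{i-1}),f(x_i))\leq 2A+1$, and then the lower bound $\tfrac1K d_X(x_{i-1},x_i)-A\leq d_Y(f(x_{i-1}),f(x_i))$ forces $d_X(x_{i-1},x_i)\leq K(3A+1)$. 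Thus $x_0,\dots,x_m$ is a $t$-chain from $x$ to $y$ of length at most $K\,d_X(x,y)+(A+1)$, with $t:=K(3A+1)$.

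For $(2)\Rightarrow(3)$, fix $r\geq t$. Any two points of $X$ are joined by a $t$-chain, which is in particular an $r$-chain, so consecutive terms are adjacent vertices of $P_r^1(X)$; hence $P_r^1(X)$ is connected and, with the path metric, $d_{P_r^1(X)}(x,y)\leq K\,d_X(x,y)+A$. Conversely, given an edge path of length $n$ in $P_r^1(X)$ from $x$ to $y$, its vertices form an $r$-chain, so the triangle inequality in $X$ yields $d_X(x,y)\leq rn=r\,d_{P_r^1(X)}(x,y)$. Since the vertex set $X$ is $\tfrac12$-dense in $P_r^1(X)$, the inclusion $X\hookrightarrow P_r^1(X)$ is a quasi-isometry. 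Finally $(3)\Rightarrow(1)$ is immediate: a connected graph with the path metric is a geodesic metric space, and $X$ is quasi-isometric to $P_r^1(X)$, so $X$ is quasi-geodesic.

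I do not expect a serious obstacle here; the one step demanding care is the sampling-and-pullback argument in $(1)\Rightarrow(2)$, where one must keep the endpoints of the chain equal to $x$ and $y$ while chasing the constants through both inequalities in the definition of a quasi-isometry. Everything else follows directly from the definitions of $t$-chain, Rips graph, and quasi-isometry.
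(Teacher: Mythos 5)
Your argument is correct, and each of the three implications is proved with the standard sampling and pull-back manipulations one would expect. Note, however, that the paper does not actually reprove this proposition: it is stated as a ``special case of Proposition~2.5 of \cite{margolis2018quasi}'' and the proof is deferred to that reference, so there is no in-paper proof to compare against line by line. That said, the route you take --- pushing forward to a geodesic model to build a uniform $t$-chain for $(1)\Rightarrow(2)$, observing that a $t$-chain is an edge path in $P_r^1(X)$ and that an edge path gives a reverse bound $d_X(x,y)\le r\,d_{P_r^1(X)}(x,y)$ for $(2)\Rightarrow(3)$, and using that a connected graph with the unit-length path metric is geodesic for $(3)\Rightarrow(1)$ --- is exactly the expected one, and your constant-chasing (in particular $t=K(3A+1)$ and keeping the chain endpoints equal to $x$ and $y$) is done carefully and correctly.
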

The following lemma gives an intrinsic characterisation of the image of a coarsely embedded quasi-geodesic metric space.
\begin{lem}\label{lem:coarse embedding}
Let $Y$ be a metric space and $W\subseteq Y$. The following are equivalent:
\begin{enumerate}
\item \label{lem:coarse embedding1} There is a quasi-geodesic metric space $X$, unique up to quasi-isometry, and a coarse embedding $f:X\rightarrow Y$ such that $d_\Haus(f(X),W)<\infty$.
\item \label{lem:coarse embedding2} There exists a $t\geq 0$ and a proper non-decreasing function $\eta:\bR_{\geq 0} \rightarrow \bR_{\geq 0}$ such that every $x,y\in W$ can be joined by a $t$-chain in $W$ of length at most $\eta(d(x,y))$.
\end{enumerate}
Moreover,  $t$ and $\eta$ in (\ref{lem:coarse embedding2}) depend only the distortion functions of $f:X\rightarrow Y$ and vice versa.  
\end{lem}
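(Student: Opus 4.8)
The plan is to prove the two implications separately, extracting the quantitative dependence as we go.

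\medskip
\noindent\textbf{(\ref{lem:coarse embedding1}) $\Rightarrow$ (\ref{lem:coarse embedding2}).} Suppose $f:X\rightarrow Y$ is an $(\eta_0,\phi_0)$-coarse embedding with $d_\Haus(f(X),W)\leq D<\infty$ and $X$ quasi-geodesic. First I would replace $W$ by $f(X)$: every point of $W$ is within $D$ of a point of $f(X)$ and vice versa, and a chain argument shows that a short $t$-chain in $f(X)$ can be upgraded to a short $(t+2D)$-chain in $W$ by inserting endpoints, changing the length by at most an additive constant. So it suffices to join points of $f(X)$ by controlled chains. Since $X$ is quasi-geodesic, by Proposition \ref{prop:quasi geodesic} there are constants $K,A,s$ so that any $x,x'\in X$ are joined by an $s$-chain $x=x_0,\dots,x_n=x'$ in $X$ of length $n\leq Kd_X(x,x')+A$. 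Applying $f$, consecutive points satisfy $d_Y(f(x_{i-1}),f(x_i))\leq \phi_0(s)$, so $f(x)$ and $f(x')$ are joined by a $\phi_0(s)$-chain in $f(X)$ of length $n$. Finally $d_X(x,x')\leq \widetilde{\eta_0}(d_Y(f(x),f(x')))$ using Remark \ref{rem:properinv}, so $n\leq K\widetilde{\eta_0}(d_Y(f(x),f(x')))+A$, which gives the desired proper non-decreasing bound $\eta$ (and $t=\phi_0(s)+2D$). Here $t,\eta$ depend only on $K,A,s$ and $\eta_0,\phi_0$, hence only on the distortion functions of $f$ together with its quasi-geodesicity constants; but those in turn are determined by the distortion functions since $X$'s quasi-geodesic structure is pulled back from $Y$ — more precisely, one shows $f(X)$ with the induced metric is quasi-geodesic with constants depending only on $\eta_0,\phi_0$, which is essentially the construction in the other direction.

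\medskip
\noindent\textbf{(\ref{lem:coarse embedding2}) $\Rightarrow$ (\ref{lem:coarse embedding1}).} Given $t$ and $\eta$, form the Rips graph $X\coloneqq P^1_t(W)$ on vertex set $W$, with the path metric $d_X$. The hypothesis that every pair in $W$ is joined by a $t$-chain in $W$ exactly says $X$ is connected, and that such a chain has length $\leq\eta(d(x,y))$ says $d_X(x,y)\leq \eta(d_Y(x,y))$. Conversely, an edge of $X$ joins points at $Y$-distance $\leq t$, so along any $X$-geodesic of length $m$ we get $d_Y(x,y)\leq tm=td_X(x,y)$. Thus the inclusion map $f\colon X\rightarrow Y$, $w\mapsto w$, satisfies $\tfrac1t d_X(x,y)\leq d_Y(x,y)\leq \eta(d_X(x,y))$ after inverting — i.e. it is an $(\eta',\phi')$-coarse embedding with $\eta'(r)=\widetilde\eta(r)$ and $\phi'(r)=tr$, depending only on $t,\eta$. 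Its image is $W$, so $d_\Haus(f(X),W)=0$. It remains to check $X$ is quasi-geodesic: being a connected graph with the path metric, it is genuinely geodesic. For uniqueness up to quasi-isometry: if $X'$ is another quasi-geodesic space coarsely embedding onto (a set at finite Hausdorff distance from) $W$, then composing the coarse embedding $X'\rightarrow Y$ with a coarse inverse to $f\colon X\rightarrow Y$ — which exists as a coarse equivalence onto its image — yields a coarse equivalence $X'\rightarrow X$, and a coarse equivalence between quasi-geodesic spaces is a quasi-isometry (this is standard, using Proposition \ref{prop:quasi geodesic} to promote a coarse equivalence to one with affine distortion).

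\medskip
\noindent\textbf{Main obstacle.} The genuinely delicate point is the ``moreover'' clause, i.e. tracking that the two sets of constants determine each other cleanly, and in particular that in direction (\ref{lem:coarse embedding1}) $\Rightarrow$ (\ref{lem:coarse embedding2}) the output $t,\eta$ depend only on the \emph{distortion functions} of $f$ and not separately on the quasi-geodesic constants of $X$. The resolution is to observe that once we know $X$ coarsely embeds in $Y$ with distortion $(\eta_0,\phi_0)$ and its image is coarsely dense in $W$, the Rips-graph construction applied to $W$ (or $f(X)$) recovers a quasi-geodesic space quasi-isometric to $X$ with constants controlled purely by $\eta_0,\phi_0$ — so we may as well assume $X$ itself is such a Rips graph, making its quasi-geodesic constants functions of $\eta_0,\phi_0$ alone. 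The rest is bookkeeping with the operation $\phi\mapsto\widetilde\phi$ from Remark \ref{rem:properinv}.
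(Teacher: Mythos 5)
Your proof is correct and takes essentially the same route as the paper. The paper's own proof is a two-line citation to Proposition~2.5 of \cite{margolis2018quasi} in both directions; you have simply unpacked what that citation contains, namely the Rips-graph/$t$-chain dictionary and the standard fact that a coarse equivalence between quasi-geodesic spaces is a quasi-isometry. Both implications and the uniqueness argument are sound, and your reduction ``$W$ may be replaced by $f(X)$ at the cost of an additive $2D$'' is exactly the right move.

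The only place where I would push back is the ``main obstacle'' paragraph. As written, your resolution is circular: to build the Rips graph $P^1_t(W)$ and know it is connected with controlled geometry, you need $W$ already to satisfy condition~(\ref{lem:coarse embedding2}) for a suitable $t$ --- which is precisely what the implication (\ref{lem:coarse embedding1})~$\Rightarrow$~(\ref{lem:coarse embedding2}) is trying to establish. The honest reading of the ``moreover'' clause is that the output $(t,\eta)$ in (\ref{lem:coarse embedding1})~$\Rightarrow$~(\ref{lem:coarse embedding2}) depends on the distortion functions $(\eta_0,\phi_0)$ of $f$ \emph{together with} the quasi-geodesicity constants $(K,A,s)$ of $X$; since ``$X$ is quasi-geodesic'' implicitly fixes those constants, this is what the lemma means, and your chain-lifting computation already gives exactly this. (One can tighten to ``only $(\eta_0,\phi_0)$'' by first replacing $X$ by a genuinely geodesic space via Remark~\ref{rem:remetrise}, at which point $(K,A,s)$ become absolute constants --- but then $f$ has been precomposed with a quasi-isometry and its distortion functions have changed, so there is no free lunch.) This is a minor imprecision of exposition rather than a gap in your argument, and the paper's own proof does not address the quantitative clause at all.
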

\begin{proof}
We equip $W\subseteq Y$  with the subspace metric.

(\ref{lem:coarse embedding1}) $\implies$ (\ref{lem:coarse embedding2}): Since $d_\Haus(f(X),W)<\infty$, we can modify $f$ by a bounded amount so that $f(X)\subseteq W$. Thus $f$ can be thought of as a coarse equivalence $f:X\rightarrow W$. Since $W$ is coarsely equivalent to a geodesic metric space,  Proposition 2.5 of \cite{margolis2018quasi} tells us that $W$ satisfies (\ref{lem:coarse embedding2}).

(\ref{lem:coarse embedding2}) $\implies$ (\ref{lem:coarse embedding1}): It follows from Proposition 2.5 of \cite{margolis2018quasi} and (\ref{lem:coarse embedding2}) that $W$ is coarsely equivalent to a (quasi-)geodesic metric space $X$, and so there is a coarse embedding $f:X\rightarrow Y$ with image $W$. If $W$ is also coarsely equivalent to a quasi-geodesic metric space $X'$, then $X$ and $X'$ are quasi-geodesic metric spaces that are coarsely equivalent. Since every coarse equivalence between quasi-geodesic metric spaces is a quasi-isometry, we conclude that $X$ and $X'$ are quasi-isometric.
\end{proof}
By a slight abuse of notation, we say that $W\subseteq Y$ is \emph{coarsely embedded} in $Y$ if it satisfies one of the two equivalent conditions in Lemma \ref{lem:coarse embedding}.

Most of the spaces that we work with are discrete geodesic metric spaces in the following sense:
\begin{defn}
Let $(X,d)$ be a metric space. A \emph{discrete geodesic} between $x,y\in X$ is a sequence of points $x=x_0,x_1,\dots, x_n=y$ such that $d(x_i,x_j)=\lvert i-j\rvert$.  We say that $X$ is a \emph{discrete geodesic metric space} if every pair of points can be joined by a discrete geodesic.
\end{defn}
\begin{rem}\label{rem:remetrise}
If $X$ is quasi-geodesic, it follows from Proposition \ref{prop:quasi geodesic} that $X$ can be remetrised by identifying it with the  vertex set of the Rips graph $P_r^1(X)$ for large enough $r$. This new metric is quasi-isometric to the original metric and is a discrete geodesic metric space. Thus every quasi-geodesic metric space can be remetrised to be a discrete geodesic metric space. Moreover, any group action by isometries on $X$ induces a group action by isometries on the remetrised discrete geodesic space. 
\end{rem}

We will require the following technical result:
\begin{prop}[cf. {\cite[Lemma 2.1]{farbmosher2000abelianbycyclic}}]\label{prop:fibreqi induces qi between fibres}
Let $X,Y,Z$ and $W$ be discrete geodesic metric spaces and let $s:Z\rightarrow G$ and $s':W\rightarrow Y$ be coarse embeddings.  Suppose that $f:X\rightarrow Y$ is a quasi-isometry and there is a constant $A$ such that $d_\mathrm{Haus}(f(\mathrm{im}(s)),\mathrm{im}(s'))\leq A$. Then there is a quasi-isometry $g:Z\rightarrow W$ with $\sup_{z\in Z}d_{Y}(s'\circ g(z),f\circ s(z))\leq A$, whose quasi-isometry constants depend only on $A$ and the distortion functions associated to $f$, $s$ and $s'$.
\end{prop}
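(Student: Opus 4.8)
The plan is to construct $g$ by a pointwise selection, to note that the required closeness bound holds by construction, and then to verify that $g$ is a coarse equivalence between $Z$ and $W$ with all quantitative data controlled by $A$ and the distortion functions of $f$, $s$, $s'$. A quasi-isometry then drops out of the fact, used already in the proof of Lemma \ref{lem:coarse embedding}, that a coarse equivalence between quasi-geodesic metric spaces is a quasi-isometry.

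First I would fix $g$. Since $d_\mathrm{Haus}(f(\im s),\im s')\leq A$, for each $z\in Z$ the point $f(s(z))$ lies in $N_A(\im s')$, so I may choose some $g(z)\in W$ with $d_Y(s'(g(z)),f(s(z)))\leq A$; making one such choice for every $z$ defines a function $g:Z\to W$, and then the inequality $\sup_z d_Y(s'\circ g(z),f\circ s(z))\leq A$ is automatic.

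Next I would check that $g$ is a coarse embedding, with distortion functions depending only on $A$ and those of $f$, $s$, $s'$. Writing $(\eta_s,\phi_s)$, $(\eta_{s'},\phi_{s'})$ for the distortion functions of $s,s'$, and taking $f$ to be a $(K,C)$-quasi-isometry, the triangle inequality reduces $d_Y(s'(g(z)),s'(g(z')))$ to $d_Y(f(s(z)),f(s(z')))$ up to an additive error $2A$; bounding the latter above and below using that $f$ is a quasi-isometry and that $s$ is an $(\eta_s,\phi_s)$-coarse embedding, and then using that $s'$ is an $(\eta_{s'},\phi_{s'})$-coarse embedding together with the ``inverse'' functions $\widetilde{(\cdot)}$ of Remark \ref{rem:properinv}, one obtains a two-sided estimate relating $d_W(g(z),g(z'))$ and $d_Z(z,z')$. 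Coarse surjectivity of $g$ is similar: given $w\in W$, the inclusion $s'(w)\in\im s'\subseteq N_A(f(\im s))$ produces a $z\in Z$ with $d_Y(f(s(z)),s'(w))\leq A$, hence $d_Y(s'(g(z)),s'(w))\leq 2A$, and therefore $d_W(g(z),w)\leq\widetilde{\eta_{s'}}(2A)$.

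At this stage $g$ is a coarse equivalence between the discrete geodesic, hence quasi-geodesic, metric spaces $Z$ and $W$, so it is a quasi-isometry. The one point needing care is that this last implication must be invoked in quantitative form, so that the quasi-isometry constants of $g$ are controlled purely by the distortion functions of $g$ together with the (absolute) quasi-geodesic constants of discrete geodesic spaces, and hence ultimately by $A$ and the distortion functions of $f$, $s$, $s'$. This quantitative statement is exactly what is supplied by Proposition \ref{prop:quasi geodesic} and the corresponding results of \cite{margolis2018quasi}; everything else is routine bookkeeping.
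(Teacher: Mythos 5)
The paper does not include a proof of this proposition; it is stated with a ``cf.'' pointer to \cite[Lemma 2.1]{farbmosher2000abelianbycyclic}, so there is nothing to compare against verbatim. Your argument is correct and is exactly the standard argument behind that kind of statement: define $g$ by pointwise selection using the Hausdorff bound (which is achieved, since distances in discrete geodesic spaces take integer values), deduce the closeness estimate for free, verify the two-sided coarse-embedding estimates by the triangle inequality and the ``inverse'' of Remark \ref{rem:properinv}, obtain coarse surjectivity the same way, and then promote the coarse equivalence to a quasi-isometry. Your caveat about invoking the last step quantitatively is well placed: for discrete geodesic spaces one can make this completely explicit --- a coarse embedding $g$ between discrete geodesic spaces is $\phi_g(1)$-Lipschitz by subdividing geodesics into unit steps, a coarse inverse $\bar g$ is Lipschitz by the same trick, and chaining the two Lipschitz bounds with the bound on $d(\bar g\circ g,\id)$ (namely $\widetilde{\eta_g}(B)$, where $B$ is the coarse surjectivity constant) yields the affine lower bound --- so the resulting quasi-isometry constants depend only on the distortion functions of $g$ and the coarse surjectivity constant, hence ultimately on $A$ and the distortion functions of $f$, $s$, $s'$ as required. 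No gap.
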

\subsection*{Coarse separation and ends}
In what follows, we assume that $(X,d)$ is a discrete geodesic bounded geometry metric space. 
  Given a subset $C\subseteq X$, we define the coarse boundary $\partial C$ to be $\{x\in X\mid d(x,C)=1\}.$ We say that $C\subseteq X$ is a \emph{coarse complementary component} of $W\subseteq X$ if there exists an $A\geq 0$ such that $\partial C\subseteq N_A(W)$. The intersection, complement, union and symmetric difference of two coarse complementary components of $W$ is again a coarse complementary component of $W,$ as shown in \cite{margolis2018quasi}.

A coarse complementary component  is said to be \emph{deep} if it is not contained in $N_A(W)$ for any $A\geq 0$. Otherwise it is said to be \emph{shallow}. A collection  of  coarse complementary components $\{C_i\}_{i\in I}$ is said to be \emph{coarse disjoint} if  $C_i\cap C_j$ is shallow for every distinct $i,j\in I$. We say that $W$ \emph{coarsely $n$-separates} $G$ if there exist $n$ deep, coarse disjoint, coarse complementary components of $W$ in $G$. Coarse complementary components and coarse separation are quasi-isometry invariants. We caution the reader that coarse complementary components are not necessarily coarsely connected.

The following definition is slightly non-standard, but agrees with other definitions of ends.
\begin{defn}
Let $X$ be a discrete geodesic, bounded geometry metric space. We define the number of ends of $X$ to be \[e(X)\coloneqq \sup\{n\in \bN\mid \text{$X$ is $n$-separated by a point}\}.\]
\end{defn}

In the case where $W$ is a subgroup of $G$, coarse complementary components and coarse $n$-separation  can be characterised in terms of more classical notions of almost invariant subsets and Kropholler--Roller number of  relative ends, denoted $\tilde e(-,-)$ \cite{kropholler1989relative}. Although we do not define these notions here, the following proposition can be thought of as a definition  for the purposes of this article.

\begin{prop}[{\cite[Propositions 5.14 and 5.15]{margolis2018quasi}}]\label{prop:relends vs cccs}
Let $G$ be a finitely generated group and $H\leq G$ be a subgroup.
\begin{enumerate}
\item $C\subseteq G$ is a coarse complementary component of $H$ if and only if it is an $H$-almost invariant subset;
\item $H$ coarsely $n$-separates $G$ if and only if $\tilde e(G,H)\geq n$.
\end{enumerate}
\end{prop}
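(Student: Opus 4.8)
The plan is to prove both statements by matching the coarse-geometric notions of Section~\ref{sec:preliminaries} with the classical module-theoretic definitions of Kropholler--Roller \cite{kropholler1989relative}. Fix a finite generating set $S$ of $G$ and let $\Gamma$ be the corresponding Cayley graph, so that $G$ with the word metric is the vertex set of $\Gamma$ with the path metric, a discrete geodesic bounded-geometry space. The left $H$-action on $\Gamma$ is free and by graph automorphisms, so the quotient $\overline\Gamma:=H\backslash\Gamma$, whose vertex set is the set of left cosets $Hg$, is a connected, locally finite graph; write $\pi\colon\Gamma\to\overline\Gamma$ and $\bar e=\pi(e)$. The elementary identity $d_{\overline\Gamma}(\bar e,\pi(g))=d_\Gamma(g,H)$ yields $N_A(H)=\pi^{-1}\bigl(B_{\overline\Gamma}(\bar e,A)\bigr)$, and since $\overline\Gamma$ is connected and locally finite every finite subset lies in some such ball. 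Hence: for $T\subseteq G$ one has $T\subseteq N_A(H)$ for some $A\ge 0$ if and only if $\pi(T)$ is finite, i.e.\ $T$ is \emph{$H$-finite} (contained in a finite union of left cosets $Hg_i$). This one observation drives everything.

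For part~(1), apply the observation with $T=\partial C$. Since $\overline\Gamma$ has bounded valence, $\partial C$ is $H$-finite if and only if the edge boundary $\delta C=\{\{g,gs\}:s\in S,\ \lvert\{g,gs\}\cap C\rvert=1\}$ is $H$-finite; and for fixed $s$ the set of such $g$ is exactly $C\triangle Cs^{-1}$, so $\delta C$ is $H$-finite iff $C\triangle Ct$ is $H$-finite for every $t\in S^{\pm 1}$. Using $C\triangle Cgh\subseteq(C\triangle Cg)\cup(C\triangle Ch)g$ and the fact that right translation preserves $H$-finiteness, this is equivalent to $C\triangle Cg$ being $H$-finite for all $g\in G$ --- i.e.\ to $C$ being an $H$-almost invariant subset. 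I would flag here that ``$H$-almost invariant'' must be read in the sense of \cite{kropholler1989relative}, which does \emph{not} require $HC=C$: for $H=\langle a\rangle\le F_2$, the set of reduced words beginning with $b$ is a coarse complementary component of $H$ that is not left $H$-invariant, and is not at $H$-finite symmetric difference from any left $H$-invariant set, so one cannot strengthen the correspondence.

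For part~(2), I would use~(1) to translate the separation conditions: a coarse complementary component $C$ of $H$ is shallow iff $C$ is $H$-finite (so deep iff $C$ is $H$-infinite), and $C_i,C_j$ are coarse disjoint iff $C_i\cap C_j$ is $H$-finite. Thus $H$ coarsely $n$-separates $G$ if and only if there exist $n$ pairwise $H$-almost-disjoint, $H$-infinite, $H$-almost invariant subsets. Passing to classes in the Boolean $\mathbb{F}_2$-algebra $R:=(\mathcal{P}(G)/\mathcal{F}_H(G))^G$ --- whose $\mathbb{F}_2$-dimension is $\tilde e(G,H)$ --- such a family is a set of $n$ pairwise orthogonal nonzero elements, hence $\mathbb{F}_2$-linearly independent (a nonempty $\mathbb{F}_2$-sum equals, up to $H$-finite, an $H$-infinite disjoint union), so $\tilde e(G,H)\ge n$. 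Conversely, any Boolean ring of $\mathbb{F}_2$-dimension $\ge n$ contains $n$ pairwise orthogonal nonzero idempotents (the $\ge n$ atoms of $\mathbb{F}_2^k$ if $\dim R=k<\infty$; splitting an infinite atomless part otherwise), and lifting these to $H$-almost invariant subsets and applying~(1) produces $n$ deep, coarse disjoint coarse complementary components of $H$. This gives the desired equivalence.

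The Boolean-ring manipulation and the bounded-valence comparison between vertex, inner and edge boundaries are routine. The point requiring genuine care is part~(1): one must keep the left/right conventions consistent across ``$N_A(H)$'', ``$Cs^{-1}$'', and the right $G$-action on $\mathcal{P}(G)$, and must use the non-left-invariant meaning of ``$H$-almost invariant'' --- forced, as the $F_2$ example shows, by the fact that a coarse complementary component is an honest subset rather than a coset of $\mathcal{F}_H(G)$. Once these are pinned down, the proof is essentially a matter of translation.
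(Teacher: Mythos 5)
The paper does not prove this proposition; it is quoted from Propositions 5.14 and 5.15 of the author's earlier work \cite{margolis2018quasi} and used here as a black box, so there is no in-paper argument to compare your attempt against. Read on its own merits, your proposal is a correct proof and captures the standard dictionary between the coarse-geometric and Kropholler--Roller module-theoretic formulations. The engine is your opening observation that $T\subseteq N_A(H)$ for some $A$ is equivalent to $T$ being $H$-finite, which you obtain cleanly from local finiteness of the quotient graph $H\backslash\Gamma$. From there part~(1) is a routine comparison of vertex and edge boundaries (using bounded valence) plus the cocycle estimate $C\triangle Cgh\subseteq(C\triangle Cg)\cup(C\triangle Ch)g$ to pass from generators to all of $G$, and part~(2) is the Boolean-ring observation that $(\mathcal{P}(G)/\mathcal{F}_H(G))^G$ has $n$ pairwise orthogonal nonzero idempotents whenever its $\mathbb{F}_2$-dimension is at least $n$ --- your induction implicitly uses that the ring is unital with unit $[G]$, which is harmless since the statement is vacuous when $[G]=0$, i.e.\ when $H$ has finite index. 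Your warning about the distinction between Kropholler--Roller almost invariance and Scott's left-invariant version, with the $\langle a\rangle\le F_2$ example, is exactly the right point to flag: it is why $\tilde e(G,H)$, rather than $e(G,H)$, is the invariant that matches coarse complementary components, and it is also why the paper takes this proposition as the working definition of $\tilde e$ rather than spelling out the module-theoretic one.
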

Bowditch's coends and Geoghegan's filtered ends are alternative  interpretations of relative ends of groups \cite{bowditch2002splittings,geoghegan2008topological}.
\subsection*{Finiteness properties and coarse topology}
We define finiteness properties of groups, which generalise the notion of being finitely generated and presented. All homology and cohomology will be taken with coefficients over $\mathbb{Z}_2$.
\begin{defn}
 We say that a group $G$ has \emph{type $F_n$} if there exists a $K(G,1)$ with finite $n$-skeleton.
We say that a group $G$ has \emph{type $FP_n$} over a ring $R$ if the trivial $RG$ module $R$ has a projective resolution $P_\bullet\rightarrow R$ with $P_i$ finitely generated for $i\leq n$.
\end{defn}Throughout this article, it will be assumed that type  $FP_n$ means type $FP_n$ over $\mathbb{Z}_2$. 
Finiteness properties can be characterised geometrically. To see this, we need the following definitions:

\begin{defn}
Let $X$ be a metric space and $n\geq 0$.
\begin{enumerate}
\item We say that $X$ is \emph{coarsely uniformly $n$-connected} if for all $i,r$, there exists a $j=j(i)\geq i$ and  $s=s(i,r)\geq r$ such that for all $k\leq n$ and $x\in X$, the map \[\pi_k(P_i(N_r(x)))\rightarrow \pi_k(P_j(N_s(x))),\] induced by inclusion, is zero.
\item We say that $X$ is \emph{coarsely uniformly $n$-acyclic} if for all $i,r$, there exists a $j=j(i)\geq i$ and  $s=s(i,r)\geq r$ such that for all $k\leq n$ and $x\in X$, the map \[\tilde H_k(P_i(N_r(x)))\rightarrow \tilde H_k(P_j(N_s(x))),\] induced by inclusion, is zero.
\end{enumerate}
\end{defn}
As these properties are  quasi-isometry invariants,  the following theorem then demonstrates that finiteness properties are quasi-isometry invariants.
\begin{thm}[\cite{kapovich2005coarse},\cite{drutu2018geometric}]\label{thm:finitenesscoarse}
Let $G$ be a finitely generated group.\begin{enumerate}
\item $G$ is of type $F_n$ if and only if it is coarsely uniformly $(n-1)$-connected.
\item $G$ is of type $FP_n$ if and only if it is coarsely  uniformly $(n-1)$-acyclic.
\end{enumerate}
\end{thm}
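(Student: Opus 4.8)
The plan is to prove both equivalences by building, in each direction, the bridge between the algebraic/topological finiteness property of $G$ and the coarse-geometric one, running the homotopical statement (1) and the homological statement (2) in parallel, with reduced $\mathbb{Z}_2$-homology $\tilde H_k$ in place of $\pi_k$. The starting point is the classical characterisation (see \cite{drutu2018geometric}) that a finitely generated group $G$ has type $F_n$ (resp.\ type $FP_n$ over $\mathbb{Z}_2$) if and only if it admits a free, proper, cocompact action by cellular automorphisms on a CW complex $Y$ that is $(n-1)$-connected (resp.\ $(n-1)$-acyclic over $\mathbb{Z}_2$) --- e.g.\ the universal cover of the $n$-skeleton of a $K(G,1)$ with finite $n$-skeleton; equivalently, a proper cocompact action with finite stabilisers on such a complex suffices. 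Equipped with the cellular path metric, such a $Y$ is quasi-isometric to $G$ by the Milnor--\v{S}varc lemma, so it is enough to relate uniform connectivity/acyclicity of $Y$ as a \emph{complex} to coarse uniform connectivity/acyclicity of $Y$ as a \emph{metric space}, and then to use quasi-isometry invariance of the latter.

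For the forward direction I would first promote $(n-1)$-connectedness of $Y$ to \emph{uniform} $(n-1)$-connectedness: there should be a non-decreasing $\rho\colon\mathbb{R}_{\ge 0}\to\mathbb{R}_{\ge 0}$ so that, for $k\le n-1$, every $f\colon S^k\to Y$ with $\operatorname{diam}(f(S^k))\le r$ extends to $\bar f\colon D^{k+1}\to Y$ with $\operatorname{diam}(\bar f(D^{k+1}))\le\rho(r)$. Here cocompactness does the work: after translating $f$ by a group element its image lies in a finite subcomplex $L$ contained in a ball of radius $r+C$ about a fixed basepoint, and since $Y$ is $(n-1)$-connected one enlarges $L$, dimension by dimension, to a finite subcomplex $L'$ in which every class in $\pi_k(L)$ with $k\le n-1$ dies; then one can take $\rho(r)=\operatorname{diam}(L')$. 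Next I would translate uniform $(n-1)$-connectedness of the complex $Y$ into coarse uniform $(n-1)$-connectedness of the metric space $Y$: a map $S^k\to P_i(N_r(x))$ is shadowed by a genuine map $S^k\to Y$ into a bounded neighbourhood of $x$, which $\rho$ fills with bounded diameter, and this filling is in turn shadowed by a null-homotopy inside some $P_j(N_s(x))$ with $j,s$ controlled by $i,r$ alone. Since coarse uniform $(n-1)$-connectedness is a quasi-isometry invariant and $Y$ is quasi-isometric to $G$, this yields the claim; the homological statement is identical with $\tilde H_k$ throughout.

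For the reverse direction, assume $G$ is coarsely uniformly $(n-1)$-connected. The definition quantifies only over bounded neighbourhoods, so the first step is to upgrade it to a statement about the Rips complexes of all of $G$: for each $i$ there is $j=j(i)\ge i$ with $\pi_k(P_i(G))\to\pi_k(P_j(G))$ the zero map for all $k\le n-1$, because any $S^k\to P_i(G)$ has bounded image, so factors through some $P_i(N_r(x))$ and hence dies in $P_{j(i)}(N_s(x))\subseteq P_{j(i)}(G)$. Iterating --- starting from a parameter large enough that $P_{r_0}(G)$ is connected, which is possible because $G$ is finitely generated, hence quasi-geodesic, by Proposition \ref{prop:quasi geodesic} --- gives $r_0\le r_1\le\dots\le r_{n-1}$ for which each inclusion $P_{r_\ell}(G)\hookrightarrow P_{r_{\ell+1}}(G)$ kills $\pi_k$ for all $k\le n-1$. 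I would then construct an $(n-1)$-connected CW complex with a proper cocompact $G$-action: $G$ acts properly and cocompactly on every $P_r(G)$ (translating a simplex so that one vertex is $e$ puts its remaining vertices in the finite ball $B(e,r)$, so there are finitely many $G$-orbits of simplices in each dimension, with all stabilisers finite), and one attaches finitely many $G$-orbits of cells of dimensions $2,\dots,n$ to $P_{r_{n-1}}(G)$ so as to kill $\pi_1,\dots,\pi_{n-1}$ in turn --- the point being that representatives of these classes bound in a uniformly bounded subcomplex, since they already die in a larger Rips complex, so cocompactness is preserved. The resulting complex witnesses type $F_n$. Equivalently one forms the mapping telescope of $P_{r_0}(G)\to\cdots\to P_{r_{n-1}}(G)$, truncates, and applies Brown's finiteness criterion. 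The $FP_n$ case is the same with $\tilde H_k$ replacing $\pi_k$: one attaches cells to kill $\tilde H_1,\dots,\tilde H_{n-1}$ and obtains an $(n-1)$-acyclic complex.

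The hard part will be the equivariant cell-attachment step in the reverse direction, namely showing that \emph{finitely many} orbits of cells suffice to kill each $\pi_k$ (or $\tilde H_k$) while keeping the complex cocompact. This is exactly where the bounded geometry of $G$ and the cocompactness of its Rips complexes are indispensable: one is in effect re-deriving the algebraic finiteness of the relevant (relative) homotopy or homology module from the coarse hypothesis, with all sizes held under uniform control. The homotopical case carries the additional burden of bookkeeping the $\pi_1$-module structure on the higher homotopy groups, which is why the $FP_n$ statement, phrased with $\mathbb{Z}_2$-coefficients, is technically the cleaner of the two. The remaining ingredients --- the Milnor--\v{S}varc identification, the shadowing between Rips complexes and the honest complex $Y$, and the quasi-isometry invariance of the coarse connectivity/acyclicity conditions --- are routine, and are carried out in full in \cite{kapovich2005coarse} and \cite{drutu2018geometric}.
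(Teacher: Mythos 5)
The paper does not actually prove Theorem~\ref{thm:finitenesscoarse}; it is stated with citations to \cite{kapovich2005coarse} and \cite{drutu2018geometric} and used as a black box, so there is no in-paper proof to compare your argument against. Your sketch is, however, a faithful high-level outline of the standard argument in those references: Milnor--\v{S}varc to replace $G$ by an $(n-1)$-connected (resp.\ $(n-1)$-acyclic) complex with free cocompact $G$-action, cocompactness to upgrade connectivity to \emph{uniform} connectivity and then shadowing to transfer it to the Rips filtration, and, for the converse, an iterated killing of homotopy (resp.\ homology) by attaching finitely many $G$-orbits of cells to a Rips complex, with all bounds controlled by bounded geometry. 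You correctly identify the cell-attachment step in the converse as the delicate one and defer the bookkeeping to the cited sources.

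One point I would flag as glossed over in the forward direction: you assert that for a finite subcomplex $L \subseteq Y$ (with $Y$ an $(n-1)$-connected complex with cocompact $G$-action) one can find a finite $L' \supseteq L$ in which \emph{every} class in $\pi_k(L)$, $k\le n-1$, dies. This is not immediate, because $\pi_k(L)$ need not be finitely generated as an abelian group for $k\geq 2$ (the group ring $\mathbb{Z}[\pi_1 L]$ is generally not Noetherian). The fix is to kill the $\pi_j$'s in order: after passing to a finite $L_1 \supseteq L$ in which $\pi_1(L)\to\pi_1(L_1)$ is zero (which uses only that $\pi_1(L)$ is finitely generated), the image of $\pi_2(L)$ in $\pi_2(L_1)$ factors through $\pi_1$-coinvariants and is generated by the finitely many $\mathbb{Z}[\pi_1(L)]$-module generators of $\pi_2(L)$, and similarly in higher dimensions. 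In the homological ($FP_n$) case the issue disappears because $H_k(L;\mathbb{Z}_2)$ is automatically finitely generated for a finite $L$, which is one of the reasons the $FP_n$ statement is, as you note, the technically cleaner one. This subtlety is handled carefully in \cite{kapovich2005coarse} via the formalism of metric complexes, and your appeal to that reference is appropriate; I'm only noting that the phrase ``one enlarges $L\dots$ to a finite subcomplex $L'$ in which every class in $\pi_k(L)$ dies'' hides a genuine inductive argument rather than a one-step compactness observation.
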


The definition of coarse connectedness and acyclicity is somewhat difficult to work with as the property cannot be phrased in terms of a single Rips complex. However, in the case $n=1$, things are simpler:
\begin{lem}\label{lem:acyclic rips complex}
Suppose $X$ is a coarsely $1$-acyclic, discrete geodesic metric space. Then there exists an $r$ such that $P_r(X)$ is $1$-acyclic.
\end{lem}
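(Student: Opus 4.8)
The plan is to unwind the definition of coarse $1$-acyclicity and combine it with the fact that for a discrete geodesic space, the Rips graphs $P^1_r(X)$ are already connected once $r \geq 1$, so the only reduced homology group we need to kill is $\tilde H_1$. Recall that $X$ being coarsely $1$-acyclic means: for all $i, r$ there are $j = j(i) \geq i$ and $s = s(i,r) \geq r$ such that $\tilde H_k(P_i(N_r(x))) \to \tilde H_k(P_j(N_s(x)))$ vanishes for all $k \leq 1$ and all $x \in X$. The first move is to specialise this to the parameter $r = $ (diameter control we need) and to globalise: taking $x$ and letting $N_r(x)$ swell, one should be able to promote the \emph{local} acyclicity statement to a statement about $P_i(X)$ itself, because $X = \bigcup_x N_r(x)$ and homology commutes with directed colimits. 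Concretely, a $1$-cycle $z$ in $P_i(X)$ has finite support, hence lies in some $P_i(N_r(x))$ for suitable $x$ and $r$; its image in $P_j(N_s(x)) \subseteq P_j(X)$ bounds; so $z$ bounds in $P_j(X)$. Combined with connectedness of $P^1_1(X)$ (which gives $\tilde H_0(P_j(X)) = 0$ for all $j \geq 1$), this shows the inclusion $P_i(X) \hookrightarrow P_j(X)$ induces the zero map on $\tilde H_k$ for $k \leq 1$.

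The second move is a standard ``telescope'' or ``mapping into a single complex'' argument: one has inclusions $P_{r_0}(X) \hookrightarrow P_{r_1}(X) \hookrightarrow P_{r_2}(X) \hookrightarrow \cdots$ where $r_0 = i$, $r_1 = j(i)$, $r_2 = j(r_1)$, and so on, and each successive inclusion kills $\tilde H_1$. Since every Rips complex $P_r(X)$ has $\tilde H_k = 0$ for $k \leq 1$ in the colimit $\varinjlim_r P_r(X)$ — indeed $\varinjlim_r P_r(X)$ is contractible (any finite set of vertices lies in a simplex for $r$ large) — one might hope a single step suffices. But the clean statement is: pick $r = r_1 = j(1)$ (starting the chain at $i = 1$, which is legitimate since $P_1(X) = P^1_1(X)$ as far as $1$-skeleta go and it is connected). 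I claim $P_{r_1}(X)$ is $1$-acyclic: $\tilde H_0 = 0$ because $P^1_{r_1}(X)$ is connected, and $\tilde H_1 = 0$ because any $1$-cycle in $P_{r_1}(X)$ pulls back (has representatives) in $P_1(X)$ up to the homology we already control — here one uses that the inclusion $P_1(X) \to P_{r_1}(X)$ is zero on $\tilde H_1$, but one also needs surjectivity of $\tilde H_1(P_1(X)) \to \tilde H_1(P_{r_1}(X))$ to conclude. Surjectivity is not automatic, so the honest argument iterates: take $r$ with $\tilde H_1(P_i(X)) \to \tilde H_1(P_r(X))$ zero for the relevant $i$, and separately observe that $1$-cycles in $P_r(X)$ are homologous to $1$-cycles supported in the $1$-skeleton, hence in $P_i(X)$ for smaller $i$ — no, cycles in $P_r(X)$ are already in its $1$-skeleton but that $1$-skeleton is $P^1_r(X)$, not $P^1_i(X)$.

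The cleanest route, which I would actually write, is: apply coarse $1$-acyclicity with $i = 1$ to get $j = j(1)$; apply it again with $i = j$ to get $j' = j(j)$. Set $r = j'$. Then $\tilde H_0(P_r(X)) = 0$ since $r \geq 1$ and $P^1_r(X)$ is connected (discrete geodesic). For $\tilde H_1$: any $1$-cycle $z$ in $P_r(X) = P_{j'}(X)$ is a cycle in the connected graph $P^1_r(X)$; but also $P_j(X) \hookrightarrow P_{j'}(X)$ is $\pi_1$- (hence $H_1$-) surjective is what we'd want — instead, note every edge of $P^1_{j'}(X)$ joins vertices at distance $\leq j'$, and using a geodesic can be filled by a path in $P^1_1(X)$; thus $P^1_1(X) \hookrightarrow P^1_{j'}(X)$ is $\pi_1$-surjective, so $H_1(P_1(X)) \to H_1(P_{j'}(X))$ is surjective (the $2$-cells only make it more surjective). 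Combined with the vanishing $H_1(P_1(X)) \to H_1(P_{j}(X)) \hookrightarrow \cdots$, we get $H_1(P_r(X)) = 0$. So $P_r(X)$ is $1$-acyclic, as desired.

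\textbf{Main obstacle.} The subtle point — and the step I expect to require the most care — is the interplay between \emph{injectivity} versus \emph{surjectivity} of the maps induced on $H_1$ by Rips-complex inclusions. Coarse acyclicity hands us vanishing (hence a kind of injectivity-into-zero), but to conclude a single $P_r(X)$ is acyclic one also needs that no \emph{new} homology appears at level $r$, i.e. surjectivity of $H_1(P_i(X)) \to H_1(P_r(X))$. This surjectivity comes not from the coarse acyclicity axiom but from the elementary geometry of Rips complexes over a \emph{geodesic} space: increasing the Rips parameter only adds edges that are homotopic rel endpoints to edge-paths already present at parameter $1$, so $\pi_1$ surjects. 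Getting this lemma of ``$\pi_1$-surjectivity under increasing Rips parameter for geodesic spaces'' stated and used correctly is the crux; everything else is colimit bookkeeping and the observation that discreteness plus geodesic gives connected $1$-skeleta for $r \geq 1$.
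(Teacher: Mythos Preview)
Your approach is essentially the same as the paper's: obtain $r$ so that $\tilde H_1(P_1(X))\to\tilde H_1(P_r(X))$ vanishes from coarse $1$-acyclicity, then argue this map is also surjective using the discrete geodesic structure, whence $\tilde H_1(P_r(X))=0$. Two small points. First, your statement that $P^1_1(X)\hookrightarrow P^1_{j'}(X)$ is $\pi_1$-surjective is false as written (an inclusion of connected graphs is $\pi_1$-surjective only if it is an isomorphism on edges); what you need, and what you gesture at with the ``$2$-cells'' remark, is that each edge $[x,y]$ of $P_r(X)$ is \emph{homologous in $P_r(X)$} to a path in $P^1_1(X)$. The paper does this cleanly and explicitly: given a discrete geodesic $x=x_0,\dots,x_n=y$ with $n\leq r$, all pairwise distances are at most $r$, so each $[x_0,x_{i-1},x_i]$ is a $2$-simplex of $P_r(X)$, and $\partial\sum_i[x_0,x_{i-1},x_i]=\sum_i[x_{i-1},x_i]-[x,y]$. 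Second, the double iteration to $j'=j(j)$ is unnecessary; a single application with $i=1$ already gives an $r=j(1)$ for which both vanishing and surjectivity hold, so $P_r(X)$ is $1$-acyclic.
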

\begin{proof} As $X$ is coarsely $1$-acyclic,  there is an $r\geq 1$ such that for $k=0,1$, the map $\tilde H_k(P_1(X))\rightarrow \tilde H_k(P_r(X))$, induced by inclusion, is zero. 
Let $\sigma=[x,y]$ be a 1-simplex in $P_r(X)$. There exists a discrete geodesic $x=x_0, x_1,\dots, x_n=y$ from $x$ to $y$ in $X$, where $n=d(x,y)\leq r$. Thus $d(x_i,x_j)= \lvert i-j\rvert \leq r$ for all $1\leq i,j\leq n$. It follows that for each $i$,  $\omega_i\coloneqq[x_0,x_{i-1},x_i]$ is a 2-simplex in $P_r(X)$, and so $\partial (\sum_{i=1}^n\omega_i)=\sum_{i=1}^n[x_{i-1},x_i]-\sigma$. It follows that every reduced $1$-cycle in $P_r(X)$ is homologous to one in $P_1(X)$, thus $P_r(X)$ is 1-acyclic.   
\end{proof}

We make use of the following class of spaces, defined in \cite{mosher2003quasi}, that  allow us to apply both topological and metric arguments.

\begin{defn}
A \emph{metric cell complex}  consists of the pair $(X,\bX)$, where $\bX$ is a connected cell complex,  $X$ is a bounded geometry metric space called the \emph{control space}, and there is a function $p$ from the set of cells of $\bX$ to  $X$ such that:
\begin{enumerate}
\item for each $d\in \mathbb{N}$, there is a number $N_d$ such that for every $x\in X$,  $p^{-1}(x)$ contains at most $N_d$ $d$-cells;
\item for each $d\in \mathbb{N}$, there is a number $M_d$ such that for every $d$-cell $e$ with attaching map $\phi_e$, 
$ \{p(e)\}\cup\{p(f)\mid f\cap \mathrm{im}(\phi_e)\neq \emptyset\}$ has diameter at most $M_d$;
\item  $p(\bX^{(0)})=X$.
\end{enumerate}
We say that a \emph{group $G$ acts on $(X,\bX)$} if $G$ acts on both $X$ and $\bX$ and the map $p$ is $G$-equivariant. Such an action is free if the corresponding action of $G$ is free on $X$.
\end{defn}

Almost all examples of metric cell complexes arising in this paper come from the following construction: 

\begin{exmp}Suppose a finitely generated group $G$, equipped with the word metric, acts freely and cocompactly on a cell complex $\bX$. We  define $p$ arbitrarily on a transversal for the set of $G$-orbits of cells and extend $p$ equivariantly. Thus $(G,\bX)$ is a metric cell complex admitting a free $G$ action. 
\end{exmp}

\begin{rem}\label{rem:metric complexes}
In fact, there is a much more general class of objects called \emph{metric complexes} that were  defined in \cite{kapovich2005coarse} and used extensively in \cite{margolis2018quasi}. For readability and conciseness, we will not define these objects here.  However, it will be clear to those familiar with the theory of metric complexes that all arguments in this article extend  to the setting of metric complexes.  \end{rem}

\begin{defn}
Let $(X,\bX)$ be a metric cell complex. Given a subset $A\subseteq Y$, we define $\bX[A]$ to be the largest subcomplex of $\bX$ such that $p(e)\in A$ for every cell $e$ in $\bX[A]$.  We say that $(X,\bX)$ is \emph{uniformly $n$-connected} if for all $r$, there exists an $s\geq r$ such that for all $x\in X$ and $k\geq n$, the map \[\pi_k(\bX[N_r(x)])\rightarrow \pi_k(\bX[N_s(x)]),\] induced by inclusion, is zero.
\end{defn}
 There is a corresponding notion of uniform $n$-acyclicity for metric cell complexes and more generally, metric  complexes.
\begin{prop}[{\cite[Proposition 3.20]{margolis2018quasi}}]\label{prop:coarse acyclicity gives complex}
Let $(X,d)$ be a bounded geometry metric space. Then:
\begin{enumerate}
\item $X$ is coarsely uniformly $n$-connected if and only if it is the control space of a uniformly $n$-connected metric cell complex.
\item $X$ is coarsely uniformly $n$-acyclic if and only if it is the control space of a uniformly $n$-acyclic metric  complex.
\end{enumerate}
Moreover, if $X=G$ is a finitely generated group equipped with the word metric, we can assume that the resulting metric cell complex in (1) and (2) admits a free $G$-action.
\end{prop}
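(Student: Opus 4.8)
The plan is to prove each implication in part~(1) directly, and to obtain part~(2) by running the identical arguments with homotopy groups replaced by reduced $\mathbb{Z}_2$-homology and metric cell complexes replaced by metric complexes (Remark~\ref{rem:metric complexes}), with the ``moreover'' clause built into the construction. After remetrising so that $X$ is a discrete geodesic metric space (Remark~\ref{rem:remetrise}), which affects neither side up to quasi-isometry, there are two things to do: deduce the coarse condition from a given complex, and conversely build a complex.

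For the first direction, suppose $(X,\bX)$ is a uniformly $n$-connected metric cell complex. First I would build a cellular map $g\colon\bX[N_\rho(x)]^{(n)}\to P_D(N_{\rho+D}(x))$ that is the identity on vertices: axiom~(2) of a metric cell complex forces the $p$-images of the vertices in the closure of a $d$-cell to lie pairwise within $2M_d$, so such a cell can be sent to the simplex spanned by those images, with $D:=2\max_{d\le n+1}M_d$. Conversely, using axiom~(3) to choose for each point of $X$ a lift in $\bX^{(0)}$, uniform $0$-connectedness of $\bX$ joins the lifts of $i$-close points by controlled paths, and uniform $k$-connectedness for $k\le n$ extends this skeleton-by-skeleton to a cellular map $f\colon P_i(N_r(x))^{(n)}\to\bX[N_R(x)]$. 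Because $g\circ f$ agrees with the inclusion on vertices and each simplex maps into a bounded region, $g\circ f$ is homotopic to the inclusion after one further Rips-scale enlargement; inserting the zero map $\pi_k(\bX[N_R(x)])\to\pi_k(\bX[N_S(x)])$ from uniform $n$-connectedness of $\bX$ between $f$ and $g$ then shows that the inclusion-induced map $\pi_k(P_i(N_r(x)))\to\pi_k(P_{D'}(N_{S'}(x)))$ vanishes for $k\le n$, with $D'$ depending only on $i$. That is coarse uniform $n$-connectedness of $X$.

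For the converse, the substantial direction, I would build $(X,\bX)$ by induction on dimension, first using the coarse hypothesis to commit to a finite increasing sequence of Rips scales $1\le j_0\le j_1\le\dots\le j_{n+1}$ (with $j_{\ell+1}=j(j_\ell)$) and the corresponding enlargement functions. Start with $\bX^{(1)}:=P^1_{j_0}(X)$, letting $p$ send each cell to a vertex: bounded geometry of $X$ gives the bounds $N_d$ and the scale $j_0$ gives the bounds $M_d$, and this graph is uniformly $0$-connected since the $P^1_{j_0}(N_r(x))$ are connected and nested. Given a $d$-dimensional metric cell complex $\bX^{(d)}$, form $\bX^{(d+1)}$ by gluing a $(d+1)$-cell along a representative of each generator of $\pi_d\bigl(\bX^{(d)}[N_{L_d}(x)]\bigr)$ for every $x\in X$ — finitely many per point, since $N_{L_d}(x)$ is finite — with $p$ of each new cell a point near its attaching image, the radius $L_d$ being precisely what makes axiom~(2) survive; after $n+1$ steps put $\bX:=\bX^{(n+1)}$. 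To verify uniform $n$-connectedness I would run a refinement argument: a cellular $k$-sphere in $\bX[N_r(x)]$ ($k\le n$) either is small, hence bounds by construction, or is carried by $g$ into a Rips complex where the coarse hypothesis fills it by a disc made of Rips simplices of bounded diameter, and this disc can then be subdivided into small cells — here discrete geodesicity lets long Rips edges be broken into unit edges, and working mod $2$ makes interior faces of the subdivision cancel in pairs — so that, provided the $L_d$ were taken large enough, the disc is realised inside $\bX[N_{s}(x)]$. When $X=G$ with the word metric, one carries out each attaching stage on a set of $G$-orbit representatives of the spheres in question (finitely many, since the bounded pieces are $G$-translates), extends $G$-equivariantly and chooses $p$ equivariantly; then $\bX$ admits a free cocompact $G$-action compatible with $p$. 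The metric-complex variant is the same with $(d+1)$-chain generators in place of $(d+1)$-cells, one for each bounded $d$-cycle of the complex built so far.

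The hard part will be this construction direction, and specifically the uniform control: no single Rips complex is uniformly $n$-connected (its defining vanishing needs a larger \emph{scale}, not merely a larger radius), so cells must genuinely be added, yet the result has to remain bounded geometry with a uniform bound $M_d$ on cell diameters. The two facts that make this possible — that $X$ is discrete geodesic, so a long Rips filling subdivides into short cells, and that only the finitely many scales $j_0,\dots,j_{n+1}$ ever occur, so a single radius $L_d$ can be fixed in advance at each dimension — must be threaded together through the $n+1$ inductive steps; arranging the dependencies correctly (the $L_d$ are chosen while building $\bX^{(d+1)}$, but how large they must be is dictated by the later verification over all of $\bX$) is the technical heart. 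Given that, the comparison-map direction and the passage to the equivariant, homological and metric-complex versions are routine.
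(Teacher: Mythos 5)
Your overall strategy — build cellular comparison maps $g\colon\bX\to P_*(X)$ and $f\colon P_*(X)\to\bX$ to get the ``complex $\Rightarrow$ coarse'' direction, and inductively attach cells on top of a Rips graph to get the ``coarse $\Rightarrow$ complex'' direction — is the standard one for this kind of statement, and your discussion of the comparison-map direction is essentially right (modulo the observation that $g$ should be constructed on the $(n+1)$-skeleton, not just the $n$-skeleton, which is unproblematic since each cell has vertices in a bounded ball and the Rips simplex on that ball is contractible). The ``moreover'' clause is also handled in the natural way.

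There is, however, a genuine gap in the construction direction. You attach a $(d+1)$-cell for ``each generator of $\pi_d\bigl(\bX^{(d)}[N_{L_d}(x)]\bigr)$'' and assert there are finitely many ``since $N_{L_d}(x)$ is finite.'' For $d\geq2$ this is false as stated: a finite complex need not have finitely generated higher homotopy groups as abelian groups. The standard example is $S^1\vee S^2$, where $\pi_2\cong\mathbb{Z}[\mathbb{Z}]$ is not finitely generated over $\mathbb{Z}$; and even as a $\mathbb{Z}[\pi_1]$-module, $\pi_d$ of a finite complex need not be finitely generated when $\mathbb{Z}[\pi_1]$ is non-Noetherian. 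So ``one cell per generator'' could mean infinitely many cells over each point, destroying bounded geometry. Note that this difficulty is specific to part~(1): for part~(2) with $\mathbb{Z}_2$ coefficients, $H_d$ of a finite complex is a finite-dimensional $\mathbb{Z}_2$-vector space, so the homological analogue of your step is fine. Two ways to repair the homotopical case: (i) exploit the inductive hypothesis that $\bX^{(d)}$ is uniformly $(d-1)$-connected, hence for $d\geq2$ the map $\pi_1(\bX^{(d)}[N_{L_d}(x)])\to\pi_1(\bX^{(d)}[N_{L_d'}(x)])$ is trivial for suitable $L_d'$; consequently the image of $\pi_d(\bX^{(d)}[N_{L_d}(x)])$ in $\pi_d(\bX^{(d)}[N_{L_d'}(x)])$ is generated as an abelian group by the (finite) set of $\mathbb{Z}[\pi_1]$-module generators, since the $\pi_1$-action trivialises after the inclusion — and one should only aim to kill this image, attaching cells supported over $N_{L_d'}(x)$ rather than $N_{L_d}(x)$; or (ii) avoid $\pi_d$ as a group entirely and attach cells only along the specific finite family of cellular $d$-spheres of bounded combinatorial complexity that appear as obstructions when homotoping $f\circ g$ to the inclusion — finitely many per point by bounded geometry, and exactly what the later verification needs. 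Either way the ``finitely many cells per point'' claim needs one of these ingredients; as written it is unsupported. One smaller slip: your aside about ``working mod $2$'' so that interior faces of a subdivision ``cancel in pairs'' is relevant only to the homological part~(2); in part~(1) the disc is a genuine map $D^{d+1}\to\bX$ and no coefficient reduction enters.
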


Given a metric complex $(X,\bX)$, we define the \emph{support} of (cellular) chains and cochains as follows. If $\{\sigma_i\}_{i\in I}$ is the set of $k$-cells of $\bX$, then the support of a chain  $\sum_{i\in I} n_i \sigma_i\in C_k(\bX)$ is defined to be $\{p(\sigma_i)\mid n_i\neq 0\}$. Similarly, the support of a cochain $\alpha\in C^k(\bX)$ is defined to be $\{p(\sigma_i)\mid \alpha(\sigma_i)\neq 0\}$. We  define compactly supported cochains as cochains $\alpha$ with $\mathrm{supp}(\alpha)$ finite. Compactly supported cochains form a subcochain complex $C^\bullet_c(\bX)$ and we thus define the compactly supported cohomology $H^*_c(\bX)$ as the cohomology of this cochain complex.

In \cite{margolis2018quasi}, the author defines a coarse version of cohomology; see also \cite{roe1993coarse} and \cite{kapovich2005coarse}. The definition can be extended to coarsely uniformly $(n-1)$-acyclic metric spaces using the theory of metric complexes.
\begin{defn}
Let $X$ be a bounded geometry, coarsely uniformly $(n-1)$-connected metric space. Then for $k\leq n$, $H^k_\mathrm{coarse}(X)\coloneqq \ker (H^k_c(\bX)\rightarrow H^k(\bX))$, where $\bX$ is a uniformly $(n-1)$-connected metric complex with control space $X$.
\end{defn}
This is invariant under quasi-isometries. Moreover, a coarse embedding $A\rightarrow X$ induces a map $H^n_\mathrm{coarse}(X)\rightarrow H^n_\mathrm{coarse}(A)$. When   $G$ is a group of type $F_n$, $H^k_\mathrm{coarse}(G)=H^k(G,\mathbb{Z}_2G)$ for $k\leq n$ (recall all cohomology is taken with coefficients in $\bZ_2$).

We can now define coarse $PD_n$ spaces.
\begin{defn}
Let $(X,\bX)$ be a uniformly acyclic metric cell complex. We say that $\bX$ is a \emph{coarse $PD_n$ complex} if there exists a number $R$, chain maps \[C^{n-\bullet}_c(\bX)\xrightarrow{P} C_\bullet(\bX) \textrm{ and } C_\bullet(\bX) \xrightarrow{\overline P}C^{n-\bullet}_c(\bX),\] and chain homotopies $\bar{P}\circ P\overset{\Phi}{\simeq} \mathrm{id}_{C^{n-\bullet}_c(\bX)}$ and $P\circ \bar{P}\overset{\bar{\Phi}}{\simeq} \mathrm{id}_{C_\bullet(\bX)}$ such that:
\begin{enumerate}
\item for every  chain $\sigma$,  $\mathrm{supp}(\overline{P}(\sigma))$ and $\mathrm{supp}(\bar{\Phi}(\sigma))$ are contained in $N_R(\mathrm{supp}(\sigma))$;
\item for every cochain $\alpha$,  $\mathrm{supp}(P(\alpha))$ and $\mathrm{supp}(\Phi(\alpha))$ are contained in $N_R(\mathrm{supp}(\alpha))$.
\end{enumerate} 
A metric space is said to be \emph{coarse $PD_n$} if it is the control space of some coarse $PD_n$ complex. 
A \emph{coarse $PD_n$ group} is a finitely generated group that is a coarse $PD_n$ space.
\end{defn}
If $X$ is a coarse $PD_n$ space, then $H^n_\mathrm{coarse}(X)\cong \mathbb{Z}_2$ and $H^k_\mathrm{coarse}(X)=0$ for $k\neq n$. We refer to \cite{kapovich2005coarse} for a more thorough treatment of coarse $PD_n$ complexes and spaces. We recall the following useful fact about coarse $PD_n$ spaces, generalising an earlier result of Farb--Schwartz \cite{farbschwartz96}.
 \begin{thm}[Packing Theorem, {\cite[Part 4 of Theorem 7.7]{kapovich2005coarse}}]\label{thm:packing}
 If $W$ and $W'$ are coarse $PD_n$ subspaces of $X$ and $W\subseteq N_R(W')$ for some $R\geq 0$, then there exists an $R'\geq 0$ such that $ W'\subseteq N_{R'}(W)$.
 \end{thm}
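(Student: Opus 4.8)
The plan is to leverage coarse Poincaré duality to turn the containment $W \subseteq N_R(W')$ into a cohomological statement, and then extract the reverse containment from the fact that both $W$ and $W'$ are coarse $PD_n$. First I would fix coarse $PD_n$ complexes $(W, \mathbf{W})$ and $(W', \mathbf{W}')$ realizing the two subspaces, together with the duality chain maps $P$, $\overline{P}$ and the chain homotopies $\Phi$, $\overline{\Phi}$ with support-control constant $R_0$. Since $W$ and $W'$ both coarsely embed in $X$, and $W \subseteq N_R(W')$, the inclusion of $W$ into $X$ factors coarsely through $W'$; that is, there is a coarse map $\iota \colon W \to W'$ (send each point of $W$ to a nearest point of $W'$) which, composed with the inclusion $W' \hookrightarrow X$, is close to the inclusion $W \hookrightarrow X$.

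The heart of the argument is to show $\iota$ is a coarse equivalence, for then $W' \subseteq N_{R'}(W)$ follows immediately by taking a coarse inverse. To do this I would examine the effect of $\iota$ on $H^n_{\mathrm{coarse}}$. Both $H^n_{\mathrm{coarse}}(W)$ and $H^n_{\mathrm{coarse}}(W')$ are isomorphic to $\mathbb{Z}_2$, and all lower coarse cohomology vanishes. One shows that $\iota^* \colon H^n_{\mathrm{coarse}}(W') \to H^n_{\mathrm{coarse}}(W)$ is an isomorphism: a compactly supported $n$-cocycle on $\mathbf{W}'$ representing the generator, pulled back along a coarse lift of $\iota$, has support controlled near $\iota(W)$, and using the duality map $P$ one checks its image under the fundamental class pairing is the nonzero element of $H_0$. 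Concretely, if $\iota(W)$ were "coarsely small" in $W'$ — say contained in $N_A(W' \setminus N_B(\iota(W)))$ for a deep complementary piece — then the generator of $H^n_c(\mathbf{W}')$ could be represented away from $\iota(W)$, making the pullback zero; but duality forces the fundamental class of $W$ to pair nontrivially, a contradiction. Hence $\iota(W)$ is coarsely dense in $W'$, i.e. $W' \subseteq N_{R'}(\iota(W)) \subseteq N_{R'}(W)$ for some $R'$ depending only on $R$ and the structural constants.

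The step I expect to be the main obstacle is making the pullback-of-coarse-cohomology argument fully rigorous with all support estimates uniform: a coarse map does not induce a genuine cochain map, so one must replace $\iota$ by an actual cellular approximation $\tilde{\iota} \colon \mathbf{W} \to \mathbf{W}'$ with bounded displacement, verify it is a morphism of metric cell complexes, and track how the support-control constants $R_0$ for $W$, $R_0'$ for $W'$, and the displacement bound combine to yield $R'$. An alternative, perhaps cleaner route — and the one I would fall back on — is to argue purely via coarse separation: if $W' \not\subseteq N_{R'}(W)$ for every $R'$, then $W$ is a coarse $PD_n$ subspace sitting inside the coarse $PD_n$ space $W'$ but failing to be coarsely dense, so $W$ would coarsely separate $W'$ into pieces one of which is deep; then a Mayer–Vietoris / coarse Alexander duality argument (as developed in \cite{kapovich2005coarse}) computes $H^n_{\mathrm{coarse}}$ of the pieces and contradicts $H^{n-1}_{\mathrm{coarse}}(W) = 0 = H^n_{\mathrm{coarse}}(\text{deep piece})$ being incompatible with $H^n_{\mathrm{coarse}}(W') \cong \mathbb{Z}_2$. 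Either way the quantitative dependence of $R'$ on $R$ comes from the uniformity built into the metric complex structure, and I would state it as such rather than chase explicit constants.
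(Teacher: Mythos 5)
The paper does not prove this result; it is quoted verbatim as Part~4 of Theorem~7.7 of Kapovich--Kleiner \cite{kapovich2005coarse}, so there is no internal proof to compare against. That said, your sketch is worth scrutinizing on its own terms, and I think it has a genuine gap at the heart of the argument.

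Both of your proposed routes hinge on a claim that you assert but do not establish: that the pullback $\iota^*\colon H^n_{\mathrm{coarse}}(W') \to H^n_{\mathrm{coarse}}(W)$ along the coarse map $\iota\colon W\to W'$ is nonzero. You write that ``duality forces the fundamental class of $W$ to pair nontrivially,'' but this is exactly the statement that needs a proof, not a consequence of definitions. A priori, the $PD_n$ duality structures on $\mathbf{W}$ and $\mathbf{W}'$ are completely unrelated data; knowing that both groups are $\mathbb{Z}_2$ tells you nothing about whether the map between them is zero. The Kapovich--Kleiner argument has to do real work precisely here, using the support-control built into the chain maps $P$, $\overline P$ of \emph{both} spaces to produce a controlled chain-level comparison between $\overline P_W$ and $\iota^\# \circ \overline P_{W'}\circ \iota_\#$, from which nonvanishing is extracted. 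Simply pulling back a cocycle supported near a chosen point and noting it is ``supported near $\iota(W)$'' does not, by itself, rule out that the resulting class is a coboundary in $C^\bullet_c(\mathbf{W})$.

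Your fallback route via coarse separation also does not apply as stated. Coarse separation results for coarse $PD_m$ subspaces of coarse $PD_n$ spaces, and the coarse Alexander duality used to detect deep complementary pieces, live in codimension at least one; here $W$ and $W'$ have the same formal dimension $n$, so $W$ does not ``coarsely separate'' $W'$ in any useful sense --- there is no analogue of a separating hypersurface, and the Mayer--Vietoris computation you gesture at does not set up. The correct codimension-$0$ intuition is closer to ``a connected $n$-manifold has no proper codimension-$0$ closed submanifold,'' whose coarse analogue is exactly what needs to be proved, not assumed. If you want to give a self-contained argument you should work through the support-controlled chain homotopies directly as in \cite[\S7]{kapovich2005coarse}; otherwise the right move, which the paper itself makes, is simply to cite the result.
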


We now define the technical hypothesis in the main theorem of \cite{margolis2018quasi}:
\begin{defn}
Let $X$ be a bounded geometry, coarsely uniformly $n$-acyclic metric space containing a coarse $PD_n$ subspace $W\subseteq X$. A coarse complementary component $C$ of $W$ is said to be \emph{essential} if the map $H^n_\mathrm{coarse}(C\cup W)\rightarrow H^n_\mathrm{coarse}(W)$, induced by inclusion, is zero.
\end{defn}
Every essential coarse complementary component is deep. Although the converse is not true, there are many instances in which deep and essential coarse complementary components are equivalent, e.g.  \cite[Proposition 1.2]{margolis2018quasi} and Proposition \ref{prop:deep imp essential}.
We refer to \cite{margolis2018quasi} for a thorough of discussion of essential components. 

In \cite{kapovich2005coarse}, Kapovich and Kleiner prove a coarse version of Jordan separation: a coarse $PD_{n}$ space $W$ coarsely separates a coarse $PD_{n+1}$ space into precisely two deep coarse complementary components. We say that such a component is a \emph{coarse $PD_{n+1}$ half-space with boundary $W$}. We will make use of the following criterion for showing coarse complementary components are essential.
\begin{lem}[{\cite[Corollary 6.7]{margolis2018quasi}}]\label{lem:essential if contains half-space}
Let $X$ be a bounded geometry, coarsely uniformly $n$-acyclic metric space containing a coarse $PD_n$ subspace $W\subseteq X$. If a coarse complementary component $C$ of $W$ contains a coarse $PD_{n+1}$ half-space with boundary $W,$ then $C$ is essential.
\end{lem}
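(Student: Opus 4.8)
The plan is to reduce the statement to the coarse-cohomological triviality of a half-space, and then invoke (or prove) that triviality. Write $\bar P = P \cup W$, where $P \subseteq C$ is the given coarse $PD_{n+1}$ half-space and $W$ is its boundary; from the description of half-spaces produced by coarse Jordan separation, $P$ is one of the two deep coarse complementary components into which $W$ coarsely separates some coarse $PD_{n+1}$ space $Z$, so $\bar P$ is the control space of a uniformly acyclic metric complex and $H^n_\mathrm{coarse}(\bar P)$ is defined. Since $W \subseteq \bar P$ and, up to finite Hausdorff distance, $\bar P \subseteq C \cup W$, we have coarse embeddings $W \hookrightarrow \bar P \hookrightarrow C \cup W$ of coarsely uniformly $(n-1)$-acyclic spaces. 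Functoriality of coarse cohomology — a coarse embedding $A \to B$ induces a restriction $H^n_\mathrm{coarse}(B) \to H^n_\mathrm{coarse}(A)$ — then presents the map appearing in the definition of an essential component as a composite
\[ H^n_\mathrm{coarse}(C \cup W) \longrightarrow H^n_\mathrm{coarse}(\bar P) \longrightarrow H^n_\mathrm{coarse}(W), \]
so it suffices to show that restriction from a coarse $PD_{n+1}$ half-space to its boundary annihilates $H^n_\mathrm{coarse}$.

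I would obtain this from the stronger statement $H^n_\mathrm{coarse}(\bar P) = 0$. This is the coarse analogue of the elementary fact that the genuine half-space $\mathbb{R}^n \times [0,\infty)$ has vanishing compactly supported cohomology in all degrees, hence vanishing coarse cohomology. Coarsely, one deduces it either from coarse Poincar\'e--Lefschetz duality for $PD_{n+1}$ half-spaces in the sense of \cite{kapovich2005coarse}, identifying $H^k_\mathrm{coarse}(\bar P)$ with a coarse locally finite homology group of the pair $(\bar P, W)$ that vanishes for a half-space, or directly from the observation that $\bar P$ is coarsely flasque — it admits a self-map boundedly close to the identity whose iterates eventually push any bounded set off itself — together with the vanishing of coarse cohomology for coarsely flasque spaces. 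Granting $H^n_\mathrm{coarse}(\bar P) = 0$, the composite above vanishes, so $H^n_\mathrm{coarse}(C \cup W) \to H^n_\mathrm{coarse}(W)$ is zero and $C$ is essential.

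The one substantive point, and the step I expect to need the most care, is the vanishing $H^n_\mathrm{coarse}(\bar P) = 0$: verifying that a coarse $PD_{n+1}$ half-space behaves like a model half-space at the level of coarse cohomology, which is exactly what the half-space machinery of \cite{kapovich2005coarse} is designed to do. The remaining ingredients are formal: setting up the chain of inclusions above, checking that $C \cup W$ and $\bar P$ are coarsely uniformly $(n-1)$-acyclic so that $H^n_\mathrm{coarse}$ and its functoriality apply (for $C \cup W$ this is implicit in the hypothesis that $C$ is a coarse complementary component of the coarse $PD_n$ subspace $W$ of the coarsely uniformly $n$-acyclic space $X$; for $\bar P$ it comes from its half-space structure), and, if $\bar P$ is only contained in a bounded neighbourhood of $C$ rather than in $C$ itself, replacing $C \cup W$ by a bounded neighbourhood of it without changing the coarse cohomology. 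An alternative to computing $H^n_\mathrm{coarse}(\bar P)$ is to run the coarse Mayer--Vietoris sequence for the decomposition of $Z$ into its two half-spaces along $W$ and feed in $H^\bullet_\mathrm{coarse}(Z)$ (which is $\mathbb{Z}_2$ in degree $n+1$ and $0$ otherwise, as $Z$ is coarse $PD_{n+1}$); but extracting the individual restriction map $H^n_\mathrm{coarse}(\bar P) \to H^n_\mathrm{coarse}(W)$ from that sequence still ultimately requires knowing the coarse cohomology of a half-space, so the direct route is cleaner.
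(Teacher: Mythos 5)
Your reduction is sound: once you have coarse embeddings $W\hookrightarrow \bar P\hookrightarrow C\cup W$, functoriality of $H^n_\mathrm{coarse}$ (which the paper records after its definition) does factor the restriction $H^n_\mathrm{coarse}(C\cup W)\to H^n_\mathrm{coarse}(W)$ through $H^n_\mathrm{coarse}(\bar P)$, and vanishing of the middle group would kill the composite. This is the right skeleton, and you correctly flag $H^n_\mathrm{coarse}(\bar P)=0$ as the substantive input. Two of your three suggested routes to that vanishing are reliable: coarse Poincar\'e--Lefschetz duality for half-spaces in \cite{kapovich2005coarse} is designed exactly for this, and a coarse Mayer--Vietoris along $W$ inside the ambient coarse $PD_{n+1}$ space $Z=\bar P\cup\bar P'$ gives it too --- your stated worry that Mayer--Vietoris ``still ultimately requires knowing the coarse cohomology of a half-space'' is unfounded, since feeding in $H^n_\mathrm{coarse}(Z)=0$ and the nonvanishing of the connecting map $H^n_\mathrm{coarse}(W)\to H^{n+1}_\mathrm{coarse}(Z)$ (the content of coarse Jordan separation: the fundamental class of $W$ hits that of $Z$) yields $H^n_\mathrm{coarse}(\bar P)\oplus H^n_\mathrm{coarse}(\bar P')=0$ directly, with no circularity.

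The third route --- coarse flasqueness of $\bar P$ --- is the one I would push back on as stated. Flasqueness requires an explicit ``translation towards infinity'' self-map, which is immediate for the model $\mathbb{R}^n\times[0,\infty)$ and for the pullback bundles over rays that actually arise in this paper (Proposition \ref{prop:deep imp essential}), but an abstract coarse $PD_{n+1}$ half-space carries no such structure a priori; you would have to manufacture the pushing map, which is essentially as hard as the duality argument you were trying to avoid. So treat flasqueness as an observation about the particular half-spaces that arise, not as a route to the general lemma. Beyond that, the only point that needs stating rather than assuming is that $\bar P$ (with its subspace metric from $X$) is coarsely uniformly $(n-1)$-acyclic so that $H^n_\mathrm{coarse}(\bar P)$ and the two restriction maps are actually defined --- this follows since $\bar P$ is, up to bounded Hausdorff distance, a deep complementary piece in the uniformly acyclic complex over $Z$, and you gesture at it correctly. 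Modulo tightening these points, the argument is a correct proof and, given how the cited corollary is used in this paper, very likely the one in \cite{margolis2018quasi}.
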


We now state part of the main theorem of \cite{margolis2018quasi}:
\begin{thm}[\cite{margolis2018quasi}]\label{thm:essential 3-sep}
Let $G$ be a group of type $FP_{n+1}$. If $W\subseteq G$ is a coarse $PD_n$ subspace that coarsely separates $G$ into three essential coarse complementary components, then there is a subgroup $H\leq G$ with $d_\mathrm{Haus}(H,W)<\infty$.
\end{thm}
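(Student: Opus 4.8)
The plan is to produce the subgroup as the \emph{coarse stabiliser} of $W$, and to use the three-essential-component hypothesis to force it to lie at finite Hausdorff distance from $W$. To set up, use type $FP_{n+1}$ and Proposition~\ref{prop:coarse acyclicity gives complex} to realise $G$, with a word metric, as the control space of a uniformly $n$-acyclic metric complex $\bX$ carrying a free cocompact $G$-action, and fix a coarse $PD_n$ complex $\mathbf{W}$ whose control space is a bounded neighbourhood of $W$. Set $H\coloneqq\{g\in G\mid d_\Haus(gW,W)<\infty\}$; since left multiplications are isometries this is a subgroup. The elementary observation that drives the reduction is: if $g\in H$ with $d_\Haus(gW,W)\le D$ and $w_0\in W$ realises $d(e,W)$, then $gw_0\in N_D(W)$ and hence $g\in N_{D+d(e,W)}(W)$; so $\{g\in H\mid d_\Haus(gW,W)\le D\}\subseteq N_{D'}(W)$ with $D'=D+d(e,W)$. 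It therefore suffices to prove (i) that $d_\Haus(gW,W)$ is \emph{uniformly bounded} for $g\in H$ (then $H\subseteq N_{D'}(W)$), and (ii) that $W\subseteq N_D(H)$; conversely any subgroup as in the conclusion is commensurable to $H$, so there is no loss in aiming at $H$.

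That both (i) and (ii) genuinely require the hypothesis is visible already from $\mathbb{Z}\le\mathbb{Z}^2$: there $W$ coarsely $2$-separates $G$, the coarse stabiliser of a line is all of $\mathbb{Z}^2$, and neither (i) nor (ii) holds. The content of the proof is thus to extract from the separation hypothesis a splitting of $G$ in which $W$ is an edge space. I would consider the $G$-invariant family $\mathcal{W}\coloneqq\{gW\mid g\in G\}$ of coarse $PD_n$ subspaces, each coarsely separating $G$ into essential coarse complementary components. The central technical step is an \emph{uncrossing lemma}: after passing to a $G$-finite refinement, one chooses for each translate a coarse complementary ``side'' in a $G$-equivariant way so that any two translates become \emph{nested}. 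This is where the coarse Poincar\'e duality of $\mathbf{W}$ and Kapovich--Kleiner coarse Jordan separation enter: an essential ``crossing'' of two translates would, via the duality chain maps together with Lemma~\ref{lem:essential if contains half-space}, produce a configuration incompatible with a coarse $PD_n$ subspace of a coarse $PD_{n+1}$ half-space bounding exactly two deep components; throughout, the Packing Theorem~\ref{thm:packing} is used to upgrade coarse containments of $PD_n$ subspaces to finite Hausdorff distance. I expect this uncrossing step, and the combinatorial book-keeping needed to make the choice of sides simultaneously $G$-equivariant and locally finite, to be the main obstacle.

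Given a nested $G$-invariant pattern, the classical construction of the tree dual to a nested family of tracks (Dunwoody) yields a $G$-tree $T$ on which $G$ acts with $W$ represented by an edge $\tilde e$, and with the stabiliser $G_{\tilde e}$ acting on the translate $W$. The assumption that $W$ has \emph{three} essential deep coarse complementary components is precisely what forces $T$ to be non-degenerate: $G$ fixes neither a vertex nor an end of $T$, the tree is ``bushy'', and $G_{\tilde e}$ acts coboundedly on $W$. Being a genuine setwise stabiliser, $d_\Haus(gW,W)$ is bounded over $g\in G_{\tilde e}$, so by the reduction above $d_\Haus(G_{\tilde e},W)<\infty$; this establishes (i) and (ii) with $G_{\tilde e}$ in place of $H$ and already proves the theorem. (If one insists on $H$ itself, note $H$ permutes the finite set of edges of $T$ parallel to $\tilde e$, so $[H:G_{\tilde e}]<\infty$ and $d_\Haus(H,W)<\infty$ as well.)
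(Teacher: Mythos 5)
Since the paper only cites this theorem from \cite{margolis2018quasi} without reproducing a proof, the comparison has to be with that source, whose ingredients you can partially reverse-engineer from Lemma~\ref{lem:cohom class and non-crossing} and Lemma~\ref{lem:Wfinitecrit} in the present text. Your reduction --- set $H=\{g\in G\mid d_\Haus(gW,W)<\infty\}$ and reduce to a uniform bound on $d_\Haus(gW,W)$ over $g\in H$ together with $W\subseteq N_D(H)$ --- is correct, and the $\mathbb{Z}\leq\mathbb{Z}^2$ example rightly highlights why the three-component hypothesis is indispensable.

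The trouble is that your route through a dual tree front-loads the entire difficulty into two steps you yourself flag as obstacles: uncrossing the $G$-orbit of $W$, and local finiteness of the resulting wall pattern. Even granting a $G$-tree $T$ with $W$ carried by an edge $\tilde e$, the inference that $d_\Haus(gW,W)$ is bounded over $g\in G_{\tilde e}$ is not free: $G_{\tilde e}$ stabilises the \emph{parallelism class} of $W$ (translates at finite Hausdorff distance), and the open question is precisely whether that class sits in a \emph{bounded} neighbourhood of $W$ --- a priori it could behave like the unbounded family of parallel lines in $\mathbb{Z}^2$. Likewise the closing remark that $H$ permutes a ``finite set of edges parallel to $\tilde e$'' is exactly the finiteness you need to prove, not something the tree construction hands you. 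The argument in \cite{margolis2018quasi} attacks this head-on and without a tree: the three essential complementary components of each translate $gW$ yield, via coarse Poincar\'e duality, two non-zero classes in $\ker(H^{n+1}_c(\bX)\to H^{n+1}(\bX))$ representable by cocycles with uniformly bounded support (Lemma~\ref{lem:cohom class and non-crossing}); the non-crossing condition forces translates beyond a definite distance to produce distinct such classes (Lemma~\ref{lem:Wfinitecrit}); and since $\mathbb{Z}_2$-cohomology supported in a fixed ball is finite, only finitely many translates can approach a given point. This discreteness directly gives coboundedness of the coarse stabiliser and hence the theorem; the tree and the splitting come afterwards via accessibility (Theorem~\ref{thm:gog}). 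So your proposal captures the right setup and the right target, but the core cohomological finiteness is what the proof actually consists of, and the dual-tree scaffolding does not let you bypass it.
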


To prove Theorem \ref{thm:main intro}, we not only need Theorem \ref{thm:essential 3-sep}, but  some of the ingredients used to prove it. Specifically, we require the following lemma. 
\begin{lem}[{\cite[Proposition 5.19 and Lemma 6.13]{margolis2018quasi}}]\label{lem:cohom class and non-crossing}
Let $X$ be a bounded geometry metric space and let $(X,\bX)$ be a uniformly $n$-connected metric cell complex. Suppose $W\subseteq X$ is a coarse $PD_n$ subspace and that $C_1$, $C_2$ and $C_3$ are coarse disjoint, essential, coarse complementary components of $W$. Then there exist constants $D,E\geq 0$ such that:
\begin{enumerate}
\item there exist non-zero, distinct cohomology classes \[[\alpha_1],[\alpha_2]\in \ker (H^{n+1}_c(\bX)\rightarrow H^{n+1}(\bX))\] such that for every $x\in W$, $[\alpha_i]$ can be supported by a cocycle supported in $N_D(x)$ for $i=1,2$;
\item  $\alpha_i$ can be represented by a cocycle supported in $C_i\setminus N_E(W)$ for $i=1,2$;
\item if $[\omega]\in \ker (H^{n+1}_c(\bX)\rightarrow H^{n+1}(\bX))$ is a cohomology class represented by cocycles supported in any two of $C_1\backslash N_E(W)$, $C_2\backslash N_E(W)$ and $C_3\backslash N_E(W)$, then $[\omega]=0$.  
\end{enumerate}
\end{lem}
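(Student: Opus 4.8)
The plan is to obtain this lemma directly from \cite{margolis2018quasi}: parts (1) and (2) are the content of \cite[Proposition 5.19]{margolis2018quasi}, and part (3) is \cite[Lemma 6.13]{margolis2018quasi}. These are exactly the ingredients extracted from the machinery behind Theorem \ref{thm:essential 3-sep}, so the only genuine task is to match the hypotheses here --- a uniformly $n$-connected metric cell complex $(X,\bX)$, a coarse $PD_n$ subspace $W\subseteq X$, and three pairwise coarse disjoint essential coarse complementary components --- with the setup used there, and to read off the constants $D$ and $E$. I describe the mechanism underlying those results.

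For (1) and (2): to each essential component $C_i$ one associates a \emph{separating cochain} $u_i\in C^n(\bX)$, morally the indicator of the $n$-cells of $\bX$ lying over $C_i$, corrected by the coarse Poincar\'e duality chain maps of $W$ so that $\alpha_i\coloneqq \delta u_i$ has support inside a bounded neighbourhood of $\partial C_i$. Since the $C_i$ are pairwise coarse disjoint, $\partial C_i\subseteq N_A(W)$ for a uniform $A$; after enlarging constants one pushes $\alpha_i$ off $W$ into $C_i\setminus N_E(W)$, using uniform $n$-acyclicity together with the vanishing of $H^{n+1}_c$ of a neighbourhood of $W$ (itself a consequence of $W$ being coarse $PD_n$), which gives (2). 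As $\alpha_i=\delta u_i$, automatically $[\alpha_i]\in\ker(H^{n+1}_c(\bX)\to H^{n+1}(\bX))$. The localisation in (1) comes from the coarse $PD_n$ constant $R$ of $W$ and the uniform acyclicity constants: the generator of $H^n_\mathrm{coarse}(W)\cong\bZ_2$ is representable near every $x\in W$, and $\alpha_i$ is cohomologous in $C^\bullet_c(\bX)$ to a localisation of it near $W$. Non-triviality $[\alpha_i]\neq 0$ is equivalent to essentiality of $C_i$ (an equivalence established in \cite{margolis2018quasi}), and $[\alpha_1]\neq [\alpha_2]$ because $C_1\cup C_2$ is again an essential coarse complementary component --- essentiality propagates to any coarse complementary component containing $C_1$, and the coarse disjoint $C_3$ ensures $C_1\cup C_2\neq X$ --- while coarse disjointness makes $u_1+u_2$ coarsely the indicator of $C_1\cup C_2$, so $[\alpha_1]+[\alpha_2]=[\delta(u_1+u_2)]\neq 0$.

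For (3): suppose $[\omega]\in\ker(H^{n+1}_c(\bX)\to H^{n+1}(\bX))$ is represented both by a cocycle $\omega_j$ supported in $C_j\setminus N_E(W)$ and by a cocycle $\omega_k$ supported in $C_k\setminus N_E(W)$, with $j\neq k$, so $\omega_k-\omega_j=\delta\gamma$ for some compactly supported $\gamma$. Restricting $\gamma$ to the subcomplex $R=\bX[(X\setminus C_j)\cup N_M(W)]$ for a suitable constant $M$, the support of $\omega_k$ lies inside $R$ while that of $\omega_j$ is disjoint from it, so $\omega_k=\delta(\gamma|_R)+\varepsilon$, where $\varepsilon$ is a compactly supported $(n+1)$-cocycle supported in a bounded neighbourhood of $W$; since $W$ is coarse $PD_n$, this $\varepsilon$ is a compactly supported coboundary, whence $[\omega]=[\omega_k]=0$. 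This non-crossing step is the main obstacle: it is the one place where coarse $PD_n$-ness of $W$ --- not merely the fact that $W$ coarsely separates $X$ --- is indispensable, and it is what forces the part of $H^{n+1}_\mathrm{coarse}(X)$ representable near $W$ to be governed by the finite set of essential coarse complementary components of $W$, in analogy with the classical relation between the ends of a space and its first cohomology.
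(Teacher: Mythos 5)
Your proposal takes the same route as the paper: the paper offers no proof of this lemma beyond the bracketed citation to \cite[Proposition 5.19 and Lemma 6.13]{margolis2018quasi}, which is precisely what you do. Your supplementary sketch of the separating-cochain mechanism and the non-crossing restriction argument is a reasonable account of what happens in the cited source, but it is extra exposition rather than a departure from the paper's treatment.
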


\section{Coarse bundles}\label{sec:coarse bundles}

In this section we introduce coarse bundles, one of the  key concepts of this article. Similar definitions appear in \cite{farbmosher2000abelianbycyclic},  \cite{kapovich2005coarse} and \cite{whyte2010coarse}. 

\begin{defn}\label{defn:coarse bundle}
Let $(X,d)$, $(F,d_F)$ and $(B,d_B)$ be bounded geometry, quasi-geodesic metric spaces. We say that \emph{$X$ is a coarse bundle over $B$ with fibre $F$} if there are constants $K\geq 1$, $A,E\geq 0$, proper non-decreasing functions $\eta,\phi:\bR_{\geq 0}\rightarrow \bR_{\geq 0}$ and a map $p:X\rightarrow B$ such that:
\begin{enumerate}
\item \label{defn:coarse bundle 1} $p$ is $(K,A)$-coarse Lipschitz, i.e. for all $x,x'\in X$, \[d_B(p(x),p(x'))\leq Kd(x,x')+A;\]
\item \label{defn:coarse bundle 2}for all $b\in B$, $D_b\coloneqq p^{-1}(N_E(b))$ is known as a \emph{fibre} of $X$ and there is an $(\eta,\phi)$-coarse embedding $s_b:F\rightarrow X$ with \[\im (s_b)\subseteq D_b\subseteq N_A(\im (s_b));\]
\item \label{defn:coarse bundle 3}for all $b,b'\in B$,  $d_\Haus(D_b,D_{b'})\leq K d_B(b,b')+A$.
\end{enumerate}
When the above holds, we say that $p:X\rightarrow B$ is a \emph{$(K,A,E)$-coarse bundle}, or simply a \emph{coarse bundle}. We call $X$ the \emph{total space}, $B$  the \emph{base space} and $F$  the \emph{fibre}. 
\end{defn}

\begin{rem}
Given a coarse bundle $p:X\rightarrow B$ and a fibre $D_b$ as above, the space $F$ can be recovered from $D_b$ up to quasi-isometry using Lemma \ref{lem:coarse embedding}. Thus there is no ambiguity in referring to $D_b$ as a fibre, as it is just a distorted copy of $F$ sitting inside $X$.
\end{rem}

\begin{prop}\label{prop:coarse bundle QI}
Suppose that $p:X\rightarrow B$ is a coarse bundle with fibre $F$ and that $f:Y\rightarrow X$ is a quasi-isometry. Then $p\circ f:Y\rightarrow B$ is also a coarse bundle with fibre $F$.
\end{prop}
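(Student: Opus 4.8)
The plan is to transport the coarse bundle data for $p:X\rightarrow B$ along the quasi-isometry $f:Y\rightarrow X$ by precomposition, verifying the three axioms of Definition \ref{defn:coarse bundle} for $p\circ f$. Let $\ol f:X\rightarrow Y$ be a coarse inverse to $f$, and suppose $f$ is a $(K_0,A_0)$-quasi-isometry and $p:X\rightarrow B$ is a $(K,A,E)$-coarse bundle with fibre $F$, coarse section distortion functions $\eta,\phi$. First I would observe that $Y$ is automatically a bounded geometry, quasi-geodesic metric space: quasi-geodesicity is a quasi-isometry invariant by Proposition \ref{prop:quasi geodesic}, and bounded geometry transfers along a quasi-isometry between such spaces. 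Axiom (\ref{defn:coarse bundle 1}) is immediate: $p\circ f$ is a composition of a coarse Lipschitz map with a Lipschitz-on-large-scale map, so it is $(K',A')$-coarse Lipschitz with $K'=KK_0$ and $A' = KA_0 + A$.

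For axiom (\ref{defn:coarse bundle 2}), I would define the new fibres by $D_b' \coloneqq (p\circ f)^{-1}(N_E(b)) = f^{-1}(D_b)$. The new coarse section is the composite $s_b' \coloneqq \ol f \circ s_b : F\rightarrow Y$; since $\ol f$ is a quasi-isometry it is in particular a coarse embedding, so $s_b'$ is a coarse embedding whose distortion functions depend only on $\eta,\phi,K_0,A_0$. The containment $\im(s_b')\subseteq D_b'$ follows because $f(\ol f(s_b(F)))$ lies within bounded Hausdorff distance of $s_b(F)\subseteq D_b$, and $D_b$ is, up to the bounded constant $A$, exactly $N_A(\im(s_b))$, so for a suitable enlarged constant the preimage works out; here I would use that $D_b = p^{-1}(N_E(b)) \subseteq N_A(\im(s_b))$, push through $f^{-1}$ using that $f$ is a quasi-isometry, and absorb the finitely many additive errors. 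The reverse containment $D_b'\subseteq N_{A''}(\im(s_b'))$ is proved similarly: a point $y\in D_b'$ has $f(y)\in D_b\subseteq N_A(\im(s_b))$, so $f(y)$ is within $A$ of some $s_b(z)$, whence $y$ is within $K_0(A) + (\text{closeness of }\ol f\circ f\text{ to }\id)+A_0$ of $\ol f(s_b(z)) = s_b'(z)$. Axiom (\ref{defn:coarse bundle 3}) follows from the corresponding bound for the $D_b$: since $f$ is bi-Lipschitz on the large scale, $d_\Haus(D_b', D_{b'}') = d_\Haus(f^{-1}(D_b), f^{-1}(D_{b'}))$ is controlled by $K_0 \, d_\Haus(D_b, D_{b'}) + (\text{const}) \leq K_0(K d_B(b,b') + A) + (\text{const})$, which is again affine in $d_B(b,b')$.

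The fibre of the new bundle is $F$ itself, up to quasi-isometry: by the Remark following Definition \ref{defn:coarse bundle}, or directly by Lemma \ref{lem:coarse embedding}, the quasi-isometry type of a fibre is determined by the coarse-embedding data of its coarse section, and $s_b' = \ol f\circ s_b$ has the same source space $F$. One can phrase this more cleanly via Proposition \ref{prop:fibreqi induces qi between fibres}: the restriction of $\ol f$ gives a quasi-isometry between a fibre of $p$ and the corresponding fibre of $p\circ f$, so they have the same quasi-isometry type, namely that of $F$.

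I do not expect a serious obstacle here; the only thing requiring care is bookkeeping of constants to confirm they depend only on the allowed data, and in particular checking that the $A$ in axiom (\ref{defn:coarse bundle 2}) (which appears on both sides, as $\im(s_b)\subseteq D_b\subseteq N_A(\im(s_b))$ and as the coarse-surjectivity constant for the $s_b$ and for $f$) can be taken uniform after the transport — this is just a matter of replacing all the various additive constants by their maximum. The mild subtlety is that the inclusions in (\ref{defn:coarse bundle 2}) must hold with a \emph{single} constant $A$ for every $b$, which is fine since the original bundle already supplies such a uniform constant and $f$ contributes only finitely many further uniform errors.
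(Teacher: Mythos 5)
Your direct verification is correct and is precisely the computation carried out in the paper's Lemma \ref{lem:technical pullback} specialized to $W=B$, $\lambda=\mathrm{id}_B$, $\widehat\lambda=f$; the paper proves the Proposition by simply citing that lemma. The one point to tighten is that the neighbourhood constant $E$ defining $D_b'=(p\circ f)^{-1}(N_E(b))$ must itself be enlarged (say to $E'\geq E+KC+A$ where $f\circ\ol f$ is $C$-close to $\mathrm{id}_X$) so that $\mathrm{im}(\ol f\circ s_b)$ actually lands inside $D_b'$ rather than merely near it --- but you correctly flag this as bookkeeping, and it is.
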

Proposition \ref{prop:coarse bundle QI} can be deduced from Lemma \ref{lem:technical pullback}.
Most of the coarse bundles that we are interested in satisfy the following:
\begin{defn}
Let $p:X\rightarrow B$ be a coarse bundle. A quasi-isometry $f:X\rightarrow X$ is said to be \emph{$A$-fibre-preserving} if for every $b\in B$, there exists a $b'\in B$ such that $d_\Haus(f(D_b),D_{b'})\leq A$.
We say that $p:X\rightarrow B$ is  \emph{quasi-homogeneous} if there exist constants $K\geq 1$ and $A \geq 0$ such that for all $x,y\in X$, there is an $A$-fibre-preserving $(K,A)$-quasi-isometry $f:X\rightarrow X$ such that 
$d(f(x),y)\leq  A$.
\end{defn}
Quasi-homogeneity of coarse bundles is easily seen to be a quasi-isometry invariant.
The following lemma follows from the  preceding definitions.
\begin{lem}\label{lem:coarse inverse fibre-preserving}
Let $p:X\rightarrow B$ be a quasi-homogeneous coarse bundle and $x_0\in X$ be a basepoint. Then there exist constants $K\geq 1$ and $A\geq 0$ such that for every $x\in X$:
\begin{enumerate}
\item There is an $A$-fibre-preserving $(K,A)$-quasi-isometry $f_x:X\rightarrow X$ such that $d(f_x(x),x_0)\leq A$;
\item $f_x$ has a coarse inverse $\overline{f_x}:X\rightarrow X$ which is also an $A$-fibre-preserving $(K,A)$-quasi-isometry such that $\overline{f_x}\circ f_x$ and $f_x\circ\overline{f_x}$ are $A$-close to the identity.
\end{enumerate}
\end{lem}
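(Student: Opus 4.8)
The statement to prove is Lemma \ref{lem:coarse inverse fibre-preserving}, which asserts that for a quasi-homogeneous coarse bundle $p : X \to B$, one can find uniform constants so that each point $x$ can be moved to the basepoint $x_0$ by a fibre-preserving quasi-isometry admitting a fibre-preserving coarse inverse, with all relevant constants uniform.

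\bigskip

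The plan is to unpack the definition of quasi-homogeneity directly and then verify that coarse inverses of fibre-preserving quasi-isometries are again fibre-preserving with controlled constants. First I would fix $x_0 \in X$ and, for each $x \in X$, apply the definition of quasi-homogeneity to the pair $(x, x_0)$: this produces an $A$-fibre-preserving $(K,A)$-quasi-isometry $f_x : X \to X$ with $d(f_x(x), x_0) \leq A$, where $K$ and $A$ are the constants from quasi-homogeneity and hence independent of $x$. This gives part (1) immediately. For part (2), I would take $\overline{f_x}$ to be any coarse inverse of $f_x$; since $f_x$ is a $(K,A)$-quasi-isometry, standard coarse geometry (as recalled in the preliminaries) gives that $\overline{f_x}$ can be chosen to be a $(K', A')$-quasi-isometry with $K', A'$ depending only on $K, A$, and with $\overline{f_x} \circ f_x$, $f_x \circ \overline{f_x}$ both $A''$-close to the identity for some $A''$ depending only on $K, A$.

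\bigskip

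The one genuinely substantive point — the main (though modest) obstacle — is checking that $\overline{f_x}$ is fibre-preserving with a uniform constant. Given a fibre $D_b$, I want $b'$ with $d_{\Haus}(\overline{f_x}(D_b), D_{b'}) \leq A'''$ for a uniform $A'''$. The natural choice: since $f_x$ is $A$-fibre-preserving, for each $c \in B$ there is $\sigma(c) \in B$ with $d_{\Haus}(f_x(D_c), D_{\sigma(c)}) \leq A$; given $b$, pick $b'$ with $d_{\Haus}(f_x(D_{b'}), D_b) \leq A$ — but this requires $\sigma$ to be coarsely surjective onto $B$, which follows because $f_x$ is coarsely surjective and bundle property (\ref{defn:coarse bundle 2})/(\ref{defn:coarse bundle 3}) control how fibres fill $X$; alternatively set $b' = \sigma'(b)$ where $\sigma'$ comes from the fact that $\overline f_x$ itself ought to be fibre-preserving. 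Concretely, I would argue: $\overline{f_x}(D_b)$ is within bounded Hausdorff distance of $\overline{f_x}(f_x(D_{b'}))$ once we know $f_x(D_{b'})$ is within $A + (\text{Lipschitz error})$ of $D_b$, and $\overline{f_x} \circ f_x$ is $A''$-close to $\id$, so $\overline{f_x}(f_x(D_{b'}))$ is within $A''$ of $D_{b'}$. The existence of a suitable $b'$ uses that $f_x$, being a quasi-isometry, sends the "fibering" of $X$ to a coarsely equivalent fibering: using (\ref{defn:coarse bundle 3}) for the domain and the fact that $f_x$ is coarsely Lipschitz and coarsely surjective, for every $b$ there is $b'$ with $d_{\Haus}(f_x(D_{b'}), D_b)$ bounded in terms of $K, A, E$ and the coarse bundle constants. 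All the bounds produced are in terms of $K, A, E$ and the coarse bundle data only, hence uniform; I would then take the maximum of all constants appearing to get the single $K$ and $A$ claimed in the statement.

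\bigskip

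In summary: the proof is a direct translation of the quasi-homogeneity hypothesis plus the elementary observation that coarse inverses and the fibre-preserving property both behave well with uniform constants under the bounded-geometry, coarse-bundle assumptions. No deep input beyond the definitions in this section and the basic coarse-geometry facts from Section \ref{sec:preliminaries} is needed; the only care required is bookkeeping to ensure that the choice of $b'$ for the coarse inverse is legitimate and that its Hausdorff-distance bound depends on nothing beyond the ambient data.
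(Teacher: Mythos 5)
Your proof is correct. The paper does not actually give a proof of this lemma; it only says that it ``follows from the preceding definitions,'' and what you have written is exactly the verification that this is so. Part~(1) is immediate from the definition of quasi-homogeneity applied to the pair $(x,x_0)$, and part~(2) is the standard fact that coarse inverses of $(K,A)$-quasi-isometries exist with uniformly controlled constants, plus the one genuinely nontrivial point: that the coarse inverse of a uniformly fibre-preserving quasi-isometry is itself uniformly fibre-preserving.

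Your argument for that last point is right, though stated somewhat tentatively in the middle (``pick $b'$ with $d_{\Haus}(f_x(D_{b'}),D_b)\le A$''---such a $b'$ with Hausdorff distance exactly $\leq A$ need not exist, only one with distance bounded by a uniform constant; you then correct course). To pin it down: given $b\in B$ and any $y\in D_b$, coarse surjectivity of $f_x$ produces $z\in X$ with $d(f_x(z),y)\leq A$; set $b'\coloneqq p(z)$ so $z\in D_{b'}$. Since $f_x$ is $A$-fibre-preserving, $d_{\Haus}(f_x(D_{b'}),D_{b''})\leq A$ for some $b''$, so $D_{b''}$ and $D_b$ contain points within distance $2A$ of each other. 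Applying the coarse Lipschitz property of $p$ and bundle condition~(3) of Definition~\ref{defn:coarse bundle} gives a uniform bound on $d_{\Haus}(D_{b''},D_b)$, hence on $d_{\Haus}(f_x(D_{b'}),D_b)$. Applying the coarse Lipschitz property of $\overline{f_x}$ and the closeness of $\overline{f_x}\circ f_x$ to the identity then bounds $d_{\Haus}(\overline{f_x}(D_b),D_{b'})$ uniformly. Every constant in sight depends only on the coarse bundle constants and $(K,A)$ from quasi-homogeneity, so taking a common maximum gives the single pair of constants claimed.
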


Arbitrary quasi-isometries between coarse bundles don't preserve the coarse bundle structure. However, if two coarse bundles are quasi-isometric via a fibre-preserving quasi-isometry, then it follows from Proposition \ref{prop:fibreqi induces qi between fibres} that the fibres are quasi-isometric. Moreover, the following lemma demonstrates that the base spaces are quasi-isometric:

\begin{lem}\label{lem:fibre-preserving qi imp qi between base spaces}
Let $p:X\rightarrow B$ and $p':Y\rightarrow B'$ be coarse bundles. Suppose $f:X\rightarrow Y$ is a $(K,A)$-quasi-isometry that is $A$-fibre-preserving.  Then $f$ induces a $(K',A')$-quasi-isometry $\hat f:B\rightarrow B'$ such that $d_\mathrm{Haus}(D_{\hat f (b)},f(D_b))\leq A$. The constants $K'$ and $A'$ depend only on $K$ and $A$.
\end{lem}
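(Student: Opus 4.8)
The plan is to build $\hat f$ by a direct choice and then verify the quasi-isometry estimates by passing through Hausdorff distances between fibres, both in $p$ and in $p'$. Fix constants $(K_1,A_1,E_1)$ and $(K_2,A_2,E_2)$ exhibiting $p:X\to B$ and $p':Y\to B'$ as coarse bundles in the sense of Definition \ref{defn:coarse bundle}. For each $b\in B$ the hypothesis that $f$ is $A$-fibre-preserving produces some $b'\in B'$ with $d_\mathrm{Haus}(f(D_b),D_{b'})\le A$; fix one such $b'$ and set $\hat f(b):=b'$. The asserted estimate $d_\mathrm{Haus}(D_{\hat f(b)},f(D_b))\le A$ then holds by construction, so the remaining content is that $\hat f$ is a quasi-isometry with constants controlled by $K$ and $A$ (the ambient bundle data being fixed throughout).

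The argument rests on two elementary observations. First, inside any coarse bundle $q:Z\to C$ with constants $(\lambda,\mu,\nu)$, the base distance and the Hausdorff distance between the corresponding fibres are comparable: for all $c_1,c_2\in C$,
\[\tfrac{1}{\lambda}\bigl(d_C(c_1,c_2)-\mu-2\nu\bigr)\ \le\ d_\mathrm{Haus}(D_{c_1},D_{c_2})\ \le\ \lambda\, d_C(c_1,c_2)+\mu.\]
The upper bound is axiom (\ref{defn:coarse bundle 3}); for the lower bound, note that fibres are non-empty (this is where $F\neq\emptyset$ enters, via axiom (\ref{defn:coarse bundle 2})), that $q$ sends $D_c$ into the $\nu$-neighbourhood of $c$, and that $q$ is $(\lambda,\mu)$-coarse Lipschitz by axiom (\ref{defn:coarse bundle 1}); so picking $z\in D_{c_1}$ and $z'\in D_{c_2}$ realising $d(z,D_{c_2})$ and projecting via $q$ bounds $d_C(c_1,c_2)$ by $\lambda\, d_\mathrm{Haus}(D_{c_1},D_{c_2})+\mu+2\nu$. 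Second, a $(K,A)$-quasi-isometry distorts Hausdorff distance between non-empty subsets affinely: for $S,T\subseteq X$,
\[\tfrac{1}{K}\, d_\mathrm{Haus}(S,T)-A\ \le\ d_\mathrm{Haus}(f(S),f(T))\ \le\ K\, d_\mathrm{Haus}(S,T)+A,\]
which follows from $f(N_r(T))\subseteq N_{Kr+A}(f(T))$ together with the analogous inclusion for a coarse inverse of $f$. By Remark \ref{rem:remetrise} we may assume all four spaces are discrete geodesic, so these Hausdorff infima are attained; alternatively one inserts an arbitrarily small slack throughout.

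Granting these, fix $b_1,b_2\in B$ and chain the estimates: $d_{B'}(\hat f(b_1),\hat f(b_2))$ is affinely comparable (first observation applied to $p'$) to $d_\mathrm{Haus}(D_{\hat f(b_1)},D_{\hat f(b_2)})$, which by the triangle inequality for $d_\mathrm{Haus}$ and the bound $d_\mathrm{Haus}(D_{\hat f(b_i)},f(D_{b_i}))\le A$ lies within $2A$ of $d_\mathrm{Haus}(f(D_{b_1}),f(D_{b_2}))$, which by the second observation is comparable to $d_\mathrm{Haus}(D_{b_1},D_{b_2})$, which by the first observation applied to $p$ is comparable to $d_B(b_1,b_2)$. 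Composing these affine comparisons gives a two-sided bound on $d_{B'}(\hat f(b_1),\hat f(b_2))$ in terms of $d_B(b_1,b_2)$ with multiplicative constant of order $K_1 K K_2$ and additive constant depending only on $K,A,K_i,A_i,E_i$. For coarse surjectivity of $\hat f$: given $b'\in B'$ pick $y\in D_{b'}$, use $A$-coarse surjectivity of $f$ to find $x\in X$ with $d(f(x),y)\le A$, and put $b:=p(x)$, so $x\in D_b$ and hence $f(x)\in f(D_b)\subseteq N_A(D_{\hat f(b)})$; thus $d(y,D_{\hat f(b)})\le 2A$, and projecting through the $(K_2,A_2)$-coarse Lipschitz map $p'$ (using that $D_{b'}$, $D_{\hat f(b)}$ project into $E_2$-neighbourhoods of $b'$, $\hat f(b)$) yields $d_{B'}(b',\hat f(b))\le 2K_2 A+A_2+2E_2$. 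Finally, any two choices in the definition of $\hat f$ differ by a bounded amount, so $\hat f$ is well-defined up to closeness.

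The only place requiring any care is the bookkeeping of constants, together with the observation that the fibres $D_b$ are non-empty; I do not expect a genuine geometric obstacle, since the whole proof reduces to the elementary behaviour of Hausdorff distance under coarse-Lipschitz maps, quasi-isometries, and the three bundle axioms.
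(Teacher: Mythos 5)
Your proof is correct and follows essentially the same route as the paper's: both arguments work by relating base distances to Hausdorff distances between fibres via the bundle axioms, and using that a quasi-isometry coarsely preserves Hausdorff distance, then chaining through $D_{\hat f(b_i)}$, $f(D_{b_i})$, $D_{b_i}$, and $b_i$. The only difference is organizational --- you extract the two comparison estimates as standalone observations and compose them symmetrically in both directions, whereas the paper mixes direct element-chasing (for the upper bound) with Hausdorff-distance chaining (for the lower bound); the constants and the coarse-surjectivity argument match.
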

\begin{proof}
By hypothesis, such a function $\hat f$ exists and is coarsely well-defined. We need only  show that $\hat f$ is a quasi-isometry. By increasing $K$ and $A$ if necessary, we may assume that $p:X\rightarrow B$ and $p':X'\rightarrow B'$ are $(K,A,E)$-coarse bundles. We fix $b,b'\in B$ and  pick $x\in D_b$ and $x'\in D_{b'}$ such that $d(x,x')\leq d_\Haus(D_b,D_{b'})\leq Kd_B(b,b')+A$. We also observe that \[d_B(b,b')\leq d_B(p(x),p(x'))+2E\leq Kd(x,x')+A+2E.\]

It follows from the definition of $\hat f$ that there is a $y\in D_{\hat f(b)}$ with  $d(y,f(x))\leq A$. Thus $d_{B'}(\hat f(b),p'(f(x)))\leq KA+A+E$. The same argument shows that $d_{B'}(\hat f(b'),p'(f(x')))\leq KA+A+E$, and so 
\begin{align*}
d_{B'}(\hat f(b),\hat f(b'))&\leq d(p'(f(x)),p'(f(x')))+2(KA+A+E)\\
&\leq Kd(f(x),f(x'))+2KA+3A+2E\\
&\leq K^2d(x,x')+ 3KA+3A+2E\\
&\leq K^3d_B(b,b')+ K^2A+ 3KA+3A+2E.\end{align*} 

We observe that $d_\Haus(D_b,D_{b'})\leq Kd_\Haus(f(D_b),f(D_{b'}))+KA$ and so
 \begin{align*}
	d_B(b,b')	&\leq	Kd_X(x,x')+A+2E\\
				&\leq	Kd_\Haus(D_b,D_{b'})+A+2E\\
				&\leq	K^2d_\Haus(f(D_b),f(D_{b'}))+K^2A+A+2E\\
				&\leq	K^2d_\Haus(D_{\widehat{f}(b)},D_{\widehat{f}(b')})+3K^2A+A+2E\\
				&\leq	K^3d_{B'}(\widehat{f}(b),\widehat{f}(b'))+4K^2A+A+2E
.\end{align*} 

Finally, for each $b'\in B'$ there is some $z'\in D_{b'}$ and $z\in X$ with $d(f(z),z')\leq A$. Let $b\coloneqq p(z)$. Since $z\in D_{b}$ and  $d_\Haus(D_{\hat f(b)},f(D_{b}))\leq A$,  there is some $y\in D_{\hat f(b)}$ with $d(y,f(z))\leq A$. Thus $d(z',y)\leq 2A$ and so \[d_{B'}(b',\hat f(b))\leq d_{B'}(p'(z'),p'(y))+2E\leq 2KA+A+2E.\] Thus $\hat f$ is indeed a quasi-isometry.
\end{proof}

\begin{defn}
Let $p:X\rightarrow B$ be a coarse bundle, let $W$ be a bounded geometry quasi-geodesic metric space,  and let $\lambda:W\rightarrow B$ be a coarse embedding. Then the \emph{pullback bundle} is a coarse bundle $p_{W}:X_{W}\rightarrow W$ equipped with a coarse embedding $\widehat \lambda :X_W\rightarrow X$ and a constant $R\geq 0$  such that for every $w\in W$, $d_\Haus(\widehat \lambda(D_w),D_{\lambda(w)})\leq R$. 

\end{defn}
\begin{prop}\label{prop:pullbacks exist}
Let $p:X\rightarrow B$,  $W$ and $\lambda$ be as above.  Then the pullback bundle $p_{W}:X_{W}\rightarrow W$ always exists and is unique up to quasi-isometry
\end{prop}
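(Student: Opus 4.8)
The plan is to construct $X_W$ directly as a subspace of $X$, and then verify it has the required coarse bundle structure. First I would fix the coarse bundle data for $p : X \to B$: constants $K\geq 1$, $A,E\geq 0$, distortion functions $\eta,\phi$, and fibres $D_b = p^{-1}(N_E(b))$. Fix also the distortion functions of $\lambda : W \to B$. The natural candidate is to set $X_W \coloneqq \bigcup_{w\in W} D_{\lambda(w)} = p^{-1}\big(N_E(\lambda(W))\big)$, equipped with the subspace metric from $X$, with $\widehat\lambda : X_W \hookrightarrow X$ the inclusion and $p_W : X_W \to W$ a map sending $x\in D_{\lambda(w)}$ to such a $w$ (chosen coarsely, using that $\lambda$ is a coarse embedding, hence coarsely injective up to bounded ambiguity). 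With $R$ taken to absorb $E$ and the additive constants, property $d_\Haus(\widehat\lambda(D_w), D_{\lambda(w)})\leq R$ is then essentially immediate, since $D_w$ for the pullback bundle is by construction $p_W^{-1}(N_{E'}(w))$, which is within bounded Hausdorff distance of $D_{\lambda(w)}$.

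Next I would check that $X_W$, with this $p_W$, satisfies the three axioms of Definition \ref{defn:coarse bundle}. Axiom (\ref{defn:coarse bundle 2}) is inherited: the section $s_{\lambda(w)} : F \to X$ has image in $D_{\lambda(w)} \subseteq X_W$, so it serves as a section $W \ni w \mapsto s^W_w : F \to X_W$ with the same distortion functions, after adjusting $A$. For axiom (\ref{defn:coarse bundle 3}), one uses axiom (\ref{defn:coarse bundle 3}) for the original bundle together with the fact that $\lambda$ is coarse Lipschitz (being a coarse embedding between quasi-geodesic spaces): $d_\Haus(D_w^W, D_{w'}^W)$ is controlled by $d_\Haus(D_{\lambda(w)}, D_{\lambda(w')}) \leq K d_B(\lambda(w),\lambda(w')) + A$, which is bounded in terms of $d_W(w,w')$ by the upper distortion function $\phi_\lambda$ of $\lambda$ — though to get a genuine \emph{coarse Lipschitz} bound (affine in $d_W(w,w')$) I would instead invoke that $W$ is quasi-geodesic, so $\lambda$ coarse-Lipschitz suffices, and remetrise $W$ via Remark \ref{rem:remetrise} to a discrete geodesic space if convenient. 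Axiom (\ref{defn:coarse bundle 1}), that $p_W$ is coarse Lipschitz, follows because if $x\in D_{\lambda(w)}$ and $x'\in D_{\lambda(w')}$ then $d_B(\lambda(w),\lambda(w')) \leq d_B(p(x),p(x')) + 2E \leq Kd(x,x') + A + 2E$, and then coarse injectivity of $\lambda$ (its lower distortion function $\eta_\lambda$) converts this into a bound on $d_W(w,w')$; here the one subtlety is that $\eta_\lambda$ need not be affine, so I would again use quasi-geodesicity of $W$ — equivalently Lemma \ref{lem:coarse embedding}, the intrinsic $t$-chain characterisation — to upgrade the bound to the required coarse Lipschitz (affine) form, after remetrising. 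Finally, $X_W$ is quasi-geodesic: any two points in a common fibre $D_{\lambda(w)}$ are joined by a chain inside that fibre (using the coarse embedding $s_{\lambda(w)}$ and quasi-geodesicity of $F$), and fibres over adjacent points of $W$ are at bounded Hausdorff distance, so one threads chains fibre-by-fibre along a chain in $W$; this is the $t$-chain criterion of Lemma \ref{lem:coarse embedding}(\ref{lem:coarse embedding2}).

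For uniqueness up to quasi-isometry, suppose $p_W : X_W \to W$ and $p'_W : X'_W \to W$ are both pullback bundles, with coarse embeddings $\widehat\lambda, \widehat\lambda'$ into $X$ and constants $R,R'$. Then $d_\Haus(\widehat\lambda(X_W), \widehat\lambda'(X'_W))<\infty$, since both are within bounded Hausdorff distance of $\bigcup_{w} D_{\lambda(w)}$. Now apply Proposition \ref{prop:fibreqi induces qi between fibres} with the quasi-isometry taken to be $\id_X$ (or more precisely, work inside the coarsely dense sub-bundle): one obtains a quasi-isometry $g : X_W \to X'_W$ with $\widehat\lambda' \circ g$ close to $\widehat\lambda$, whose constants depend only on $R,R'$ and the distortion functions of $\widehat\lambda,\widehat\lambda'$. (One should first remetrise $X_W, X'_W$ to discrete geodesic metric spaces via Remark \ref{rem:remetrise} so that Proposition \ref{prop:fibreqi induces qi between fibres} applies.) This $g$ is the desired quasi-isometry, and it is automatically fibre-preserving over $W$ because $\widehat\lambda(D_w)$ and $\widehat\lambda'(D'_w)$ both track $D_{\lambda(w)}$ up to bounded error.

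I expect the main obstacle to be the bookkeeping in axiom (\ref{defn:coarse bundle 1}): converting the non-affine lower distortion function $\eta_\lambda$ of the coarse embedding $\lambda$ into an honest coarse Lipschitz bound for $p_W$. The resolution is to exploit that $W$ is assumed quasi-geodesic — so $\lambda$, being a coarse embedding of a quasi-geodesic space, is in fact quasi-Lipschitz in both directions at large scales when $W$ is remetrised as a discrete geodesic space, which is exactly what Remark \ref{rem:remetrise} and Proposition \ref{prop:quasi geodesic} provide. Everything else is a routine, if somewhat lengthy, verification of the coarse bundle axioms for the explicit model $X_W = p^{-1}(N_E(\lambda(W)))$.
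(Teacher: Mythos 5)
Your proposal is correct and takes essentially the same route as the paper, which factors the argument into Lemma~\ref{lem:pullbacks exist} (showing $p^{-1}(N_E(\im\lambda))$ is coarsely embedded, by exactly the fibre-by-fibre $t$-chain threading you describe) and Lemma~\ref{lem:technical pullback} (verifying the three coarse bundle axioms, including the same trick of upgrading the non-affine lower distortion bound to an affine one via $t$-chains after remetrising). Your uniqueness argument via Proposition~\ref{prop:fibreqi induces qi between fibres} spells out what the paper leaves implicit in the uniqueness clause of Lemma~\ref{lem:coarse embedding}(\ref{lem:coarse embedding1}), but the content is the same.
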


To prove this, we require the following lemma. 
\begin{lem}\label{lem:pullbacks exist}
Suppose $p:X\rightarrow B$ is a $(K,A,E)$-coarse bundle and $\lambda:W\rightarrow B$ is a coarse embedding. Then there exists a quasi-geodesic metric space $X_W$ and a coarse embedding $\hat \lambda:X_W\rightarrow B$ such that \[d_\Haus(p^{-1}(N_{E}(\im(\lambda))),\im(\hat \lambda))<\infty.\]
\end{lem}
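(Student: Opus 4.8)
The plan is to take $X_W$ to be the preimage $p^{-1}(N_E(\im(\lambda)))$ with its induced subspace metric, remetrised to be quasi-geodesic, and let $\hat\lambda$ be the inclusion into $X$ followed by $p$; the Hausdorff-distance claim is then essentially immediate, and the only real content is verifying that $X_W$ is quasi-geodesic. By Lemma \ref{lem:coarse embedding}, this amounts to producing a threshold $t$ and a proper non-decreasing function $\eta$ so that any two points of $X_W$ can be joined by a $t$-chain inside $X_W$ of length at most $\eta(d(\cdot,\cdot))$; once that is done, Lemma \ref{lem:coarse embedding} identifies $X_W$ (up to quasi-isometry) with a quasi-geodesic space and $\hat\lambda$ with a coarse embedding.

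First I would fix the $(K,A,E)$-coarse bundle data and the distortion functions $\eta_0,\phi_0$ of the coarse embedding $\lambda:W\to B$, together with the constants $\eta_F,\phi_F$ of the fibre embeddings $s_b:F\to X$ from Definition \ref{defn:coarse bundle}(\ref{defn:coarse bundle 2}). Since $W$ is quasi-geodesic, so is its image $N_E(\im\lambda)$ in $B$, and since each fibre $D_b$ is coarsely equivalent to $F$ (hence quasi-geodesic), the idea is to build chains in $X_W$ by concatenating ``horizontal'' moves that follow a chain in the base along $\im\lambda$ with ``vertical'' moves inside individual fibres. Concretely, given $x,x'\in X_W$, set $b=p(x)$, $b'=p(x')$; these lie in $N_E(\im\lambda)$, so choose $w,w'\in W$ with $d_B(\lambda(w),b), d_B(\lambda(w'),b')\le E$, take a quasi-geodesic chain from $w$ to $w'$ in $W$, push it forward by $\lambda$ to a chain $b=b_0,\dots,b_k=b'$ in $N_{?}(\im\lambda)$ whose length is controlled by $\eta_0,\phi_0$ and $d_B(b,b')\le Kd(x,x')+A$. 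At each step, property (\ref{defn:coarse bundle 3}) gives $d_\Haus(D_{b_i},D_{b_{i+1}})\le Kd_B(b_i,b_{i+1})+A$, so I can pick points $x_i\in D_{b_i}$ consecutively within bounded distance of one another; I then interpolate between consecutive $x_i$ using a bounded-length $t$-chain inside the fibre $D_{b_i}$ (which stays in $X_W$ since $p(D_{b_i})\subseteq N_E(b_i)\subseteq N_{2E}(\im\lambda)$), and likewise connect $x$ to $x_0$ and $x'$ to $x_k$ within their own fibres $D_b,D_{b'}$. The total chain length is then bounded by a proper non-decreasing function of $d(x,x')$, built out of $K,A,E,\eta_0,\phi_0,\eta_F,\phi_F$, and the threshold $t$ can be taken uniform.

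The main obstacle I anticipate is the ``vertical interpolation'' step: I need the chains inside each fibre $D_{b_i}$ to have length bounded purely in terms of the fibre-distortion data and the distance between their endpoints in $X$, uniformly over all fibres. This works because each $D_{b_i}$ lies within Hausdorff distance $A$ of $\im(s_{b_i})$, and $s_{b_i}:F\to X$ is an $(\eta_F,\phi_F)$-coarse embedding with $F$ quasi-geodesic, so by Lemma \ref{lem:coarse embedding} applied to $\im(s_{b_i})\subseteq X$ there is a single $t$ and a single $\eta$ (depending only on $\eta_F,\phi_F$ and the quasi-geodesic constants of $F$, not on $b_i$) realising $t$-chains of controlled length inside each fibre; one must be slightly careful that moving from $D_{b_i}$ to $\im(s_{b_i})$ and back only costs a bounded additive error. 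Once the uniformity across fibres is nailed down, assembling the global chain and reading off $\eta$ and $t$ is routine, and Lemma \ref{lem:coarse embedding} delivers both the quasi-geodesic space $X_W$ and the coarse embedding $\hat\lambda$, with $d_\Haus(\im(\hat\lambda), \im(\lambda))\le E$ by construction, hence $d_\Haus(p^{-1}(N_E(\im\lambda)), \im(\hat\lambda))=0$ after the identification.
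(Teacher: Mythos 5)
Your proposal takes essentially the same approach as the paper: identify $X_W$ with $p^{-1}(N_E(\im(\lambda)))$, reduce to the chaining criterion of Lemma~\ref{lem:coarse embedding}, connect points by pushing a chain in (a neighbourhood of) $\im(\lambda)$ horizontally using Definition~\ref{defn:coarse bundle}(\ref{defn:coarse bundle 3}), and close up with a vertical chain in a single fibre, with uniformity across fibres supplied by the uniform distortion data of the $s_b$'s. The only differences are cosmetic: the paper lifts the base chain from $x$ directly to a $(Kt+A)$-chain ending at some $z\in D_{b_y}$ and then makes the one key estimate $d(z,y)\leq d(x,y)+M(Kt+A)$ to control the final vertical chain length (a bound you gesture at but leave implicit), and your extra step of interpolating to $t$-chains inside intermediate fibres is unnecessary since Lemma~\ref{lem:coarse embedding} permits any threshold, so the $(Kt+A)$-chain already suffices.
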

\begin{proof}
We recall that a subset of a metric space is said to be coarsely embedded if it satisfies one of the conditions of Lemma \ref{lem:coarse embedding}. By Lemma \ref{lem:coarse embedding},  it is sufficient to show that $Y\coloneqq p^{-1}(N_{E}(\im(\lambda)))$ is coarsely embedded.

 It follows from Lemma \ref{lem:coarse embedding} and Definition \ref{defn:coarse bundle} that there exists a $t>0$ and a proper non-decreasing function $\eta:\bR_{\geq 0} \rightarrow \bR_{\geq 0}$ such that for each $b\in B$, every $x,y\in D_b$ can be joined by a $t$-chain in $D_b$ of length at most $\eta(d(x,y))$.
Since $\im(\lambda)$  is coarsely embedded, we can take $t$ and $\eta$ sufficiently large so that every $x,y\in \im(\lambda)$ can be joined by a $t$-chain in $\im(\lambda)$ of length at most $\eta(d(x,y))$.

Suppose that $p:X\rightarrow B$ is a $(K,A,E)$-coarse bundle. Pick $x,y\in Y$ and $b_x,b_y\in \im(\lambda)$ such that $d(b_x,p(x)),d(b_y,p(y))\leq E$. Thus $d_B(b_x,b_y)\leq Kd(x,y)+A+2E$, and so $b_x$ and $b_y$ can be joined by a $t$-chain in $\im(\lambda)$ of length at most $\eta(Kd(x,y)+A+2E) \eqqcolon M$.  Notice that if $d(b,b')\leq t$, then $d_\Haus(D_b,D_{b'})\leq Kt+A$. We can thus ``lift'' this $t$-chain in $\im(\lambda)$ to a $(Kt+A)$-chain in $Y$ from $x$ to some $z\in D_{b_y}$ of length at most $M$.

Now observe that $z,y\in D_{b_y}$ and that $d(z,y)\leq d(x,y)+M(Kt+A)$. Therefore $z$ and $y$ can be joined by a $t$-chain in $D_{b_y}\subseteq Y$ of length at most $\eta(d(x,y)+M(Kt+A))$. Thus $x$ and $y$ can be joined by a $(Kt+A)$-chain in $Y$ of length at most \[N\coloneqq M+ \eta(d(x,y)+M(Kt+A)).\] The result now follows since $N$ can be expressed as a proper non-decreasing function of $d(x,y)$.
\end{proof}

Proposition \ref{prop:pullbacks exist} follows from Lemma \ref{lem:pullbacks exist} and the following lemma:
\begin{lem}\label{lem:technical pullback}
Let $p:X\rightarrow B$ be a $(K,A,E)$-coarse bundle, let $W$ and $X_W$ be quasi-geodesic metric spaces, and let $\lambda:W\rightarrow B$ and $\widehat \lambda:X_W\rightarrow X$ be coarse embeddings such that \[d_\Haus(\im(\widehat{\lambda}),p^{-1}(N_E(\im(\lambda)))<\infty.\] Then there are constants $K'\geq 1$, $A',E'\geq 0$ and a $(K',A',E')$-coarse bundle $p_W:X_W\rightarrow W$   such that \[d_\Haus(\widehat \lambda(D_w),D_{\lambda(w)})\leq A'\] for every $w\in W$.
\end{lem}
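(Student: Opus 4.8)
The plan is to build $p_W$ as a ``coarse lift'' of $p$ through $\widehat\lambda$ and $\lambda$, and then to check the three axioms of Definition \ref{defn:coarse bundle} one at a time; the only point requiring care will be upgrading two coarse-Lipschitz-type estimates from \emph{proper} bounds to genuinely \emph{affine} ones, which is exactly where the quasi-geodesicity of $W$ and $X_W$ is used.

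Fix distortion functions $\eta_\lambda,\phi_\lambda$ for $\lambda$ and $\eta_{\widehat\lambda},\phi_{\widehat\lambda}$ for $\widehat\lambda$, and set $C_0:=d_\Haus(\im\widehat\lambda,p^{-1}(N_E(\im\lambda)))$. For $x\in X_W$ the point $p(\widehat\lambda(x))$ lies within $KC_0+A+E$ of $\im\lambda$ (push $\widehat\lambda(x)$ into $p^{-1}(N_E(\im\lambda))$ and use axiom (1) of $p$), so I can choose $w\in W$ with $d_B(\lambda(w),p(\widehat\lambda(x)))\leq C_1$ for a constant $C_1$ independent of $x$, and declare $p_W(x):=w$. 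Since $\lambda$ is a coarse embedding, any two admissible choices of $w$ are $\widetilde{\eta_\lambda}(2C_1)$-close, so $p_W$ is well defined up to bounded error. I would also record that axiom (3) of $p$ gives $p^{-1}(N_r(b))\subseteq N_{Kr+A}(D_b)$ for all $b\in B$, $r\geq 0$.

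Next comes the fibre estimate $d_\Haus(\widehat\lambda(D_w),D_{\lambda(w)})\leq A'$, where $D_w:=p_W^{-1}(N_{E'}(w))$. For the inclusion $D_{\lambda(w)}\subseteq N_{C_0}(\widehat\lambda(D_w))$: a point of $D_{\lambda(w)}=p^{-1}(N_E(\lambda(w)))$ lies in $p^{-1}(N_E(\im\lambda))$, hence within $C_0$ of some $\widehat\lambda(x)$, and unwinding the definitions forces $d_W(p_W(x),w)\leq\widetilde{\eta_\lambda}(KC_0+A+E+C_1)$; so I take $E'$ to be this quantity, which also shows $D_w\neq\emptyset$. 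For the reverse inclusion: if $x\in D_w$ then $d_B(p(\widehat\lambda(x)),\lambda(w))\leq C_1+\phi_\lambda(E')$, so $\widehat\lambda(x)\in p^{-1}(N_{C_1+\phi_\lambda(E')}(\lambda(w)))\subseteq N_{K(C_1+\phi_\lambda(E'))+A}(D_{\lambda(w)})$ by the recorded fact. The same estimates identify the fibre of $p_W$ with $F$: composing $s_{\lambda(w)}$ with a coarsely well-defined ``push'' $D_{\lambda(w)}\to D_w$ (available since $D_{\lambda(w)}\subseteq N_{C_0}(\widehat\lambda(D_w))$) produces a coarse embedding $s_w:F\to X_W$ with $\im s_w\subseteq D_w\subseteq N_{A'}(\im s_w)$ and distortion functions depending only on those of $s_{\lambda(w)}$, $\widehat\lambda$ and the constants above.

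Finally I must check that $p_W$ is coarse Lipschitz (axiom (1)) and that $d_\Haus(D_w,D_{w'})$ is affine in $d_W(w,w')$ (axiom (3)). The naive bounds -- via $\widetilde{\eta_\lambda}$ for the first, and via $\widetilde{\eta_{\widehat\lambda}}$ plus the triangle inequality for $d_\Haus$ together with the fibre estimate and axiom (3) of $p$ for the second -- only control the left-hand sides by proper non-decreasing functions, not affine ones. This is where I invoke Proposition \ref{prop:quasi geodesic}: joining $x,x'\in X_W$ by a $t$-chain of length at most $K_0 d_{X_W}(x,x')+A_0$, applying the naive estimate on each step (where the distance is $\leq t$) to bound $d_W(p_W(x_{i-1}),p_W(x_i))$ by a uniform constant, and summing, yields an affine bound for axiom (1); the symmetric argument, chaining $w,w'\in W$ by a $t$-chain, yields the affine bound for axiom (3). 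Taking $K',A',E'$ to be the maxima of the finitely many constants produced (and noting that $X_W$ may be assumed to have bounded geometry, being coarsely equivalent to the bounded-geometry subspace $p^{-1}(N_E(\im\lambda))\subseteq X$) finishes the proof. The main obstacle is precisely this ``linearisation'': coarse embeddings carry no affine control, so one genuinely needs the quasi-geodesic structure of both the base and the total space to turn the estimates into the affine form demanded by Definition \ref{defn:coarse bundle}.
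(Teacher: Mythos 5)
Your proof is correct and follows essentially the same route as the paper's: both define $p_W$ so that $\lambda\circ p_W$ coarsely agrees with $p\circ\widehat\lambda$, both upgrade the a priori proper bounds to the affine bounds required in Definition~\ref{defn:coarse bundle} by chaining small steps via the quasi-geodesicity of $X_W$ (resp.\ $W$), and both construct $s_w$ by coarsely lifting $s_{\lambda(w)}$ through $\widehat\lambda$. The only difference is that the paper dispatches axiom~(3) with ``follows easily from the definitions,'' whereas you spell out the same $t$-chain linearisation for it explicitly.
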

\begin{proof}
We may choose $K\geq 1$, $t,A,E\geq 0$ and proper non-decreasing functions $\eta$ and $\phi$ such that:
\begin{itemize}
\item $\lambda:W\rightarrow B$ is an $(\eta,\phi)$-coarse embedding.
\item $p:X\rightarrow B$ is a $(K,A,E)$-coarse bundle.
\item Every $x,y\in W$ can be joined by a $t$-chain of length at most $Kd_W(x,y)+A$.
\item There is a quasi-geodesic metric space $X_W$ and an $(\eta,\phi)$-coarse embedding $\hat \lambda:X_W\rightarrow X$ such that $d_\Haus(p^{-1}(N_{E}(\im(\lambda))),\im(\hat \lambda))\leq A$. The existence of $X_W$ and $\hat\lambda$ follows from Lemma \ref{lem:pullbacks exist}.
\item Every $x,y\in X_W$ can be joined by a $t$-chain of length at most $Kd_{X_W}(x,y)+A$.
\end{itemize} 

We  define $p_W:X_W\rightarrow W$ such that  for every $x\in X_W$, 
\[d((\lambda\circ p_W)(x),(p\circ \widehat \lambda)(x))\leq KA+A+E.\] The existence of such a $p_W$ follows from our choice of $\widehat \lambda$. We show that $p_W:X_W\rightarrow W$ is a coarse pullback of $\lambda$.

We define $E'\coloneqq \widetilde \eta(2KA+2A+2E)$ and claim that $p_W:X_W\rightarrow W$ is a $(K',A',E')$-coarse bundle for some $K'\geq 1$ and $A'\geq 0$ to be determined.
 We first show that $p_W$ is coarse Lipschitz. For any  $x,y\in X_W$, we have  
\begin{align*}
\eta\big(d_W(p_W(x),p_W(y))\big)&\leq d_B\big((\lambda\circ p_W)(x),(\lambda\circ p_W)(y)\big)\\
&\leq d_B\big((p\circ \widehat \lambda)(x),(p\circ \widehat \lambda)(y)\big)+2(KA+A+E)\\
&\leq Kd_X(\widehat\lambda(x),\widehat \lambda(y))+2KA+3A+2E\\
&\leq K\phi(d_{X_W}(x,y))+2KA+3A+2E.
\end{align*}
Thus there is a number $M$ such that $d_W(p_W(x),p_W(y))\leq M$ whenever $d_{X_W}(x,y)\leq t$. Recall that any $x,y\in X_W$ can be joined by a $t$-chain of length at most $Kd_{X_W}(x,y)+A$. Thus  $p_W(x)$ and $p_W(y)$ can be joined by an $M$-chain of length $Kd_{X_W}(x,y)+A$, and so  $d_W(p_W(x),p_W(y))\leq M(Kd_{X_W}(x,y)+A)$ for any $x,y\in X_W$, verifying that $p_W$ is coarse Lipschitz.

As in Definition \ref{defn:coarse bundle}, we define the fibres of $p_W:X_W\rightarrow W$ to be $D^W_w\coloneqq p_W^{-1}(N_{E'}(w))$ for each $w\in W$.  We pick $w\in W$ and observe that $\im(s_{\lambda(w)})\subseteq D_{\lambda(w)}\subseteq p^{-1}(N_E(\im(\lambda)))$, where $s_{\lambda(w)}$ is in Definition \ref{defn:coarse bundle}. We thus define a coarse embedding $s_w:F\rightarrow X_W$ such that \[d((\widehat \lambda\circ s_w)(f),s_{\lambda(w)}(f))\leq A\] for all $f\in F$. It is straightforward to verify  that for some $A'$ sufficiently large,
 \begin{align*}\im(s_w)\subseteq D_w^W\subseteq N_{A'}(\im(s_w))\end{align*} for all $w\in W$.
The remainder of the proof follows easily from the definitions.
\end{proof}

Our main examples of coarse bundles are finitely generated groups containing almost normal subgroups. Many  basic facts concerning almost normal subgroups are proven in \cite{connermihalik2014}. For instance:
\begin{prop}\label{prop:alnorm is comm inv}
Suppose $G$ is a group and $H\alnorm G$. If $H'\leq G$ is commensurable to $H$, then $H'\alnorm G$.
\end{prop}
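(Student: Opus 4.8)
The plan is to unwind the definitions and reduce everything to the fact that commensurability of subgroups, \emph{within a fixed ambient group}, is transitive, together with the elementary observation that conjugation by a fixed element of $G$ preserves both the relation of commensurability and the index of one subgroup in another. Concretely, let $H \alnorm G$ and suppose $H'\leq G$ is commensurable to $H$, meaning $H\cap H'$ has finite index in both $H$ and $H'$.

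First I would record the conjugation lemma: for any $g\in G$ and any subgroups $A,B\leq G$, the map $x\mapsto gxg^{-1}$ is an automorphism of $G$ carrying $A\cap B$ onto $gAg^{-1}\cap gBg^{-1}$ and restricting to a bijection $A\to gAg^{-1}$; hence $[A:A\cap B]=[gAg^{-1}:gAg^{-1}\cap gBg^{-1}]$, and in particular $A$ is commensurable to $B$ if and only if $gAg^{-1}$ is commensurable to $gBg^{-1}$. Applying this with $A=H$, $B=H'$ shows that $gHg^{-1}$ and $gH'g^{-1}$ are commensurable for every $g\in G$.

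Next, fix an arbitrary $g\in G$; I want to show $H'$ and $gH'g^{-1}$ are commensurable. We have the chain: $H'$ is commensurable to $H$ (hypothesis); $H$ is commensurable to $gHg^{-1}$ (since $H\alnorm G$); $gHg^{-1}$ is commensurable to $gH'g^{-1}$ (by the conjugation lemma applied to the hypothesis). Transitivity of commensurability among subgroups of the fixed ambient group $G$ then yields that $H'$ is commensurable to $gH'g^{-1}$. Since $g$ was arbitrary, $\Comm_G(H')=G$, i.e. $H'\alnorm G$.

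The only point requiring a word of care — and the closest thing to an obstacle — is transitivity of commensurability: if $A\cap B$ has finite index in $A$ and $B$, and $B\cap C$ has finite index in $B$ and $C$, then $A\cap C$ has finite index in $A$ and $C$. This follows because $A\cap B\cap C$ has finite index in $A\cap B$ (as $A\cap B\cap C = (A\cap B)\cap(B\cap C)$ and one intersects the finite-index subgroup $B\cap C$ of $B$ with $A\cap B$), hence finite index in $A$, hence $A\cap C\supseteq A\cap B\cap C$ has finite index in $A$; symmetrically for $C$. This is standard and I would state it as a one-line remark rather than belabour it. Everything else is purely formal bookkeeping with the definition of $\alnorm$.
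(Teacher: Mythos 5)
Your proof is correct and complete. It proves transitivity of commensurability, observes that commensurability is preserved by conjugation, and then chains $H' \sim H \sim gHg^{-1} \sim gH'g^{-1}$; every step checks out, including the one-line index bound in the transitivity argument. The paper's own proof is terser: it reduces to the two special cases where $H$ is a finite-index subgroup of $H'$ or vice versa (by passing through $K = H\cap H'$) and then cites Lemma 3.10 of Conner--Mihalik for those cases. Mathematically the content is the same --- both arguments rest on the facts that commensurability is an equivalence relation and is conjugation-invariant --- but yours is self-contained where the paper outsources the finite-index case to a reference, and you avoid the explicit reduction by packaging everything into a single transitivity chain. Either packaging is fine; yours has the minor advantage of not needing to verify that the reduction to the finite-index case is itself legitimate (which implicitly uses transitivity anyway).
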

\begin{proof}
It is sufficient to prove the lemma in the case that $H$ is a finite index subgroup of $H'$ or vice-versa, which is proved in Lemma 3.10 of \cite{connermihalik2014}.
\end{proof}

 We  frequently make use of the following characterisation of almost normal subgroups, also shown in \cite{vavrichek2013commensurizer} and  \cite{connermihalik2014}.
\begin{prop}\label{prop:coset characterisation}
Let $G$ be a finitely generated group with $H\leq G$. The following are equivalent:
\begin{enumerate}
\item $H\alnorm G$;
\item for every $g\in G$, the coset $gH$ is at finite Hausdorff distance from $H$.
\end{enumerate}
\end{prop}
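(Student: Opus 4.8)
The plan is to prove the two implications separately, using the word metric on $G$ with respect to a fixed finite generating set $S$.

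For (1) $\implies$ (2), suppose $H \alnorm G$ and fix $g\in G$. Write $g$ as a word $s_1\cdots s_n$ in the generators $s_i\in S^{\pm 1}$. The key observation is that for each single generator $s\in S^{\pm 1}$, the coset $sH$ is at finite Hausdorff distance from $H$: indeed $sH = sHs^{-1}\cdot s$, and since $sHs^{-1}$ and $H$ are commensurable, $H\cap sHs^{-1}$ has finite index in $sHs^{-1}$, so $sHs^{-1}\subseteq N_{R_s}(H)$ for some $R_s\geq 0$ (pick coset representatives for $(H\cap sHs^{-1})\backslash sHs^{-1}$ and let $R_s$ bound their word lengths); translating by $s$ gives $sH\subseteq N_{R_s+1}(H)$, and symmetrically $H\subseteq N_{R_s+1}(sH)$ after possibly enlarging $R_s$ (using that $H$ and $s^{-1}Hs$ are commensurable). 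Now iterate: if $\phi_k := s_1\cdots s_k$, then left-multiplication by $\phi_{k-1}$ is an isometry of $G$ carrying the pair $(s_k H, H)$ to $(\phi_k H, \phi_{k-1}H)$, so $d_{\Haus}(\phi_k H,\phi_{k-1}H)=d_{\Haus}(s_kH,H)\leq R_{s_k}+1$. By the triangle inequality for Hausdorff distance, $d_{\Haus}(gH,H)=d_{\Haus}(\phi_n H,\phi_0 H)\leq \sum_{k=1}^n (R_{s_k}+1) \leq n\cdot(\max_{s\in S^{\pm 1}}R_s + 1) < \infty$.

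For (2) $\implies$ (1), suppose $gH$ is at finite Hausdorff distance from $H$ for every $g\in G$; I must show $H$ and $gHg^{-1}$ are commensurable, i.e. $H\cap gHg^{-1}$ has finite index in each. Note $gHg^{-1} = (gH)g^{-1}$, and right-multiplication by $g^{-1}$ is a bijection $G\to G$ which, while not an isometry of the word metric in general, does distort distances in a controlled way (it is a quasi-isometry, or more simply: $d(xg^{-1},yg^{-1})$ and $d(x,y)$ differ by at most $2|g|_S$). Hence $gHg^{-1}$ is at finite Hausdorff distance from $Hg^{-1}$, and since $Hg^{-1}$ is a right coset of $H$ it lies in a uniform neighbourhood of... actually the cleanest route: $d_{\Haus}(gHg^{-1}, H) \leq d_{\Haus}(gHg^{-1}, gH \cdot g^{-1})$-type estimates show $gHg^{-1}\subseteq N_R(H)$ and $H\subseteq N_R(gHg^{-1})$ for some finite $R$, combining the hypothesis applied to $g$ and to $g^{-1}$ together with the bounded distortion of right translation. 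Then I invoke a counting argument: $H\cap gHg^{-1}\subseteq gHg^{-1}$ with $gHg^{-1}\subseteq N_R(H)$; the cosets of $H\cap gHg^{-1}$ in $gHg^{-1}$ are $R$-separated translates (since distinct cosets, being translates of the subgroup $H$ intersected appropriately, are pairwise at controlled distance), and bounded geometry of $G$ forces finitely many such cosets within a bounded neighbourhood of $H$ — more precisely, each coset of $H\cap gHg^{-1}$ inside $gHg^{-1}$ meets the ball of radius $R$ around some fixed basepoint of $H$, and there are boundedly many points there, giving finite index. Symmetrically $H\cap gHg^{-1}$ has finite index in $H$.

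The main obstacle is the counting step in (2) $\implies$ (1): one must argue carefully that ``$gHg^{-1}$ lies in a bounded neighbourhood of $H$'' upgrades to ``$H\cap gHg^{-1}$ has finite index in $gHg^{-1}$''. The trick is that the cosets of $K:=H\cap gHg^{-1}$ in the larger group $L:=gHg^{-1}$ partition $L$, and if two elements $\ell_1,\ell_2\in L$ are close in $G$ then $\ell_1^{-1}\ell_2$ is a short element of $L$ that need not lie in $K$ — so instead one should count differently: since $L\subseteq N_R(H)$, for each $\ell\in L$ there is $h_\ell\in H$ with $d(\ell,h_\ell)\leq R$, hence $\ell h_\ell^{-1}$ has length $\leq R$ in $G$; but $\ell h_\ell^{-1}$ need not lie in $L$ either. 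The correct and standard resolution (this is exactly Lemma~3.10 of \cite{connermihalik2014} in the background) is: $L\cap N_R(1_G)$ is finite by bounded geometry, and one shows the double-coset or index computation reduces to finiteness of $L\cap N_{2R}(1_G)$ via choosing coset representatives of minimal length — I would structure this final paragraph to mirror the cited lemma rather than reprove it from scratch, so the obstacle is really just citing/reproducing that packing argument cleanly.
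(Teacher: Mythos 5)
Your proof is correct, and it takes a genuinely different route from the paper. The paper's proof is three sentences long: it cites \cite[Corollary 2.14]{mosher2011quasiactions}, which says that two subgroups of $G$ are commensurable if and only if they are at finite Hausdorff distance, observes that $gH$ and $gHg^{-1}$ are always at finite Hausdorff distance, and combines these via the triangle inequality for Hausdorff distance. Both directions of the proposition then fall out simultaneously from the cited equivalence. You instead prove each implication directly. Your $(1)\Rightarrow(2)$ argument is a fully self-contained, generator-by-generator iteration using left-invariance of the word metric, which effectively reproves the relevant direction of the Mosher--Sageev--Whyte result; it is longer but requires no black box. Your $(2)\Rightarrow(1)$ argument correctly reduces to a packing/counting lemma (finite Hausdorff distance between subgroups implies commensurability), which is the other direction of the cited MSW corollary, and you appropriately defer to \cite[Lemma 3.10]{connermihalik2014} rather than rederiving it --- a citation the paper itself also uses in Proposition \ref{prop:alnorm is comm inv}, so this is reasonable. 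The trade-off is clear: the paper's version is crisp but outsources all the content to a single citation, while yours makes one direction elementary at the cost of length and still ends up citing essentially the same external lemma for the other. One stylistic note on $(2)\Rightarrow(1)$: the paragraph wanders a bit (the estimate $d_{\Haus}(gHg^{-1}, gH\cdot g^{-1})$ is vacuously zero since these are the same set), but the chain you eventually assemble --- bound $d_{\Haus}(gHg^{-1}, Hg^{-1})$ via the bounded distortion of right translation, bound $d_{\Haus}(Hg^{-1}, H)\leq |g|$, then invoke the packing lemma --- is sound. If you were to write this up cleanly, leading with that chain and stating the packing lemma as a named claim would tighten it considerably.
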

\begin{proof}
It is shown in \cite[Corollary 2.14]{mosher2011quasiactions} that two subgroups $H$ and $K$ of $G$ are commensurable if and only if they are at finite Hausdorff distance. It is easy to see that for all $g\in G$, $gHg^{-1}$ and $gH$ are at finite Hausdorff distance. Thus $gH$ and $H$ are at finite Hausdorff distance if and only if $H$ and $gHg^{-1}$ are commensurable.
\end{proof}
This characterisation allows us to define the quotient space $G/H$.
\begin{defn}
Given a finitely generated group $G$ and an almost normal subgroup $H\alnorm G$, the \emph{quotient space} $G/H$ is the set of left cosets of $H$, equipped with the metric $d_{G/H}(gH,g'H):=d_\mathrm{Haus}(gH,g'H)$.
\end{defn}
The natural left action of $G$ on $G/H$ is an isometric action. 

\begin{prop}\label{prop:almost normal subgroups coarse bundles}
Let $G$ be a finitely generated group with $H\alnorm G$ finitely generated and let $p:G\rightarrow G/H$ be the \emph{quotient map} $g\mapsto gH$. Then $G$ is a quasi-homogeneous coarse bundle over $G/H$ with fibre $H$.
\end{prop}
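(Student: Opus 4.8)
The goal is to exhibit $p:G\to G/H$, $g\mapsto gH$, as a quasi-homogeneous coarse bundle over $G/H$ with fibre $H$. I would verify the three clauses of Definition \ref{defn:coarse bundle} directly with $E=0$ (so that the fibre $D_{gH}$ over a point $gH\in G/H$ is literally the preimage $p^{-1}(gH)=gH$, the coset itself), and then establish quasi-homogeneity using the left multiplication action. Throughout, $G$ is equipped with a word metric with respect to a finite generating set (or rather, by Remark \ref{rem:remetrise}, its remetrisation as a discrete geodesic space), $H$ with its own word metric, and $G/H$ with the Hausdorff metric $d_{G/H}(gH,g'H)=d_\Haus(gH,g'H)$ as in the definition of the quotient space. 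Since $H$ is finitely generated, Lemma \ref{lem:sbgpis coarsely embedding} gives that the inclusion $(H,d_H)\hookrightarrow(G,d_G)$ is a coarse embedding, with some distortion functions $(\eta,\phi)$; this is the coarse embedding $s_e:H\to G$ of the identity fibre.

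\textbf{Step 1: $p$ is coarsely Lipschitz.} For $g,g'\in G$ we have $d_{G/H}(gH,g'H)=d_\Haus(gH,g'H)\le d_G(g,g')$, since $g\in gH$, $g'\in g'H$, and left translation by $g'g^{-1}$ carries $gH$ onto $g'H$ moving each point a distance at most $\ldots$ — more carefully, for any $gh\in gH$ the point $g'h\in g'H$ satisfies $d_G(gh,g'h)=d_G(g,g')$ by left-invariance, so $gH\subseteq N_{d_G(g,g')}(g'H)$ and symmetrically, giving $d_{G/H}(gH,g'H)\le d_G(g,g')$. Thus $p$ is $(1,0)$-coarse Lipschitz, verifying \ref{defn:coarse bundle 1}.

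\textbf{Step 2: fibres and the coarse embedding $s_b$.} Fix $b=gH\in G/H$. Then $D_b=p^{-1}(\{b\})=gH$. The map $s_b:H\to G$, $h\mapsto gh$, is left-translation-by-$g$ of the inclusion $H\hookrightarrow G$, hence (by left-invariance of $d_G$) an $(\eta,\phi)$-coarse embedding with the \emph{same} distortion functions as the inclusion, and $\im(s_b)=gH=D_b$, so trivially $\im(s_b)\subseteq D_b\subseteq N_0(\im(s_b))$. This verifies \ref{defn:coarse bundle 2}. For \ref{defn:coarse bundle 3}, the computation in Step 1 already shows $d_\Haus(D_b,D_{b'})=d_\Haus(gH,g'H)=d_{G/H}(b,b')\le 1\cdot d_{G/H}(b,b')+0$, which is the required inequality (and in fact an equality by definition of the metric on $G/H$). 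Finally one should note $G/H$ is a bounded geometry quasi-geodesic metric space and $H$ likewise: bounded geometry of $G/H$ follows since balls in $G/H$ pull back under the isometric transitive $G$-action to a controlled number of cosets, and quasi-geodesicity follows from Proposition \ref{prop:quasi geodesic} applied using that $G$ is itself quasi-geodesic and $p$ is a coarse Lipschitz surjection with coarsely connected fibres — alternatively one invokes that $G/H$ with this metric is, up to quasi-isometry, the coset graph, which is connected since $G$ is finitely generated. These facts are standard and I would state them with a one-line justification.

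\textbf{Step 3: quasi-homogeneity.} Given $x,y\in G$, set $\gamma=yx^{-1}$ and let $f=f_\gamma:G\to G$ be left multiplication $f(z)=\gamma z$. Then $f$ is an isometry of $(G,d_G)$ (by left-invariance), in particular a $(1,0)$-quasi-isometry, and $f(x)=\gamma x=y$, so $d_G(f(x),y)=0$. Moreover $f$ is $0$-fibre-preserving: for any $b=gH$, $f(D_b)=f(gH)=\gamma gH=D_{\gamma gH}=D_{b'}$ with $b'=\gamma gH$, so $d_\Haus(f(D_b),D_{b'})=0$. Hence $p:G\to G/H$ is quasi-homogeneous with constants $K=1$, $A=0$. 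This completes the verification.

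\textbf{Main obstacle.} The only genuinely non-formal point is confirming that $(G/H,d_{G/H})$ is a bounded geometry \emph{quasi-geodesic} metric space, which is a standing requirement in Definition \ref{defn:coarse bundle} for all three spaces $X$, $F$, $B$; everything else is bookkeeping with left-invariance of the word metric and the definition of the Hausdorff metric on cosets. I expect this to be dealt with by the reference to Section \ref{sec:coarse bundles}'s discussion of the quotient space metric (noting it is, up to quasi-isometry, the proper metric on $G/H$ making the $G$-action isometric, explicitly realised by a coset graph that is connected because $G$ is finitely generated, and of bounded geometry because $H$ is a fixed subgroup so each metric ball meets boundedly many cosets), rather than reproved from scratch; alternatively the needed quasi-geodesicity of $G/H$ follows formally from Lemma \ref{lem:coarse embedding}(\ref{lem:coarse embedding2}) applied to the image of $p$ in $G$, since $t$-chains in $G$ push forward to $t$-chains in $G/H$ of the same length.
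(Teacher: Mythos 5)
Your Steps 2 and 3 and the overall architecture (take $E=0$ so $D_{gH}=gH$ literally, use left translation for quasi-homogeneity) match the paper's proof, but Step 1 contains a genuine error, and it is not merely a cosmetic one — it erases the one place where almost normality enters the Lipschitz estimate.

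You assert that for $gh\in gH$ the point $g'h\in g'H$ satisfies $d_G(gh,g'h)=d_G(g,g')$ ``by left-invariance,'' and conclude $d_{G/H}(gH,g'H)\le d_G(g,g')$, i.e.\ that $p$ is $(1,0)$-coarse Lipschitz. But $gh$ and $g'h$ differ by right multiplication by $h$, and the word metric is left-invariant, not right-invariant: $d_G(gh,g'h)=|h^{-1}(g^{-1}g')h|_S$, which has no reason to equal $|g^{-1}g'|_S$. Concretely, take $G=BS(1,2)=\langle a,t\mid tat^{-1}=a^2\rangle$, $H=\langle a\rangle$, $S=\{a,t\}$. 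Then $d_G(e,t)=1$, but $d(a,tH)=\min_n|a^{2n-1}t|_S=2$, so $d_\Haus(H,tH)\ge 2>1$. Your claimed inequality fails, and indeed the pair $(gh,g'h)=(a,ta)$ that your argument produces already has $d_G(a,ta)=|at|_S=2\ne 1$.

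The correct bound, as in the paper, is $d_\Haus(gH,kH)\le K\,d_G(g,k)$ with $K:=\max_{s\in S}d_\Haus(H,sH)$. The proof writes $g^{-1}k=s_1\cdots s_n$ with $n=d_G(g,k)$ and observes that the coset chain $gH,\,gs_1H,\ldots,\,kH$ satisfies $d_\Haus(gs_1\cdots s_iH,\,gs_1\cdots s_is_{i+1}H)=d_\Haus(H,s_{i+1}H)\le K$, using left-invariance correctly (left translation is an isometry of $G$, hence preserves Hausdorff distance between subsets). The finiteness of $K$ is exactly where $H\alnorm G$ is used, via Proposition \ref{prop:coset characterisation}; your Step 1, if it were correct, would have given the coarse Lipschitz bound without appealing to almost normality at all, which should have been a warning sign. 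Since your justification for quasi-geodesicity of $G/H$ also leans on this Lipschitz estimate, it should be restated with the constant $K$, but once that is fixed the rest of the chain argument, and your Steps 2 and 3, go through as written.
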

\begin{proof}
We  fix a generating set $S$ of $G$ and let $d$ be the corresponding word metric on $G$. We set $K\coloneqq \max_{s\in S}d_\Haus(H,sH)$ and claim that $p:G\rightarrow G/H$ is a $(K,0,0)$-coarse bundle. We first note that  $G/H$ has bounded geometry. This follows from the fact that $G$ has bounded geometry and distinct  cosets are disjoint. 

We use Proposition \ref{prop:quasi geodesic} to show that the quotient space $G/H$ is  quasi-geodesic. 
If $gH,kH\in G/H$, then there is an $h\in H$ with  $d(g,kh)\leq d_\Haus(gH,kH)$. Since $S$ is a generating set, $g^{-1}kh=s_1\dots s_t$ where $t\leq d_\Haus(gH,kH)$ and for each $i$, either $s_i$ or its inverse is contained in $S$. 
Then for every $i$  \[d_\mathrm{Haus}(gs_1\dots s_{i} H, gs_1\dots s_is_{i+1} H)=d_\mathrm{Haus}( H, s_{i+1} H)\leq K,\] demonstrating that $gH$ and $kH$ can be joined by a $K$-chain of length at most $d_\Haus(gH,kH)$.

To prove (\ref{defn:coarse bundle 1}) of Definition \ref{defn:coarse bundle}, we pick $g, k\in G$ and we note the above argument shows $gH$ and $kH$ can be joined by a $K$-chain in $G/H$ of length at most $d(g,k)$. Thus $d_{G/H}(p(g),p(k))=d_\mathrm{Haus}(gH,kH)\leq Kd(g,k)$.
Condition (\ref{defn:coarse bundle 2}) of Definition \ref{defn:coarse bundle} follows easily from Lemma \ref{lem:sbgpis coarsely embedding} and the fact that left multiplication in $(G,d)$ is isometric. The metric defined on $G/H$ ensures that (\ref{defn:coarse bundle 3}) is automatically true. Quasi-homogeneity of the coarse bundle is also evident, since the transitive action of $G$ on itself permutes the set of left $H$-cosets.
\end{proof}

 Another construction of the quotient space is carried out in \cite{kronmoller08roughcayley}:
\begin{defn}
Let $G$ be a group and $H$ be an almost normal subgroup such that $G$ is finitely generated relative to $H$, i.e. there exists a finite set $S\subseteq G$ such that $S\cup H$ generates $G$. Then the \emph{rough Cayley graph} is a graph with vertex set $G/H$ and edge set $\{(gH,gsH)\mid s\in S, g\in G\}$. 
\end{defn}
 It is shown in \cite{kronmoller08roughcayley} that the rough Cayley graph is locally finite and is well-defined up to quasi-isometry. The main focus of \cite{kronmoller08roughcayley} is the situation in which $G$ is a totally disconnected locally compact group and  $H$ is a compact open subgroup; in this case the rough Cayley graph is frequently referred to as the \emph{Cayley--Abels} graph. In the case where $G$ is finitely generated, it is not hard to see that the quotient space $G/H$ is equivariantly quasi-isometric to the rough Cayley graph of $G$ with respect to $H$, see also \cite{connermihalik2014}.

\begin{lem}\label{lem:coarse bundle ccc}
Let $X$ and  $B$ be discrete geodesic metric spaces and let $p:X\rightarrow B$ be a $(K,A,E)$-coarse bundle. Pick a basepoint $b_0\in B$. Then:
\begin{enumerate}
\item\label{lem:coarse bundle ccc1} 
If $C$ is a coarse complementary component of $D_{b_0}$, then $C'\coloneqq N_E(p(C))$ is a coarse complementary component of $b_0$. Moreover,  $d_\Haus(p^{-1}(C'),C)<\infty$.
\item \label{lem:coarse bundle ccc2} 
If $C'\subseteq B$ is a coarse complementary component of $b_0$, then $C\coloneqq p^{-1}(C')$ is a coarse complementary component of $D_{b_0}$.
\end{enumerate}In both cases, $C$ is deep if and only if $C'$ is deep.
\end{lem}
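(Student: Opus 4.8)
The plan is to prove both directions of Lemma \ref{lem:coarse bundle ccc} by carefully passing coarse complementary components back and forth through $p$, using the three defining properties of a coarse bundle together with the fact that coarse separation is preserved by coarse embeddings and quasi-isometries.

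For part (\ref{lem:coarse bundle ccc1}): suppose $C$ is a coarse complementary component of $D_{b_0}$ in $X$, so $\partial C \subseteq N_{A_0}(D_{b_0})$ for some $A_0$. First I would show $C' \coloneqq N_E(p(C))$ has coarse boundary controlled by $b_0$. A point $b \in \partial C'$ lies within distance $E+1$ of $p(c)$ for some $c \in C$; I want to produce a nearby point of $C$ whose image is near $b_0$. The key mechanism is property (\ref{defn:coarse bundle 3}): $d_\Haus(D_b, D_{b'}) \leq K d_B(b,b') + A$. Using this and the fact that $\partial C' \not\subseteq$ (image of the interior of $C'$), one finds a point $c \in C$ with $p(c)$ close to the "frontier" in $B$; then a $K$-chain argument — lifting a discrete geodesic in $B$ from $p(c)$ toward a point outside $C'$ — shows that $C$ itself must get close to $D_{b_0}$ near such points, so $\partial C' \subseteq N_{A_1}(b_0)$ for a suitable $A_1$. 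The statement $d_\Haus(p^{-1}(C'), C) < \infty$ then follows: $C \subseteq p^{-1}(p(C)) \subseteq p^{-1}(C')$ trivially, and conversely any $x \in p^{-1}(C')$ has $p(x) \in N_E(p(c))$ for some $c \in C$, so by (\ref{defn:coarse bundle 3}) $x$ is within $K(E+1)+A$ of $D_{p(c)}$ hence within bounded distance of $c$'s fibre; to conclude $x$ is boundedly close to $C$ rather than merely to its fibre, I would use that $C$ is a coarse complementary component together with the $K$-chain lifting argument to rule out the fibre $D_{p(c)}$ straddling $\partial C$ by more than a bounded amount.

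For part (\ref{lem:coarse bundle ccc2}): given $C' \subseteq B$ a coarse complementary component of $b_0$ with $\partial C' \subseteq N_{A_0}(b_0)$, set $C \coloneqq p^{-1}(C')$. I would show $\partial C \subseteq N_{A_1}(D_{b_0})$: if $x \in \partial C$, then $x \notin C$ but some neighbour $x'$ of $x$ lies in $C$, so $p(x) \notin C'$ while $p(x')\in C'$; since $p$ is $(K,A)$-coarse Lipschitz, $d_B(p(x),p(x')) \leq K + A$, and because $C'$ is a coarse complementary component, moving this short distance across its frontier forces $p(x)$ (or $p(x')$) into $N_{A_0 + K + A}(b_0)$; then (\ref{defn:coarse bundle 3}) gives $d(x, D_{b_0}) \leq K(A_0 + K + A) + A$. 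This is the easier direction.

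Finally, the depth equivalence. In (\ref{lem:coarse bundle ccc1}), $d_\Haus(p^{-1}(C'), C) < \infty$ immediately gives that $C$ is deep iff $p^{-1}(C')$ is deep; and $p^{-1}(C')$ is deep iff $C'$ is deep because $p$ is coarse Lipschitz and $D_b \subseteq N_A(\im s_b)$ with each fibre of bounded "width" over its basepoint, so $p^{-1}(C')$ fails to be contained in any $N_R(D_{b_0})$ precisely when $C'$ fails to be contained in any $N_R(b_0)$. The same reasoning handles (\ref{lem:coarse bundle ccc2}). I expect the main obstacle to be the first half of (\ref{lem:coarse bundle ccc1}) — rigorously showing $\partial(N_E(p(C)))$ is controlled by $b_0$ and that $p^{-1}(C')$ does not exceed $C$ by more than a bounded Hausdorff distance — since this requires the chain-lifting argument (of the type used in the proof of Lemma \ref{lem:pullbacks exist}) to transfer the "coarse boundary" condition through the non-injective map $p$, whereas every other step is a direct application of coarse-Lipschitz and property (\ref{defn:coarse bundle 3}).
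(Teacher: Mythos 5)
Your proposal is correct and follows essentially the same approach as the paper. One place you can simplify: in part (1), since $y\in\partial C'$ lies at distance exactly $1$ from some $x\in C'$, a single application of condition (3) of the coarse bundle definition already yields a point $\tilde y\in D_y$ within $K+A$ of a point $\tilde x\in C\cap D_x$, and $\tilde y\notin C$ is automatic (otherwise $y\in N_E(p(C))=C'$), so there is no need to lift a whole discrete geodesic in $B$ — one step suffices to land $\tilde y$ in $N_{K+A}(\partial C)\subseteq N_{K+A+r}(D_{b_0})$ and push forward to bound $d_B(y,b_0)$. Your treatment of the remaining inclusion $p^{-1}(C')\subseteq N_R(C)$ — using coarse connectedness of the fibres $D_b$ together with $\partial C\subseteq N_r(D_{b_0})$ to rule out a fibre far from $b_0$ straddling $\partial C$ — is exactly the right mechanism, and is worth writing out carefully since the paper asserts $d_\Haus(p^{-1}(C'),C)<\infty$ without detailing this step.
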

\begin{proof}
(\ref{lem:coarse bundle ccc1}): As $C$ is a coarse complementary component of $D_{b_0}$, there is an $r$ such that $\partial C\subseteq N_{r}(D_{b_0})$. Suppose $y\in \partial C'$ with $x\in C'$ and $d(x,y)=1$. We may choose $\tilde x\in C$ such that $d_B(p(\tilde x),x)\leq E$. Since $d_B(x,y)=1$, we see that $d_\Haus(D_x,D_y)\leq K+A$, and so there exists some $\tilde{y}\in D_y$ with $d(\tilde x, \tilde y)\leq K+A$.  Since $\tilde x\in C$ and $\tilde y\notin C$, we see that $\tilde{y}\in N_{K+A}(\partial C)\subseteq N_{K+A+r}(D_{b_0})$. Thus $d(y,b_0)\leq K(K+A+r)+A+2E$  so that $C'$ is a coarse complementary component of $b_0$.

(\ref{lem:coarse bundle ccc2}): As $C'$ is a coarse complementary component of $b_0$, there is an $r$ such that $\partial C'\subseteq N_{r}(b_0)$. Suppose $y\in \partial C$ with $x\in C$ and $d(x,y)=1$. Then $p(x)\in C'$, $p(y)\notin C'$ and $d(p(x),p(y))\leq K+A$. It follows that $d(p(y),b_0)\leq K+A+r$ and so $d_\Haus(D_{p(y)},D_{b_0})\leq K(K+A+r)+A$. Thus $d(y,D_{b_0})\leq K(K+A+r)+A$ so that $C$ is a coarse complementary component of $D_{b_0}$.

In both cases,  $C$ is deep if and only if $C'$ is.
\end{proof}
\begin{cor}\label{cor:ends}
Suppose $X$ and $B$ are discrete geodesic metric spaces and that $p:X\rightarrow B$ is a coarse bundle. Let $b_0\in B$. Then $D_{b_0}$ coarsely $n$-separates $X$ if and only if $B$ has at least $n$ ends. In particular, if $G$ is a finitely generated group containing a finitely generated subgroup $H\alnorm G$, then $\tilde e(G,H)=e(G/H)$.
\end{cor}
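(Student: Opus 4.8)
The plan is to deduce Corollary \ref{cor:ends} directly from Lemma \ref{lem:coarse bundle ccc}, using the fact that coarse $n$-separation and the number of ends are both detected by the same combinatorial data on coarse complementary components, together with the remetrisation remarks that let us assume all spaces in sight are discrete geodesic.

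First I would fix a basepoint $b_0 \in B$ and observe that the two correspondences of Lemma \ref{lem:coarse bundle ccc} — sending a coarse complementary component $C$ of $D_{b_0}$ in $X$ to $C' = N_E(p(C))$, and sending a coarse complementary component $C'$ of $b_0$ in $B$ to $p^{-1}(C')$ — are, up to finite Hausdorff distance, mutually inverse. This requires a small check: starting from $C$, forming $C' = N_E(p(C))$ and then $p^{-1}(C')$ recovers $C$ up to finite Hausdorff distance (this is exactly the ``moreover'' clause $d_\Haus(p^{-1}(C'),C) < \infty$ in part (\ref{lem:coarse bundle ccc1})), and starting from $C'$, forming $p^{-1}(C')$ and then $N_E(p(p^{-1}(C')))$ recovers $C'$ up to finite Hausdorff distance since $p$ is coarsely surjective onto $B$ (coarse bundles have $p(X)$ coarse-dense, as $D_b \ne \emptyset$ for all $b$). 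Hence the correspondence is a bijection between $\Haus$-equivalence classes of coarse complementary components, and by the last sentence of Lemma \ref{lem:coarse bundle ccc} it preserves the property of being deep.

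Next I would check that this bijection preserves coarse disjointness: if $C_1, C_2$ are coarse complementary components of $D_{b_0}$, then since intersection of coarse complementary components corresponds under $p$ (up to finite Hausdorff distance) to intersection downstairs — using $d_\Haus(p^{-1}(C_i'),C_i)<\infty$ and that $p^{-1}$ commutes with intersection — we get that $C_1 \cap C_2$ is shallow if and only if $C_1' \cap C_2'$ is shallow, again by the deep-iff-deep clause of Lemma \ref{lem:coarse bundle ccc} applied to the intersection (which is itself a coarse complementary component, as recalled in Section \ref{sec:preliminaries}). Combining: a collection of $n$ deep, coarse disjoint coarse complementary components of $D_{b_0}$ in $X$ exists if and only if such a collection exists for $b_0$ in $B$. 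By definition $D_{b_0}$ coarsely $n$-separates $X$ iff the former holds, and $B$ has at least $n$ ends iff $b_0$ coarsely $n$-separates $B$, i.e. iff the latter holds (after remetrising $B$ to be discrete geodesic, which by Remark \ref{rem:remetrise} changes nothing up to quasi-isometry, and coarse separation is a quasi-isometry invariant). This gives the first assertion. The final sentence then follows by specialising: for $G$ finitely generated and $H \alnorm G$ finitely generated, Proposition \ref{prop:almost normal subgroups coarse bundles} exhibits $p:G \to G/H$ as a coarse bundle with fibre $H$, so $H = D_{eH}$ coarsely $n$-separates $G$ iff $G/H$ has at least $n$ ends; by Proposition \ref{prop:relends vs cccs}(2) the left side is equivalent to $\tilde e(G,H) \geq n$, and taking the supremum over $n$ yields $\tilde e(G,H) = e(G/H)$.

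The main obstacle I anticipate is bookkeeping the ``up to finite Hausdorff distance'' estimates carefully enough that the bijection on equivalence classes genuinely commutes with the Boolean operations (intersection, complement) used to define coarse disjointness — in particular verifying that $p^{-1}$ of an intersection is within finite Hausdorff distance of the intersection of the $p^{-1}$'s, which needs the coarse Lipschitz bound on $p$ and the fibre-thickness constant $A$. Everything else is a direct unwinding of definitions, so the proof should be short, essentially: ``This follows from Lemma \ref{lem:coarse bundle ccc}'' together with the observation that the correspondence there respects deepness and coarse disjointness, plus Propositions \ref{prop:almost normal subgroups coarse bundles} and \ref{prop:relends vs cccs} for the final sentence.
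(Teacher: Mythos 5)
Your proposal takes essentially the same route as the paper: both directions come from Lemma \ref{lem:coarse bundle ccc} applied to $n$ deep, coarse disjoint components, with the final sentence deduced from Proposition \ref{prop:relends vs cccs}, and the paper is just as terse as you are about why coarse disjointness is preserved under the correspondence. One small correction to the ``main obstacle'' you flag: $p^{-1}$ commutes with intersection on the nose (it is a preimage), so that direction is painless; the genuine wrinkle is in going from $X$ down to $B$, where one needs $N_E(p(C_1))\cap N_E(p(C_2))$ to be shallow given $C_1\cap C_2$ is, and $N_E(p(\cdot))$ does \emph{not} commute with intersection --- this is handled by first replacing each $C_i$ by $p^{-1}(C_i')$ (legitimate because coarse complementary components at finite Hausdorff distance have shallow symmetric difference) and only then pushing down, exactly the ``bookkeeping'' you anticipate. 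The paper leaves this implicit as well, so your sketch matches its level of detail.
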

 \begin{proof}
If $D_{b_0}$ coarsely $n$-separates $X$, we can choose $n$ deep, coarse disjoint, coarse complementary components $C_1,\dots, C_n$ of $D_{b_0}$. Lemma \ref{lem:coarse bundle ccc} ensures there is an $E$ such that each $C'_i\coloneqq N_E(p(C_i))$ is a deep  coarse complementary component of $b_0$. Moreover, for $i\neq j$ we see that $C'_i$ and $C'_j$ are coarse disjoint since $C_i$ and $C_j$ are. Thus $B$ has at least $n$ ends. A similar argument demonstrates that $D_{b_0}$ coarsely $n$-separates $X$ if $B$ has at least $n$ ends.
We recall from Proposition \ref{prop:relends vs cccs} that $H$ coarsely $n$-separates $G$ if and only $\tilde{e}(G,H)\geq n$. It follows that $\tilde{e}(G,H)=e(G/H)$.
\end{proof} 

The following corollary is essentially due to Hopf \cite{hopf1944enden}, since any connected locally finite vertex-transitive graph has exactly $0$, $1$, $2$ or infinitely many ends and $G/H$ is quasi-isometric to a vertex-transitive graph, namely the rough Cayley graph. See also \cite{kropholler1989relative} and \cite{connermihalik2014}.
\begin{cor}\label{cor:ends hopf}
Let $G$ be a finitely generated group and $H\alnorm G$ be an almost normal subgroup. Then $e(G/H)=\tilde e(G,H)=0,1,2$ or $\infty$.
\end{cor}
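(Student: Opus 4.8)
The plan is to reduce the claim to the classical Hopf dichotomy for vertex-transitive graphs. First I would invoke Corollary \ref{cor:ends} to identify the relevant quantities: since $H\alnorm G$ is finitely generated and $G$ is finitely generated, the corollary gives $\tilde e(G,H)=e(G/H)$, so it suffices to prove that $e(G/H)\in\{0,1,2,\infty\}$. Next I would replace the quotient space $G/H$ by a model that is literally a graph. By Proposition \ref{prop:almost normal subgroups coarse bundles} (or directly via the discussion of the rough Cayley graph following it), $G/H$ is equivariantly quasi-isometric to the rough Cayley graph $\Gamma$ of $G$ relative to $H$; this graph is connected, locally finite, and the natural $G$-action is vertex-transitive. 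Since the number of ends $e(-)$ as defined in the excerpt is a quasi-isometry invariant of discrete geodesic bounded geometry metric spaces (the defining property of $n$-separation by a point is a coarse notion preserved under quasi-isometry), we have $e(G/H)=e(\Gamma)$.

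It then remains to observe that a connected, locally finite, vertex-transitive graph $\Gamma$ has $0$, $1$, $2$ or infinitely many ends. This is exactly Hopf's theorem \cite{hopf1944enden}; the standard argument is that if $\Gamma$ had finitely many ends with $2<e(\Gamma)<\infty$, then one could separate $\Gamma$ into $e(\Gamma)$ deep components by removing a finite set $K$, and using vertex-transitivity one could translate $K$ deep inside one of these components to produce a new finite separating set yielding strictly more than $e(\Gamma)$ deep components, a contradiction; the case $e(\Gamma)=0$ occurs precisely when $\Gamma$ is finite (equivalently $H$ has finite index in $G$). One must check that the excerpt's definition of ends for $\Gamma$ as a metric space agrees with the combinatorial notion of ends of the graph, but for a connected locally finite graph equipped with the path metric the two coincide: a point $n$-separates $\Gamma$ in the coarse sense of the excerpt if and only if removing a large enough ball disconnects $\Gamma$ into $n$ deep pieces, which is the usual topological count of ends.

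I do not expect a genuine obstacle here — the statement is essentially a packaging of known facts. The only mild subtlety, worth a sentence, is ensuring that the quasi-isometry invariance of $e(-)$ (established in the preliminary material on coarse separation) applies to $G/H$, which requires that $G/H$ is a discrete geodesic bounded geometry metric space; this is precisely what was verified in the proof of Proposition \ref{prop:almost normal subgroups coarse bundles}. With that in hand, the chain of equalities $\tilde e(G,H)=e(G/H)=e(\Gamma)\in\{0,1,2,\infty\}$ completes the proof.
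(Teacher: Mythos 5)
Your argument matches the paper's: the paper also reduces $e(G/H) = \tilde{e}(G,H)$ to the classical Hopf dichotomy via the observation that $G/H$ is quasi-isometric to the (locally finite, vertex-transitive) rough Cayley graph, citing \cite{hopf1944enden}, \cite{kropholler1989relative} and \cite{connermihalik2014}. Your write-up simply fills in the details (the appeal to Corollary \ref{cor:ends} and a sketch of Hopf's translation argument) that the paper leaves as a one-sentence remark; the only mild caveat, which the paper likewise glosses over, is that Corollary \ref{cor:ends} as stated assumes $H$ is finitely generated, whereas Corollary \ref{cor:ends hopf} does not explicitly impose this.
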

This is no longer true if we drop the  assumption that $H\alnorm G$, since then $\tilde e(G,H)$ can take any value in $\bN\cup \infty$; see \cite{kropholler1989relative}. 
Corollary \ref{cor:ends hopf} tells us that the $e(G/H)\geq 3$ hypothesis in Theorem \ref{thm:main intro} is equivalent to the a priori stronger hypothesis $e(G/H)=\infty$.

We now relate coarse complementary components to group actions on trees. A \emph{minimal} action of a group $G$ on a tree $T$ is one in which no proper subtree of $T$ is fixed by $G$.  Given an  edge $e\in ET$ with midpoint $m_e$, the components of $T\backslash m_e$ are called \emph{halfspaces based at $e$}. 
\begin{lem}\label{lem:halfspace vs ccc}
Suppose $G$ is a finitely generated group which acts minimally on a tree $T$. Let $v_0\in VT$ be a basepoint. If $e\in ET$ is an edge with stabiliser $H$ and $\mathfrak{h}$ is a halfspace based at $e$, then \[C_\mathfrak{h}\coloneqq \{g\in G\mid gv_0\in \mathfrak{h}\}\] is a deep coarse complementary component of $H$.
\end{lem}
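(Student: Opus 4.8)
The plan is to check the two clauses in the definition of a deep coarse complementary component separately: first that $\partial C_{\mathfrak h}\subseteq N_A(H)$ for some $A\geq 0$ (so that $C_{\mathfrak h}$ is a coarse complementary component of $H$), and then that $C_{\mathfrak h}$ is not contained in $N_A(H)$ for any $A$. I would fix the finite generating set $S$ defining the word metric $d_G$ and set $K\coloneqq \max_{s\in S\cup S^{-1}}d_T(v_0,sv_0)$; since the action is minimal and $T$ has an edge, $T$ is not a point, so $G$ has no global fixed vertex and $K\geq 1$. The orbit map $\phi\colon G\to VT$, $g\mapsto gv_0$, is then $K$-coarsely Lipschitz: $d_T(gv_0,g'v_0)=d_T(v_0,g^{-1}g'v_0)\leq K\,d_G(g,g')$. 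I would also record $c_0\coloneqq d_T(v_0,m_e)$, so that $d_T(hv_0,m_e)=d_T(hv_0,hm_e)=c_0$ for every $h\in H$, and note that $m_e$ is not a vertex, so every vertex $gv_0$ lies either in $\mathfrak h$ or in the complementary halfspace $\mathfrak h^{*}$.

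For the inclusion $\partial C_{\mathfrak h}\subseteq N_A(H)$, the point is that even though $T$ need not be locally finite, there are only finitely many geodesic segments $[v_0,sv_0]$ with $s\in S\cup S^{-1}$. Given $g\in\partial C_{\mathfrak h}$, there is $s\in S\cup S^{-1}$ with $gv_0\in\mathfrak h^{*}$ and $gsv_0\in\mathfrak h$. The geodesic from $gv_0$ to $gsv_0$ is $g\cdot[v_0,sv_0]$, and since it runs from $\mathfrak h^{*}$ to $\mathfrak h$ it passes through $m_e$; as $m_e$ is an interior point of the edge $e$, a geodesic in $T$ through $m_e$ must traverse $e$ entirely, so $e$ is one of the finitely many edges of $g\cdot[v_0,sv_0]$. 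Hence $g^{-1}e=f$ for some edge $f$ of the fixed segment $[v_0,sv_0]$. For each pair $(s,f)$ with $f$ in the $G$-orbit of $e$, fix once and for all $b_{s,f}\in G$ with $b_{s,f}f=e$; then $gf=e=b_{s,f}f$ gives $b_{s,f}^{-1}g\in\operatorname{Stab}(f)=b_{s,f}^{-1}Hb_{s,f}$, so $g\in Hb_{s,f}$. Thus $\partial C_{\mathfrak h}$ is contained in the finite union of right cosets $\bigcup_{(s,f)}Hb_{s,f}$, and since $d_{\mathrm{Haus}}(Hb,H)\leq |b|_G$ for any $b$, we conclude $\partial C_{\mathfrak h}\subseteq N_A(H)$ with $A=\max_{(s,f)}|b_{s,f}|_G$.

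To prove $C_{\mathfrak h}$ is deep I would exhibit elements of $C_{\mathfrak h}$ arbitrarily far from $H$. Since $G$ is finitely generated, acts minimally, and has no global fixed vertex, $G$ contains a hyperbolic element (a finitely generated group all of whose elements are elliptic fixes a vertex of $T$); the union of the axes of all hyperbolic elements is a nonempty $G$-invariant subtree, hence equals $T$ by minimality, so $e$ lies on the axis of some hyperbolic $\gamma$ with translation length $\ell=\ell(\gamma)\geq1$. For $n\geq1$ in the appropriate direction, $\gamma^n m_e$ lies in $\mathfrak h$ with $d_T(\gamma^n m_e,m_e)=n\ell$; as $\gamma^n v_0$ is at distance $c_0$ from $\gamma^n m_e$, once $n\ell>c_0$ we get $\gamma^n v_0\in\mathfrak h$, so $\gamma^n\in C_{\mathfrak h}$, and $d_T(\gamma^n v_0,m_e)\geq n\ell-c_0$. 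For every $h\in H$, $d_T(\gamma^n v_0,hv_0)\geq d_T(\gamma^n v_0,m_e)-c_0\geq n\ell-2c_0$, while $d_T(\gamma^n v_0,hv_0)\leq K\,d_G(\gamma^n,h)$; hence $d_G(\gamma^n,H)\geq (n\ell-2c_0)/K\to\infty$, so $C_{\mathfrak h}\not\subseteq N_A(H)$ for any $A$, i.e. $C_{\mathfrak h}$ is deep.

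The only genuinely non-formal step is the inclusion $\partial C_{\mathfrak h}\subseteq N_A(H)$: the naive argument "there are finitely many edges near $v_0$" fails because the action need not be proper and $T$ need not be locally finite (think of the Bass--Serre tree of a free product), so I expect this to be where the real work lies, resolved by pushing forward the finitely many fixed segments $[v_0,sv_0]$ by $g$ to pin $g$ into finitely many cosets of $H$. The remaining steps are routine manipulations of the definitions together with standard structure theory for group actions on trees.
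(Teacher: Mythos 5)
Your proof is correct, but it takes a different route from the paper, which simply cites a known result rather than arguing from scratch. The paper's proof is one line: it invokes Scott's Lemma 1.8 (from \emph{Ends of pairs of groups}) to conclude that $C_\mathfrak{h}$ is a (nontrivial) $H$-almost invariant subset, and then applies Proposition \ref{prop:relends vs cccs} of the present paper, which translates ``$H$-almost invariant subset'' into ``coarse complementary component of $H$'' (with nontriviality corresponding to deepness). Your proof instead verifies both halves of the definition directly: you show $\partial C_\mathfrak{h}\subseteq N_A(H)$ by tracking which edges of the finitely many translated segments $g\cdot[v_0,sv_0]$ can equal $e$, thereby pinning $g$ into finitely many right $H$-cosets; and you establish deepness by producing a hyperbolic element whose axis crosses $e$ (using that a finitely generated group acting minimally on a tree with an edge has a hyperbolic element and that the union of axes is a nonempty invariant subtree, hence all of $T$). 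What your argument buys is self-containment and explicit constants, and it correctly handles the two subtleties you flag (the action need not be proper and $T$ need not be locally finite) without leaning on outside references; what the paper's approach buys is brevity, at the cost of importing Scott's lemma and the author's own bridging proposition. The one place worth a word of caution in your write-up is the claim that the union of axes of hyperbolic elements is a subtree --- this is a standard fact (via the bridge/product argument for disjoint axes) but deserves either a sentence of justification or a citation, e.g.\ to Serre or Culler--Morgan.
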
 
\begin{proof}
It is a well known fact that $C_\mathfrak{h}$ is an $H$-almost invariant subset, see for instance \cite[Lemma 1.8]{scott1977ends}. Thus Proposition \ref{prop:relends vs cccs} tells us that $C_\mathfrak{h}$ is a deep coarse complementary component of $H$.
\end{proof}
 
\begin{cor}\label{cor:ends tree vs space}
Let $G$ be a finitely generated group and $H\alnorm G$. Suppose $G$ acts minimally on a  tree $T$, all of whose edge stabilisers are commensurable to $H$. Then $e(T)\leq \tilde e(G,H)$.
\end{cor}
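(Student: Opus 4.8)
The plan is to prove the slightly stronger assertion that $e(T)\geq n$ implies $\tilde e(G,H)\geq n$ for every $n$; this immediately gives $e(T)\leq \tilde e(G,H)$, including the case $e(T)=\infty$ (apply the implication for every $n\in\bN$). By Proposition \ref{prop:relends vs cccs}, it suffices to exhibit, given $e(T)\geq n$, a family of $n$ deep, pairwise coarse-disjoint coarse complementary components of $H$ in $G$.

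First I would reduce the hypothesis to a purely combinatorial statement about $T$: since $T$ is a tree with $e(T)\geq n$, there is a finite connected subtree $S\subseteq T$ such that $T\setminus S$ has at least $n$ unbounded components $K_1,\dots,K_n$. (This is the usual description of the number of ends of a tree, and it matches the coarse-separation definition of $e(T)$: if $v\in S$ and $A=\operatorname{diam}(S)$, the $K_i$ are deep coarse disjoint coarse complementary components of $v$, and conversely $n$ such components of a vertex yield such a subtree.) For each $i$, let $e_i\in ET$ be the unique edge joining $S$ to $K_i$ and let $\mathfrak h_i$ be the component of $T\setminus m_{e_i}$ not containing $S$; then $\mathfrak h_i$ has the same vertex set as $K_i$. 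Since $S$ is connected and contains an endpoint of each $e_i$, the geodesic in $T$ from any vertex to $S$ crosses at most one of the midpoints $m_{e_1},\dots,m_{e_n}$, so the halfspaces $\mathfrak h_1,\dots,\mathfrak h_n$ are pairwise disjoint and in particular $e_1,\dots,e_n$ are distinct.

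Next, fixing a base vertex $v_0\in VT$, I set $C_i\coloneqq C_{\mathfrak h_i}=\{g\in G\mid gv_0\in\mathfrak h_i\}$ and $H_i\coloneqq\operatorname{Stab}_G(e_i)$. By Lemma \ref{lem:halfspace vs ccc}, applied using minimality of the $G$-action, each $C_i$ is a deep coarse complementary component of $H_i$. By hypothesis $H_i$ is commensurable to $H$, hence $d_{\Haus}(H_i,H)<\infty$ by \cite[Corollary 2.14]{mosher2011quasiactions} (as in the proof of Proposition \ref{prop:coset characterisation}); since both ``being a coarse complementary component'' and ``being deep'' are unaffected by replacing the reference subset with one at finite Hausdorff distance, each $C_i$ is a deep coarse complementary component of $H$. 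Disjointness of the $\mathfrak h_i$ forces $C_i\cap C_j=\{g\in G\mid gv_0\in\mathfrak h_i\cap\mathfrak h_j\}=\emptyset$ for $i\neq j$, which is shallow, so the $C_i$ are coarse disjoint. Thus $H$ coarsely $n$-separates $G$, and $\tilde e(G,H)\geq n$ by Proposition \ref{prop:relends vs cccs}.

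The only genuinely delicate point is the first step: reconciling the coarse-separation definition of $e(T)$ with the existence of a finite subtree cutting off $n$ unbounded complementary components, and checking in a possibly non-locally-finite tree that each such component really is a halfspace based at an edge incident to $S$. Everything after that is routine bookkeeping with Lemma \ref{lem:halfspace vs ccc}, the commensurability/Hausdorff-distance fact, and Proposition \ref{prop:relends vs cccs}. If one prefers to sidestep the subtree discussion, an equivalent route is to start from the deep coarse-disjoint complementary components $C_1,\dots,C_n$ of a separating vertex supplied by $e(T)\geq n$, extract a geodesic ray of $T$ running into each $C_i$, and take $S$ to be a finite subtree containing all pairwise divergence points of these rays; this produces the same halfspaces $\mathfrak h_i$.
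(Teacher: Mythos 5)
Your proof is correct and follows essentially the same route as the paper's: choose $n$ pairwise disjoint halfspaces from $e(T)\geq n$, invoke Lemma \ref{lem:halfspace vs ccc} to get deep coarse complementary components of the edge stabilisers, transfer to $H$ via commensurability (finite Hausdorff distance), and conclude with Proposition \ref{prop:relends vs cccs}. The paper's version is simply more terse and leaves the construction of the disjoint halfspaces and the commensurability-to-Hausdorff-distance transfer implicit, while you spell them out.
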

\begin{proof}
We fix a basepoint $v_0\in V$.  If $e(T)\geq n$, we choose  halfspaces $\mathfrak{h}_1,\mathfrak{h}_2,\dots, \mathfrak{h}_n$  that are pairwise disjoint. Since all edge stabilisers are commensurable to $H$,  $C_{\mathfrak{h}_1},C_{\mathfrak{h}_2},\dots, C_{\mathfrak{h}_n}$ are a disjoint collection of deep coarse complementary components of $H$. Thus  $H$ coarsely $n$-separates $G$ and so Proposition \ref{prop:relends vs cccs} ensures  $\tilde e(G,H)\geq n$.
\end{proof}

It is well-known that the quotient of a finitely presented group by a finitely generated group is finitely presented. The following proposition is a geometric analogue of this fact.
\begin{prop}\label{prop:base space acyclic}
Let $G$ be a group of type $FP_2$ with $H\alnorm G$ finitely generated. Then $G/H$ is coarsely 1-acyclic.
\end{prop}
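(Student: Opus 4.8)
The plan is to exploit the quotient coarse bundle $p\colon G\to G/H$ of Proposition \ref{prop:almost normal subgroups coarse bundles}: given a $1$-cycle in a Rips complex of $G/H$, I would lift it to a $1$-cycle in a Rips complex of $G$, fill it there using coarse $1$-acyclicity of $G$ (which holds because $G$ has type $FP_2$), and then push the filling back down along $p$ to fill the original cycle.

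The ingredients I would assemble first are as follows. By Proposition \ref{prop:almost normal subgroups coarse bundles}, $p$ is a $(K,0,0)$-coarse bundle whose fibres $D_b=p^{-1}(b)$ are the left cosets of $H$, and $G/H$ is quasi-geodesic; after remetrising (Remark \ref{rem:remetrise}) I may take $G/H$ to be discrete geodesic. Since $p$ is $(K,0)$-coarse Lipschitz it induces simplicial maps $P_r(G)\to P_{Kr}(G/H)$ for every $r$. Since $H$ is finitely generated the fibres $D_b$ are uniformly coarsely connected (Lemma \ref{lem:coarse embedding}), so there is a constant $t_1$ with $P^1_{t_1}(D_b)$ connected for every $b$; hence every $0$-chain supported on a single fibre $D_b$ with vanishing coefficient sum is a boundary in $P_{t_1}(D_b)$. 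Finally, since $G$ has type $FP_2$ it is coarsely uniformly $1$-acyclic (Theorem \ref{thm:finitenesscoarse}), and since any finite $1$-cycle in a Rips complex of $G$ has support contained in some ball $N_r(x_0)$, this yields a function $i\mapsto j'(i)$ such that every finite $1$-cycle in $P_i(G)$ bounds in $P_{j'(i)}(G)$.

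The $\widetilde H_0$ half of coarse $1$-acyclicity of $G/H$ is immediate, since quasi-geodesicity gives (Proposition \ref{prop:quasi geodesic}) a $t$ with $P^1_r(G/H)$ connected for $r\geq t$. For the $\widetilde H_1$ half, I would take a finite $1$-cycle $z=\sum_e n_e[b_e,b'_e]$ in $P_i(G/H)$ and, for each edge $e$, pick lifts $c_e\in p^{-1}(b_e)$ and $c'_e\in p^{-1}(b'_e)$ with $d_G(c_e,c'_e)\leq i$; this is possible because $d_{G/H}(b_e,b'_e)=d_\mathrm{Haus}(p^{-1}(b_e),p^{-1}(b'_e))\leq i$ and this Hausdorff distance is realised. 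The chain $\widetilde z_0:=\sum_e n_e[c_e,c'_e]\in C_1(P_i(G))$ need not be a cycle, but $\partial\widetilde z_0=\sum_b\beta_b$ where each $\beta_b$ is a $0$-chain supported on the single fibre $p^{-1}(b)$ with vanishing coefficient sum (this last because $\partial z=0$). Writing $\beta_b=\partial\gamma_b$ with $\gamma_b$ a finite $1$-chain in $P_{t_1}(p^{-1}(b))$, the corrected chain $\widetilde z:=\widetilde z_0-\sum_b\gamma_b$ is a finite $1$-cycle in $P_{i'}(G)$ with $i':=\max(i,t_1)$; and since all vertices of each $\gamma_b$ lie in the single fibre $p^{-1}(b)$, the map $p$ collapses $\gamma_b$, so $p_*(\widetilde z)=p_*(\widetilde z_0)=z$. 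Filling $\widetilde z=\partial c$ in $P_{j'(i')}(G)$ and applying the simplicial map induced by $p$ then gives $z=\partial(p_*c)$ in $P_{Kj'(i')}(G/H)$.

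Thus every finite $1$-cycle in $P_i(G/H)$ bounds in $P_{j}(G/H)$ for $j=\max(i,t,Kj'(i'))$, which depends only on $i$; together with the $\widetilde H_0$ statement this shows $G/H$ is coarsely $1$-acyclic. The step I expect to be delicate is the lifting: the corrected lift $\widetilde z$ must simultaneously (a) live at a scale in $G$ bounded in terms of $i$ alone, and (b) satisfy $p_*(\widetilde z)=z$ exactly. Both follow from the same point — the only corrections introduced are the fibrewise fillings $\gamma_b$, and these are chains at the \emph{fixed} scale $t_1$ coming from finite generation of $H$ (their combinatorial length may be enormous, but that is irrelevant), while any chain supported in a single fibre is annihilated by $p_*$. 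Everything else is routine bookkeeping with Rips complexes and simplicial maps.
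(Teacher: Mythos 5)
Your proof is correct and follows essentially the same strategy as the paper's proof: lift a $1$-cycle from a Rips complex of $G/H$ to one of $G$ using fibrewise connectedness of the cosets, fill it using coarse $1$-acyclicity of $G$ (which follows from $G$ being of type $FP_2$), and push the filling back down via the simplicial maps of Lemma \ref{lem:induced to base}. The only cosmetic difference is that the paper reduces to edge-loops $(x_0H,\dots,x_{n-1}H,x_0H)$ and lifts their vertices consecutively, so that a single closing path in $P_r(x_0H)$ suffices, whereas you lift each edge independently and patch at every vertex with fibrewise fillings $\gamma_b$ justified by the coefficient-sum argument; both variants are valid.
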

\begin{lem}\label{lem:induced to base}
For each $r\geq 0$, there is a $t\geq r$ and a simplicial map \[\phi:P_r(G)\rightarrow P_t(G/H)\] defined by $[g_0,\dots,g_n]\mapsto [g_0H,\dots,g_nH]$.
\end{lem}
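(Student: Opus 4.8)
The plan is to simply invoke the coarse Lipschitz property of the quotient map. Recall from the proof of Proposition \ref{prop:almost normal subgroups coarse bundles} that, once we fix the finite generating set $S$ of $G$ defining the word metric $d$, the quotient map $p:G\rightarrow G/H$, $g\mapsto gH$, is a $(K,0,0)$-coarse bundle with $K=\max_{s\in S}d_\Haus(H,sH)$. In particular $p$ is $(K,0)$-coarse Lipschitz, so $d_{G/H}(gH,g'H)\leq Kd(g,g')$ for all $g,g'\in G$, and $K\geq 1$ by the definition of a coarse bundle.

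First I would set $t:=Kr$, so that $t\geq r$. Now suppose $\{g_0,\dots,g_n\}$ spans a simplex of $P_r(G)$, i.e.\ $d(g_i,g_j)\leq r$ for all $i,j$. The coarse Lipschitz estimate gives $d_{G/H}(g_iH,g_jH)\leq Kd(g_i,g_j)\leq Kr=t$ for all $i,j$, so $\{g_0H,\dots,g_nH\}$ spans a simplex of $P_t(G/H)$. Hence the vertex assignment $g\mapsto gH$ determines a simplicial map $\phi:P_r(G)\rightarrow P_t(G/H)$ sending $[g_0,\dots,g_n]$ to $[g_0H,\dots,g_nH]$, as claimed.

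There is essentially no obstacle here; the only point worth flagging is that the cosets $g_iH$ need not be distinct, so the image of an $n$-simplex under $\phi$ may be a simplex of dimension strictly less than $n$. This is harmless, since a simplicial map is by definition permitted to collapse simplices in this way, and it does not affect well-definedness of $\phi$.
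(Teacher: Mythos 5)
Your proof is correct and takes essentially the same approach as the paper: both invoke the coarse Lipschitz property of the quotient map $p:G\rightarrow G/H$ established in Proposition \ref{prop:almost normal subgroups coarse bundles} (the paper writes $t=Kr+A$ for a general $(K,A,E)$-coarse bundle, while you observe that one may take $A=0$ and hence $t=Kr$). Your remark about collapsed simplices is a valid clarification, though the paper leaves it implicit.
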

\begin{proof}
By Proposition \ref{prop:almost normal subgroups coarse bundles}, the quotient map $p:G\rightarrow G/H$ is a $(K,A,E)$-coarse bundle for some $K$, $A$ and $E$. Thus whenever $[g_0,\dots,g_n]$ is a simplex of $P_r(G)$, \[d_\Haus(g_iH,g_jH)\leq Kd(g_i,g_j)+A\leq Kr+A\] for all $i,j$,  and so $[g_0H,\dots,g_nH]$ is a simplex of $P_{Kr+A}(G/H)$.
\end{proof}

\begin{proof}[Proof of Proposition \ref{prop:base space acyclic}]
By Theorem \ref{thm:finitenesscoarse} and Lemma \ref{lem:acyclic rips complex}, we can  pick $r$ sufficiently large such that $P_r(H)$ and $P_r(G/H)$ are connected and $P_r(G)$ is 1-acyclic. We pick $t\geq r$ so that Lemma \ref{lem:induced to base} holds. It is sufficient to show that every loop in $P_r(G/H)$ of the form \[\sigma= (x_0H,x_1H, \dots,x_{n-1}H,x_0H)\] is the boundary of some 2-chain in $P_t(G/H)$. We do this by lifting $\sigma$ to a cycle $\tilde \sigma$ in $P_r(G)$. We set $\tilde{x}_0\coloneqq x_0$. We inductively define $\tilde{x}_i$ as follows: since $d_\mathrm{Haus}(x_{i-1}H,x_{i}H)\leq r$, there is an $\tilde{x}_i\in x_iH$ with $d(\tilde{x}_{i-1},\tilde{x}_{i})\leq r$. 
We thus get a path $\tilde{x}_0,\dots, \tilde{x}_n$ in $P_r(G)$, where $\tilde{x}_0H=\tilde{x}_nH=x_0H$. 
Since $P_r(H)$ is connected, we choose a path $(\tilde{x}_n,\tilde{x}_{n+1},\dots, \tilde{x}_m=\tilde{x}_0)$ in $P_r(x_0H)$. Thus $\tilde{\sigma}:=(\tilde{x}_0,\dots, \tilde{x}_m)$ is a loop in $P_r(G)$ such that $\phi_{\#}(\tilde{\sigma})=\sigma$, with $\phi$  as in Lemma \ref{lem:induced to base}. 
By assumption, there is an $\omega\in C_2(P_r(G))$ such that $\partial\omega=\tilde{\sigma}$, and hence $\partial\phi_{\#}(\omega)=\phi_{\#}(\tilde \sigma)=\sigma$ as required.
\end{proof}

We can now prove Theorem \ref{thm:gog}, which is a relative version of Dunwoody accessibility. We recall the graph theoretic interpretation of Dunwoody accessibility, first noted in \cite{mosher2003quasi}. See also \cite{drutu2018geometric}.
\begin{thm}[\cite{dunwoody1985accessibility}]\label{thm:dunwoody accessibility}
Let $X$ be a 2-dimensional, locally finite, 1-acyclic simplicial complex with $\mathrm{Aut}(X)$ acting cocompactly on $X$. There exists a collection of disjoint compact tracks $\{\tau_i\}$ that is $\mathrm{Aut}(X)$-invariant, has finitely many $\mathrm{Aut}(X)$ orbits, and every component of $X\backslash \cup_i{\tau_i}$ has at most one end. 
\end{thm}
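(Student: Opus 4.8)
The statement is the $\mathrm{Aut}(X)$--equivariant, $\mathbb{Z}_2$--homological form of Dunwoody's accessibility theorem \cite{dunwoody1985accessibility}, in the shape used by Mosher--Sageev--Whyte \cite{mosher2003quasi}; the plan is to recall the \emph{track} argument underlying it rather than to give a self-contained proof. Recall that a \emph{track} in $X$ is a compact subset meeting each closed $2$--simplex in a disjoint union of straight arcs joining points of distinct open edges and missing the vertices, so that a track is the $\mathbb{Z}_2$--homological analogue of a properly embedded codimension--one submanifold transverse to the $1$--skeleton; such a track is determined by the ($\mathbb{Z}_2$--valued) set of $1$--cells it crosses together with a matching on each incident $2$--cell. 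A finite track $\tau$ is \emph{separating} if $X\setminus\tau$ is disconnected, and the $1$--acyclicity of $X$ (over $\mathbb{Z}_2$, per the standing convention) is exactly what makes the correspondence between separating tracks, classes in $H^1_c(X;\mathbb{Z}_2)$, and ends of the complementary pieces behave as in the manifold setting.

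I would then run an inductive cut--and--reduce procedure. Starting from the empty system, suppose some component $Y$ of $X$ minus the current finite (up to $\mathrm{Aut}(X)$--orbit) system of disjoint tracks has more than one end. A relative form of Stallings' ends theorem for locally finite $1$--acyclic $2$--complexes --- Dunwoody's ``encompassing track'' construction --- produces a track $\tau'\subseteq Y$ each side of which carries a nonempty proper collection of the ends of $Y$. Add $\tau'$ together with its entire $\mathrm{Aut}(X)$--orbit; since $\mathrm{Aut}(X)$ acts cocompactly on the locally finite complex $X$, this orbit consists of disjoint compact tracks and adds only finitely many new orbits, and one may further arrange, by a small isotopy or by passing to a reduced pattern, that no two tracks in the enlarged system are \emph{parallel}, i.e.\ cobound a product region containing no other track. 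Iterating, the number of $\mathrm{Aut}(X)$--orbits of pairwise non--parallel disjoint tracks strictly increases at each step.

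The heart of the matter --- and the step I expect to be the only genuine obstacle --- is showing the procedure terminates, for which one invokes \emph{Dunwoody's accessibility bound}: there is a constant $n = n(X/\mathrm{Aut}(X))$, depending only on the finite quotient complex, bounding the number of $\mathrm{Aut}(X)$--orbits in any collection of pairwise non--parallel disjoint tracks in $X$. This is precisely the deep combinatorial content of \cite{dunwoody1985accessibility} (the ``thin versus thick pieces'' complexity argument), and it is what we would cite rather than reprove. Since each iteration strictly increases the orbit count while preserving non--parallelism, the process stops after at most $n$ steps; when it stops, every component of $X\setminus\bigcup_i\tau_i$ has at most one end, and by construction $\{\tau_i\}$ is $\mathrm{Aut}(X)$--invariant with finitely many orbits and consists of disjoint compact tracks. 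A secondary point needing a little care is that $\mathrm{Aut}(X)$ is a priori only a locally compact, possibly non--discrete and non--finitely--generated group; this causes no difficulty because the whole argument is phrased in terms of the cocompact action on $X$ and the finite quotient $X/\mathrm{Aut}(X)$ --- which is exactly why the theorem is stated with these hypotheses rather than in terms of a finitely presented acting group.
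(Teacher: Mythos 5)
The paper does not prove this theorem at all: it is stated as a citation of Dunwoody's accessibility theorem \cite{dunwoody1985accessibility}, in the graph-theoretic reformulation that the paper notes was ``first noted in \cite{mosher2003quasi}.'' Your outline is a faithful high-level account of Dunwoody's track argument, and you correctly flag the accessibility bound on orbits of pairwise non-parallel disjoint tracks as the deep ingredient to be cited rather than reproved --- so in substance you are deferring to the same source the paper defers to, which is consistent with the paper's treatment. One small caution: the claim that the full $\mathrm{Aut}(X)$-orbit of the new track $\tau'$ automatically ``consists of disjoint compact tracks'' is not immediate (translates of $\tau'$ under the stabiliser of its component can intersect it), and making the system disjoint and $\mathrm{Aut}(X)$-invariant is itself part of Dunwoody's pattern theory (passing to minimal/thin tracks in the given $\mathbb{Z}_2$-classes); since you already invoke ``reduced patterns'' this is a presentational rather than substantive gap.
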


\reldunaccessibility
\begin{proof}
As in Remark \ref{rem:remetrise}, we can remetrise $G/H$ to be a discrete geodesic metric space, which we denote $Y$.
It follows from Proposition \ref{prop:base space acyclic} that $Y$ is a coarsely 1-acyclic. Thus Lemma \ref{lem:acyclic rips complex} ensures that $P_s(Y)$ is  $1$-acyclic for some sufficiently large $s$.   Let $X\coloneqq P_s(Y)^{(2)}$. By applying Theorem \ref{thm:dunwoody accessibility} to $X$, we see that there exists a collection of finite disjoint tracks $\{\tau_i\}$, containing finitely many $G$-orbits, such that every component of $X\setminus \cup \{\tau_i\}$ has at most 1 end.

Let $T$ be the dual tree to $\{\tau_i\}$. By subdividing edges if necessary, we may assume that $G \curvearrowright T$ without edge inversions. We claim that the stabiliser of a track $\tau_i$, hence an edge of $T$, is commensurable to $H$. Indeed, let $C\subseteq X^{(0)}$ be the set of finitely many vertices  that are contained in a simplex which intersects $\tau$. The pointwise stabiliser of $C$ is commensurable to $H$ and stabilises $\tau_i$. Moreover, the setwise stabiliser of $C$ contains the stabiliser of $\tau_i$ and is contained in  finitely many left $H$-cosets as the track $\tau_i$ is finite and $X$ is locally finite. Thus the stabiliser of $\tau_i$ is commensurable to $H$.

Suppose $C$ is a  component of $X\setminus \cup \{\tau_i\}$ corresponding to some $v\in VT$. Let $X_v$ denote the subcomplex of $X$ consisting of the closure of cells that intersect $C$.  As $X_v$ is connected and  $G_v$ acts cocompactly on $X_v$ with  cell stabilisers commensurable to $H$, hence finitely generated, it follows that $G_v$ is finitely generated.

Note that $C$ has finite Hausdorff distance from $Y_v\coloneqq \{gH\mid g\in G_v\}\subseteq X$.  
Observe that the vertex stabiliser $G_v$ contains some adjacent edge group, i.e. a subgroup commensurable to $H$. Thus $H_v\coloneqq H\cap G_v$ is a finite index subgroup of $H$ so that $H_v\alnorm G_v$. The map  $\phi:G/H_v\rightarrow G/H=X^{(0)}$ defined by $gH_v\mapsto gH$ is a quasi-isometry. Now observe that $\phi^{-1}(Y_v)=\{gH_v\mid g\in G_v \}$. Since $C$ has at most one end, the quotient space $G_v/H_v$ has at most one end, hence by  Corollaries \ref{cor:ends} and \ref{cor:ends tree vs space}, $G_v$ cannot split over any subgroup commensurable to $H$.
\end{proof}

\section{QI rigidity of groups containing almost normal subgroups}\label{sec:mainthm}

We are now in a position to prove Theorem \ref{thm:main intro}. We actually prove a more general theorem.

\begin{thm}\label{thm:main fib bundle}
Let $p:X\rightarrow B$ be a quasi-homogeneous coarse fibre bundle with fibre $F$ such that:
\begin{enumerate}
\item $X$ is  a coarsely uniformly $n$-acyclic metric space;
\item $B$  has at least three ends;
\item $F$ is a coarse $PD_n$ space.
\end{enumerate}

Suppose $G$ is a finitely generated group quasi-isometric to $X$ and that $K\geq 1$ and $A\geq 0$. Then there is a constant $C$ and a subgroup $H\alnorm G$ such that the following holds: for every $(K,A)$-quasi-isometry $f:G\rightarrow X$  and $g\in G$,  \[d_\Haus(f(gH),D_b)\leq C\] for some $b\in B$.
\end{thm}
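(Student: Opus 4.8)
The plan is to transport the coarse fibre-bundle structure from $X$ to $G$ via the quasi-isometry $f$, extract a coarse $PD_n$ subspace of $G$ from the pulled-back fibre using coarse separation, and then promote that subspace to a subgroup using Theorem~\ref{thm:essential 3-sep}. First I would use Proposition~\ref{prop:coarse bundle QI} to see that $p\circ f:G\to B$ is again a coarse bundle with fibre $F$, and moreover that it inherits quasi-homogeneity since $f$ conjugates the fibre-preserving quasi-isometries of $X$ to fibre-preserving quasi-isometries of $G$ (using Lemma~\ref{lem:coarse inverse fibre-preserving}). Fix a basepoint $b_0\in B$ and let $W\coloneqq (p\circ f)^{-1}(N_E(b_0))$ be the corresponding fibre in $G$; by construction $d_\Haus(f(W),D_{b_0})<\infty$, so $W$ is a coarse $PD_n$ subspace of $G$ (coarse $PD_n$-ness is a quasi-isometry invariant, and $D_{b_0}$ is a distorted copy of $F$).

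Next I would show that $W$ coarsely separates $G$ into three essential coarse complementary components. Since $B$ has at least three ends, Corollary~\ref{cor:ends} tells us that $D_{b_0}$ coarsely $3$-separates $X$, hence $f(W)$, hence $W$ coarsely $3$-separates $G$; concretely, Lemma~\ref{lem:coarse bundle ccc}(\ref{lem:coarse bundle ccc2}) produces the three deep coarse complementary components of $W$ as preimages under $p\circ f$ of three deep coarse disjoint complementary components of $b_0$ in $B$. To upgrade "deep" to "essential" I would invoke the Kapovich--Kleiner coarse Jordan separation theorem together with Lemma~\ref{lem:essential if contains half-space}: $F$ is coarse $PD_n$ and $X$ is coarsely uniformly $n$-acyclic, so I need each of the three components of $W$ in $G$ to contain a coarse $PD_{n+1}$ half-space with boundary $W$. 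This should follow because, locally near a fibre, the total space of a coarse $PD_n$-bundle over a geodesic (a ray in $B$ landing in one of the ends) is a coarse $PD_{n+1}$ space, and $W$ coarsely separates it into two halves. Once the three components are essential, Theorem~\ref{thm:essential 3-sep} (valid since $G$ is of type $FP_{n+1}$, which is implied by being QI to a coarsely uniformly $n$-acyclic group of the appropriate finiteness type) gives a subgroup $H\leq G$ with $d_\Haus(H,W)<\infty$.

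It remains to check that $H$ is almost normal and that the Hausdorff-distance bound is uniform over all $(K,A)$-quasi-isometries $f$ and all $g\in G$. For almost normality I would use Proposition~\ref{prop:coset characterisation}: for $g\in G$, left-translation by $g$ is an isometry of $G$, and the quasi-homogeneity of the bundle $p\circ f$ means that this isometry is (uniformly) fibre-preserving, so it sends $W$ to within finite Hausdorff distance of another fibre $W'$, which by the Packing Theorem~\ref{thm:packing} is within finite Hausdorff distance of $gH$; chasing this shows $gH$ and $H$ are at finite Hausdorff distance, so $H\alnorm G$. For the uniform bound: the quasi-isometry constants of the induced bundle $p\circ f:G\to B$ depend only on $K$, $A$ and the bundle data of $p:X\to B$ (via Lemma~\ref{lem:technical pullback}); the constants in the coarse-separation and essentiality arguments, and the constant in Theorem~\ref{thm:essential 3-sep}, then depend only on these; and finally $d_\Haus(f(gH),D_b)$ is controlled by $d_\Haus(H,W)$, the QI constants of $f$, and the constant witnessing $d_\Haus(f(W),D_{b_0})<\infty$, all of which are uniform.

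The main obstacle I expect is the essentiality step: showing that each deep coarse complementary component of $W$ in $G$ is in fact essential, i.e. finding a coarse $PD_{n+1}$ half-space inside it. This requires understanding the coarse topology of the portion of the total space lying over a ray in $B$ heading into one of the ends, and arguing that this portion is coarse $PD_{n+1}$ with the fibre as a coarsely separating coarse $PD_n$ subspace — essentially a relative/coarse version of the fact that an $F$-bundle over $[0,\infty)$ is a $PD_{n+1}$ manifold-with-boundary. Getting this to work coarsely, with all constants uniform and without a genuine geometric structure, is the technical heart of the argument and is presumably where the bulk of Theorem~\ref{thm:main fib bundle}'s proof (and the auxiliary results it cites from \cite{kapovich2005coarse} and \cite{margolis2018quasi}) is spent.
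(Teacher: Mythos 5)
Your scaffolding is right and matches the paper: identify the image of a fibre as a coarse $PD_n$ subspace of $G$, show it is coarsely $3$-separated by essential components (via Kapovich--Kleiner and Lemma~\ref{lem:essential if contains half-space}), and then invoke Theorem~\ref{thm:essential 3-sep} to produce the subgroup $H$. Your sketch of the essentiality step is also roughly what the paper does in Proposition~\ref{prop:deep imp essential}: it pulls the bundle back over a \emph{bi-infinite} geodesic (not just a ray) and cites Theorem~11.3 of \cite{kapovich2005coarse} to get a coarse $PD_{n+1}$ subspace, of which one half is the desired half-space.

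The genuine gap is in your treatment of almost normality and the uniform bound, and it is not a minor technicality --- it is where most of the paper's work lives. You write that ``the quasi-homogeneity of the bundle $p\circ f$ means that [left-translation by $g$] is (uniformly) fibre-preserving.'' That does not follow from the definition. Quasi-homogeneity guarantees that for any two points there exists \emph{some} fibre-preserving quasi-isometry taking one near the other; it says nothing about whether a \emph{given} (quasi-)isometry --- such as left-translation by $g$, or an arbitrary $(K,A)$-quasi-isometry $f$ --- respects the fibre structure. Establishing that it does is exactly the content of Lemma~\ref{lem:coarsely characteristic} (every self-quasi-isometry of $X$ moves $D_{b_0}$ a bounded Hausdorff distance) and, more quantitatively, Lemma~\ref{lem:unif set close to fibre} (every ``uniform $PD_n$-set'' in $X$ lies uniformly close to genuine fibres). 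These in turn rest on the cohomological machinery of Lemmas~\ref{lem:uniform set parameters} and~\ref{lem:Wfinitecrit}: uniformly supported cohomology classes attached to each member of the family, a non-crossing condition, and a finiteness argument. Your appeal to the Packing Theorem and to ``constants depend only on \ldots'' cannot substitute for this, because nothing you have written rules out a $(K,A)$-quasi-isometry $f$ for which $f(gH)$ is far from every $D_b$. Relatedly, you identify the essentiality step as ``the technical heart of the argument,'' but in the paper that step is relatively quick; the heart is the rigidity statement that your argument quietly assumes.
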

To simplify the argument, we only prove Theorem \ref{thm:main fib bundle} in the case where $X$ is coarsely uniformly $n$-connected. This is all that is required if one wishes to prove Theorem \ref{thm:main intro}. The proof is almost identical when $X$ is not necessarily  coarsely uniformly $n$-connected, except that we  need to use the theory of metric complexes rather than metric cell complexes (see Remark \ref{rem:metric complexes}). 

Combining Proposition \ref{prop:fibreqi induces qi between fibres} and Lemma \ref{lem:fibre-preserving qi imp qi between base spaces} with Theorem \ref{thm:main fib bundle} gives the following:
\begin{cor}\label{cor:main cor}
Let $X, F, B, G, H$ be as in Theorem \ref{thm:main fib bundle}. Then $H$ is quasi-isometric to $F$ and $G/H$ is quasi-isometric to $B$.
\end{cor}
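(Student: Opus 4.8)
The plan is to read the conclusion of Theorem~\ref{thm:main fib bundle} as saying that a quasi-isometry $f\colon G\to X$ is \emph{fibre-preserving}, and then invoke the two transfer results already established: Proposition~\ref{prop:fibreqi induces qi between fibres} to identify the fibres and Lemma~\ref{lem:fibre-preserving qi imp qi between base spaces} to identify the base spaces. Since all the hard coarse-topological work is contained in Theorem~\ref{thm:main fib bundle}, what remains is essentially bookkeeping.

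By Remark~\ref{rem:remetrise} we may assume all spaces involved are discrete geodesic metric spaces. Fix a quasi-isometry $f\colon G\to X$ (one exists since $G$ is quasi-isometric to $X$), let $H\alnorm G$ and $C$ be as provided by Theorem~\ref{thm:main fib bundle} for the quasi-isometry constants of $f$, and let $\bar f$ be a coarse inverse of $f$. First I would check that $H$ is finitely generated: Theorem~\ref{thm:main fib bundle} applied with $g=e$ gives $d_{\Haus}(f(H),D_{b_0})\le C$ for some $b_0\in B$, so $d_{\Haus}(H,\bar f(D_{b_0}))<\infty$; as $\bar f(D_{b_0})$ is a quasi-geodesic subspace of $G$, Lemma~\ref{lem:coarse embedding} shows that $H$ is a coarsely embedded quasi-geodesic subspace, so any $h\in H$ is a product of at most $\eta(|h|)$ elements of the finite set $H\cap B_G(e,t)$, whence $H$ is finitely generated. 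Proposition~\ref{prop:almost normal subgroups coarse bundles} then tells us that the quotient map $p_G\colon G\to G/H$, $g\mapsto gH$, is a coarse bundle with fibre $H$; moreover, inspecting its proof, $p_G$ is a $(K,0,0)$-coarse bundle, so the fibre of $p_G$ over a coset $gH$ is the coset $gH$ itself.

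With this identification, the conclusion $d_{\Haus}(f(gH),D_{b})\le C$ of Theorem~\ref{thm:main fib bundle} says precisely that $f$ is a $C$-fibre-preserving quasi-isometry from the coarse bundle $p_G\colon G\to G/H$ to the coarse bundle $p\colon X\to B$. Lemma~\ref{lem:fibre-preserving qi imp qi between base spaces} therefore yields a quasi-isometry $\hat f\colon G/H\to B$, so $G/H$ is quasi-isometric to $B$. For the fibres, I would apply Proposition~\ref{prop:fibreqi induces qi between fibres} with $Z=H$ and $W=F$, taking $s\colon H\to G$ to be the inclusion (a coarse embedding by Lemma~\ref{lem:sbgpis coarsely embedding}) and $s'\colon F\to X$ a coarse embedding onto the fibre $D_{b_0}$ furnished by Definition~\ref{defn:coarse bundle}(2); the required hypothesis holds since $d_{\Haus}(f(\im s),\im s')\le d_{\Haus}(f(H),D_{b_0})+d_{\Haus}(D_{b_0},\im s')$ is finite by the above together with the bundle constants. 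Proposition~\ref{prop:fibreqi induces qi between fibres} then provides a quasi-isometry $H\to F$, so $H$ is quasi-isometric to $F$.

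There is no genuine obstacle here beyond Theorem~\ref{thm:main fib bundle} itself. The only points that need care are the verification that $H$ is finitely generated — so that $G$ genuinely carries the coarse bundle structure $G\to G/H$ — and the observation that the fibre of $p_G$ over $gH$ is exactly the coset $gH$, which is what makes the conclusion of Theorem~\ref{thm:main fib bundle} literally the fibre-preserving hypothesis demanded by Lemma~\ref{lem:fibre-preserving qi imp qi between base spaces} and Proposition~\ref{prop:fibreqi induces qi between fibres}.
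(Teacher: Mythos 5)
Your proof is correct and takes the same route as the paper, which simply remarks that the corollary follows by combining Proposition~\ref{prop:fibreqi induces qi between fibres} and Lemma~\ref{lem:fibre-preserving qi imp qi between base spaces} with Theorem~\ref{thm:main fib bundle}. Your write-up usefully fills in the implicit verification that $H$ is finitely generated (needed to invoke Proposition~\ref{prop:almost normal subgroups coarse bundles} so that $G\to G/H$ is a coarse bundle) and that the fibre over a coset $gH$ is $gH$ itself, making the conclusion of Theorem~\ref{thm:main fib bundle} literally the fibre-preserving hypothesis required.
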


We explain how to deduce Theorem \ref{thm:main intro}  from Theorem \ref{thm:main fib bundle}:
\begin{proof}[Proof of Theorem \ref{thm:main intro}]
Suppose that $G$ is a group of type $F_{n+1}$ containing an almost normal coarse $PD_n$ subgroup $H$ with $e(G/H)\geq 3$. Then Theorem \ref{thm:finitenesscoarse} tells us that $G$ is coarsely uniformly $n$-connected and Proposition \ref{prop:almost normal subgroups coarse bundles} ensures that  $p:G\rightarrow G/H$ is a quasi-homogeneous coarse bundle. Thus $p:G\rightarrow G/H$ satisfies the hypotheses of Theorem \ref{thm:main fib bundle}. Using Theorem \ref{thm:main fib bundle} and Corollary \ref{cor:main cor}, we see that if $G'$ is a finitely generated group quasi-isometric to $G$, it contains an almost normal subgroup $H'$ such that $H$ is quasi-isometric to $H'$ and $G/H$ is quasi-isometric to $G'/H'$. In particular, $H'$ is coarse $PD_n$ as required.
\end{proof}

For the remainder of this section, we fix a coarse bundle $p:X\rightarrow B$ as in Theorem \ref{thm:main fib bundle}. The following proposition allows us to apply the coarse topological methods of \cite{margolis2018quasi}.  We fix some $b_0\in B$ and let $D_{b_0}\subseteq X$ be the corresponding fibre, i.e. $p^{-1}(N_E(b_0))$ for $E$ sufficiently large.
\begin{prop}\label{prop:deep imp essential}
Every deep coarse complementary component of $D_{b_0}$ is essential.
\end{prop}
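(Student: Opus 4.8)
The plan is to reduce the statement to the criterion for essentiality provided by Lemma \ref{lem:essential if contains half-space}, namely that a coarse complementary component of a coarse $PD_n$ subspace is essential as soon as it contains a coarse $PD_{n+1}$ half-space with boundary that subspace. So let $C$ be a deep coarse complementary component of $D_{b_0}$. By Lemma \ref{lem:coarse bundle ccc}\eqref{lem:coarse bundle ccc1}, $C' := N_E(p(C))$ is a deep coarse complementary component of the point $b_0$ in $B$, and $d_\Haus(p^{-1}(C'),C)<\infty$. Since $B$ has at least three ends — in particular infinitely many ends by the Hopf-type dichotomy of Corollary \ref{cor:ends hopf} — the deep complementary component $C'$ of a point in $B$ contains a deep complementary component $C''$ of a point which is, up to finite Hausdorff distance, a half-line (one-ended subspace) in $B$: concretely, pick a bi-infinite discrete geodesic through $b_0$ lying in $C'$ away from $b_0$, or more simply observe that $B$ being infinite-ended has a point $b_1$ whose removal separates off arbitrarily many deep components, one of which is coarsely contained in $C'$; iterating, one finds inside $C'$ a ray-like one-ended quasi-geodesic. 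I will instead argue directly: since $C'$ is deep in $B$, there is a discrete geodesic ray $\gamma$ based near $b_0$ with $\gamma \subseteq N_{\text{const}}(C')$.

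\textbf{Key steps.} First, produce a coarsely embedded line or ray $\lambda : \bR_{\geq 0} \to B$ (or $\bZ \to B$) whose image lies, outside a bounded set, inside $C'$; this is possible because $C'$ is a deep complementary component of a point in a geodesic bounded-geometry space, so it contains points arbitrarily far out, which may be connected back toward $b_0$ by geodesics staying coarsely in $C'$ (using that $\partial C' \subseteq N_{\text{const}}(b_0)$), and a diagonal/limit argument extracts a ray. Second, form the pullback bundle $p_\lambda : X_\lambda \to \im(\lambda)$ via Proposition \ref{prop:pullbacks exist}; its total space $X_\lambda$ coarsely embeds in $X$ with image within finite Hausdorff distance of $p^{-1}(N_E(\im\lambda))$, which lies (outside a bounded set) inside $p^{-1}(C')$, hence coarsely inside $C$. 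Third, identify $X_\lambda$ up to quasi-isometry: it is a coarse bundle over a ray (or line) with fibre the coarse $PD_n$ space $F$, and since the base is quasi-isometric to $[0,\infty)$ and the bundle is (quasi-)trivial over such a contractible base, $X_\lambda$ is quasi-isometric to $F \times [0,\infty)$, which is a coarse $PD_{n+1}$ half-space with boundary $D_{b_0}$ — here one invokes that a product of a coarse $PD_n$ space with $[0,\infty)$ is a coarse $PD_{n+1}$ half-space (this is part of the Kapovich--Kleiner framework cited before Lemma \ref{lem:essential if contains half-space}), and that its boundary fibre $D_{\lambda(0)}$ is at finite Hausdorff distance from $D_{b_0}$ because $\lambda(0)$ is near $b_0$ and of fibres vary coarsely Lipschitzly in the base (Definition \ref{defn:coarse bundle}\eqref{defn:coarse bundle 3}). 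Fourth, conclude via Lemma \ref{lem:essential if contains half-space}: $C$ contains a coarse $PD_{n+1}$ half-space with boundary $D_{b_0}$, hence $C$ is essential.

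\textbf{Main obstacle.} The step I expect to require the most care is the third one: showing that the pullback bundle over a ray is quasi-isometric to $F \times [0,\infty)$ and that this product genuinely realizes a coarse $PD_{n+1}$ half-space with boundary $D_{b_0}$ (rather than merely some coarse $PD_{n+1}$ space coarsely separated by $D_{b_0}$). One must check that the coarse bundle structure over $[0,\infty)$ can be trivialized up to quasi-isometry — a ``coarse no-monodromy'' statement over a coarsely contractible base, which should follow from Proposition \ref{prop:fibreqi induces qi between fibres} applied to consecutive fibres, giving uniform quasi-isometries $D_{\lambda(t)} \to D_{\lambda(t+1)}$ that can be composed — and then that $F \times [0,\infty)$, with $F$ coarse $PD_n$, is a coarse $PD_{n+1}$ half-space in the precise sense needed by Lemma \ref{lem:essential if contains half-space}; this is where I would lean on the coarse Jordan separation theorem of \cite{kapovich2005coarse} recalled just above that lemma. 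A secondary technical point is the existence of the ray $\lambda$ with image coarsely inside $C'$; this is where the hypothesis that $B$ has at least three (equivalently infinitely many) ends is used, guaranteeing that deep complementary components of points are themselves ``large enough'' to contain rays, and it is exactly the place where the $e(G/H)\geq 3$ hypothesis of Theorem \ref{thm:main intro} enters the coarse-topological machinery.
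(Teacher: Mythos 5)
Your reduction to Lemma \ref{lem:essential if contains half-space} and the first two steps (passing to $C'$ in the base via Lemma \ref{lem:coarse bundle ccc}, extracting a ray/line in $B$ coarsely inside $C'$, and forming a pullback bundle via Proposition \ref{prop:pullbacks exist}) match the paper's proof. However, your third step contains a genuine gap that the paper avoids in a crucial way.

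You claim that the pullback bundle over a ray is quasi-isometric to $F\times[0,\infty)$ because the base is coarsely contractible, and you propose to prove this ``coarse no-monodromy'' statement by composing uniform quasi-isometries $D_{\lambda(t)}\to D_{\lambda(t+1)}$ between consecutive fibres. This fails: composing $n$ uniform $(K,A)$-quasi-isometries produces a $(K^n,\ast)$-quasi-isometry, with constants degenerating as $n\to\infty$, so the resulting map is not a quasi-isometry at all. And the trivialization itself is simply false in general --- this is exactly the phenomenon the fibre distortion function in Section~\ref{sec:applications} is designed to capture. For a concrete counterexample, take $G=BS(1,2)$ with $H=\langle a\rangle$ and a ray $\{t^nH : n\geq 0\}$ in $G/H$; the pullback over this ray is coarsely a horoball in $\mathbb{H}^2$, which is not quasi-isometric to $\mathbb{Z}\times[0,\infty)$.

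The paper sidesteps this by pulling back over a \emph{bi-infinite} geodesic $l$ (produced via Arzel\`a--Ascoli and quasi-homogeneity) and appealing directly to Theorem~11.3 of \cite{kapovich2005coarse}, which says the total space $X_l$ of a (possibly nontrivial) coarse bundle over a line with coarse $PD_n$ fibres is itself a coarse $PD_{n+1}$ space --- no triviality is required or asserted. The coarse Jordan separation theorem then says that $D_{b_0}$ (a coarse $PD_n$ subspace) separates the coarse $PD_{n+1}$ space $\im(\hat\phi)$ into exactly two deep components, each a half-space; the one lying over the ray $l\cap C'$ sits inside $C$, and Lemma \ref{lem:essential if contains half-space} finishes the argument. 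So the missing idea in your proposal is precisely the input you cannot reconstruct from triviality: the Kapovich--Kleiner structural theorem that handles nontrivial coarse bundles over a line. You should also note that the paper needs a full line, not merely a ray, to invoke that theorem; the ray only re-enters at the end when selecting which of the two Jordan half-spaces lies in $C$.
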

\begin{proof}
Let $C$ be a deep coarse complementary component of $D_{b_0}$.  Lemma \ref{lem:coarse bundle ccc}, ensures $C'\coloneqq N_E(p(C))$ is a deep coarse complementary component of $b_0$ for some $E$. 
By Proposition \ref{prop:quasi geodesic}, we can pick $r$ sufficiently large so that $P^1_r(B)$ is connected and the inclusion $B\rightarrow P^1_r(B)$ is a quasi-isometry.
By the Arzel\`a-Ascoli theorem and quasi-homogeneity, there is a bi-infinite geodesic $l$ in  $P^1_r(B)$ such that $l\cap C'$  contains the vertex set of a ray $r$ of $l$. Since $\phi:l\rightarrow B$ is a coarse embedding, we use Proposition \ref{prop:pullbacks exist} to deduce the existence of a pullback bundle $p_l:X_l\rightarrow l$ with coarse embedding $\hat\phi: X_l\rightarrow X$.

Theorem 11.3 of \cite{kapovich2005coarse} now tells us that   $X_l$ is a coarse $PD_{n+1}$ space. Since being a  coarse $PD_{n+1}$ space is invariant under coarse equivalence (see \cite[Remark 4.4]{margolis2018quasi}),  it follows that $\im(\widehat{\phi})$ is a coarse $PD_{n+1}$ space.  As $l\cap C'$ contains a ray $r$, it follows that $\im(\widehat \phi)\cap C$ contains a coarse $PD_{n+1}$ half-space  whose  boundary has finite Hausdorff distance from  $D_{b_0}$.  Thus Lemma \ref{lem:essential if contains half-space} ensures that  $C$ is essential.
\end{proof}
Since $B$ has at least three ends, we use Lemma \ref{lem:coarse bundle ccc} and Proposition \ref{prop:deep imp essential} to deduce the following:
\begin{cor}\label{cor:3 separating}
The fibre $D_{b_0}$ coarsely separates $X$ into three essential coarse complementary components.
\end{cor}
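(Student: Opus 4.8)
The plan is to combine the geometric input about $B$ having at least three ends with the two facts established just above, namely Lemma \ref{lem:coarse bundle ccc} (which transfers coarse complementary components back and forth between $X$ and $B$ via the coarse bundle map $p$) and Proposition \ref{prop:deep imp essential} (which upgrades ``deep'' to ``essential'' for coarse complementary components of the fibre $D_{b_0}$). The corollary is essentially a bookkeeping statement once these ingredients are in hand.

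First I would use the hypothesis $e(B)\geq 3$. By definition of the number of ends, and by Remark \ref{rem:remetrise} allowing us to assume $B$ is a discrete geodesic metric space, $B$ is $3$-separated by a point; so there is a point $b_0\in B$ (and we take the same $b_0$ fixed before Proposition \ref{prop:deep imp essential}) together with three deep, coarse disjoint, coarse complementary components $C_1',C_2',C_3'$ of $b_0$ in $B$. (If the $b_0$ witnessing $3$-separation differs from the one already fixed, I would note that by quasi-homogeneity all fibres are uniformly quasi-isometric, or simply observe that the number of ends is witnessed at every point of a vertex-transitive-like space; alternatively one can just fix $b_0$ to be this witnessing point throughout.) Then I would apply part (\ref{lem:coarse bundle ccc2}) of Lemma \ref{lem:coarse bundle ccc}: each $C_i\coloneqq p^{-1}(C_i')$ is a coarse complementary component of $D_{b_0}$ in $X$, and it is deep because $C_i'$ is deep. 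Coarse disjointness of the $C_i$ follows since $C_i\cap C_j=p^{-1}(C_i'\cap C_j')$ is the preimage of a shallow set, hence shallow (a preimage under $p$ of a bounded neighbourhood of $b_0$ has finite Hausdorff distance from $D_{b_0}$ by condition (\ref{defn:coarse bundle 3}) of Definition \ref{defn:coarse bundle}, so is shallow as a coarse complementary component of $D_{b_0}$).

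Next I would invoke Proposition \ref{prop:deep imp essential}: since $C_1,C_2,C_3$ are deep coarse complementary components of $D_{b_0}$, each is essential. Hence $D_{b_0}$ coarsely separates $X$ into (at least) three essential, coarse disjoint, coarse complementary components, which is exactly the assertion of Corollary \ref{cor:3 separating}.

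I do not expect any real obstacle here; the only point requiring a little care is the compatibility of the basepoint $b_0$ used to witness $e(B)\geq 3$ with the basepoint fixed for $D_{b_0}$, and the verification that coarse disjointness is preserved under $p^{-1}$ — both of which are immediate from the coarse-bundle axioms and Lemma \ref{lem:coarse bundle ccc}. The conceptual work has all been done in Proposition \ref{prop:deep imp essential}.
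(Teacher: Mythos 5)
Your proof is correct and follows exactly the route the paper indicates: the paper's ``proof'' is only the sentence preceding the corollary (``Since $B$ has at least three ends, we use Lemma \ref{lem:coarse bundle ccc} and Proposition \ref{prop:deep imp essential} to deduce the following''), and your writeup is a careful fleshing-out of that sentence, including the two small points the paper leaves implicit --- coarse disjointness of the pulled-back components and the harmless choice of basepoint $b_0$ --- both of which you handle correctly.
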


\begin{defn}\label{defn:unif set}
 A collection $\cW$ of subsets of $X$ is said to be a \emph{uniform $PD_n$-set} if there exist constants $K\geq 1$ and $A\geq 0$ such that the following holds: for every $W\in \mathcal{W}$, there is a $(K,A)$-quasi-isometry  $f_W:X\rightarrow X$ with $d_\Haus( f_W(W),D_{b_0})\leq A$.
\end{defn}

The set $\cW\coloneqq \{D_b\mid b\in B\}$ is a uniform $PD_n$-set because the coarse fibre bundle $p:X\rightarrow B$ is quasi-homogeneous. Our motivation for the definition of uniform $PD_n$-sets is the following lemma:
\begin{lem}\label{lem:unif set close to fibre}
Let $\cW$ be a uniform $PD_n$-set. Then there is a constant $R$ such that for every $W\in \cW$, there is a $b\in B$ such that $d_\Haus(D_b,W)\leq R$. 
\end{lem}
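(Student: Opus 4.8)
The plan is to reduce, via coarse inverses, to the statement that the image of the fixed fibre $D_{b_0}$ under a single quasi-isometry with controlled constants is uniformly close to \emph{some} fibre, and then to identify that fibre by combining the coarse--topological input of Section \ref{sec:preliminaries} with the coarse bundle structure.

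First I would reduce. Let $K\geq 1$, $A\geq 0$ be constants witnessing that $\cW$ is a uniform $PD_n$-set, fix $W\in\cW$, and choose a $(K,A)$-quasi-isometry $f_W:X\to X$ with $d_\Haus(f_W(W),D_{b_0})\leq A$. A coarse inverse $g:=\overline{f_W}$ is a $(K_0,A_0)$-quasi-isometry with $K_0,A_0$ depending only on $K,A$, and $d_\Haus(W,g(D_{b_0}))\leq A_1$ for some $A_1=A_1(K,A)$; so it is enough to find $b\in B$ with $d_\Haus(g(D_{b_0}),D_b)$ bounded by a constant depending only on $K,A$ and the constants of the bundle $p:X\to B$. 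Now $g(D_{b_0})$ is again a coarse $PD_n$ subspace, with structure constants controlled by $K_0,A_0$ and by those of $D_{b_0}$ (uniform over all fibres, by quasi-homogeneity). By Corollary \ref{cor:3 separating} together with the quasi-isometry invariance of coarse separation, deepness and essentialness, $g(D_{b_0})$ coarsely $3$-separates $X$ into essential coarse complementary components; and pushing forward by $g$ the coarse $PD_{n+1}$ half-spaces constructed in the proof of Proposition \ref{prop:deep imp essential}, each such component contains a coarse $PD_{n+1}$ half-space whose boundary lies at uniformly bounded Hausdorff distance from $g(D_{b_0})$. At this point Lemma \ref{lem:cohom class and non-crossing} applies to $g(D_{b_0})$ and three such components, producing a constant $D$ and non-zero cohomology classes that are localised near $g(D_{b_0})$ (each representable by a cocycle supported in $N_D(x)$ for every $x\in g(D_{b_0})$) and subject to the non-crossing property.

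The heart of the matter --- and the step I expect to be the main obstacle --- is to show that $p(g(D_{b_0}))$ has uniformly bounded diameter in $B$. If it did not, then since $g(D_{b_0})$ is coarsely connected one could join two of its points lying over far-apart points of $B$ by a short $t$-chain inside $g(D_{b_0})$, projecting to a short chain in $B$; as $B$ has at least three ends, Lemma \ref{lem:coarse bundle ccc} then forces the essential components and half-spaces of $g(D_{b_0})$ to be spread along an unbounded, branching portion of the tree-like base. This is incompatible with $g(D_{b_0})$ being a coarse $PD_n$ space whose coarse cohomology is concentrated near itself: making this precise is exactly where the coarse--topological machinery of \cite{margolis2018quasi} --- the localised classes and the non-crossing property of Lemma \ref{lem:cohom class and non-crossing} --- enters, and it should show that an unbounded $p(g(D_{b_0}))$ would force $g(D_{b_0})$ to coarsely carry a coarse $PD_{n+1}$ subspace, a contradiction. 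Granting this, $p(g(D_{b_0}))\subseteq N_r(b)$ for some $b\in B$ and uniform $r$, and then condition (\ref{defn:coarse bundle 3}) of Definition \ref{defn:coarse bundle} gives $p^{-1}(N_r(b))\subseteq N_{r'}(D_b)$ for some $r'$ depending only on $r$ and the constants of the bundle; hence $g(D_{b_0})\subseteq N_{R_1}(D_b)$ for a uniform $R_1$.

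Finally, $g(D_{b_0})$ and $D_b$ are both coarse $PD_n$ subspaces of $X$ with $g(D_{b_0})\subseteq N_{R_1}(D_b)$, so the Packing Theorem (Theorem \ref{thm:packing}) yields $D_b\subseteq N_{R_2}(g(D_{b_0}))$; inspecting its proof, $R_2$ depends only on $R_1$ and on the (uniform) structure constants of the two coarse $PD_n$ spaces, and is therefore uniform. Hence $d_\Haus(g(D_{b_0}),D_b)\leq\max(R_1,R_2)$, and combined with the reduction of the first step this bounds $d_\Haus(W,D_b)$ by a constant independent of the choice of $W\in\cW$, as required.
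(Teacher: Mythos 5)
Your proposal diverges substantially from the paper's argument and contains a genuine gap that you yourself flag (``the step I expect to be the main obstacle'', ``Granting this''). The critical claim --- that the non-crossing cohomological constraints force $p(g(D_{b_0}))$ to have \emph{uniformly} bounded diameter --- is not established. The sketch you give (``incompatible with $g(D_{b_0})$ being a coarse $PD_n$ space whose coarse cohomology is concentrated near itself'') does not identify a mechanism: the localised cohomology classes of Lemma \ref{lem:cohom class and non-crossing} live near $g(D_{b_0})$ essentially by construction, so their existence alone does not constrain the size of $p(g(D_{b_0}))$. Note also that once one knows $d_\Haus(g(D_{b_0}),D_{b_0})<\infty$ (which follows from Lemma \ref{lem:coarsely characteristic} and is non-uniform), the boundedness of $p(g(D_{b_0}))$ is automatic; the entire difficulty of the lemma is the \emph{uniformity}, and nothing in your sketch shows how the cohomological machinery would convert a bound depending on the particular quasi-isometry into one depending only on its constants.

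The paper's proof takes a fundamentally different route, and the ingredient your proposal is missing is Lemma \ref{lem:Wfinitecrit}. Rather than trying to directly produce a uniform bound, the paper argues by contradiction: if no uniform $R$ existed one could find a sequence $(W_i)\subseteq\cW$ with $\min_{b}d_\Haus(D_b,W_i)$ unbounded, but each $W_i$ at finite distance from $D_{b_0}$ by Lemma \ref{lem:coarsely characteristic}. Using quasi-homogeneity one picks $A$-fibre-preserving quasi-isometries $f_{x_i}$ moving each $W_i$ to a set $\widehat W_i\coloneqq f_{x_i}(W_i)$ near a fixed basepoint $x_0$; the fibre-preserving property (and coarse inverses) is then used to show that $d_\Haus(\widehat W_i,\widehat W_j)>D+E$ for $i\neq j$. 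This produces an infinite uniform $PD_n$-set $\{\widehat W_i\}$ satisfying the hypotheses of Lemma \ref{lem:Wfinitecrit}, contradicting that lemma. Lemma \ref{lem:Wfinitecrit} is where the cohomological input actually gets used --- finitely many cohomology classes can be supported in a fixed ball, and the pairwise-far condition forces the classes attached to distinct $\widehat W_i$ to differ --- so its conclusion is a genuine local finiteness statement, not a direct uniform distance bound. Your approach never invokes Lemma \ref{lem:Wfinitecrit}, never performs the normalization via fibre-preserving quasi-isometries from Lemma \ref{lem:coarse inverse fibre-preserving}, and replaces the compactness argument with an unproved boundedness claim. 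Separately, your appeal to a quantitative Packing Theorem is plausible but not justified by the statement in the paper (Theorem \ref{thm:packing} gives existence of $R'$, not an explicit dependence); this is a minor point beside the missing main step, but worth being aware of.
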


We can use this lemma to prove Theorem \ref{thm:main fib bundle}:
\begin{proof}[Proof of Theorem \ref{thm:main fib bundle}]
Let $X$ and $G$ be as in Theorem \ref{thm:main fib bundle}. We fix a quasi-isometry $f_0:X\rightarrow G$. Then Corollary \ref{cor:3 separating} ensures that $f_0(D_{b_0})$ is coarse $PD_n$ subset of $G$ that coarsely separates $G$ into at least three essential coarse complementary components. Theorem \ref{thm:essential 3-sep} ensures that $f_0(D_{b_0})$ has finite Hausdorff distance from a subgroup $H\leq G$. We fix $K\geq 1$ and $A\geq 0$ and let $\cF$ be the set of \emph{all} $(K,A)$-quasi-isometries from $G$ to $X$. Let $\cW\coloneqq \{ f(H)\mid f\in \cF\}$, which is clearly a uniform $PD_n$-set. Since left multiplication in $G$ is isometric, we see that $\cW=\{ f(gH)\mid f\in \cF, g\in G\}$. It follows from Lemma \ref{lem:unif set close to fibre} that there is constant $R$ such that for every $f\in \cF$ and $g\in G$, there is a $b\in B$ with $d_\Haus(D_b,f(gH))\leq R$ as required. Finally, since $d_\Haus(D_b,D_{b'})<\infty$ for any $b,b'\in B$, we see that $d_\Haus(H,gH)<\infty$ for every $g\in G$, and so Proposition \ref{prop:coset characterisation} ensures $H\alnorm G$.
\end{proof}

We now prove Lemma \ref{lem:unif set close to fibre}. As $X$ is coarsely uniformly $n$-connected, we fix some  uniformly $n$-connected metric cell complex $(X,\bX)$. 
\begin{lem} \label{lem:uniform set parameters}
Let $\mathcal{W}$ be a uniform $PD_n$-set.  Then there exist constants $D$ and $E$ such that:
\begin{enumerate}
\item \label{item:3 components} For every $W\in \mathcal{W}$, there is a decomposition of $X\setminus N_E(W)=C^W_1\cup C^W_2\cup C^W_3$ into disjoint coarse complementary components of $W$ with $\partial C^W_i\subseteq N_E(W)$ for $i=1,2,3$. 
\item \label{item:cohomsupportunif} There are distinct cohomology classes $[\alpha_1^W],[\alpha_2^W]\in \ker (H^{n+1}_c(\bX)\rightarrow H^{n+1}(\bX))$ such that for $i=1,2$ and every $x\in W$, $[\alpha_i^W]$ can be represented by a cocycle supported in $N_D(x)$.
\item For $i=1,2$, the cohomology class $[\alpha_i^W]$ can be represented by a cocycle supported in $C^W_i\backslash N_{E}(W)$.
\item\label{item:crossing} A cohomology class $[\omega]\in \ker (H^{n+1}_c(\bX)\rightarrow H^{n+1}(\bX))$ is represented by cocycles supported in any two of  $C^W_1\backslash N_{E}(W)$, $C^W_2 \backslash N_{E}(W)$ and $C^W_3 \backslash N_{E}(W)$ only if $[\omega]=[0]$. 
\end{enumerate}
\end{lem}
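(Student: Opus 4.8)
The plan is to deduce everything by transporting, along a uniformly controlled family of quasi-isometries, the structure already available for the single fibre $D_{b_0}$. First I would record the baseline: by Corollary~\ref{cor:3 separating}, $D_{b_0}$ coarsely separates $X$ into three essential coarse complementary components, so Lemma~\ref{lem:cohom class and non-crossing} applies to $D_{b_0}$ and yields constants $D_0,E_0\geq 0$, coarse disjoint essential coarse complementary components $C_1,C_2,C_3$ of $D_{b_0}$, and distinct nonzero classes $[\alpha_1],[\alpha_2]\in\ker(H^{n+1}_c(\bX)\to H^{n+1}(\bX))$ with the three stated support properties. Enlarging $E_0$ and redistributing the shallow connected components of $X\setminus N_{E_0}(D_{b_0})$ among $C_1,C_2,C_3$ (the Boolean operations on coarse complementary components of a fixed set again produce coarse complementary components), I may assume $X\setminus N_{E_0}(D_{b_0})=C_1\sqcup C_2\sqcup C_3$ is an honest partition. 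For each $W\in\cW$, Definition~\ref{defn:unif set} gives a $(K,A)$-quasi-isometry $f_W\colon X\to X$ with $d_\Haus(f_W(W),D_{b_0})\leq A$; I fix also a coarse inverse $g_W$ whose quasi-isometry and closeness constants depend only on $K,A$.

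Next I would push this partition back to $W$. Since $d_\Haus(f_W(W),D_{b_0})\leq A$, the map $g_W$ carries $D_{b_0}$ to within uniformly bounded Hausdorff distance of $W$ and carries coarse complementary components of $D_{b_0}$ to coarse complementary components of $W$, the associated constants degrading only by amounts controlled by $K,A$; moreover deepness, shallowness and essentialness are quasi-isometry invariants. I would choose a constant $E$ depending only on $K,A,E_0$ and define $C^W_i$ to be the union of the connected components of $X\setminus N_E(W)$ lying within bounded distance of $g_W(C_i)$, throwing any leftover shallow pieces into $C^W_1$; the elementary estimate $d(x,W)>E\Rightarrow d(f_W(x),D_{b_0})\geq\tfrac1K E-2A$ guarantees $f_W\bigl(X\setminus N_E(W)\bigr)\subseteq X\setminus N_{E_0}(D_{b_0})$ once $E$ is large, which is exactly what makes this bookkeeping uniform. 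This gives item~(1), with $W$ itself a coarse $PD_n$ space since this class is quasi-isometry invariant (\cite[Remark 4.4]{margolis2018quasi}).

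For the cohomological items I would invoke the machinery behind quasi-isometry invariance of coarse cohomology from \cite{margolis2018quasi}: $f_W$ induces an automorphism of $\ker(H^{n+1}_c(\bX)\to H^{n+1}(\bX))$, realised by chain-level maps that displace supports by at most a constant depending only on $K,A$ and $(X,\bX)$, so that a cocycle supported in $S$ is sent to one supported in a uniform neighbourhood of $f_W(S)$, and symmetrically for $g_W$. I set $[\alpha^W_i]$ to be the image of $[\alpha_i]$ under the automorphism induced by $g_W$. Distinctness and non-triviality persist because the map is an isomorphism; the $N_D(x)$-support property for $x\in W$ follows because $f_W(x)\in N_A(D_{b_0})$ and $[\alpha_i]$ has small support near every point of $D_{b_0}$; the $C^W_i\setminus N_E(W)$-support property follows from the neighbourhood inclusion above together with the corresponding property of $[\alpha_i]$; and for the non-crossing statement~(4), if $[\omega]$ is represented by cocycles supported in two of the sets $C^W_i\setminus N_E(W)$, then its image under the automorphism induced by $f_W$ is represented by cocycles supported in two of $C_1\setminus N_{E_0}(D_{b_0})$, $C_2\setminus N_{E_0}(D_{b_0})$, $C_3\setminus N_{E_0}(D_{b_0})$, hence is zero by part~(3) of Lemma~\ref{lem:cohom class and non-crossing} applied to $D_{b_0}$, and therefore $[\omega]=0$.

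I expect the main obstacle to be entirely quantitative: ensuring that no constant produced along the way secretly depends on $W$. Uniformity of the quasi-isometry constants of $f_W$ is the content of the definition of a uniform $PD_n$-set, and the behaviour of coarse complementary components (deepness, shallowness, essentialness, Boolean closure, passage under quasi-isometries) is governed by the parameters involved. The one genuinely delicate point is that the chain-level maps realising the induced automorphism of $\ker(H^{n+1}_c(\bX)\to H^{n+1}(\bX))$ displace supports by a bound depending only on $(K,A)$ and the fixed complex $(X,\bX)$, and not on $W$; this is precisely the quantitative form of the quasi-isometry invariance of $H^{*}_{\mathrm{coarse}}$ established in \cite{margolis2018quasi}, which I would cite in that quantitative form. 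The remaining combinatorial step of tidying $g_W(C_1),g_W(C_2),g_W(C_3)$ into an honest disjoint decomposition of $X\setminus N_E(W)$ is routine and uniform.
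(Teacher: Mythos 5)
Your proposal is correct and follows essentially the same route as the paper: both apply Corollary~\ref{cor:3 separating} and Lemma~\ref{lem:cohom class and non-crossing} once to $D_{b_0}$ and then transport the resulting components and cohomology classes along the uniform family $\{f_W\}$, using the fact that these quasi-isometries induce uniformly proper chain homotopy equivalences whose displacement constants depend only on $(K,A)$. The only cosmetic difference is that the paper defines $C^W_i\coloneqq f_W^{-1}(C_i)\setminus N_E(W)$ directly (after arranging $C_3=X\setminus(C_1\cup C_2)$) rather than passing through $g_W(C_i)$ and connected components, but these are equivalent up to bounded error.
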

\begin{proof}
We let $\{f_W\mid W\in \cW\}$ be quasi-isometries as in Definition \ref{defn:unif set}. We  recall from Corollary \ref{cor:3 separating} that $D_{b_0}$ coarsely separates $X$ into at least three essential components. Thus there exist essential coarse disjoint, coarse complementary components $C_1$, $C_2$ and $C_3$ of $D_{b_0}$, where we may assume without loss of generality that $C_3=X\setminus (C_1\cup C_2)$. By taking $E$ sufficiently large, we can easily deduce (\ref{item:3 components}) by taking  $C_i^W\coloneqq f_W^{-1}(C_i)\setminus N_E(W)$ for each $i\in \{1,2,3\}$ and $W\in \cW$. 

Parts (\ref{item:cohomsupportunif})--(\ref{item:crossing}) follow from Lemma \ref{lem:cohom class and non-crossing} and the fact that each  quasi-isometry $f_W$  induces a uniformly proper chain homotopy equivalence on $C_\bullet(\bX^{(n+1)})$, whose constants are independent of $W\in \cW$.  For instance, see Theorem 2.7 of \cite{mosher2011quasiactions}, Proposition 9.48 of \cite{drutu2018geometric} or Lemma 3.21 of \cite{margolis2018quasi}.
\end{proof}
Our first step in proving Lemma \ref{lem:unif set close to fibre} is the following:
\begin{lem}\label{lem:coarsely characteristic}
For any quasi-isometry $f:X\rightarrow X$, $d_\Haus(f(D_{b_0}),D_{b_0})<\infty$.
\end{lem}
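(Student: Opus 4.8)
\textbf{Proof plan for Lemma \ref{lem:coarsely characteristic}.}

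The plan is to exploit the rigidity of the cohomological configuration attached to $D_{b_0}$. By Corollary \ref{cor:3 separating}, the fibre $D_{b_0}$ coarsely separates $X$ into three essential coarse complementary components $C_1,C_2,C_3$, and Lemma \ref{lem:cohom class and non-crossing} attaches to this triple a pair of distinguished cohomology classes $[\alpha_1],[\alpha_2]\in\ker(H^{n+1}_c(\bX)\rightarrow H^{n+1}(\bX))$ together with a ``non-crossing'' property (part (3) of that lemma). Given a quasi-isometry $f\colon X\rightarrow X$, I would consider the collection $\cW\coloneqq\{D_{b_0},\ f(D_{b_0})\}$ (or, more efficiently, the singleton $\{f(D_{b_0})\}$), which is trivially a uniform $PD_n$-set since $f$ is a fixed quasi-isometry and $D_{b_0}$ is a coarse $PD_n$ space by hypothesis. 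Applying Lemma \ref{lem:uniform set parameters} to this set produces, for $W=f(D_{b_0})$, the data of a decomposition $X\setminus N_E(W)=C^W_1\cup C^W_2\cup C^W_3$ and classes $[\alpha^W_1],[\alpha^W_2]$ supported near $W$ and in the $C^W_i$, satisfying the same non-crossing constraint (part (\ref{item:crossing})).

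The heart of the argument is then a purely coarse-topological comparison: the classes $[\alpha^W_i]$ are supported within uniformly bounded distance of every point of $W=f(D_{b_0})$, while $[\alpha_i]$ are supported within bounded distance of every point of $D_{b_0}$. I would argue that, because both pairs of classes lie in the finite-dimensional space $\ker(H^{n+1}_c(\bX)\rightarrow H^{n+1}(\bX))$ — which for a coarse $PD_n$ subspace coarsely separating into three essential pieces is governed by Lemma \ref{lem:cohom class and non-crossing} — the support of any nonzero such class must be coarsely contained in $W$ and must coarsely contain $W$ (this is where the Packing Theorem \ref{thm:packing} enters: two coarse $PD_n$ subspaces that coarsely contain one another are at finite Hausdorff distance). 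Concretely: one shows that a nonzero class in the relevant kernel cannot be represented by a cocycle supported in the complement of a neighbourhood of $D_{b_0}$ (by the non-crossing property applied with the $C_i$), which forces $\supp$ of such a class to meet $N_R(D_{b_0})$ ``everywhere it lives''; combined with the uniform bound from part (\ref{item:cohomsupportunif}) for $f(D_{b_0})$, this sandwiches $f(D_{b_0})$ and $D_{b_0}$ into bounded neighbourhoods of one another. Since $f(D_{b_0})$ is a coarse $PD_n$ space (being the image of one under a quasi-isometry), the Packing Theorem upgrades the one-sided containment to $d_\Haus(f(D_{b_0}),D_{b_0})<\infty$.

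I expect the main obstacle to be the step identifying the support of the distinguished cohomology classes with (a neighbourhood of) $D_{b_0}$ in both directions — essentially a ``coarse Alexander duality''-type statement saying that the locus where these classes can be supported is coarsely exactly the separating $PD_n$ subspace. The non-crossing property of Lemma \ref{lem:cohom class and non-crossing}(3) gives the key obstruction preventing the classes from being pushed off $D_{b_0}$, but turning this into a genuine two-sided Hausdorff estimate, with constants independent of which representative cocycle one picks, will require care; this is precisely the point where the coarse $PD_n$ hypothesis on the fibre and the Packing Theorem are indispensable. Once $d_\Haus(f(D_{b_0}),D_{b_0})<\infty$ is established for an arbitrary quasi-isometry $f$, the lemma follows immediately, and this in turn is exactly the input needed (via Lemma \ref{lem:unif set close to fibre}) to show that every $W$ in a uniform $PD_n$-set is at bounded Hausdorff distance from some fibre $D_b$.
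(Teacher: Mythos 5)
There is a genuine gap. The central step you describe — that ``a nonzero class in the relevant kernel cannot be represented by a cocycle supported in the complement of a neighbourhood of $D_{b_0}$'' — is false, and in fact is contradicted by the very data supplied by Lemma~\ref{lem:uniform set parameters}: the class $[\alpha_1]$ associated to $D_{b_0}$ is nonzero and is represented by a cocycle supported in $C_1\setminus N_E(D_{b_0})$, i.e.\ in the complement of a neighbourhood of $D_{b_0}$. What the non-crossing property (part~(\ref{item:crossing})) actually forbids is representing a nonzero class in \emph{two different} components $C_i\setminus N_E(D_{b_0})$, $C_j\setminus N_E(D_{b_0})$. Applied to a set $W$ whose classes can be supported in $N_D(x)$ for every $x\in W$, this only yields the one-sided containment $W\subseteq C_i\cup N_{D+E}(D_{b_0})$ for a single $i$. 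By the same reasoning applied to the decomposition around $W=f(D_{b_0})$, one gets $D_{b_0}\subseteq C^W_j\cup N_{D+E}(W)$ for a single $j$. These two one-sided containments are perfectly compatible with $W$ and $D_{b_0}$ being arbitrarily far apart (take $W$ deep inside $C_1$, with $D_{b_0}$ then lying in the corresponding component of $X\setminus W$), so they do not ``sandwich'' the two sets. A smaller point: $\ker(H^{n+1}_c(\bX)\to H^{n+1}(\bX))$ is not finite-dimensional in general, so that part of the heuristic also does not help.

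The paper closes exactly this gap with a two-fibre trick that is absent from your sketch. Because each $C_i$ is deep, one chooses points $x_1\in C_1$, $x_2\in C_2$ far from $D_{b_0}$ and pulls them back to $y_1,y_2\in X$ with base points $b_i=p(y_i)$. Then $f(D_{b_i})$ penetrates deep into $C_i$, so the non-crossing property forces $f(D_{b_i})\subseteq C_i\cup N_{D+E}(D_{b_0})$ for $i=1,2$ — two one-sided containments into \emph{disjoint} components. Since $D_{b_1}$ and $D_{b_2}$ are at finite Hausdorff distance, so are $f(D_{b_1})$ and $f(D_{b_2})$; intersecting the two constraints then forces $f(D_{b_2})\subseteq N_{r+D+E}(D_{b_0})$, which is a genuine (one-sided) neighbourhood containment between two coarse $PD_n$ subspaces. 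Only at this point does the Packing Theorem~\ref{thm:packing} apply to upgrade it to a two-sided Hausdorff bound, and the lemma follows since $d_\Haus(f(D_{b_2}),f(D_{b_0}))<\infty$. Your identification of the relevant ingredients — Lemma~\ref{lem:uniform set parameters}, the non-crossing property, and Packing — is correct, but without the auxiliary fibres $D_{b_1},D_{b_2}$ the cohomological data alone does not produce the containment that Packing needs as input.
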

\begin{proof}
 We observe that $\cW\coloneqq \cup\{D_b\mid b\in B\}\bigcup \cup\{f(D_b)\mid b\in B\}$ is a uniform $PD_n$-set, so we may apply Lemma \ref{lem:uniform set parameters}.  We thus  choose  $D$, $E$, $C_i^W$ and $\alpha_i^W$ so the conclusions of Lemma \ref{lem:uniform set parameters} hold. We suppose that $f$ is a $(K,A)$-quasi-isometry and set $C_i\coloneqq C_i^{D_{b_0}}$ for $i=1$, $2$ and $3$. Since each $C_i$ is deep, we choose $x_i\in C_i$ such that $d(x_i,D_{b_0})>D+E+A$, and a $y_i\in X$ such that $d(f(y_i),x_i)\leq A$. Let $b_i=p(y_i)$ for $i=1,2$.  

Since $f(D_{b_1}),f(D_{b_2})\in \cW$,  Lemma \ref{lem:uniform set parameters} ensures that there exist non-zero cohomology classes $[\alpha_1],[\alpha_2]\in \ker (H^{n+1}_c(\bX)\rightarrow H^{n+1}(\bX))$ such that for every $z_i\in  f(D_{b_i})$, $[\alpha_i]$ can be represented by a cocycle supported in $N_D(z_i)$. As $f(y_i)\in C_i\setminus N_{D+E}(D_{b_0})$, we see that each $[\alpha_i]$ can be represented by a cocycle supported in $C_i\setminus N_{E}(D_{b_0})$ for $i=1,2$. Thus \[f(D_{b_i})\subseteq C_i\cup N_{D+E}(D_{b_0}),\] otherwise $[\alpha_i]$ could be represented by  cocycles supported in  at least two of $C_1\backslash N_{E}(D_{b_0})$, $C_2 \backslash N_{E}(D_{b_0})$ and $C_3 \backslash N_{E}(D_{b_0})$, which would contradict  Lemma \ref{lem:uniform set parameters}.

Since $f(D_{b_1})$ and $f(D_{b_2})$ are at finite Hausdorff distance, we pick $r$ sufficiently large so that $f(D_{b_2})\subseteq N_r(f(D_{b_1}))$. Since $N_r(f(D_{b_1}))\subseteq C_1\cup N_{r+D+E}(D_{b_0})$, we see that 
\begin{align*}
f(D_{b_2})&=f(D_{b_2})\cap N_r(f(D_{b_1}))
\\&\subseteq (C_2\cup N_{D+E}(D_{b_0}))\cap (C_1\cup N_{r+D+E}(D_{b_0}))\subseteq N_{r+D+E}(D_{b_0}).
\end{align*}
Since $d_\Haus(f(D_{b_2}),f(D_{b_0}))<\infty$,  it follows from Theorem \ref{thm:packing} that $f(D_{b_0})$ and $D_{b_0}$ are at finite Hausdorff distance.
\end{proof}

\begin{lem}\label{lem:Wfinitecrit}
Let  $\mathcal{W}$, $D$ and $E$  be as in Lemma  \ref{lem:uniform set parameters}. Suppose that
\begin{enumerate}
\item $D+E<d_\mathrm{Haus}(W,W')<\infty$ for all distinct $W,W'\in \mathcal{W}$;
\item there exist $C\geq 0$ and $x_0\in X$ such that $d(x_0,W)\leq C$ for all $W\in \mathcal{W}$.
\end{enumerate}
Then $\mathcal{W}$ is finite.
\end{lem}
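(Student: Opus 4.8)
The plan is to argue by contradiction. Assuming $\mathcal{W}$ is infinite, I would first extract two distinct members of $\mathcal{W}$ that agree cohomologically, and then use the non-crossing properties from Lemma~\ref{lem:uniform set parameters} to show those two members are at Hausdorff distance at most $D+E$, contradicting hypothesis~(1).

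The first step is a finiteness observation. For each $W\in\mathcal{W}$, hypothesis~(2) lets me pick $w_W\in W$ with $d(x_0,w_W)\le C$, and then Lemma~\ref{lem:uniform set parameters}(2) provides representatives of the classes $[\alpha_1^W],[\alpha_2^W]$ by cocycles supported in $N_D(w_W)\subseteq N_{C+D}(x_0)$. Since $X$ has bounded geometry, $N_{C+D}(x_0)$ is finite; since $(X,\bX)$ is a metric cell complex, only finitely many $(n+1)$-cells of $\bX$ lie over $N_{C+D}(x_0)$; and since all cohomology is taken with $\mathbb{Z}_2$ coefficients, there are therefore only finitely many cochains in $C^{n+1}_c(\bX)$ supported in $N_{C+D}(x_0)$, hence only finitely many classes in $\ker(H^{n+1}_c(\bX)\to H^{n+1}(\bX))$ admitting such a representative. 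Thus $W\mapsto([\alpha_1^W],[\alpha_2^W])$ has finite image, so if $\mathcal{W}$ is infinite there are distinct $W_0,W_1\in\mathcal{W}$ with $[\alpha_1^{W_0}]=[\alpha_1^{W_1}]=:[\beta_1]$ and $[\alpha_2^{W_0}]=[\alpha_2^{W_1}]=:[\beta_2]$; recall that $[\beta_1]$ and $[\beta_2]$ are nonzero and distinct (Lemma~\ref{lem:cohom class and non-crossing}).

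The heart of the argument is to show that $W_1\subseteq N_{D+E}(W_0)$. Suppose some $w\in W_1$ had $d(w,W_0)>D+E$. Then every point of $N_D(w)$ lies outside $N_E(W_0)$, so $N_D(w)\subseteq C_1^{W_0}\cup C_2^{W_0}\cup C_3^{W_0}$; moreover, since $X$ is a discrete geodesic metric space (as we may assume, cf.\ Remark~\ref{rem:remetrise}), a discrete geodesic from $w$ to any point of $N_D(w)$ stays outside $N_E(W_0)$ and hence cannot meet any of the sets $\partial C_i^{W_0}\subseteq N_E(W_0)$, so $N_D(w)$ lies in a single component $C_i^{W_0}$. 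By Lemma~\ref{lem:uniform set parameters}(2) applied to $W_1$ at the point $w\in W_1$, both $[\beta_1]$ and $[\beta_2]$ are then representable by cocycles supported in $C_i^{W_0}\setminus N_E(W_0)$. On the other hand, Lemma~\ref{lem:uniform set parameters}(3) applied to $W_0$ gives a representative of $[\beta_1]$ supported in $C_1^{W_0}\setminus N_E(W_0)$ and a representative of $[\beta_2]$ supported in $C_2^{W_0}\setminus N_E(W_0)$. If $i\ne1$, then $[\beta_1]$ would be represented by cocycles supported in two of the three sets $C_j^{W_0}\setminus N_E(W_0)$, forcing $[\beta_1]=0$ by Lemma~\ref{lem:uniform set parameters}(4), which is impossible; hence $i=1$. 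But then the same reasoning applied to $[\beta_2]$ (using the representatives in $C_1^{W_0}\setminus N_E(W_0)$ and $C_2^{W_0}\setminus N_E(W_0)$) forces $[\beta_2]=0$, again impossible. Therefore no such $w$ exists, so $W_1\subseteq N_{D+E}(W_0)$, and by symmetry $W_0\subseteq N_{D+E}(W_1)$. Hence $d_\mathrm{Haus}(W_0,W_1)\le D+E$, contradicting hypothesis~(1), and so $\mathcal{W}$ is finite.

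I expect the non-crossing step to be the main obstacle. It rests on the finer cohomological information about the classes $[\alpha_i^W]$ packaged in Lemma~\ref{lem:uniform set parameters}, and ultimately on Lemma~\ref{lem:cohom class and non-crossing} and the coarse Poincar\'e duality machinery of \cite{margolis2018quasi}, together with the (easy but necessary) fact that a ball of radius $D$ around a point at distance more than $D+E$ from $W_0$ cannot straddle two of the coarse complementary components of $W_0$. The finiteness step, by contrast, is a soft consequence of bounded geometry and the use of $\mathbb{Z}_2$ coefficients.
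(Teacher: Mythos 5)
Your proof is correct and follows essentially the same strategy as the paper's. The paper's proof runs the argument directly: it shows that the map $W\mapsto\{[\alpha_1^W],[\alpha_2^W]\}$ is injective on $\mathcal{W}$ by taking distinct $W,W'$, choosing $x\in W$ with $d(x,W')>D+E$ (possible by hypothesis (1)), observing that both $[\alpha_1^W]$ and $[\alpha_2^W]$ are then supported in a single $C_i^{W'}\setminus N_E(W')$, and concluding from parts (3) and (4) of Lemma~\ref{lem:uniform set parameters} that the two unordered pairs cannot coincide. You phrase the same idea as a proof by contradiction: pigeonhole two distinct $W_0,W_1$ with matching classes, then use the identical support-plus-non-crossing argument to force $d_\mathrm{Haus}(W_0,W_1)\le D+E$. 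The one place you add genuine content is the explicit justification that $N_D(w)$ lies in a \emph{single} component $C_i^{W_0}$, via discrete geodesics staying outside $N_E(W_0)$ and hence never meeting $\partial C_i^{W_0}$; the paper uses this fact implicitly. Your write-up is slightly longer but fills in a step the paper glosses over; otherwise the two proofs are the same.
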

\begin{proof}
Let $\{[\alpha_i^W]\}_{W\in \mathcal{W},i=1,2}$ be the cohomology classes that satisfy the properties of Lemma \ref{lem:uniform set parameters}. Each cohomology class $[\alpha_i^W]$ can be represented by a cocycle supported in $N_{C+D}(x_0)$, thus there are only finitely many such classes\footnote{Recall cohomology is taken with $\bZ_2$ coefficients.}. To show $\mathcal{W}$ is finite, it is sufficient to show that if $W$ and $W'$ are distinct elements of $\mathcal{W}$, then $\{[\alpha_1^W],[\alpha_2^W]\}\neq \{[\alpha_1^{W'}],[\alpha_2^{W'}]\}$. Indeed, if $W\neq W'$, then by reversing the roles of $W$ and $W'$ if necessary we can  choose $x\in W$ such that $d(x,W')>D+E$. Hence both $[\alpha_1^W]$ and $[\alpha_2^W]$ can be represented by cocycles supported in $C_i^{W'}\backslash N_E(W')$ for some $i\in \{1,2,3\}$. This ensures that $\{[\alpha_1^W],[\alpha_2^W]\}\neq \{[\alpha_1^{W'}],[\alpha_2^{W'}]\}$. 
\end{proof}

\begin{proof}[Proof of Lemma \ref{lem:unif set close to fibre}]
The following proof is based on the argument used in the proof of \cite[Proposition 4.3]{vavrichek2013commensurizer}, although in a more general context.
We fix $x_0$, $K$, $A$, $\{f_x\}_{x\in X}$ and $\{\overline{f_x}\}_{x\in X}$ as in Lemma \ref{lem:coarse inverse fibre-preserving}. We set $D$ and $E$ as in Lemma \ref{lem:uniform set parameters}.

 We assume for contradiction that no such $R$ exists, and hence $\cW$ is necessarily infinite. It follows from Lemma \ref{lem:coarsely characteristic} that $d_\Haus(D_{b_0},W)<\infty$ for every $W\in \cW$.  We can thus take a sequence $(W_i)$ in $\mathcal{W}$ such that $\min_{b\in B}d_\mathrm{Haus}(D_{b},W_i)$ is unbounded. Choose $b_i\in B$ such that \[\min_{b\in B}d_\mathrm{Haus}(D_b,W_i)=d_\mathrm{Haus}(D_{b_i},W_i).\]  By passing to a subsequence, we may suppose that \begin{align}d_\mathrm{Haus}(D_{b_j},W_j)> K^2d_\mathrm{Haus}(W_i,D_{b_i})+K(2A+D+E)+3A\label{eqn:close to coset1}\end{align} for all $j> i$. 

We claim that $d_{\mathrm{Haus}}(f_x(W_i), f_{x'}( W_j))>D+E$ for all $i\neq j$ and $x,x'\in X$. We assume for contradiction that $d_{\mathrm{Haus}}(f_x(W_i), f_{x'}( W_j))\leq D+E$ for some $i\neq j$ and $x,x'\in X$. Without loss of generality, we may suppose $j>i$.
	
Since $f_x$ is $A$-fibre preserving, there is a $b\in B$ with $d_\Haus(f_{x}(D_{b_i}),D_{b})\leq A$. Thus
\begin{align*}
d_\mathrm{Haus}(f_{x'}(W_j),D_b)&\leq 
d_\mathrm{Haus}(f_{x'}(W_j),f_{x}(D_{b_i}))+A\\
&\leq d_\mathrm{Haus}(f_{x'}(W_j),f_{x}(W_i))+d_\mathrm{Haus}(f_{x}(W_i),f_{x}(D_{b_i}))+A\\
&\leq K d_\mathrm{Haus}(W_i,D_{b_i})+2A+D+E.\end{align*}
Since $\overline{f_{x'}}$ is also $A$-preserving, there is a $b'\in B$ with $d_\Haus(\overline f_{x'}(D_{b}),D_{b'})\leq A$. Thus as $\overline f_{x'}\circ f_{x'}$ is $A$-close to the identity, we see 
\begin{align*}
d_\mathrm{Haus}(W_j,D_{b'})&\leq 
d_\mathrm{Haus}(\overline {f_{x'}}(f_{x'}(W_j)),\overline {f_{x'}}(D_b))+2A\\
&\leq K d_\mathrm{Haus}(f_{x'}(W_j),D_{b})+3A\\
&\leq K^2d_\mathrm{Haus}(W_i,D_{b_i})+K(2A+D+E)+3A   \end{align*}
It follows from the definition of $b_j$ that \[d_\Haus(W_j,D_{b_j})\leq d_\Haus(W_j,D_{b'})\leq K^2d_\Haus(W_i,D_{b_i})+K(2A+D+E)+3A,\] contradicting (\ref{eqn:close to coset1}) and so proving the claim. 

For each $W_i$, we pick an arbitrary $x_i\in W_i$ and set $\widehat{W}_i\coloneqq f_{x_i}(W_i)$ so that $d(x_0,\widehat W_i)\leq A$. It follows from the above claim that $d_\Haus(\widehat W_i,\widehat W_j)>D+E$ for all $i\neq j$. Thus $\{\widehat W_i\mid i\in \bN\}$ is an infinite uniform $PD_n$-set that contradicts  Lemma \ref{lem:Wfinitecrit}.
\end{proof}

We explain how to reformulate Theorem \ref{thm:main intro} in terms of graphs of groups:
\gogs
\begin{proof}
We claim  a finitely presented group has an almost normal coarse $PD_n$ subgroup $H$ with $e(G/H)\geq 3$ if and only if $G$ splits as a graph of groups satisfying (\ref{item:condition bushy})--(\ref{item:condition vertex group doesn't split}). Corollary \ref{cor:qirigidity of gogs} now follows from Theorem \ref{thm:main intro} and the above claim. If $G$ splits as a graph of groups satisfying  (\ref{item:condition bushy})--(\ref{item:condition vertex group doesn't split}), let $H$ be any edge group. Since all conjugates of $H$ are commensurable to $H$, we deduce that $H\alnorm G$. Corollaries \ref{cor:ends} and \ref{cor:ends tree vs space} then ensure that $e(G/H)\geq 3$.

Conversely, suppose that $G$ contains an almost normal, coarse $PD_n$ subgroup $H$ with $e(G/H)\geq 3$. Theorem \ref{thm:gog} ensures that $G$ splits as a graph of groups satisfying (\ref{item:conjugates are commensurable}) and (\ref{item:condition vertex group doesn't split}). It is clear from the proof of  Theorem \ref{thm:gog} that as $e(G/H)\geq 3$, the corresponding Bass-Serre tree has at least three ends.
\end{proof}

We now prove a partial quasi-isometric classification for such groups. We closely follow the proof of \cite[Theorem 3.1]{papasoglu2002quasi}.
\qiclassification
\begin{proof}
We recall from the proof of Theorem \ref{thm:gog} that there is an infinite ended, $1$-acyclic 2-complex $X$ on which $G$ acts cocompactly. There exists a collection of finitely many $G$-orbits of finite disjoint tracks $\{\tau_i\}_{i\in I}$ such that each  component of $X\backslash \cup_{i\in I}\{ \tau_i\}$ is either compact or 1-ended. The dual tree $T$ gives the Bass-Serre tree of the graph of groups in Theorem \ref{thm:gog}. Similarly, let $X'$ be the corresponding complex for $G'$, let $\{\tau_j\}_{j\in J}$ be the corresponding collection of tracks, and let $T'$ be the dual tree.

Suppose $f:G\rightarrow G'$ is a quasi-isometry. By Theorem \ref{thm:main fib bundle} $f$ is fibre-preserving, so  Lemma \ref{lem:fibre-preserving qi imp qi between base spaces} ensures $f$ induces a quasi-isometry $\hat f: X\rightarrow X'$.
It follows from  the proof of \cite[Theorem 3.1]{papasoglu2002quasi} that $\hat f$ sends 1-ended components of  $X\backslash \cup\{ \tau_i\}$ to a uniform metric neighbourhood of a 1-ended component of $X'\backslash \cup \{\tau_j\}$.  It was shown in the proof of Theorem \ref{thm:gog} that these components are quasi-isometric to quotient spaces of the form $G_v/H_v$ or $G'_{v'}/H'_{v'}$. Moreover, as  $d_\mathrm{Haus}(\hat f(gH),f(gH))\leq B$ it follows that $f$ sends each   $G_v\in V_\infty \mathcal{G}$ to within uniform Hausdorff distance of some $G'_{v'}\in V_\infty \mathcal{G}'$ and vice-versa. Proposition \ref{prop:fibreqi induces qi between fibres} then ensures $f$ induces the required quasi-isometries.
\end{proof}

\section{Fibre distortion}\label{sec:applications}

Theorem \ref{thm:main intro}, Corollary \ref{cor:qirigidity of gogs} and Theorem \ref{thm:qiclassification of gogs} give necessary conditions for two groups in a certain class to be quasi-isometric. Can we obtain finer QI invariants that yield finer QI classification and rigidity results?
  For example,  observe that $BS(1,3)=\langle a,t\mid tat^{-1}=a^3\rangle $ and  $F_2 \times \mathbb{Z}$ have  the same Bass-Serre tree in the following sense: both act on 4-valent trees with all edge and vertex stabilisers isomorphic to $\mathbb{Z}$. Thus Corollary \ref{cor:qirigidity of gogs} and Theorem \ref{thm:qiclassification of gogs} cannot  distinguish them up to quasi-isometry. 
  
To distinguish such groups, we use the notion of height change defined in \cite{farbmosher1999bs2} and \cite{whyte2001baumslag}. Let $H\coloneqq \langle a \rangle \alnorm BS(1,3)$. We think of $ BS(1,3)$ as fibred over its left $H$-coset space with  fibres corresponding to $H$-cosets. Endowing each fibre with the path metric, the closest point projection from $H$ to $t^k H$ coarsely  distorts distances by a factor of $3k$; this is because $t^ka^i=a^{3i}t^k$. In contrast, no such distortion can occur in $F_2\times \mathbb{Z}$. These sort of phenomena will be used to prove Theorems \ref{thm:zfibre} and \ref{thm:zn fibre with aiq}.

We fix a finitely generated group $G$ containing a finitely generated, almost normal subgroup $H\alnorm G$. 
 For each $g\in G$, let $p_g:H\rightarrow gH$ be a \emph{closest point projection map}, i.e. any map such that $d(x,p_g(x))\leq d(x,y)$ for all $y\in gH$. We remark that by Proposition \ref{prop:coset characterisation}, $H$ and $gH$ are at finite Hausdorff distance. Any map $f:H\rightarrow gH$ which moves points a uniform distance will be close to $p_g$, i.e. if $\sup_{h\in H}d(f(h),h)<\infty$, then $\sup_{h\in H}d(f(h),p_g(h))<\infty$. Thus $p_g$ is coarsely well-defined.

\begin{exmp}\label{exmp:abelian p_g}
Suppose $\mathbb{Z}[v_1,\dots, v_n]=\mathbb{Z}^n\cong H\alnorm G$. Since $gHg^{-1}$ and $H$ are commensurable, for each $v_j$ there exist integers $q_j,p_{1j}, \dots, p_{2j}$ such that  $v_j^{q_j} =gv_1^{p_{1j}}v_2^{p_{2j}}\dots v_n^{p_{nj}}g^{-1}$. We associate to each $g\in G$ a matrix $A_g\in GL(n,\mathbb{Q})$, whose $(i,j)$ entry is $\frac{p_{ij}}{q_j}$. Then  \[\sup_{h\in H} d(p_g(h),g(\lfloor A_g\cdot h\rfloor))<\infty,\] where $\lfloor (x_1,\dots, x_n)\rfloor\coloneqq (\lfloor x_1\rfloor,\dots, \lfloor x_n\rfloor)$. We show this in the case $n=1$; the general case is similar but notationally unwieldy.

We write $v_1=v$, $q_1=q$ and $p_{11}=p$. For each $k\in \mathbb{Z}$, let $r_k\coloneqq k-q\lfloor \frac{k}{q} \rfloor$. Then $v^k=v^{q\lfloor \frac{k}{q} \rfloor+r_k}=gv^{p\lfloor \frac{k}{q} \rfloor}g^{-1}v^{r_k}$.  Since \[\sup_{k\in \mathbb{Z}} d(gv^{\lfloor \frac{p}{q} k\rfloor},gv^{p\lfloor \frac{k}{q} \rfloor}g^{-1}v^{r_q})<\infty,\] we see that $\sup_{k\in \mathbb{Z}} d(gv^{\lfloor \frac{p}{q} k\rfloor},v^k)<\infty.$ We deduce from the above discussion that \[\sup_{k\in \mathbb{Z}} d(p_g(v^k),gv^{\lfloor \frac{p}{q} k\rfloor})<\infty.\]
\end{exmp}
 
We pick a finite generating set for $H$ and equip $H$ with the word metric $d_H$. For each $g\in G$, the map $H\rightarrow G$ given by $h\mapsto gh$ is a coarse embedding. This follows from Lemma \ref{lem:sbgpis coarsely embedding} and the fact that left multiplication by $g$ is isometric. Thus one equips each coset $gH$ with the metric $d_{gH}(gh,gh')\coloneqq d_H(h,h')$.
Proposition \ref{prop:fibreqi induces qi between fibres} ensures that $p_g:H\rightarrow gH$ is a quasi-isometry with respect to the above metrics. 
\begin{rem}
If $gH=kH$, then for all $h,h'\in H$ \[d_{gH}(gh,gh')=d_{H}(h,h')=d_H(k^{-1}gh,k^{-1}gh')=d_{kH}(gh,g'h).\] Thus  $d_{gH}$ depends only on the coset $gH$ and is independent of $g$. 
\end{rem}

\begin{defn}
Let $X$ and $Y$ be metric spaces and $f:X\rightarrow Y$ be a quasi-isometry. We define \[\kappa(f)\coloneqq \inf \{K\geq 1 \mid \textrm{$f$ is a $(K,A)$-quasi-isometry for some $A\geq 0$} \}.\] 
Let $G$ be a finitely generated group with $H\alnorm G$ a finitely generated almost normal subgroup.  We define a function $F:G/H\rightarrow \mathbb{R}$ by $gH \mapsto\log(\kappa(p_g))$, which we call the \emph{fibre distortion function}.
\end{defn}

\begin{exmp}\label{exmp:fibre distortion free abelian}
Suppose $H\alnorm G$ is a finitely generated free abelian subgroup. We note that the word metric with respect to a $\mathbb{Z}$-basis of $H$ is simply the $l_1$-norm of $H$. Define $A_g$ as in Example \ref{exmp:abelian p_g}. Choose $B$ such that \[\sup_{h\in H} d(p_g(h),g(\lfloor A_g\cdot h\rfloor))\leq B.\] We also observe that $\lvert \lfloor A_g\cdot h\rfloor -A_g\cdot h\rvert_1\leq n$ for all $h\in \bR^n$.
 Thus for any $h\neq h'\in H$, \[\Bigg\lvert \frac{d_{gH}(p_g(h),p_g(h'))}{d_H(h,h')}-\frac{\lvert A_g(h'-h)\rvert_1}{\lvert h'-h\rvert_1}\Bigg\rvert\leq \frac{2B+2n}{d_H(h,h')},\] and so 
\begin{align*}
 K(h,h')d_H(h,h')-A&\leq d_{gH}(p_g(h),p_g(h')) \leq K(h,h')d_H(h,h')+A.
\end{align*}
where $K(h,h')\coloneqq\frac{\lvert A_g(h'-h)\rvert_1}{\lvert h'-h\rvert_1}$ and $A\coloneqq 2B+2n$. Notice that \[\frac{1}{\lVert A_g\rVert_1}\leq K(h,h')\leq \lVert A_g\rVert_1,\] where  $\lVert A_g\rVert_1$ is the matrix norm of $A_g$. Thus $F(gH)=\lvert\log( \lVert A_g\rVert_1)\rvert$.
\end{exmp}

We now show that fibre distortion is a  quasi-isometry invariant up to some additive error. 
\begin{prop}\label{prop:F qi invariant}
Suppose that $G$ and $G'$ are groups containing  almost normal subgroups $ H\alnorm G$ and $ H'\alnorm G'$ such that the hypotheses of Theorem \ref{thm:main intro} are satisfied. Let $F$ and $F'$ be the associated fibre distortion functions. If  $f:G\rightarrow G'$ is a quasi-isometry and $\widehat f: G/H\rightarrow G'/H'$ is the induced quasi-isometry as in Lemma \ref{lem:fibre-preserving qi imp qi between base spaces}, there is a constant $C\geq 0$ such that 
\[\lvert F'(\widehat{f}(gH) )-F(gH)\rvert\leq  C.\]
\end{prop}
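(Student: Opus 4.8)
The plan is to transport the closest-point projections $p_g$ across the quasi-isometry $f$ by means of Proposition \ref{prop:fibreqi induces qi between fibres}, and then to exploit three elementary features of the invariant $\kappa$: it is multiplicative, $\kappa(a\circ b)\le\kappa(a)\kappa(b)$; it is unchanged when a map is replaced by one at finite distance from it; and a coarse inverse of a $(K,A)$-quasi-isometry is again a $(K,A')$-quasi-isometry, hence has $\kappa\le K$. Write $g'H'\coloneqq\widehat f(gH)$. By Theorem \ref{thm:main fib bundle} there is a constant $C_0$, independent of $g$, with $d_\Haus(f(gH),g'H')\le C_0$ for every $g\in G$. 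After post-composing $f$ with a left translation of $G'$, which is an isometry, we may assume $\widehat f(H)=H'$; this costs only a bounded additive perturbation of $F'$, because for $\gamma\in G'$ the closest-point projection $H'\to\gamma kH'$ agrees up to bounded error with the composite $H'\xrightarrow{p_\gamma}\gamma H'\to\gamma kH'$ whose second map is the conjugate of $p_k$ by the isometry $\gamma\cdot{}$, so $\lvert F'(\gamma\cdot x)-F'(x)\rvert$ is bounded in terms of $\gamma$ alone.

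Since left multiplication in $G$ and in $G'$ is isometric, all of the coarse embeddings $(gH,d_{gH})\hookrightarrow G$ have one common pair of distortion functions, and likewise for $G'$. Thus Proposition \ref{prop:fibreqi induces qi between fibres}, applied to the restrictions of $f$ to the fibres $H$ and $gH$ and to the cosets $H'$ and $g'H'$, produces $(K_0,A_0)$-quasi-isometries $\phi\colon(H,d_H)\to(H',d_{H'})$ and $\psi_g\colon(gH,d_{gH})\to(g'H',d_{g'H'})$, with $K_0$ and $A_0$ independent of $g$, such that $d_{G'}(\phi(h),f(h))$ and $d_{G'}(\psi_g(x),f(x))$ are uniformly bounded. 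Fix coarse inverses $\overline\phi$ and $\overline{\psi_g}$; these are $(K_0,A_0')$-quasi-isometries with $A_0'$ again independent of $g$, and $\overline\phi\circ\phi$, $\overline{\psi_g}\circ\psi_g$ are uniformly close to the identity.

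The crux is to show that, for each fixed $g$, the two maps $\psi_g\circ p_g$ and $p_{g'}\circ\phi$ from $(H,d_H)$ to $(g'H',d_{g'H'})$ lie at finite distance. For $h\in H$ one has $d_G(h,p_g(h))\le d_\Haus(H,gH)<\infty$, so $d_{G'}(f(h),f(p_g(h)))$ is bounded (by a constant depending on $g$), whence $d_{G'}(\phi(h),\psi_g(p_g(h)))$ is bounded; since $p_{g'}(\phi(h))$ is a point of $g'H'$ nearest to $\phi(h)$ and $\psi_g(p_g(h))$ also lies in $g'H'$, it follows that $d_{G'}(\psi_g(p_g(h)),p_{g'}(\phi(h)))$ is bounded. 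As the inclusion $(g'H',d_{g'H'})\hookrightarrow G'$ is a coarse embedding with distortion functions not depending on $g$, Remark \ref{rem:properinv} converts this into a bound on $d_{g'H'}(\psi_g(p_g(h)),p_{g'}(\phi(h)))$. That this bound is not uniform in $g$ is irrelevant — and this is the conceptual heart of the argument — since $\kappa$ sees only multiplicative distortion.

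Consequently $\kappa(\psi_g\circ p_g)=\kappa(p_{g'}\circ\phi)$. Combining this with multiplicativity of $\kappa$, the bounds $\kappa(\phi),\kappa(\psi_g),\kappa(\overline\phi),\kappa(\overline{\psi_g})\le K_0$, and the fact that $\overline{\psi_g}\circ\psi_g$ and $\overline\phi\circ\phi$ lie at finite distance from the identity, a short chain of inequalities gives $K_0^{-2}\kappa(p_g)\le\kappa(p_{g'})\le K_0^{2}\kappa(p_g)$, that is, $\lvert F'(\widehat f(gH))-F(gH)\rvert\le 2\log K_0$. Adding the bounded perturbation incurred in the normalization step yields the desired constant $C$. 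The only real obstacle is the closeness estimate of the previous paragraph, together with the repeated use of the observation that a lack of uniformity in closeness constants does not affect $\kappa$; the left-translation normalization used to identify $\widehat f(H)$ with $H'$ is a minor but necessary technicality.
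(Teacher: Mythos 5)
Your proof is correct and follows essentially the same strategy as the paper: apply Theorem \ref{thm:main fib bundle} to obtain uniform fibre-preservation, use Proposition \ref{prop:fibreqi induces qi between fibres} to produce fibre quasi-isometries $\phi$ and $\psi_g$ with constants independent of $g$, observe that $p_{g'}\circ\phi$ and $\psi_g\circ p_g$ lie at finite (possibly $g$-dependent) distance, and conclude via the behaviour of $\kappa$ under composition with uniform quasi-isometries. The normalization step replacing $f$ by a left translate so that $\widehat f(H)=H'$ is harmless but unnecessary — the paper works directly with the bound $d_\Haus(f(H),H')\le C$ from Theorem \ref{thm:main fib bundle} — and your phrase \emph{multiplicative} should read \emph{submultiplicative}, but neither affects the validity of the argument.
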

The existence of $\widehat{f}$ follows from Theorem \ref{thm:main fib bundle}, since any quasi-isometry $f:G\rightarrow G'$ is fibre-preserving. To prove Proposition \ref{prop:F qi invariant} we make use of the following lemma:
\begin{lem}\label{lem:qi bounds composition}
 Suppose that $f:X\rightarrow Y$ is a quasi-isometry. If $g:W\rightarrow X$ and $h:Y\rightarrow Z$ are $(K,A)$-quasi-isometries, then \[\frac{\kappa (f)}{K^2}\leq \kappa(h\circ  f\circ g))\leq \kappa (f) K^2.\]
\end{lem}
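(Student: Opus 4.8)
The plan is to control $\kappa(h\circ f\circ g)$ by comparing the distortion of the composite to that of $f$, using that pre- and post-composition by a $(K,A)$-quasi-isometry changes distances by a bounded multiplicative factor. First I would recall that $\kappa$ is, by definition, the infimal multiplicative constant over all additive constants; so it suffices to show that for every $K_f > \kappa(f)$ there is an $A'$ making $h\circ f\circ g$ a $(K_f K^2, A')$-quasi-isometry, and conversely that every multiplicative constant for $h\circ f\circ g$ yields a multiplicative constant for $f$ no worse than $K^2$ times smaller. Fix $K_f>\kappa(f)$ and $A_f$ with $f$ a $(K_f,A_f)$-quasi-isometry.

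For the upper bound, take $w,w'\in W$. Since $g$ is a $(K,A)$-quasi-isometry, $d_X(g(w),g(w'))\le K\,d_W(w,w')+A$, and applying the $(K_f,A_f)$-estimate for $f$ followed by the $(K,A)$-upper estimate for $h$ gives
\[
d_Z(h f g(w), h f g(w'))\le K\big(K_f(K\,d_W(w,w')+A)+A_f\big)+A = K^2K_f\,d_W(w,w') + (K^2A+KA_f+A).
\]
The matching lower bound is symmetric: $d_X(g(w),g(w'))\ge \tfrac1K d_W(w,w')-A$, then the lower $f$-estimate, then the lower $h$-estimate, producing
\[
d_Z(hfg(w),hfg(w'))\ge \frac{1}{K^2K_f}\,d_W(w,w') - \Big(\frac{A}{KK_f}+\frac{A_f}{K}+A\Big).
\]
Surjectivity up to bounded error is immediate since each of $g$, $f$, $h$ is coarsely surjective. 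Hence $hfg$ is a $(K^2K_f, A')$-quasi-isometry for a suitable $A'$, so $\kappa(hfg)\le K^2 K_f$; letting $K_f\downarrow\kappa(f)$ gives $\kappa(hfg)\le K^2\kappa(f)$.

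For the lower bound I would run the same argument in reverse: $g$ and $h$ are quasi-isometries, so they have coarse inverses $\overline g$, $\overline h$ which are themselves $(K,A')$-quasi-isometries for some $A'$ depending only on $K,A$ (a standard fact, and implicit in the excerpt's discussion of coarse inverses). Writing $f = \overline h \circ (h f g)\circ \overline g$ up to closeness, the upper-bound estimate already proved—applied with $hfg$ in place of $f$ and $\overline h,\overline g$ in place of $h,g$—gives $\kappa(f)\le K'^2\,\kappa(hfg)$ where $K'$ is the multiplicative constant of the coarse inverses. The mild annoyance here is that $K'$ need not literally equal $K$; the clean fix is to observe that a coarse inverse of a $(K,A)$-quasi-isometry can be taken to be a $(K,A'')$-quasi-isometry with the \emph{same} multiplicative constant $K$ (one may always enlarge the additive constant), so $\kappa(f)\le K^2\kappa(hfg)$, i.e. $\kappa(hfg)\ge \kappa(f)/K^2$. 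Combining the two inequalities yields the claim.

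The only real subtlety—the step I expect to need the most care—is the reverse direction: making sure the coarse inverses of $g$ and $h$ can be taken with multiplicative constant exactly $K$ rather than some larger $K'$, so that the symmetric estimate gives the stated factor $K^2$ and not $K^{4}$ or similar. Everything else is a routine chain of the defining inequalities, and the bounded-geometry/discreteness hypotheses in force in this section guarantee the various infima and closeness constants behave well.
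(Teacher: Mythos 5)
Your proof of the upper bound is essentially the paper's: fix $K_f>\kappa(f)$, chain the defining inequalities, let $K_f\downarrow\kappa(f)$. For the lower bound you diverge. The paper argues by contradiction: it picks witness sequences $(x_n),(x'_n)$ certifying that $f$ is not an $(M-\epsilon,C)$-quasi-isometry for any $C$, approximates $x_n, x'_n$ by $g(w_n), g(w'_n)$, and shows these pull back to witness sequences certifying that $h\circ f\circ g$ is not a $((M-\epsilon)/K^2, A')$-quasi-isometry for any $A'$. You instead write $f$ as close to $\overline h\circ(hfg)\circ\overline g$ and reuse the already-proved upper bound. Your route is valid and arguably cleaner, and the subtlety you flag is exactly the right one: you need the coarse inverse of a $(K,A)$-quasi-isometry to be a $(K,A'')$-quasi-isometry with the \emph{same} multiplicative constant $K$. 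That fact is true (the defining inequalities for $f$ rearrange directly to inequalities for $\overline f$ with constant $K$, at the cost of a larger additive constant coming from the closeness of $f\circ\overline f$ and $\overline f\circ f$ to the identity), and you also implicitly use that $\kappa$ is constant on closeness classes, which is immediate. What the paper's sequence argument buys is that it avoids invoking coarse inverses at all; what your argument buys is symmetry and brevity, since the second inequality becomes a formal consequence of the first.
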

\begin{proof}
Let $\epsilon>0$ and $M\coloneqq \kappa (f)$. Then there exists a $B\geq 0$ such that $f$ is a $(M+\epsilon, B)$-quasi-isometry. Thus $h\circ f\circ g$ is a $(K^2(M+\epsilon),B')$-quasi-isometry for some $B'> 0$. This shows that $\kappa(h\circ f\circ g)\leq K^2M=K^2 \kappa(f)$.

To obtain the lower bound, we observe that as $f$ is not an $(M-\epsilon, C)$-quasi-isometry for any $C$, there exist sequences of points  $(x_n),(x'_n)$ such that for each $n$, either \[d(f(x_n),f(x'_n))> (M-\epsilon) d(x_n,x'_n)+n\] or \[d(f(x_n),f(x'_n))< \frac{1}{M-\epsilon} d(x_n,x'_n)-n.\] There exist sequences $(w_n)$ and $(w'_n)$ in $W$ such that for all $n$, $d(g(w_n),x_n)\leq A$ and $d(g(w'_n),x'_n)\leq A$. Thus there is a constant $B''> 0$ such that for every $n$, either
\[d((h\circ f\circ g)(w_n),(h\circ f\circ g)(w'_n))> \frac{M-\epsilon}{K^2} d(w_n,w'_n)+\frac{n}{K}+B''\] or \[d((h\circ f\circ g)(w_n),(h\circ f\circ g)(w'_n))< \frac{K^2}{M-\epsilon} d(w_n,w'_n)-nK-B''.\] 
By taking $n$ sufficiently large, we deduce that $h\circ f\circ g$ is not an  $(\frac{M-\epsilon}{K^2},A')$-quasi-isometry for any $A'\geq 0$.  Thus $\kappa(h\circ f\circ g)\geq \frac{M}{K^2}=\frac{\kappa(f)}{K^2}$.
\end{proof}

\begin{proof}[Proof of Proposition \ref{prop:F qi invariant}]
Let $f:G\rightarrow G'$ be a $(K,A)$ quasi-isometry. By Theorem \ref{thm:main fib bundle} and Lemma \ref{lem:fibre-preserving qi imp qi between base spaces}, there is a  constant $C\geq 0$ such that:
\begin{enumerate}
\item $d_\mathrm{Haus}(f(H),H')\leq C$;
\item for all $g\in G$, then $d_\mathrm{Haus}(f(gH),\widehat{f}(gH))\leq C$.
\end{enumerate}

 We thus define $f_0:H\rightarrow H'$ such that $d(f_0(h),f(h))\leq C$ for all $h\in H$.   We fix $g\in G$, and similarly define $f_g:gH\rightarrow \widehat f(gH)$ such that $d(f_g(gh),f(gh))\leq C$ for all $h\in H$.
Let $p_g:H\rightarrow gH$ and $p'_{g}:H'\rightarrow \widehat{f}(gH)$ be closest point projections. By Proposition \ref{prop:fibreqi induces qi between fibres} there exist constants $K'\geq 1$ and $A'\geq 0$, independent of $g$, such that $f_0$ and $f_g$ are $(K',A')$ quasi-isometries for all $g\in G$.  Moreover, we may choose $(K',A')$ sufficiently large so that $f_0$ has a quasi-inverse $\bar f_0$ that  is also a $(K',A')$-quasi-isometry. 

Since $\sup_{h\in H}d(f_g\circ p_g(h),f(h))<\infty$ and $\sup_{h\in H}d(p'_g\circ f_0(h),f(h))<\infty$, we see that  $\sup_{h\in H} d(p'_g\circ f_0(h),f_g\circ p_g(h))<\infty$. Thus \[\sup_{h'\in H'} d( p'_g(h'), (f_g\circ p_g\circ \bar f_0)(h'))<\infty.\] It follows from Lemma \ref{lem:qi bounds composition} that $\frac{\kappa(p_g)}{(K')^2}\leq\kappa(p'_g)=\kappa(f_g\circ p_g\circ \bar f_0)\leq (K')^2\kappa(p_g)$ and so  $\lvert F'(\widehat f(gH))-F(gH)\rvert\leq 2\log (K')$.
\end{proof}

In the case where $H$ is infinite cyclic, we define a related invariant called the height function, generalising the height function used in \cite{whyte2001baumslag}.
\begin{defn}
Let $G$ be a finitely generated group containing an almost normal subgroup $\mathbb{Z}\cong \langle t \rangle =H\alnorm G$. 
Pick $q,p\in \mathbb{Z}$ such that $t^q=gt^pg^{-1}$. The \emph{height function} $h:G\rightarrow \mathbb{R}$ is defined by $g \mapsto\log(\lvert \frac{p}{q} \rvert)$.
\end{defn}
The height function is well-defined as the ratio $\frac{p}{q}$ depends only on $g$. The following is evident from Examples \ref{exmp:abelian p_g} and \ref{exmp:fibre distortion free abelian}: 
\begin{lem}\label{lem:height vs F}
If $H\alnorm G$ is infinite cyclic, then $\lvert h(g) \rvert =F(gH)$.
\end{lem}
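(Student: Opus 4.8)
The plan is to simply specialise Examples \ref{exmp:abelian p_g} and \ref{exmp:fibre distortion free abelian} to the rank-one case and match up the resulting data with the definition of the height function. First I would fix the generator $t$ of $H\cong\mathbb Z$ and note that the word metric on $H$ with respect to $\{t\}$ is $d_H(t^i,t^j)=|i-j|$, so that in the notation of Example \ref{exmp:fibre distortion free abelian} (with $n=1$) the metric $d_H$ is precisely the $l_1$-norm on $\mathbb Z\cong\mathbb R^1$.

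Next I would identify the matrix $A_g$ attached to $g$ in Example \ref{exmp:abelian p_g}. Taking $v_1=t$, the integers $q_1=q$ and $p_{11}=p$ appearing there are characterised by $t^{q}=g t^{p} g^{-1}$, which is exactly the data $(q,p)$ used to define $h(g)=\log\lvert p/q\rvert$. Hence $A_g\in GL(1,\mathbb Q)$ is the $1\times 1$ matrix with entry $p/q$ (note $p\neq 0$, since $t^q=gt^pg^{-1}$ has infinite order, so $\log\lvert p/q\rvert$ is defined), and by the remark following the definition of the height function this ratio is independent of the choice of $(q,p)$, so there is no ambiguity.

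Finally I would invoke Example \ref{exmp:fibre distortion free abelian}, which computes $F(gH)=\lvert\log\lVert A_g\rVert_1\rvert$ for any finitely generated free abelian $H\alnorm G$. For a $1\times 1$ matrix the operator norm $\lVert A_g\rVert_1$ equals the absolute value of its single entry, namely $\lvert p/q\rvert$. Therefore $F(gH)=\bigl\lvert\log\lvert p/q\rvert\bigr\rvert=\lvert h(g)\rvert$, as claimed.

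There is no real obstacle here: the content is entirely contained in the two cited examples, and the only thing to verify is the bookkeeping identification of the scalar $A_g$ with $p/q$ and of the $1$-norm of a scalar matrix with $\lvert A_g\rvert$. If one prefers a self-contained argument avoiding the matrix-norm formalism of Example \ref{exmp:fibre distortion free abelian}, one can instead use the estimate $\sup_{k\in\mathbb Z}d\bigl(p_g(t^k),g t^{\lfloor (p/q)k\rfloor}\bigr)<\infty$ from Example \ref{exmp:abelian p_g} directly: it shows that $p_g$ is, up to bounded error, the map $t^k\mapsto g t^{\lfloor(p/q)k\rfloor}$ on the cyclic fibres, whose multiplicative distortion is exactly $\lvert p/q\rvert$ in both directions, giving $\kappa(p_g)=\max\{\lvert p/q\rvert,\lvert q/p\rvert\}$ and hence $\log\kappa(p_g)=\lvert\log\lvert p/q\rvert\rvert=\lvert h(g)\rvert$.
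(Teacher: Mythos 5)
Your proof is correct and takes essentially the same route as the paper, which gives no proof beyond remarking that the lemma is ``evident from Examples \ref{exmp:abelian p_g} and \ref{exmp:fibre distortion free abelian}''; you have simply made the bookkeeping in those examples explicit for $n=1$.
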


One advantage of the height function over the fibre distortion function is the following:
\begin{prop}\label{prop:height is hom}
The height function $h:G\rightarrow (\mathbb{R},+)$ is  a homomorphism. 
\end{prop}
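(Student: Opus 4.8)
The plan is to show that $h$ respects multiplication, i.e. $h(g_1g_2) = h(g_1) + h(g_2)$, by tracking how conjugation by products of group elements scales the infinite cyclic subgroup $H = \langle t\rangle$. Since $H$ is almost normal, for each $g\in G$ we have $gHg^{-1}$ commensurable with $H$, so there are integers $q,p$ (with $q \neq 0$) such that $t^q = g t^p g^{-1}$. As remarked after the definition, the ratio $p/q$ depends only on $g$, so write $\rho(g) := p/q \in \mathbb{Q}^\times$; then $h(g) = \log|\rho(g)|$. It suffices to prove that $\rho: G \to \mathbb{Q}^\times$ is a homomorphism, since $\log|\cdot|: \mathbb{Q}^\times \to (\mathbb{R},+)$ is a homomorphism.

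First I would pin down $\rho$ cleanly. The relation $t^q = g t^p g^{-1}$ says that conjugation by $g$ maps the finite-index subgroup $\langle t^p\rangle \leq H$ isomorphically onto the finite-index subgroup $\langle t^q \rangle \leq H$, sending $t^p \mapsto t^q$ (after possibly replacing $p$ by $-p$, i.e. the sign is absorbed into $\rho(g)$, since $gt^{-p}g^{-1} = t^{-q}$ as well; one should be slightly careful that $\rho$ is genuinely well-defined as a rational number with a consistent sign — this follows because $g$ acts on $H \otimes \mathbb{Q} \cong \mathbb{Q}$ by multiplication by $\rho(g)$, a well-defined scalar). Concretely: for any nonzero $m \in \mathbb{Z}$ with $t^m \in \langle t^p\rangle$, we get $g t^m g^{-1} = t^{\rho(g)m}$ whenever $\rho(g)m \in \mathbb{Z}$. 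The key point is that $\rho(g)$ is characterized by: $g t^a g^{-1} = t^b$ (for $a,b\in\mathbb{Z}$, $a\neq 0$) implies $b/a = \rho(g)$.

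Next, the main computation. Given $g_1, g_2 \in G$ with $\rho(g_1) = p_1/q_1$ and $\rho(g_2) = p_2/q_2$, I want to find integers $a, b$ with $(g_1g_2) t^a (g_1g_2)^{-1} = t^b$ and $b/a = \rho(g_1)\rho(g_2)$. Start with a suitable power of $t$: choose $a = q_1 q_2$ (or a common multiple making everything integral). We have $g_2 t^{q_1 q_2} g_2^{-1} = t^{q_1 p_2}$ using $g_2 t^{q_2} g_2^{-1} = t^{p_2}$ raised to the $q_1$ power. Then apply conjugation by $g_1$: we need $t^{q_1 p_2}$ to lie in $\langle t^{p_1}\rangle$; if not, first enlarge $a$ by a factor so that $q_1 p_2$ becomes a multiple of $p_1$ — say replace $a$ by $p_1 q_1 q_2$, giving $g_2 t^{p_1 q_1 q_2} g_2^{-1} = t^{p_1 q_1 p_2}$, and then $g_1 (t^{p_1 q_1 p_2}) g_1^{-1} = (g_1 t^{p_1} g_1^{-1})^{q_1 p_2} = t^{q_1 \cdot q_1 p_2}$... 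I would carefully arrange the exponents so that the final relation reads $(g_1g_2) t^a (g_1 g_2)^{-1} = t^{\rho(g_1)\rho(g_2) a}$ for an appropriate integer $a$; the bookkeeping is routine once one works in $H\otimes\mathbb{Q}$, where conjugation by $g_i$ is literally multiplication by $\rho(g_i)$, so conjugation by $g_1g_2$ is multiplication by $\rho(g_1)\rho(g_2)$, forcing $\rho(g_1g_2) = \rho(g_1)\rho(g_2)$. Applying $\log|\cdot|$ then yields $h(g_1 g_2) = h(g_1) + h(g_2)$.

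The main obstacle is purely notational: making sure the sign and the well-definedness of $\rho(g)$ as a rational number are handled correctly (so that $h$ is unambiguous), and that the exponent bookkeeping in the product formula is done with integer powers of $t$ at every stage rather than fractional ones. The conceptual content — that $G$ acts on the one-dimensional $\mathbb{Q}$-vector space $H \otimes \mathbb{Q}$ by scalars, giving a homomorphism $G \to \mathbb{Q}^\times$ — is immediate, and the cleanest write-up would phrase the whole argument in those terms and then observe $h = \log|\cdot| \circ \rho$.
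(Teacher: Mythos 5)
Your approach is essentially the paper's: both arguments rest on the observation that conjugation by $g$ scales the rank-one lattice $H$ by a well-defined rational factor, which is multiplicative in $g$, and the paper's four-line exponent computation ($t^{mr}=gkt^{ns}(gk)^{-1}$) is exactly the ``bookkeeping'' you sketch. One small slip worth fixing: you define $\rho(g)=p/q$ from $t^q=gt^pg^{-1}$ but then assert the characterization ``$gt^ag^{-1}=t^b\implies b/a=\rho(g)$''; applying the latter to the defining relation gives $q/p$, not $p/q$, so your two descriptions of $\rho$ are reciprocals of each other. This doesn't affect the conclusion (replacing $\rho$ by $1/\rho$ is an automorphism of $\mathbb{Q}^\times$, and $h=\pm\log|\rho|$ is a homomorphism either way), but a clean write-up should pick one convention and stick to it.
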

\begin{proof}
Let $g,k\in G$. We pick non-zero integers $n,m,r,s$ such that $t^m=gt^ng^{-1}$ and $t^r=kt^sk^{-1}$. Thus $t^{mr}=gt^{nr}g^{-1}=gkt^{ns}k^{-1}g^{-1}$ and so \[h(gk)=\log\Big(\Big\lvert\frac{ns}{mr}\Big\rvert\Big)=\log\Big(\Big\lvert\frac{n}{m}\Big\rvert\Big)+\log\Big(\Big\lvert\frac{s}{r}\Big\rvert\Big)=h(g)+h(k).\] As $g^{-1}t^mg=t^n$, we see that $h(g^{-1})=\log(\lvert \frac{m}{n}\rvert)=-\log(\lvert \frac{n}{m}\rvert)=-h(g)$. Thus $h$ is a homomorphism.
\end{proof}

We now show quasi-isometric rigidity for finitely presented $\mathbb{Z}$-by-($\infty$-ended) groups. This result also holds for groups of type $FP_2$.
\zfibre
\begin{proof}
Suppose $G$ is a finitely presented group that contains an infinite cyclic normal subgroup $H$, with $G/H$ infinite ended.   As $H\vartriangleleft G$, the height function $h:G\rightarrow \mathbb{R}$ is zero. Thus the fibre distortion function $F$ is zero by Lemma \ref{lem:height vs F}. Suppose $G'$ is a finitely generated group quasi-isometric to $G$. By Theorem \ref{thm:main intro}, $G'$ contains an 2-ended almost normal subgroup $H'\alnorm G'$. Since 2-ended groups are virtually cyclic, we may assume by Proposition \ref{prop:alnorm is comm inv} that $H'=\langle t \rangle$ for some $t\in G'$. By Proposition \ref{prop:F qi invariant} and Lemma \ref{lem:height vs F}, it follows that  the associated height function $h':G'\rightarrow \mathbb{R}$ has bounded image. Since $(\bR,+)$ is torsion-free,  Proposition \ref{prop:height is hom} ensures $h'$ is  trivial.

Let $S=\{s_1,s_2,\dots, s_n\}$ be a finite generating set of $G'$. Since $s_iHs_i^{-1}$ and $H$ are commensurable,  there are positive integers $n_i$ and $m_i$ such that $s_it^{m_i}s_i^{-1}=t^{\pm n_i}$. As $h'$ is trivial, $n_i=\pm m_i$. Thus for sufficiently large $M$ such that $m_i\vert M$ for all $i$, we deduce that $s_i\langle t^M\rangle s_i^{-1}=\langle t^M\rangle$ for all $i$. Since $S$ is a generating set,  $\langle t^M\rangle$ is an infinite cyclic normal subgroup of $G'$.
\end{proof}
It is easy to see, as observed in the preceding proof, that central extensions necessarily have bounded fibre distortion. 
One may wonder if the converse holds: does bounded fibre distortion distortion necessarily imply the group is (virtually) a central extension? This is not the case. The following example resolves a question of \cite[Section 12.2]{frigerio2015graphmflds}, giving an example of a group quasi-isometric to $F_2\times \mathbb{Z}^2$ that does not have a normal free abelian subgroup of rank 2. 
\begin{exmp}\label{exmp:qitofreeXz2}
Let \[G=\langle a,b,t\mid [a,b], a^{13}=ta^{5}b^{12}t^{-1},b^{13}=ta^{-12}b^5t^{-1}\rangle.\] This is an HNN extension of $\mathbb{Z}^2$ with stable letter $t$. Notice that the associated matrix 
\[A\coloneqq\begin{pmatrix}
\frac{5}{13} & -\frac{12}{13}\\[6pt]
\frac{12}{13}& \frac{5}{13}
\end{pmatrix}\] is an infinite order rotation through angle $\arccos(\frac{5}{13})$. Thus the $\mathbb{Z}^2$ subgroup $H=\langle a,b\rangle\leq G$ is almost normal but not normal.   We now build a graph of spaces $X$ by taking two tori $T$ and $T'$ and gluing $T'\times [0,1]$ to $T$ so that $\pi(X)=G$. Endowing $T$ and $T'\times [0,1]$ with standard Euclidean metrics, we see that as $A$ is orthogonal, the attaching maps used to define $X$ are isometric embeddings. Thus $\tilde X$ is isometric to $\mathbb{E}^2\times T$, where $T$ is a regular tree with infinitely many ends, and so $G$ is quasi-isometric to $F_2\times \mathbb{Z}^2$. Such examples are considered in work of Leary and Minasyan \cite{learyminasyan19}.  See also \cite{huang2018commensurators}.
\end{exmp}

We now investigate what one can say if the fibre distortion function is unbounded. In particular, we are interested in the following situation:
\begin{defn}
Let $G$ be a finitely generated group containing an almost normal subgroup $H\alnorm G$. We say that $(G,H)$ has \emph{proper fibre distortion} if the fibre distortion map $F:G/H\rightarrow \mathbb{R}$ is proper, i.e. for each $r\geq 0$, $F^{-1}([1,r])$ is finite. 
\end{defn}
\begin{exmp}
Suppose $G$ is a finitely generated group extension \[1\rightarrow H\cong \mathbb{Z}^n\rightarrow G\rightarrow Q\rightarrow 1\] such that the map $Q\rightarrow Aut(H)\cong GL(n,\mathbb{Z})$ has finite kernel. In such a situation, we say that $G$ is \emph{$\mathbb{Z}^n$--by--($\infty$ ended) with almost injective quotient (AIQ)}. Observe that for every $r\geq 0$, there are only finitely many matrices $M\in GL(n,\mathbb{Z})$ such that $\lVert M\rVert_1\leq r$.  Thus Example \ref{exmp:fibre distortion free abelian} ensures that $(G,H)$ has proper fibre distortion if and only if $G$ is $\mathbb{Z}^n$--by--($\infty$ ended) with AIQ.
\end{exmp}

Proposition \ref{prop:F qi invariant} implies the following:
\begin{cor}\label{cor:proper fibre distortion}
Suppose that $G$ and $G'$ are groups containing  almost normal subgroups $ H\alnorm G$ and $ H'\alnorm G'$ such that the hypotheses of Theorem \ref{thm:main intro} are satisfied. If $G$ and $G'$ are quasi-isometric, then $(G,H)$ has proper fibre distortion if and only if $(G',H')$ does.
\end{cor}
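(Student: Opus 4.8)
The plan is to read Corollary~\ref{cor:proper fibre distortion} off from the quasi-isometry estimate of Proposition~\ref{prop:F qi invariant}, combined with the fact (noted in the proof of Proposition~\ref{prop:almost normal subgroups coarse bundles}) that quotient spaces have bounded geometry. Since the hypotheses are symmetric in $(G,H)$ and $(G',H')$ and ``$G$ quasi-isometric to $G'$'' is a symmetric relation, it is enough to prove one implication: assuming $(G,H)$ has proper fibre distortion, I will show the same for $(G',H')$.

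First I would fix a quasi-isometry $\varphi\colon G'\to G$. By Theorem~\ref{thm:main fib bundle} every quasi-isometry is fibre-preserving, so Lemma~\ref{lem:fibre-preserving qi imp qi between base spaces} produces an induced quasi-isometry $\widehat\varphi\colon G'/H'\to G/H$, and Proposition~\ref{prop:F qi invariant} (applied with the roles of $G$ and $G'$ exchanged) gives a constant $C\ge 0$ with
\[
\lvert F(\widehat\varphi(g'H'))-F'(g'H')\rvert\le C
\qquad\text{for all }g'H'\in G'/H'.
\]
Next, suppose for contradiction that $(G',H')$ does not have proper fibre distortion, so that $(F')^{-1}([0,r])$ is infinite for some $r\ge 0$ (recall $F'\ge 0$, so this is the set appearing in the definition). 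Because $G'/H'$ has bounded geometry, an infinite subset of it is unbounded, so I can pick a sequence $(g'_nH')$ of pairwise distinct cosets in $(F')^{-1}([0,r])$ with $d_{G'/H'}(g'_nH',H')\to\infty$. Put $g_nH:=\widehat\varphi(g'_nH')$. Since $\widehat\varphi$ is a quasi-isometry, $d_{G/H}(g_nH,\widehat\varphi(H'))\to\infty$, so the $g_nH$ take infinitely many distinct values; passing to a subsequence I may assume they are pairwise distinct. The displayed inequality then forces $0\le F(g_nH)\le F'(g'_nH')+C\le r+C$ for all $n$, so $F^{-1}([0,r+C])$ is infinite, contradicting proper fibre distortion of $(G,H)$. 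The converse implication is identical, with the roles of $(G,H)$ and $(G',H')$ reversed.

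Once Proposition~\ref{prop:F qi invariant} is available the deduction is short and I do not expect a real obstacle; the one point deserving care is how the cutoff in the definition of proper fibre distortion interacts with the bounded additive error introduced by a quasi-isometry. Because $F\ge 0$, such an error cannot move a coset below the range of $F$, which is exactly why properness measured by finiteness of $F^{-1}([0,r])$ is quasi-isometry invariant; this also matches the usual formulation in the cases of interest, since $F^{-1}([0,1))$ is finite there (for instance when $H$ is virtually free abelian, by Examples~\ref{exmp:abelian p_g} and~\ref{exmp:fibre distortion free abelian}). Getting this endpoint bookkeeping right is the only subtlety.
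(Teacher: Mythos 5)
Your argument is correct and is exactly the intended deduction: the paper offers no separate proof, stating only that Corollary~\ref{cor:proper fibre distortion} follows from Proposition~\ref{prop:F qi invariant}, and your write-up (bounded additive error plus finiteness of balls in $G/H$, which has bounded geometry) is the obvious elaboration of that. One small remark: the paper's printed definition reads ``$F^{-1}([1,r])$ is finite,'' which you silently replaced with $F^{-1}([0,r])$; since $F=\log\kappa(p_{\cdot})\ge 0$ and the intended meaning is that $F$ is a proper map $G/H\to[0,\infty)$, your reading is the correct one (and the $[1,r]$ version would not in fact be quasi-isometry invariant by this argument), so you have effectively corrected a typo rather than changed the content.
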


The following is a  generalisation of Lemma 4.1 of \cite{whyte2010coarse}:
\begin{prop}\label{prop:proper fib distortion}
Suppose $G$ is finitely presented, $\mathbb{Z}^n\cong H\alnorm G$  and $e(G/H)=\infty$. Suppose $(G,H)$ has proper fibre distortion. Then $H$ has a finite index subgroup that is normal in $G$.
\end{prop}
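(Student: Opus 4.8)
The plan is to extract a normal subgroup of $G$ from the action of $G$ on the commensurability class of $H$, and then to promote it to a finite-index subgroup of $H$ by a counting argument.

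First I would set up the modular homomorphism associated to the commensurated subgroup $H$. Put $V:=\mathbb{Q}\otimes_{\mathbb{Z}}H\cong\mathbb{Q}^n$. For each $g\in G$, conjugation $c_g\colon x\mapsto gxg^{-1}$ restricts to an isomorphism $H\cap g^{-1}Hg\to H\cap gHg^{-1}$ between finite-index subgroups of $H$ (finite index since $H\alnorm G$), and hence induces an element $A_g\in GL(V)=GL(n,\mathbb{Q})$; this is the matrix of Example \ref{exmp:abelian p_g}. Since $c_{g_1g_2}=c_{g_1}\circ c_{g_2}$ on a common finite-index subgroup of $H$, the map $\phi\colon G\to GL(n,\mathbb{Q})$, $g\mapsto A_g$, is a homomorphism; and since $H$ is abelian, $c_h=\id_H$ for $h\in H$, so $H\leq N:=\ker\phi\vartriangleleft G$. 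By Example \ref{exmp:fibre distortion free abelian}, $F(gH)=\lvert\log\lVert A_g\rVert_1\rvert$, so $A_g=I$ forces $F(gH)=0$. Because $(G,H)$ has proper fibre distortion, the set $\{gH\in G/H: F(gH)\leq r\}$ is finite for every $r\geq 0$; since $\{gH: g\in N\}\subseteq\{gH: F(gH)\leq 0\}$, the coset space $N/H$ is finite. Hence $[N:H]<\infty$, and as $H\cong\mathbb{Z}^n$ is finitely generated, so is $N$.

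Next I would count the conjugates of $H$. Since $N\vartriangleleft G$ contains $H$, every conjugate $gHg^{-1}$ lies in $N$, and since $c_g$ is an automorphism of $N$ carrying $H$ to $gHg^{-1}$, each $gHg^{-1}$ has index $m:=[N:H]$ in $N$. A finitely generated group has only finitely many subgroups of index $m$, so $\{gHg^{-1}: g\in G\}=\{K_1,\dots,K_s\}$ is finite, with say $K_1=H$. The normal core $H_0:=\bigcap_{g\in G}gHg^{-1}=K_1\cap\dots\cap K_s$ is then a finite intersection of index-$m$ subgroups of $N$, so $[N:H_0]\leq m^s<\infty$, whence $[H:H_0]=[N:H_0]/[N:H]<\infty$. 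Being a normal core, $H_0$ is normal in $G$, so $H_0$ is the required finite-index subgroup of $H$ that is normal in $G$.

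The work here is essentially bookkeeping: checking that $g\mapsto A_g$ is a well-defined homomorphism with $H$ in its kernel (the standard modular homomorphism of a commensurated abelian subgroup, implicit in Examples \ref{exmp:abelian p_g}--\ref{exmp:fibre distortion free abelian}), and observing that properness of $F$ pins down the finite set $N/H$. I do not expect a serious obstacle; the only place the hypotheses are genuinely used is in deducing $[N:H]<\infty$ from proper fibre distortion, which is exactly the content of Example \ref{exmp:fibre distortion free abelian} combined with properness of $F$.
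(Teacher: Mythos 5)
Your proof is correct, but it takes a genuinely different route from the paper's. The paper's argument goes through the tree $T$ provided by Theorem \ref{thm:gog} (Dunwoody accessibility applied to $G/H$): it shows that $H$ must act trivially on $T$ — otherwise one manufactures infinitely many distinct cosets $g_i^{-1}hg_iH$ all with $F(g_i^{-1}hg_iH)=0$, contradicting properness — and then takes the kernel $K$ of the $G$-action on $T$, which is normal, contains $H$, and is commensurable to $H$ because it sits inside an edge stabiliser. You instead work purely algebraically with the modular representation $g\mapsto A_g$: properness of $F$ already forces $N=\ker(g\mapsto A_g)$ to contain $H$ with finite index, and then the standard fact that a finitely generated group has finitely many subgroups of a given index lets you pass to the normal core. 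Two remarks. First, note that with the paper's convention for $A_g$ (Example \ref{exmp:abelian p_g}), $g\mapsto A_g$ is an \emph{anti}-homomorphism ($A_{gk}=A_kA_g$), not a homomorphism as you write; this is harmless since the kernel of an anti-homomorphism is still a normal subgroup and is all you use, but the wording should be adjusted (or replace $A_g$ by the linear map on $\mathbb{Q}\otimes H$ actually induced by $c_g$, which is its inverse). Second, your argument buys something the paper's does not: it nowhere uses finite presentability of $G$ or $e(G/H)=\infty$, whereas the paper needs both to invoke Theorem \ref{thm:gog} and build the tree. Your normal-core step also makes fully explicit the paper's somewhat compressed final sentence (``Since $K$ is normal, $H$ has a finite index normal subgroup'').
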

\begin{proof}
Let $F:G/H\rightarrow \mathbb{R}$ be the fibre distortion map. It follows from the definition of $A_g$ in Example \ref{exmp:abelian p_g} that $A_{gk}=A_kA_g$ for all $g,k\in G$.  Suppose $g\in G$ and $h\in H$.  Using Example \ref{exmp:abelian p_g}, we see that 
\[\mathrm{sup}_{k\in H}d(k,ghg^{-1}\lfloor A_{g^{-1}}A_{h}A_g\cdot k\rfloor)=\mathrm{sup}_{k\in H}d(k,ghg^{-1}\lfloor A_{ghg^{-1}}\cdot k\rfloor)<\infty.\] Since $h\in H$, $A_h$ is the identity, so $A_{g^{-1}}A_{h}A_g$ is also identity. By Example \ref{exmp:fibre distortion free abelian}, we see that $F(ghg^{-1}H)=0$.

Let $X$ be the 2-complex  with vertex set $G/H$ constructed in the proof of Theorem \ref{thm:gog}. As in the  proof of Theorem \ref{thm:gog}, there exists a collection of tracks $\{\tau_i\}$ in $X$ such that components of $X\backslash \cup_i \tau_i$ correspond to vertices of the dual tree $T$. 
We claim that $H$ fixes $T$. If this is the case, then let $K$ be the kernel of the action of $G$ on $T$. By assumption $H\leq K$. Since edge stabilisers are commensurable to $H$ and $K$ is contained in an edge stabiliser, it follows that $K$ is commensurable to $H$.  Since $K$ is normal, $H$ has a finite index normal subgroup.

To prove the claim, we suppose some $h\in H$ doesn't fix $T$. Let $v$ be the vertex of $T$ corresponding to the component of $X\backslash \cup_i \tau_i$ containing $H$. Then $Hv=v$. Thus there is an $A\geq 0$ and a component $C$ of $T\backslash N_A(v)$ such that $hC\neq C$. Since $G\backslash T$ is cocompact, we can pick a sequence $(g_i)$ in $G$ such that  $g_iv\in C$ for all $i$ and $\lim_{i\rightarrow \infty} d_T(v,g_iv)=\infty$. As any path from $g_iv$ to $hg_iv$ passes through $N_A(v)$, we see that \[d_T(v,g_i^{-1}hg_iv)=d_T(g_iv,hg_iv)\geq d_T(g_iv,v) + d_T(g_iv,v)-2A.\]  Thus $\lim_{i\rightarrow \infty} d_T(v,g_i^{-1}hg_iv)=\infty$. By passing to a subsequence, we may assume that $g_i^{-1}hg_iv\neq g_j^{-1}hg_jv$ unless $i=j$. Since $Hv=v$, we deduce that $g_i^{-1}hg_iH\neq g_j^{-1}hg_jH$ when $i\neq j$. By the first paragraph, we have $F(g_i^{-1}hg_iH)=0$ for all $i$, which contradicts the properness of $F$.\end{proof}

We now combine Theorem \ref{thm:main intro} with Propositions \ref{prop:F qi invariant} and \ref{prop:proper fib distortion} to deduce quasi-isometric rigidity for the class of $\mathbb{Z}^n$--by--($\infty$ ended) groups with AIQ:
\znfibre
\appendix
\section{QI rigidity of surface group extensions}\label{sec:qirigidity surfaces}
Throughout this appendix, let $S$ denote a closed hyperbolic surface.  In \cite{farbmosher2002surfacebyfree}, Farb and Mosher prove quasi-isometric rigidity for hyperbolic surface--by--free groups. These are group extensions of form $1\rightarrow \pi_1(S)\rightarrow \Gamma_H\rightarrow K\rightarrow 1$, where  the induced action $K\leq \Out(\pi_1(S))\cong\MCG(S)$ is a Schottky subgroup, i.e. a convex cocompact free subgroup in the sense of \cite{farbmosher02convexcocompact}. These methods were further developed by Mosher \cite{mosher2003fiber, mosher2009homology} to study the coarse geometry of other surface group extensions. 

Mosher classified \emph{fibre-preserving} quasi-isometries of certain surface group extensions in \cite{mosher2003fiber}. Theorem \ref{thm:main fib bundle} tells us that in many cases, every quasi-isometry is fibre-preserving. Combining these results allows us to deduce Theorem \ref{thm:qirigidity surface group extension}. For the convenience of the reader, we give an account  of Mosher's methods and explain how to deduce Theorem \ref{thm:qirigidity surface group extension} from Theorem \ref{thm:main fib bundle} and \cite{mosher2003fiber}. We refer the reader to \cite{farbmosher02convexcocompact}, \cite{farbmosher2002surfacebyfree} and \cite{mosher2003fiber} for more details on these topics.

\subsection*{Background on Teichm\"uller space}
\emph{Teichm\"uller space} is defined to be the space of marked hyperbolic structures on $S$ modulo isotopy. Equivalently, it is the space of marked conformal structures on $S$ modulo isotopy. 
A \emph{measured foliation} on $S$ is defined to be a foliation $\cF$ with finitely many singular points such that:
\begin{enumerate}
\item each neighbourhood of a singular point is a $k$-pronged saddle for $k\geq 3$ (see \cite{farbmargalitbook});
\item $\cF$ is equipped with a positive transverse Borel measure.
\end{enumerate}
A \emph{saddle collapse} of a measured foliation $\cF$ is obtained by collapsing the leaf segment joining two singularities to a point. The set of measured foliations of $S$, modulo isotopy and saddle collapse, is denoted $\MF$. Given a measured foliation $\cF$ and $\lambda >0$, let $\lambda\cF$ be the measured foliation obtained by multiplying the transverse measure by  $\lambda$. Let $\PMF$ denote the set of all projective measured foliations, i.e.  $\MF$ modulo the equivalence relation $\lambda F\sim F$ for $\lambda>0$. We let $[\cF]\in \PMF$ denote the equivalence class containing $\cF$.

We define a topology on $\PMF$ and $\cT$ as follows. Let $\cC$ denote the set of isotopy classes of simple closed curves on $S$ and let $\bP[0,\infty)^\cC$ be the projectivisation of $[0,\infty)^\cC$. There is an injection $\cT\rightarrow \bP[0,\infty)^\cC$  that associates to each $x\in \cT$ and $c\in \cC$ the length of the unique geodesic in the class $c$. The image of this map is homeomorphic to an open ball of dimension $6g-6$. For each measured foliation $\cF$ and $c\in \cC$, we define $\ell_\cF(c)$ to be the infimum  of $\int_{\gamma}\cF$ as $\gamma$ ranges over curves in the isotopy class $c$. The map $\cF\mapsto (c\mapsto \ell_\cF(c))$ descends to a well-defined map $\PMF\rightarrow \bP[0,\infty)^\cC$ with image homeomorphic to a sphere of dimension $6g-7$. We topologise $\cT$ and $\PMF$ by identifying them with their images in $\bP[0,\infty)^\cC$ endowed with subspace topology. Then Thurston's compactification theorem says that with respect to the above embedding into $\bP[0,\infty)^\cC$, $\overline{\cT}=\cT\sqcup \PMF$ is the closure of $\cT$.

We now describe the Teichm\"uller metric on $\cT$. We say that two measured foliations $\cF_x$ and $\cF_y$ are \emph{transverse} if:
\begin{enumerate}
\item they have the same set of singular point;
\item each singular point has the same number of prongs;
\item their leaves are transverse at all regular points.
\end{enumerate}
Two transverse measured foliations $\cF_x$ and $\cF_y$ determine a conformal structure at regular points of $S$: a neighbourhood of each regular point is identified with a subset of $\mathbb{C}$ such that leaves of $\cF_x$ (resp. $\cF_y$) correspond to lines with constant imaginary (resp. real) part  and distance is determined by the transverse measures. This conformal structure extends uniquely over singular points to a conformal structure on $S$ itself, hence determines a point of $\cT$ which we denote $\sigma(\cF_x,\cF_y)$. This is well-defined for any pair of transverse measured foliations  $(\cF_x,\cF_y)$ that represent the same pair  in $\PMF^2$.

A pair of transverse measured foliations $(\cF_x,\cF_y)$ defines a \emph{Teichm\"uller geodesic} \[t\mapsto \sigma(e^t\cF_x,e^{-t}\cF_y),\] which we denote $\geodesic{[\cF_x]}{[\cF_y]}$. Teichm\"uller's theorem says that every pair of points on Teichm\"uller space lie on a unique Teichm\"uller geodesic, and so the associated \emph{Teichm\"uller metric} given by $d(\sigma(e^t\cF_x,e^{-t}\cF_y),\sigma(e^s\cF_x,e^{-s}\cF_y))=\lvert s-t\rvert$ is  a well-defined metric on $\cT$. The topology induced by this metric agrees with the topology on $\cT$ defined by homeomorphically identifying $\cT$ with a subset of  $\bP[0,\infty)^\cC$ via the preceding embedding. Henceforth, it will be assumed that $\cT$ is endowed with the Teichm\"uller metric.
\subsection*{The mapping class group and its action on Teichm\"uller space}
The \emph{mapping class group} $\mathrm{MCG}(S)$ is the group $\mathrm{Homeo}(S)/\mathrm{Homeo}_0(S)$, where $\mathrm{Homeo}(S)$ is the group of homeomorphisms (not necessarily orientation preserving) of $S$ and $\mathrm{Homeo}_0(S)$ is the connected component of the identity. This is sometimes called the \emph{extended mapping class group} of $S$. The mapping class group $\MCG(S)$ acts properly discontinuously and isometrically on $\cT(S)$, but the action is not cocompact.

Let $p\in S$ be a point. The Dehn--Nielsen--Baer says that there is an isomorphism of short exact sequences, where the top row is known as the \emph{Birman exact sequence} associated to the mapping class group.

\begin{equation}
\begin{tikzcd}
1 \arrow[r]   & \pi_1(S)\arrow[r] \arrow[d,"="] &\MCG(S\setminus \{p\})\arrow[r,"\pi"]\arrow[d,"\cong"]&\mathrm{MCG}(S)\arrow[r]\arrow[d,"\cong"]& 1\\
1\arrow[r]&\pi_1(S)\arrow[r]&\mathrm{Aut}(\pi_1(S))\arrow[r]&\mathrm{Out}(\pi_1(S))\arrow[r]& 1
\end{tikzcd}
\end{equation}

 A homeomorphism $H\in \Homeo(S)$ is said to be a \emph{pseudo-Anosov} if there exist a pair of transverse measured foliations $(\cF_x,\cF_y)$ and a $\lambda>1$ such that $f(\cF_x,\cF_y)=(\lambda\cF_x,\lambda^{-1}\cF_y)$. A mapping class $f\in \MCG(S)$ is said to be  \emph{pseudo-Anosov} if it has a pseudo-Anosov representative. Every pseudo-Anosov $g$ acts by translations of non-zero length along a Teichm\"uller geodesic. This geodesic is unique and is called the \emph{axis} of $g$. 

This  action extends to an action  $\MCG(S) \curvearrowright \overline{\cT(S)}$. Given $f\in \MCG(S)$,  let $\Fix{f}$ denote the set of fixed points of $f$ in $\overline{\cT(S)}$. If $f$ is a pseudo-Anosov such that $f(\cF_x,\cF_y)=(\lambda\cF_x,\lambda^{-1}\cF_y)$ for some $\lambda>1$, then $\Fix{f}=\{[\cF_x],[\cF_y]\}$.
\begin{lem}[\cite{mccarthy1994normalizers}]
Given two pseudo-Anosovs $f,g\in \MCG(S)$, either $\Fix{f}=\Fix{g}$ or $\Fix{f}\cap\Fix{g}=\emptyset$. The setwise stabiliser of $\Fix{f}$ is equal to $\Comm(\langle f \rangle)$ and  contains $\langle f\rangle\cong \mathbb{Z}$ as a finite index subgroup.
\end{lem}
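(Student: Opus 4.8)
The plan is to reduce the whole statement to one structural fact: if $[\mathcal F]\in\PMF$ is arational and uniquely ergodic, then its stabiliser $\operatorname{Stab}_{\MCG(S)}([\mathcal F])$ is virtually cyclic. Granting this, the lemma follows by elementary bookkeeping with pseudo-Anosov fixed sets, using two standard facts: a pseudo-Anosov $f$ with $f(\cF_x,\cF_y)=(\lambda\cF_x,\lambda^{-1}\cF_y)$ and $\lambda>1$ has $\Fix{f}=\{[\cF_x],[\cF_y]\}$ with $\cF_x,\cF_y$ arational and uniquely ergodic, and $\Fix{f^k}=\Fix{f}$ for every $k\neq 0$, since $f^k$ is pseudo-Anosov with the same pair of invariant foliations.

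First I would prove the structural fact. By unique ergodicity, any $h\in\operatorname{Stab}_{\MCG(S)}([\mathcal F])$ satisfies $h\mathcal F=\mu(h)\,\mathcal F$ for a unique $\mu(h)>0$, and $h\mapsto\log\mu(h)$ is a homomorphism to $(\mathbb R,+)$. Its kernel is a torsion subgroup: any infinite-order mapping class fixing the projective class of an arational foliation is pseudo-Anosov with that class among its two fixed points, and hence scales $\mathcal F$ by a dilatation $\neq 1$, so lies outside the kernel (this is a standard consequence of the Nielsen--Thurston classification together with the theory of arational foliations; see \cite{mccarthy1994normalizers} and the references therein); since $\MCG(S)$ is virtually torsion-free, the kernel is finite. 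The image is nontrivial (it contains $\log\lambda(f)$), so it remains to show it is discrete in $\mathbb R$, which will give $\operatorname{Stab}([\mathcal F])$ virtually cyclic. This discreteness is the crux: it follows from the positive lower bound on dilatations of pseudo-Anosov mapping classes of $S$ (equivalently, a positive lower bound on closed geodesic lengths in the moduli space of $S$), or alternatively from Bowditch's acylindricity of the $\MCG(S)$--action on the curve complex, Klarreich's identification of arational foliations with its Gromov boundary, and the fact that stabilisers of loxodromic boundary points of acylindrical actions on hyperbolic spaces are virtually cyclic. For the index statement one can also argue directly: the pointwise stabiliser of $\Fix{f}=\{[\cF_x],[\cF_y]\}$ preserves the unique Teichm\"uller geodesic $\geodesic{[\cF_x]}{[\cF_y]}$ and acts on it by translations, so proper discontinuity of $\MCG(S)\curvearrowright\cT(S)$ forces it to be finite-by-$\mathbb Z$.

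Granting the structural fact, the rest is formal. For the dichotomy, if $a\in\Fix{f}\cap\Fix{g}$ with $f,g$ pseudo-Anosov, then $a$ is arational and uniquely ergodic, and $\langle f\rangle,\langle g\rangle$ are infinite cyclic subgroups of the virtually cyclic group $\operatorname{Stab}(a)$, hence of finite index and so commensurable; thus $f^p=g^{\pm q}$ for some $p,q>0$, whence $\Fix{f}=\Fix{f^p}=\Fix{g^{\pm q}}=\Fix{g}$. For the last assertion, first note that $\operatorname{Stab}(\Fix{f})$ is virtually cyclic: it acts on the two-element set $\Fix{f}$ with kernel the pointwise stabiliser, which lies in the virtually cyclic group $\operatorname{Stab}([\cF_x])$ and contains $f$, hence is virtually $\mathbb Z$; therefore $\langle f\rangle\cong\mathbb Z$ has finite index in $\operatorname{Stab}(\Fix{f})$. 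If $h\in\operatorname{Stab}(\Fix{f})$ then $hfh^{-1}$ is pseudo-Anosov with $\Fix{hfh^{-1}}=h\,\Fix{f}=\Fix{f}$, so $\langle hfh^{-1}\rangle$ and $\langle f\rangle$ are finite-index subgroups of $\operatorname{Stab}(\Fix{f})$, hence commensurable, i.e.\ $h\in\Comm(\langle f\rangle)$. Conversely, if $h\in\Comm(\langle f\rangle)$ then $\langle f\rangle$ and $\langle hfh^{-1}\rangle$ are commensurable, so $f^p=hf^{\pm q}h^{-1}$ for some $p,q>0$, and then $\Fix{f}=\Fix{f^p}=h\,\Fix{f^{\pm q}}=h\,\Fix{f}$, so $h\in\operatorname{Stab}(\Fix{f})$. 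Hence $\operatorname{Stab}(\Fix{f})=\Comm(\langle f\rangle)$ and it contains $\langle f\rangle\cong\mathbb Z$ with finite index. The one genuinely nontrivial point is the discreteness appearing in the structural fact; everything else is manipulation of fixed sets and the Nielsen--Thurston trichotomy.
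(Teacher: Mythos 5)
The paper does not prove this lemma; it is attributed directly to McCarthy's article and no argument is given. Your argument is a correct proof of the cited statement: the reduction to the structural fact that $\operatorname{Stab}_{\MCG(S)}([\mathcal F])$ is virtually cyclic for an arational, uniquely ergodic $[\mathcal F]\in\PMF$, the scaling homomorphism $h\mapsto\log\mu(h)$ with torsion (hence finite) kernel, discreteness of its image via the uniform lower bound on pseudo-Anosov dilatations, and the subsequent bookkeeping identifying $\operatorname{Stab}(\Fix{f})$ with $\Comm(\langle f\rangle)$ are all sound, and this is essentially the same mechanism as in McCarthy's original argument. The only caveat worth flagging is that the torsion-kernel step implicitly invokes the nontrivial classification fact that an infinite-order mapping class fixing an arational projective class must be pseudo-Anosov (ruling out reducibles); you correctly flag this as standard and cite it rather than reprove it, which is appropriate at this level.
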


We say that $L\leq \MCG(S)$ is \emph{irreducible} if it doesn't preserve a finite collection of disjoint simple closed curves.  Ivanov showed that a subgroup of $\mathrm{MCG}(S)$ is infinite and irreducible if and only if it contains a pseudo-Anosov  \cite[Corollary 7.14]{ivanov92subgroups}. 

Given a measured foliation $\cF$ of $S$, we can lift it to a $\pi_1(S)$-invariant measured foliation $\widetilde{\cF}$ of $\widetilde S\cong \Hyp$. We set $E([\cF])\subseteq S^1$ to be the set of endpoints of leaves of $\widetilde{\cF}$, noting that $E([\cF])$ is independent of the choice of $\cF\in [\cF]$. If $(\cF_x,\cF_y)$ is a pair of transverse measured foliations, then $E([\cF_x])\cap E([\cF_y])=\emptyset$ by transversality of $\cF_x$ and $\cF_y$. Thus if $[\cF_x],[\cF_y]\in \PMF$ can be joined by a Teichm\"uller geodesic, then $E([\cF_x])\cap E([\cF_y])=\emptyset$.

Let $g$ be a pseudo-Anosov with $\Fix{g}=\{[\cF_x],[\cF_y]\}$. We define $E(g)\coloneqq E([\cF_x])\sqcup E([\cF_y])$. Suppose that $f$ is another pseudo-Anosov such that $\Fix{f}\cap \Fix{g}=\emptyset$. Then any two distinct elements of $\Fix{f}\cup \Fix{g}$ can be joined by a Teichm\"uller geodesic. To see this, we recall from  \cite{farbmosher02convexcocompact} that for $n$ sufficiently large, the subgroup generated by $\langle f^n, g^n\rangle$ is convex cocompact. The definition of convex cocompactness then ensures that  any two elements of  $\Fix{f^n}\cup \Fix{g^n}=\Fix{f}\cup \Fix{g}$ can be joined by a Teichm\"uller geodesic. Hence we deduce the following:

\begin{lem}\label{lem:endpts of leaves disjoint}
If $f,g\in\MCG(S)$ are pseudo-Anosovs, then either    $E(f)\cap E(g)=\emptyset$ or $E(f)=E(g)$. In the latter case, $g\in \Comm(\langle f\rangle)$ and $f$ and $g$ share an axis.
\end{lem}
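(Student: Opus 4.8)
The plan is to reduce everything to the dichotomy for pseudo-Anosovs recorded above (the lemma of \cite{mccarthy1994normalizers}): for pseudo-Anosovs $f,g$, either $\Fix{f}=\Fix{g}$ or $\Fix{f}\cap\Fix{g}=\emptyset$. In each of the two cases I will transfer the statement about fixed points in $\overline{\cT(S)}$ into the claimed statement about endpoints of leaves in $S^1$, using the two facts established in the discussion preceding this lemma: that $E(h)=E([\cF_x])\sqcup E([\cF_y])$ depends only on the unordered pair $\Fix{h}=\{[\cF_x],[\cF_y]\}\subseteq\PMF$ (so swapping stable and unstable foliations of $h$ changes nothing), and that if two elements of $\PMF$ are joined by a Teichm\"uller geodesic then they are realised by transverse measured foliations and hence have disjoint sets of leaf-endpoints.

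First I would treat the case $\Fix{f}=\Fix{g}$. Writing $\Fix{f}=\{[\cF_x],[\cF_y]\}=\Fix{g}$, we get $E(f)=E([\cF_x])\sqcup E([\cF_y])=E(g)$ directly from the definition, which is the second alternative. For the remaining assertions: by the cited lemma the setwise stabiliser of $\Fix{f}$ equals $\Comm(\langle f\rangle)$, and since $g$ preserves $\Fix{g}=\Fix{f}$ it lies in that stabiliser, so $g\in\Comm(\langle f\rangle)$. Moreover the axis of a pseudo-Anosov is the unique Teichm\"uller geodesic joining its two fixed points in $\PMF$; as $f$ and $g$ have the same pair of fixed points, they have the same axis.

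Next I would treat the case $\Fix{f}\cap\Fix{g}=\emptyset$ and show $E(f)\cap E(g)=\emptyset$. Suppose for contradiction there is $\xi\in E(f)\cap E(g)$. Then $\xi$ is an endpoint of a leaf of the lift of some $[\cF]\in\Fix{f}$ and also of the lift of some $[\cF']\in\Fix{g}$, and $[\cF]\neq[\cF']$ because $\Fix{f}\cap\Fix{g}=\emptyset$. Both $[\cF]$ and $[\cF']$ lie in $\Fix{f}\cup\Fix{g}$; recalling that $\langle f^n,g^n\rangle$ is convex cocompact for $n$ large in the sense of \cite{farbmosher02convexcocompact}, any two distinct elements of $\Fix{f}\cup\Fix{g}=\Fix{f^n}\cup\Fix{g^n}$ are joined by a Teichm\"uller geodesic, as observed above. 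Hence $[\cF]$ and $[\cF']$ are joined by a Teichm\"uller geodesic, so $E([\cF])\cap E([\cF'])=\emptyset$, contradicting $\xi\in E([\cF])\cap E([\cF'])$.

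The argument is short, since all substantive inputs — McCarthy's dichotomy, the identification of the commensurator with a setwise stabiliser, and the chain ``joined by a Teichm\"uller geodesic $\Rightarrow$ transverse $\Rightarrow$ disjoint leaf-endpoints'' — are already in place. The only point needing care is the bookkeeping in the first case: one must use that $E(\cdot)$ is a function of the unordered fixed-point pair rather than of the pseudo-Anosov itself, and that the disjoint-union notation $E(h)=E([\cF_x])\sqcup E([\cF_y])$ is legitimate precisely because the stable and unstable foliations of $h$ are transverse. I do not anticipate a genuine obstacle.
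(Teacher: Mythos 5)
Your proposal is correct and follows exactly the route the paper has in mind: the paper sets up McCarthy's dichotomy, the identification of $\Comm(\langle f\rangle)$ with the setwise stabiliser of $\Fix{f}$, and the chain ``convex cocompactness of $\langle f^n,g^n\rangle$ $\Rightarrow$ joined by a Teichm\"uller geodesic $\Rightarrow$ disjoint leaf-endpoints,'' and then states ``Hence we deduce the following,'' leaving precisely the deduction you have written out. Your bookkeeping (in particular the observation that $E(\cdot)$ depends only on the unordered fixed-point pair, and that the two cases are mutually exclusive so that $E(f)=E(g)$ forces $\Fix{f}=\Fix{g}$) is sound, and no gap remains.
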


A subset $A\subseteq \cT$ is said to be \emph{cobounded} if there is some bounded set $\Omega\subseteq \cT$ such that $A\subseteq\MCG(S)\Omega$. Every axis of a pseudo-Anosov is cobounded.  We deduce the following lemma  via Proposition 3.3 of \cite{anderson2007freesubgpmcgs}, which also follows from the proof of \cite[Lemma 2.4]{farbmosher02convexcocompact}.
\begin{lem}\label{lem:finite h.d.}
Let $\gamma=\geodesic{u}{v}$ and $\gamma'=\geodesic{u'}{v'}$ be cobounded Teichm\"uller geodesics such that $\{u,v\}\cap \{u',v'\}=\emptyset$. Then for any $r\geq 0$, the set $\{\sigma\in \cT\mid d(\sigma,\gamma),d(\sigma,\gamma')\leq r\}$ is bounded. In particular, $\gamma$ and $\gamma'$ cannot be at finite Hausdorff distance.
\end{lem}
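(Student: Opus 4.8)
The plan is to deduce the first statement directly from Proposition 3.3 of \cite{anderson2007freesubgpmcgs}, and then to obtain the ``in particular'' clause by an easy contradiction argument.

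First I would recall the content of the cited proposition: for a pair of cobounded Teichm\"uller geodesics $\gamma,\gamma'$ whose endpoint sets in $\PMF$ are disjoint, and for every $r\geq 0$, there is a bound $R=R(r,\gamma,\gamma')$ such that $\{\sigma\in\cT\mid d(\sigma,\gamma)\leq r,\ d(\sigma,\gamma')\leq r\}$ is contained in a ball of radius $R$ about any fixed basepoint. Our hypothesis $\{u,v\}\cap\{u',v'\}=\emptyset$ says precisely that the endpoint sets of $\gamma=\geodesic{u}{v}$ and $\gamma'=\geodesic{u'}{v'}$ are disjoint, so the proposition applies and the first claim follows immediately. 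I would also note the alternative route through the proof of \cite[Lemma 2.4]{farbmosher02convexcocompact}: cobounded geodesics lie in the thick part of $\cT$, where Minsky's contraction estimates make such geodesics behave like geodesics in a Gromov hyperbolic space, and two such geodesics with distinct ideal endpoints have bounded coarse intersection.

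For the ``in particular'' clause, I would argue by contradiction: if $d_\Haus(\gamma,\gamma')=D<\infty$, then every $\sigma\in\gamma$ has $d(\sigma,\gamma)=0$ and $d(\sigma,\gamma')\leq D$, so $\gamma$ is contained in the set $\{\sigma\in\cT\mid d(\sigma,\gamma),d(\sigma,\gamma')\leq D\}$, which the first part shows is bounded; but $\gamma$ is a bi-infinite Teichm\"uller geodesic and hence unbounded, a contradiction. The only point demanding care is confirming that the hypotheses of the cited proposition match ours verbatim --- that ``cobounded geodesic with distinct endpoints in $\PMF$'' is exactly the input used there, and that its conclusion is the uniform bound on the coarse intersection stated above; beyond this bookkeeping, no genuinely new argument is needed.
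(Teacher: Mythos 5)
Your proposal matches the paper exactly: the paper states this lemma with no separate proof, simply noting that it follows from Proposition 3.3 of the cited Anderson--Aramayona--Shackleton reference and also from the proof of Lemma 2.4 of Farb--Mosher, which are precisely the two sources you invoke. Your derivation of the ``in particular'' clause from the boundedness statement is the obvious one and is correct.
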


We define the \emph{canonical marked hyperbolic surface bundle} $\cS\rightarrow \cT$ as follows. The fibre $S_\sigma$ is the surface $S$ equipped with a hyperbolic metric corresponding to $\sigma\in \cT$, chosen so that $S_\sigma$  varies smoothly with $\sigma$.
\emph{The canonical hyperbolic plane bundle} $p:\cH\rightarrow \cT$ is the universal cover of $\cS$, where each fibre  is a hyperbolic plane identified with the universal cover of each $S_\sigma$.  In fact,  $\cH$ can be identified with the Teichm\"uller space of the punctured surface $S\setminus p$, and the mapping class group of $S\setminus p$ admits a fibre-preserving isometric action on $\cH$. Let $D_\sigma=p^{-1}(\sigma)$ denote the fibre at $\sigma\in \cT$.

A \emph{line} $\ell\subseteq \cT$ is the image of a  piecewise geodesic map $\mathbb{R}\rightarrow \cT$. We can pull back the canonical hyperbolic plane bundle to get a bundle $\cH_\ell\rightarrow \ell$. Each $\cH_\ell$ can be naturally endowed with a piecewise Riemannian metric, see for instance \cite{farbmosher02convexcocompact}. A line is said to be \emph{hyperbolic} if  $\cH_\ell$ is (Gromov) hyperbolic. We make use of the following theorem, proven independently by Bowditch and Mosher:

\begin{thm}[\cite{mosher03stable},\cite{bowditch13stacks}]\label{thm:mosher stable teich}
Let $Z\subseteq \cT$ be  cobounded and let $\delta\geq 0$. There is a constant $N$ such that the following holds. 
If $\ell\subseteq Z$ is a line such that $\cH_\ell$ is $\delta$-hyperbolic, then there is a cobounded Teichm\"uller geodesic $\gamma_\ell$ such that $d_{\Haus}(\gamma_\ell,\ell)\leq N$.
Conversely, if a  line $\ell\subseteq Z$ is Hausdorff equivalent to a cobounded geodesic $\gamma$, then the bundle $\cH_\ell$ is hyperbolic.  
\end{thm}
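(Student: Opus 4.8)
The plan is to characterise hyperbolicity of the bundle $\cH_\ell\to\ell$ by a \emph{flaring condition} and then to match flaring lines with Teichm\"uller geodesics. Recall the Bestvina--Feighn framework: for a metric bundle over a line whose fibres are uniformly (Gromov) hyperbolic and uniformly properly embedded, the total space is hyperbolic if and only if the bundle \emph{flares}, i.e.\ there is $\lambda>1$ so that for every thickness constant $A$ there is a threshold $n_0$ after which any pair of $A$-fellow-travelling fibrewise quasigeodesic segments of length $2n$ (with $n\ge n_0$) has midpoint distance at most $\lambda^{-n}$ times the larger of the two endpoint distances (together with the corresponding lower flaring bound). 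The ``only if'' direction is Mosher's converse combination theorem and the ``if'' direction is the Bestvina--Feighn combination theorem (in the metric-bundle form of Mj--Sardar). Since $\cH_\ell$ and $\cH_\gamma$ are uniformly quasi-isometric whenever $d_\Haus(\ell,\gamma)<\infty$ (nearby hyperbolic structures on $S$ are uniformly bi-Lipschitz, and lifts to the plane bundle respect this), and since any Teichm\"uller geodesic that fellow-travels the cobounded line $\ell\subseteq Z$ lies in a uniform neighbourhood of $Z$ and is therefore cobounded, the theorem reduces to two assertions: \textbf{(i)} the canonical plane bundle over a cobounded Teichm\"uller geodesic flares, with constants depending only on the coboundedness constant; and \textbf{(ii)} a cobounded line $\ell\subseteq Z$ whose plane bundle flares is uniformly Hausdorff-close to a Teichm\"uller geodesic.

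For (i) I would exploit the singular flat structure underlying a Teichm\"uller geodesic. If $\gamma=\geodesic{[\cF_x]}{[\cF_y]}$ is cut out by a quadratic differential $q$, the associated flat metric is scaled by $e^t$ horizontally and $e^{-t}$ vertically, so for any simple closed curve $c$ the flat length of $c$ at time $t$ is comparable, up to a uniform multiplicative error (the passage between $\ell^1$ and $\ell^2$ norms and between flat and extremal length), to $e^{t}\ell_{\cF_x}(c)+e^{-t}\ell_{\cF_y}(c)$, whence the extremal length of $c$ flares exponentially. Coboundedness of $\gamma$ keeps the systole of the surfaces along $\gamma$ bounded below, so Maskit's comparison makes the hyperbolic length of $c$ along $\gamma$ comparable to the square root of its extremal length with uniform constants; thus hyperbolic curve lengths flare exponentially along $\gamma$. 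To pass to the bundle, note that the translation length of an element of $\pi_1(S)$ acting on a fibre $D_\sigma\cong\Hyp$ is exactly the hyperbolic length of the corresponding geodesic on $S_\sigma$, and that $\pi_1(S)$ acts cocompactly on each fibre; a Morse-lemma argument in $\Hyp$ then upgrades exponential flaring of translation lengths to the flaring inequality for fibrewise distances.

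For (ii), the deep half, I would attach to a flaring line $\ell\subseteq Z$ a pair of limit foliations: the exponential divergence provided by flaring should force the forward and backward directions of $\ell$ to determine well-defined, transverse points $[\cF_\pm]\in\PMF$, hence joined by a (necessarily cobounded) Teichm\"uller geodesic $\gamma=\geodesic{[\cF_-]}{[\cF_+]}$, and one must then bound $d_\Haus(\ell,\gamma)$ in terms of $Z$ and the flaring data alone. The crucial input is the strong contraction and Morse behaviour of cobounded Teichm\"uller geodesics due to Minsky: a cobounded Teichm\"uller geodesic is strongly contracting, which yields the stability statement that any flaring line in the thick part with the same endpoint foliations is uniformly close to $\gamma$. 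I expect \textbf{this to be the main obstacle}: establishing the existence and transversality of the limit foliations of a flaring line, and deducing the fellow-travelling with uniform constants, requires the fine geometry of Teichm\"uller space (quadratic differentials, the thick/thin decomposition, Minsky's quasi-projection estimates, and Rafi-type combinatorial control) and constitutes the technical core of the results of Mosher \cite{mosher03stable} and Bowditch \cite{bowditch13stacks}. The converse implication is comparatively routine once (i) is available.
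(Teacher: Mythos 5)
The paper states this theorem with citations to \cite{mosher03stable} and \cite{bowditch13stacks} and does not prove it; it is used as a black box (for instance in the proof of Proposition~\ref{prop:qigp injective}), so there is no in-paper proof to compare your attempt against. Your sketch is a fair reconstruction of the strategy used in the cited sources: characterise hyperbolicity of $\cH_\ell$ by the Bestvina--Feighn/Mosher flaring condition; establish flaring over cobounded Teichm\"uller geodesics via the singular flat metric and the extremal-versus-hyperbolic length comparison (which handles the ``if'' direction through the combination theorem); and, for the ``only if'' direction, extract limit foliations from the exponential divergence and invoke Minsky's contraction/stability for thick Teichm\"uller geodesics to pin down the fellow-travelling geodesic with uniform constants. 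You correctly identify the latter as the technical core.

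Two imprecisions are worth flagging, neither fatal to the outline. First, your statement of the flaring inequality is garbled: in the standard formulation the threshold is a lower bound $M_A$ on the \emph{fibre-distance at time $0$}, not a lower bound $n_0$ on the length of the segment, and the conclusion is a multiplicative blow-up $\lambda\cdot d_0\leq \max(d_{+n},d_{-n})$ after a \emph{fixed} number $n$ of steps, rather than the asymptotic $\lambda^{-n}$ bound you wrote. Second, upgrading exponential growth of hyperbolic (equivalently, translation) lengths to the flaring inequality for fibrewise quasigeodesic sections is not a ``Morse-lemma argument'' in any usual sense: one needs a Milnor--\v{S}varc comparison between fibre distances and translation lengths, together with control of how the identifications $D_\sigma\cong\Hyp$ move basepoints as $\sigma$ varies, and in the cited proofs this is handled through the explicit singular \Solv\ geometry of $\cH_\gamma^\Solv$ rather than by a soft Morse-type observation.
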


Given a cobounded geodesic $\gamma\subseteq \cT$, the bundle $\cH_\gamma\rightarrow \gamma$ also admits a singular $\Solv$ metric, which we denote $\cH_\gamma^\Solv$. If $(\cF_x,\cF_y)$ is a pair of transverse measured foliations such that $\gamma=\geodesic{[\cF_x]}{[\cF_y]}$ and $\gamma$ is parametrised by $t$, this $\Solv$     
metric is defined by $ds^2= e^{-2t}dx^2+e^{2t}dy^2+dt^2$, where $dx$ and $dy$ are the transverse measures associated to $\widetilde{\cF_x}$ and $\widetilde{\cF_y}$. It is shown in Proposition 4.4 of \cite{farbmosher2002surfacebyfree} that the identity map $\cH_\gamma\rightarrow \cH_\gamma^\Solv$ is a quasi-isometry.

Let $\Isom(\cH_\gamma^\Solv)$ denote the group of isometries  of $\cH_\gamma^\Solv$. It is shown in \cite{farbmosher2002surfacebyfree} that isometries of $\cH_\gamma^\Solv$ are \emph{fibre-preserving}, i.e. if $\phi:\cH_\gamma^\Solv\rightarrow \cH_\gamma^\Solv$ is an isometry then $\phi(D_\sigma)=D_{\overline\phi (\sigma)}$ for every $\sigma\in \gamma$, where $\overline \phi:\gamma\rightarrow \gamma$ is an induced isometry of $\gamma$.
Thus there is  a short exact sequence \[1\rightarrow \Isom_h(\cH_\gamma^\Solv)\rightarrow \Isom(\cH_\gamma^\Solv)\xrightarrow{\phi\mapsto \overline{\phi}} C_\gamma \rightarrow 1\] where $\Isom_h(\cH_\gamma^\Solv)$ denotes the kernel of the induced action on $\gamma$ and $C_\gamma$ is the associated quotient. It follows from \cite{farbmosher2002surfacebyfree} that $C_\gamma$ is isomorphic to either $1$, $\bZ_2$, $\bZ$ or $D_\infty$ and the restriction of $\Isom_h(\cH_\gamma^\Solv)$ to any fibre $D_\sigma$ is a cocompact group of isometries of $D_\sigma\cong \mathbb{H}^2$ that contains $\pi_1(S)$ as a finite index subgroup.

If $\gamma$ and $\gamma'$ are two Teichm\"uller geodesics, a quasi-isometry $f:\cH_{\gamma}\rightarrow \cH_{\gamma'}$ is said to be  \emph{fibre-preserving} if there is a constant $B$ such that for each $\sigma\in \gamma$, there is some $\mu\in \gamma'$ with $d_\Haus(f(D_\sigma),D_\mu)\leq B$.

\begin{prop}[{\cite[Lemma 6.3]{farbmosher2002surfacebyfree}}]\label{prop:axes preserved}
Let $\gamma$ and $\gamma'$ be cobounded Teichm\"uller geodesics in $\cT$ such that $\gamma$ is the axis of a pseudo-Anosov.  If there exists a fibre-preserving quasi-isometry $f:\cH_{\gamma}\rightarrow \cH_{\gamma'}$, then $\gamma'$ is also the axis of a pseudo-Anosov and there exists an isometry $\cH_{\gamma}^\Solv\rightarrow \cH_{\gamma'}^\Solv$.
\end{prop}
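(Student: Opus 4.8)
The plan is to reduce everything to the singular $\Solv$ metrics, transport a cocompact group action across $f$, and then invoke a quasiconformal rigidity step to straighten the quasi-isometry to an isometry. By Proposition 4.4 of \cite{farbmosher2002surfacebyfree} the identity maps $\cH_\gamma\to\cH_\gamma^{\Solv}$ and $\cH_{\gamma'}\to\cH_{\gamma'}^{\Solv}$ are quasi-isometries, so $f$ becomes a fibre-preserving quasi-isometry $\cH_\gamma^{\Solv}\to\cH_{\gamma'}^{\Solv}$, which I still call $f$. By Lemma \ref{lem:fibre-preserving qi imp qi between base spaces} there is an induced quasi-isometry $\widehat f:\gamma\to\gamma'$ with $d_{\Haus}(f(D_\sigma),D_{\widehat f(\sigma)})\le B$ for all $\sigma\in\gamma$; since $\gamma$ and $\gamma'$ are bi-infinite geodesics this is a quasi-isometry $\bR\to\bR$. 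Then Proposition \ref{prop:fibreqi induces qi between fibres} shows that, after moving points a bounded amount, $f$ restricts on each fibre to a quasi-isometry $D_\sigma\to D_{\widehat f(\sigma)}$ with constants independent of $\sigma$, i.e. to a uniform self-quasi-isometry of $\Hyp$.

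Next I would set up the group action. Since $\gamma$ is the axis of a pseudo-Anosov $g\in\MCG(S)$, choose a lift $\widetilde g\in\MCG(S\setminus\{p\})\cong\Aut(\pi_1(S))$ of $g$ under the Birman exact sequence. Then $\Gamma\coloneqq\langle\pi_1(S),\widetilde g\rangle\cong\pi_1(S)\rtimes_g\bZ$ acts isometrically and fibre-preservingly on $\cH$, preserves the sub-bundle $\cH_\gamma$ because $g$ preserves $\gamma$, and acts \emph{cocompactly} on $\cH_\gamma$: the generator $\widetilde g$ translates the cobounded base $\gamma$ by a fixed amount, $\pi_1(S)$ acts cocompactly on each fibre $\cong\Hyp$, and $\cH_\gamma$ has bounded geometry over $\gamma$ since $\gamma$ is cobounded. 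Conjugating the $\Gamma$-action by $f$ and a coarse inverse yields a proper, cocompact, fibre-preserving quasi-action of $\Gamma$ on $\cH_{\gamma'}^{\Solv}$ with uniform constants; projecting to the base via $\widehat f$, the image of $\widetilde g$ quasi-acts on $\gamma'\cong\bR$ as a quasi-translation of nonzero (bounded) displacement, after replacing $g$ by a power if necessary so that its translation length on $\gamma'$ exceeds the relevant additive constant. This is the heuristic that $\gamma'$ should be a pseudo-Anosov axis.

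The crux — and the main obstacle — is to upgrade this quasi-action to a genuine isometric action, equivalently to show that $f$ lies a bounded distance from an isometry $\Phi:\cH_\gamma^{\Solv}\to\cH_{\gamma'}^{\Solv}$. This is exactly the rigidity analysis of \cite{farbmosher2002surfacebyfree}: the uniform fibrewise quasi-isometries $\Hyp\to\Hyp$ from the first paragraph extend to uniformly quasiconformal homeomorphisms of $S^1$, and fibre-preservation forces these boundary homeomorphisms to conjugate the exponential expansion and contraction of the $\Solv$ metric along $\gamma$ to those along $\gamma'$; the coboundedness hypotheses on $\gamma$ and $\gamma'$ (via Theorem \ref{thm:mosher stable teich} and Lemma \ref{lem:finite h.d.}, which rule out degenerate identifications of the associated measured foliations) then pin these maps down to be Möbius up to bounded error, producing $\Phi$. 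Finally, conjugating by $\Phi$ the isometry of $\cH_\gamma^{\Solv}$ induced by $g$ — which translates along $\gamma$ — gives an isometry of $\cH_{\gamma'}^{\Solv}$ translating along $\gamma'$; feeding it through the exact sequence $1\to\Isom_h(\cH_{\gamma'}^{\Solv})\to\Isom(\cH_{\gamma'}^{\Solv})\to C_{\gamma'}\to 1$ recalled just before the statement, where $C_{\gamma'}\in\{1,\bZ_2,\bZ,D_\infty\}$ and $\Isom_h$ restricts on each fibre to a finite extension of $\pi_1(S)$, exhibits an element of $\MCG(S)$ that preserves $\gamma'$ with nonzero translation length, i.e. a pseudo-Anosov whose axis is $\gamma'$. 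The quasiconformal rigidity input is the genuinely hard step; the rest is bookkeeping with the Birman sequence and the structure of $\Isom(\cH_\gamma^{\Solv})$.
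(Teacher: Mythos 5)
The paper does not prove this statement---it is quoted directly from Farb--Mosher, attributed to \cite[Lemma 6.3]{farbmosher2002surfacebyfree}, so there is no internal proof to compare against. Your proposal should therefore be judged on whether it reconstructs that lemma. As a reconstruction it is an outline, not a proof: the paragraph you yourself flag as ``the crux --- and the main obstacle'' is left to ``exactly the rigidity analysis of \cite{farbmosher2002surfacebyfree},'' which is the very thing you were asked to supply. Passing to the $\Solv$ metrics via Proposition 4.4, extracting uniform fibre quasi-isometries of $\Hyp$ via Proposition \ref{prop:fibreqi induces qi between fibres}, and exploiting the periodicity coming from the pseudo-Anosov are all correct reductions, but the substance of the lemma lies in the step you do not carry out.

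Two more concrete points. First, the tools you cite for the rigidity step are not the right ones. Theorem \ref{thm:mosher stable teich} concerns hyperbolicity of $\cH_\ell$ over lines close to cobounded geodesics, and Lemma \ref{lem:finite h.d.} is a thin-intersection statement for cobounded geodesics with disjoint endpoints; neither pins down the boundary homeomorphism. The input that actually does the work is Farb--Mosher's Proposition 5.2, which says a fibre-preserving quasi-isometry of singular $\Solv$ bundles coarsely respects the stable and unstable foliations; the paper itself invokes exactly this in the proof of Proposition \ref{prop:endpts of leaves preserved}. From it one gets a quasi-symmetric boundary map carrying $E(\cF_x)\sqcup E(\cF_y)$ to $E(\cF'_x)\sqcup E(\cF'_y)$, and it is this foliation conjugacy together with the $\bZ$-periodicity on the $\gamma$ side that yields both conclusions. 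Second, your logic is arranged the wrong way round: you want to produce the isometry $\Phi$ first and then deduce that $\gamma'$ is a pseudo-Anosov axis, whereas the natural order (and the one that actually closes) is to first identify $\gamma'$ as a pseudo-Anosov axis --- the boundary map conjugates the translation by $g$ to a quasi-symmetric homeomorphism normalizing $\pi_1(S)$ and fixing the endpoint pair, hence an element of $\Aut(\pi_1(S))$ representing a pseudo-Anosov with axis $\gamma'$ --- and only then use periodicity on \emph{both} sides to straighten $f$ to an isometry of $\Solv$ structures. Finally, the transport of the whole $\pi_1(S)\rtimes_g\bZ$ action to a cocompact quasi-action is a detour: Farb--Mosher analyze the single quasi-isometry $f$ directly, using only the $\bZ$-symmetry on the source; they do not appeal to any quasi-action rigidity theorem, and no such theorem for $\Solv$-like spaces is available off the shelf.
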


Much of the theory of mapping class group and Teichm\"uller space of surfaces extends to the setting of orbifolds, as observed in \cite{farbmosher02convexcocompact} and \cite{farbmosher2002surfacebyfree}. Throughout, we will restrict our attention to closed hyperbolic $2$-orbifolds with only cone singularities, since these are the only orbifolds that admit pseudo-Anosov homeomorphisms. Let $\cO$ be such an orbifold and let $\pi_1(\cO)$ denote the orbifold fundamental group. As shown in \cite{farbmosher02convexcocompact}, an analogue of the Dehn--Nielsen--Baer theorem holds so that $\Out(\pi_1(\cO))\cong \MCG(\cO)$. However, it is not the case that $\Aut(\pi_1(\cO))\cong \MCG(\cO\setminus \{p\})$. We define $\widetilde{\mathrm{MCG}}(\mathcal{O})$ to be the group of all lifts of homeomorphisms of $\cO$ to the universal cover $\widetilde{\cO}$, modulo the connected component of the identity.
In \cite{farbmosher02convexcocompact}, it was shown that there is an isomorphism of short exact sequences as follows:
\begin{equation}
\begin{tikzcd}
1 \arrow[r]   & \pi_1(\cO)\arrow[r] \arrow[d,"="] &\widetilde{\MCG}(\cO)\arrow[r,"\pi"]\arrow[d,"\cong"]&\MCG(\cO)\arrow[r]\arrow[d,"\cong"]& 1\\
1\arrow[r]&\pi_1(\cO)\arrow[r]&\Aut(\cO)\arrow[r]&\Out(\cO)\arrow[r]& 1
\end{tikzcd}
\end{equation}

\subsection*{Quasi-symmetric homeomorphisms of the circle}

\begin{defn}
Let $M\geq 1$. A homeomorphism $h:S^1\rightarrow S^1$ is said to be \emph{$M$-quasi-symmetric} if for all $x\in \bR$ and $t\in (0,2\pi)$, we have  \[\frac{1}{M}\leq \Big\lvert \frac{h(e^{i(x+t)})-h(e^{ix})}{h(e^{ix})-h(e^{i(x-t)})}\Big\rvert\leq M.\] We say that a homeomorphism   $h:S^1\rightarrow S^1$ is \emph{quasi-symmetric} if it is $M$-quasi-symmetric for some $M\geq 1$.
\end{defn}

The set of quasi-symmetric homeomorphisms of $S^1$ is closed under  composition and taking inverses. Hence quasi-symmetric homeomorphisms of $S^1$ form a group which we denote $\QSym$.
The following theorem is essentially due to Ahlfors--Beurling \cite{beurlingahlfors56boundary}, see also \cite{paulin96boundary}.

\begin{thm}[\cite{beurlingahlfors56boundary,paulin96boundary}]\label{thm:qsymvsqi}
There is an isomorphism $\Lambda:\QIsom(\Hyp)\cong \QSym$ such that the following hold:
\begin{enumerate}
\item For $K\geq 1$ and $A\geq 0$, there is an $M=M(K,A)$ such that every $(K,A)$-quasi-isometry $f:\Hyp\rightarrow \Hyp$ induces an $M$-quasi-symmetric homeomorphism $\Lambda(f):S^1\rightarrow S^1$.
\item If $f,g\in \QIsom(\Hyp)$ such that $\sup_{x\in \Hyp}d(f(x),g(x))<\infty$, then $\Lambda(f)=\Lambda(g)$.
\item If $M\geq 1$, there are $K=K(M)$ and $A=A(M)$ such that every $M$-quasi-symmetric homeomorphism $\lambda:S^1\rightarrow S^1$ induces a $(K,A)$-quasi-isometry $f:\Hyp\rightarrow\Hyp$ such that $\Lambda(f)=\lambda$.
\item If $f,g\in \QIsom(\Hyp)$ are  $(K,A)$-quasi-isometries such that $\Lambda(f)=\Lambda(g)$, then  $\sup_{x\in \Hyp}d(f(x),g(x))\leq B$ for some $B=B(K,A)$.
\end{enumerate}
\end{thm}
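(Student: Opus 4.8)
The plan is to realise $\Lambda$ as the boundary-extension map: use the $\delta$-hyperbolicity of $\Hyp$ to extend quasi-isometries to quasi-symmetric homeomorphisms of $\partial\Hyp = S^1$, use the Beurling--Ahlfors extension to go back, and keep track of all constants along the way. Throughout, $S^1$ is equipped with a visual metric, with respect to which quasi-symmetry is equivalent, with comparable constants, to quasi-symmetry for the standard round metric; so it makes no difference which we use.

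For the construction of $\Lambda$ and properties (1) and (2): a $(K,A)$-quasi-isometry $f\colon\Hyp\to\Hyp$ sends geodesics to $(K',A')$-quasi-geodesics, and by stability of quasi-geodesics in a $\delta$-hyperbolic space (the Morse lemma), two quasi-geodesics with the same pair of ideal endpoints lie within Hausdorff distance $C = C(K,A,\delta)$ of one another. Hence $f$ extends to a homeomorphism $\partial f\colon S^1\to S^1$; the extension depends only on the closeness class of $f$, since close maps carry a geodesic ray to Hausdorff-close quasi-rays and hence to the same boundary point; and $\partial(g\circ f)=\partial g\circ\partial f$. This already yields property (2) and shows that $\Lambda(f):=\partial f$ defines a homomorphism into $\Homeo(S^1)$. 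That its image lies in $\QSym$ with controlled constant is the classical fact that a quasi-isometry of a hyperbolic space induces a quasi-M\"obius boundary homeomorphism whose quasi-M\"obius constant depends only on $K$, $A$ and $\delta$, together with the observation that on the circle a quasi-M\"obius homeomorphism is $M$-quasi-symmetric with $M$ a function of the quasi-M\"obius constant; this is (1).

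For surjectivity and property (3): given an $M$-quasi-symmetric homeomorphism $\lambda$ of $S^1$, passing to the upper half-plane model and applying the Beurling--Ahlfors extension produces a homeomorphism $\Hyp\to\Hyp$ that is $Q(M)$-quasiconformal with boundary values $\lambda$, and a $Q$-quasiconformal self-map of $\Hyp$ is a $(K(Q),A(Q))$-quasi-isometry, being coarsely bi-Lipschitz in the hyperbolic metric with constants depending only on $Q$. This produces a $(K(M),A(M))$-quasi-isometry $f$ with $\Lambda(f)=\lambda$, proving (3) and the surjectivity of $\Lambda$. For injectivity and property (4): if $f,g$ are $(K,A)$-quasi-isometries with $\Lambda(f)=\Lambda(g)$, fix $x\in\Hyp$ and a bi-infinite geodesic $\gamma$ through $x$; then $f(\gamma)$ and $g(\gamma)$ are quasi-geodesics with the same endpoints, so both lie in a $C(K,A,\delta)$-neighbourhood of the geodesic with those endpoints, and comparing parameters (using that $f$ and $g$ are coarse Lipschitz and that $\gamma$ is arc-length parametrised) yields $d(f(x),g(x))\le B(K,A)$; this is (4), and it also shows that $\Lambda$ is injective on closeness classes, so $\Lambda$ is the asserted isomorphism.

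The main obstacle is not conceptual but quantitative: all the ingredients are classical (Beurling--Ahlfors, Paulin), and the work is the bookkeeping of constants through the two equivalences ``quasi-M\"obius boundary map $\leftrightarrow$ quasi-symmetric circle homeomorphism'' and ``quasi-symmetric $\to$ quasiconformal Beurling--Ahlfors extension $\to$ quasi-isometry'', together with verifying that the Morse constant is uniform, so that the constants appearing in (1), (3) and (4) genuinely depend only on the stated data.
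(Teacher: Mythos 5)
The paper itself does not prove Theorem~\ref{thm:qsymvsqi}; it is stated as a known result and attributed to Beurling--Ahlfors and Paulin, so there is no in-paper argument to compare with. Your sketch is the standard one and parts (1), (2) and (3) are fine as outlined. There is, however, a genuine gap in your argument for part~(4), and since your injectivity claim rests on (4), the gap propagates there as well.

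The problem is that you try to bound $d(f(x),g(x))$ using a \emph{single} geodesic $\gamma$ through $x$, i.e.\ only the two boundary points $\gamma(\pm\infty)$. Knowing that $f(\gamma)$ and $g(\gamma)$ are $(K,A)$-quasi-geodesics lying within $C(K,A,\delta)$ of the same geodesic $\gamma'$ does not control the offset between $f(\gamma(0))$ and $g(\gamma(0))$ along $\gamma'$: both projected parametrisations are coarsely monotone and onto, but they can be shifted by an arbitrary amount relative to one another, and ``$f$, $g$ coarse Lipschitz plus $\gamma$ arc-length parametrised'' does not pin down where $0$ lands. Concretely, if $f=\mathrm{id}$ and $g$ is a hyperbolic isometry translating along $\gamma$ by $T$, then $f(\gamma)=g(\gamma)=\gamma$, both are $(1,0)$-quasi-isometries, and $d(f(x),g(x))=T$ is unbounded; of course this $g$ does not satisfy $\Lambda(f)=\Lambda(g)$, but nothing in your written argument uses boundary data beyond the two endpoints of $\gamma$, so the argument cannot tell these cases apart. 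The standard repair is to use three boundary points: choose an ideal triangle with vertices $\xi_1,\xi_2,\xi_3\in S^1$ whose center (the coarse median, the point uniformly close to all three sides, which exists and is coarsely unique by $\delta$-thinness) is within bounded distance of $x$. Then each $f(\gamma_{ij})$ lies within $C$ of the geodesic joining $\Lambda(f)(\xi_i)$ to $\Lambda(f)(\xi_j)$, so $f(x)$ is uniformly close to the center of the image ideal triangle, and the same holds for $g(x)$ because $\Lambda(f)=\Lambda(g)$; uniqueness of coarse medians then gives $d(f(x),g(x))\le B(K,A,\delta)$, yielding (4) and injectivity.
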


We fix some cocompact lattice $\Gamma\leq \Isom(\Hyp)$. Note that $\Gamma$ is a hyperbolic group with boundary $S^1$. Thus every automorphism $\Gamma\rightarrow \Gamma$ canonically induces a quasi-symmetric homeomorphism $S^1\rightarrow S^1$. This gives a  homomorphism $\theta:\Aut{\Gamma}\rightarrow \QSym$. The following lemma is implicit in Nielsen's work on surface group automorphisms.

\begin{lem}\label{lem:MCGinjective}
The map $\theta:\Aut{\Gamma}\rightarrow \QSym$ is injective.
\end{lem}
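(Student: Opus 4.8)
The plan is to deduce injectivity of $\theta$ from a single fact: a cocompact Fuchsian group acts \emph{faithfully} on its circle at infinity. Since $\theta$ is a homomorphism, it is enough to show that $\theta(\phi)=\id_{S^1}$ forces $\phi=\id_\Gamma$.

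First I would set up compatible boundaries. Fix a point $x_0\in\Hyp$; the orbit map $q\colon\Gamma\to\Hyp$, $\gamma\mapsto\gamma x_0$, is a $\Gamma$-equivariant quasi-isometry (as $\Gamma$ acts geometrically on $\Hyp$), where $\Gamma$ acts on itself by the left translations $L_\gamma$ and on $\Hyp$ by isometries. Because quasi-isometries of hyperbolic spaces extend functorially to homeomorphisms of their boundaries, $q$ induces a $\Gamma$-equivariant homeomorphism $\partial q\colon\partial\Gamma\to S^1=\partial\Hyp$. Under this identification: (a) the homeomorphism $\theta(\phi)\in\QSym$ attached to an automorphism $\phi$ is exactly the boundary extension $\partial\phi$ of the quasi-isometry $\phi\colon\Gamma\to\Gamma$ (this is how the canonically induced homeomorphism of the excerpt is defined); and (b) for each $\gamma\in\Gamma$, the action of $\gamma$ on $\partial\Gamma$, i.e.\ the boundary extension $\partial L_\gamma$, corresponds to the action of $\gamma\in\Gamma\le\Isom(\Hyp)$ on $S^1$. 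Since an isometry of $\Hyp$ fixing three points of $\partial\Hyp$ is the identity, the action of $\Gamma$ on $S^1$ is faithful; equivalently $\gamma\mapsto\partial L_\gamma$ is injective.

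Now assume $\theta(\phi)=\id_{S^1}$, so $\partial\phi=\id$ by (a). For every $\gamma\in\Gamma$ we have the identity of set maps $\phi\circ L_\gamma\circ\phi^{-1}=L_{\phi(\gamma)}$; applying the boundary-extension operator (functorial, since $\phi$, $\phi^{-1}$, $L_\gamma$ are all quasi-isometries of $\Gamma$) yields $\partial\phi\circ\partial L_\gamma\circ(\partial\phi)^{-1}=\partial L_{\phi(\gamma)}$, hence $\partial L_\gamma=\partial L_{\phi(\gamma)}$ in $\Homeo(S^1)$. By the faithfulness in (b), $\phi(\gamma)=\gamma$ for all $\gamma$, so $\phi=\id_\Gamma$, completing the argument.

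I expect the only genuine work to be the two identifications in the middle paragraph: checking that the canonically induced homeomorphism $\theta(\phi)$ really is the boundary extension of $\phi$ with respect to one and the same $\Gamma$-equivariant identification $\partial\Gamma\cong S^1$, and that under that identification the $\Gamma$-action on $\partial\Gamma$ becomes the Fuchsian action on $S^1$; both are standard but should be recorded carefully, and the conclusion is in any case independent of the choice of equivariant identification (a different choice conjugates $\theta$ by a fixed homeomorphism and leaves the kernel unchanged). One could instead invoke Theorem~\ref{thm:qsymvsqi}(4): $\theta(\phi)=\id$ forces the quasi-isometry of $\Hyp$ induced by $\phi$ to lie at bounded distance from the identity, whence $\gamma^{-1}\phi(\gamma)$ stays in a fixed finite subset of $\Gamma$ for all $\gamma$; but extracting $\phi=\id$ from this ``bounded automorphism'' condition requires further argument, so the boundary-faithfulness route above is the cleaner one.
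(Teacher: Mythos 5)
Your argument is correct and is essentially the paper's proof: both reduce injectivity of $\theta$ to faithfulness of the Fuchsian action of $\Gamma$ on $S^1$ via a conjugation identity (the paper conjugates the inner automorphisms $C_g$ by $\alpha$ and applies $\theta$, while you conjugate the left translations $L_\gamma$ by $\phi$ and apply the boundary-extension operator; these coincide since $C_\gamma$ and $L_\gamma$ differ by a right translation, which is at bounded distance from the identity and so has trivial boundary extension). The two identifications you flag as ``the only genuine work'' are precisely what the paper silently invokes when passing from $\theta(C_{\alpha(g)g^{-1}})=\id_{S^1}$ to the conclusion that $\alpha(g)g^{-1}$ is an isometry of $\Hyp$ fixing $\partial\Hyp$ pointwise, hence trivial.
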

\begin{proof}
Let $\alpha\in \Aut{\Gamma}$ be an automorphism such that the induced map $\theta(\alpha)$ is trivial. Fix some $g\in \Gamma$.
For each $h\in \Gamma$, let $C_h:\Gamma\rightarrow \Gamma$ be the inner automorphism $k\mapsto hkh^{-1}$. Applying $\theta$ to the identity \begin{align}\label{eqn:identityconj} C_{\alpha(g)}\circ \alpha \circ C_{g^{-1}}\circ \alpha^{-1}=\id_G,\end{align} we deduce that $\theta(C_{\alpha(g)g^{-1}})=\id_{S^1}$ for all $g\in \Gamma$. Hence  $\alpha(g)g^{-1}$ is an isometry of $\Hyp$ which extends to a map that fixes the boundary. However any isometry of $\Hyp$ that induces the identity map on the boundary must be trivial. Therefore $\alpha(g)=g$ for all $g\in \Gamma$.
\end{proof}

We thus identify  $\Aut{\Gamma}$ with a subgroup of $\QSym$, which we also denote $\Aut{\Gamma}$. Moreover, as $\Gamma$ has trivial centre, it may also be identified with a subgroup of  $\Aut{\Gamma}\leq\QSym$.

\begin{lem}\label{lem:homeodeterminedbyconj}
If there exists an $f\in \QSym$ such that $f\circ g\circ f^{-1}=g$ for every $g\in \Gamma\leq \QSym$, then $f=\mathrm{id}_{S^1}$.
\end{lem}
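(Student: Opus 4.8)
The plan is to exploit the dynamics of the boundary action of $\Gamma$ on $S^1=\partial\Hyp$, using only that $f$ is a homeomorphism commuting with every element of $\Gamma\le\QSym$; the quasi-symmetry of $f$ plays no role.

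First I would record two standard facts about the cocompact Fuchsian group $\Gamma\le\Isom(\Hyp)$: it acts minimally on $S^1=\partial\Hyp$ (its limit set is all of $S^1$, so every $\Gamma$-orbit in $S^1$ is dense), and it contains a hyperbolic element $a$, with attracting and repelling fixed points $a^{+},a^{-}\in S^1$. The crucial point is that $a^{+}$ admits a purely topological, hence conjugation-invariant, description: it is the unique point of $S^1$ possessing an open-interval neighbourhood $U$ with $\overline{a(U)}\subseteq U$ and $\bigcap_{n\ge 0}a^{n}(U)=\{a^{+}\}$. It follows that for any homeomorphism $h$ of $S^1$, $h$ carries the attracting fixed point of $a$ to the attracting fixed point of $h a h^{-1}$ (if $U$ is an attracting neighbourhood for $a$ about $a^{+}$, then $h(U)$ is an attracting neighbourhood for $h a h^{-1}$ about $h(a^{+})$).

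Next, for each $\gamma\in\Gamma$ the conjugate $\gamma a\gamma^{-1}\in\Gamma$ is hyperbolic with attracting fixed point $\gamma(a^{+})$. Since $f$ commutes with $\gamma a\gamma^{-1}$, i.e. $f\circ(\gamma a\gamma^{-1})\circ f^{-1}=\gamma a\gamma^{-1}$, the observation above forces $f$ to fix the attracting fixed point of $\gamma a\gamma^{-1}$, so $f(\gamma(a^{+}))=\gamma(a^{+})$. Thus $f$ fixes every point of the orbit $\Gamma\cdot a^{+}$; by minimality this orbit is dense in $S^1$, and a homeomorphism of $S^1$ fixing a dense set pointwise is the identity, so $f=\id_{S^1}$.

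There is no real obstacle in this argument. The only point requiring care is to phrase the attracting-fixed-point characterisation topologically enough that its invariance under conjugation by the arbitrary homeomorphism $f$ is transparent, together with invoking the standard facts that a cocompact Fuchsian group acts minimally on its boundary circle and contains a hyperbolic element. (One could equally note that the same proof shows the centraliser of $\Gamma$ in $\Homeo(S^1)$ is trivial.)
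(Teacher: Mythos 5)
Your proof is correct, and it takes a genuinely different (and somewhat more direct) route to the key technical point than the paper does. Both arguments must get past the same obstacle: from $f\circ g\circ f^{-1}=g$ one only sees immediately that $f$ preserves $\Fix{g}$ \emph{as a set}, so for a loxodromic $g$ it could a priori swap the two fixed points. You dispose of this by invoking the purely topological (hence homeomorphism-conjugation-invariant) characterisation of the \emph{attracting} fixed point, so that commutation with $\gamma a\gamma^{-1}$ forces $f$ to fix $\gamma(a^+)$ individually; you then conclude via minimality of the $\Gamma$-action on $S^1$. The paper instead works with the dense set $\Lambda$ of all fixed points of loxodromics, takes a second loxodromic $h$ with $\Fix{g}\cap\Fix{h}=\emptyset$, and uses the north--south convergence $g^n w\to a^+$ together with the setwise preservation $f(\Fix{g^nhg^{-n}})=\Fix{g^nhg^{-n}}$ and continuity of $f$ to pin down $f(a^+)=a^+$, without needing a separate conjugation-invariance lemma for attracting fixed points. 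The two proofs thus agree in overall strategy (exhibit a dense subset fixed pointwise), but resolve the attracting-vs-repelling ambiguity by different dynamical arguments; yours is slightly shorter once the topological characterisation of the attracting fixed point is granted, while the paper's is self-contained apart from the density of $\Lambda$.
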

\begin{proof}
Let $g\in \Gamma$ and $x\in \Fix{g}\subseteq S^1$. As $f(x)=f(g(x))=(f\circ g\circ f^{-1})(f(x))=g(f(x))$, we deduce that  $f(x)\in \Fix{g}$.

Let \[\Lambda\coloneqq \{x\in S^1\mid x\in \Fix{g}  \text{ for some loxodromic }g\in \Gamma\}.\] Since $\Lambda$ is a dense subset of $S^1$, it is sufficient to show that $f|_\Lambda=\id_\Lambda$. For any $x\in \Lambda$, let $g,h\in \Gamma$ be loxodromic elements with $x\in \Fix{g}$ and $\Fix{g}\cap \Fix{h}=\emptyset$. Suppose $\Fix{g}=\{x,y\}$ and $\Fix{h}=\{w,z\}$. Replacing $g$ with $g^{-1}$ if necessary, we may suppose that $x$ is the attracting fixed point of $g$.  We note that $\Fix{g^nhg^{-n}}=\{g^n w,g^n z\}$. Since $x$ is the attracting fixed point of $g$ and $w,z\neq y$, it follows that $g^nw\rightarrow x$ and $g^nz\rightarrow x$ as $n\rightarrow \infty$. By the above observation that $f(g^nw)=g^n w$ or $g^n z$, we see that $f(g^nw)\rightarrow x$ as $n\rightarrow \infty$, thus $f(x)=x$ by continuity of $f$.
\end{proof}

\begin{cor}\label{cor:homeodeterminedbyconj}
The inclusion \[\theta:\Aut{\Gamma}\rightarrow \Norm_{\QSym}(\Gamma)\coloneqq \{f\in \QSym\mid f\Gamma f^{-1}=\Gamma\}\] is surjective.
\end{cor}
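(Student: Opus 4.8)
The plan is to show that $\theta$ surjects onto $\Norm_{\QSym}(\Gamma)$; injectivity is already Lemma \ref{lem:MCGinjective}, so this will make $\theta$ an isomorphism. Fix $f \in \Norm_{\QSym}(\Gamma)$. First I would observe that conjugation by $f$ restricts to an automorphism $\alpha \in \Aut(\Gamma)$, namely $\alpha(\gamma) = f\gamma f^{-1}$; this maps $\Gamma$ into $\Gamma$ precisely because $f\Gamma f^{-1} = \Gamma$. The goal is then to prove $f = \theta(\alpha)$, which exhibits $f$ as an element of $\theta(\Aut\Gamma)$.

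The one nontrivial ingredient, which I would isolate as the main step, is the conjugation formula
\[
\theta(\beta)\circ \gamma\circ\theta(\beta)^{-1} = \beta(\gamma) \quad\text{in }\QSym, \qquad \beta\in\Aut(\Gamma),\ \gamma\in\Gamma\le\QSym .
\]
This follows formally from two facts already used in the proof of Lemma \ref{lem:MCGinjective}: that $\theta$ is a homomorphism, and that $\theta(C_g) = g$ for $g\in\Gamma$ (the inner automorphism $C_g$ lies at bounded distance from left translation by $g$, so by part (2) of Theorem \ref{thm:qsymvsqi} the two induce the same element of $\QSym$). Indeed, applying the homomorphism $\theta$ to the identity $\beta\circ C_g\circ\beta^{-1} = C_{\beta(g)}$ in $\Aut(\Gamma)$ yields $\theta(\beta)\circ g\circ\theta(\beta)^{-1} = \beta(g)$.

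With the formula in hand I would set $h := \theta(\alpha)^{-1}\circ f$, which lies in $\QSym$ since $\QSym$ is closed under composition and inversion. For every $\gamma\in\Gamma$ one computes $h\circ\gamma\circ h^{-1} = \theta(\alpha)^{-1}\circ(f\gamma f^{-1})\circ\theta(\alpha) = \theta(\alpha)^{-1}\circ\alpha(\gamma)\circ\theta(\alpha) = \gamma$, using the conjugation formula. Thus $h$ commutes with every element of $\Gamma\le\QSym$, so Lemma \ref{lem:homeodeterminedbyconj} forces $h = \id_{S^1}$, i.e. $f = \theta(\alpha)$. Finally the same conjugation formula gives $\theta(\alpha)\,\Gamma\,\theta(\alpha)^{-1} = \alpha(\Gamma) = \Gamma$, so $\theta$ genuinely takes values in $\Norm_{\QSym}(\Gamma)$, which completes the argument.

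I do not anticipate a real obstacle: the content is concentrated in the conjugation formula for $\theta$, and since the paper already tacitly relies on $\theta(C_g)=g$ and on $\theta$ being a homomorphism, deriving it is routine; everything else is formal manipulation together with the two cited lemmas.
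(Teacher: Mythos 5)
Your proof is correct and follows the same route as the paper's: define $\alpha(\gamma)=f\gamma f^{-1}$, apply $\theta$ to the identity $\beta\circ C_g\circ\beta^{-1}=C_{\beta(g)}$ to obtain the conjugation formula, and then conclude via Lemma \ref{lem:homeodeterminedbyconj} that $\theta(\alpha)^{-1}\circ f=\id_{S^1}$. You spell out the justification of $\theta(C_g)=g$ a bit more explicitly than the paper does, but the structure is the same.
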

\begin{proof}
Suppose there exists an $f\in \QSym$ with $f\Gamma f^{-1}=\Gamma$. There is an automorphism $\alpha\in \Aut{\Gamma}$ such that for all $g\in \Gamma$, $f \theta(g) f^{-1}=\theta(\alpha(g))$. By identity (\ref{eqn:identityconj}) in Lemma \ref{lem:MCGinjective}, we see that $\theta(\alpha(g))=\theta(\alpha)\theta(g)\theta(\alpha^{-1})$. Hence $\theta(\alpha)^{-1}f \theta(g) f^{-1}\theta(\alpha)=\theta(g)$ for all $g\in \Gamma$. Thus Lemma \ref{lem:homeodeterminedbyconj} ensures that $\theta(\alpha)=f$.
\end{proof}

\subsection*{Proof of quasi-isometric rigidity of surface group extensions}
We fix a closed hyperbolic surface $S$ and a finitely generated, infinite ended, irreducible subgroup $L\leq \MCG(S)$ of type $F_3$. Let $\Gamma_L$ be the group extension \[1\rightarrow \pi_1(S)\rightarrow \Gamma_L\xrightarrow{\pi} L \rightarrow 1.\] By Brown's criterion \cite{brown1987finiteness}, $\Gamma_L$ is also of type $F_3$. (See also Exercise 1 of Section 7.2 in \cite{geoghegan2008topological}.)
We now construct a geometric model of the group $\Gamma_L$, the details of which are found in \cite{farbmosher02convexcocompact}. 

We first fix a generating set of $L$ and embed the correspond Cayley graph $X_L$ into $\cT$ as follows. We identify the vertex set of $X_L$ with  some $L$-orbit in $\cT$. For each edge in $X_L$, we join the corresponding points of $\cT$ by a piecewise  geodesic, all of which are disjoint except at their endpoints. This is done in an equivariant way. The action of $L$ on $X_L$ is cocompact and properly discontinuous. Moreover, the inclusion $X_L\rightarrow\cT$ is a coarse  embedding. For ease of notation, we identify $X_L$ with its image in $\cT$. We now pullback the canonical hyperbolic plane bundle to obtain the bundle $p_L:\cH_L\rightarrow X_L$. As in \cite{farbmosher02convexcocompact},  we endow $\cH_L$ with a piecewise Riemannian metric such that  $\Gamma_L$ acts properly discontinuously and cocompactly on $\cH_L$.

 Given a pseudo-Anosov $g\in L$, we say that a line $\ell_g\subseteq \cT$ is a \emph{coarse axis}  of $g$ if there is a bounded set $\Omega\subseteq \cT$ such that $\ell_g\subseteq \langle g\rangle \Omega$. Coarse axes of pseudo-Anosovs always exist and are coarsely well-defined, i.e. if $\ell_g$ and  $\ell'_g$ are  coarse axes of the same pseudo-Anosov $g$, then $d_\Haus(\ell_g,\ell'_g)<\infty$.

\begin{prop}\label{prop:pA in subgroup}
Let  $\gamma\subseteq \cT$ be the axis of a pseudo-Anosov $g\in \MCG(S)$ such that $\gamma\subseteq N_B(X_L)$ for some $B\geq 0$.  Then $g^n\in L$ for some $n$ sufficiently large.
\end{prop}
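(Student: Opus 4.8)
The plan is to argue that the hypothesis $\gamma\subseteq N_B(X_L)$ forces $\gamma$ to be (coarsely) an axis of some pseudo-Anosov element already lying in $L$, and that $g$ commensurates this element, so a power of $g$ lies in $L$. First I would use the coboundedness of Teichmüller axes of pseudo-Anosovs: the axis $\gamma$ of $g$ is cobounded, and $X_L$ is cobounded as well since it is an orbit of $L$ (more precisely, $L\Omega\supseteq X_L$ for a bounded $\Omega$, this being part of the geometric model construction recalled above). Because $\gamma$ lies in a bounded neighborhood of $X_L$ and $L$ acts cocompactly on $X_L$, I can pick a sequence $g_i\in L$ and a basepoint $\sigma_0\in\gamma$ such that the points $g_i\sigma_0$ track $\gamma$ coarsely, i.e. $d_\Haus$ between $\gamma$ and $\{g_i\sigma_0\}$ is finite and the $g_i\sigma_0$ march off to infinity along both ends of $\gamma$.

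The key step is to extract from this an honest pseudo-Anosov in $L$ whose axis is Hausdorff-close to $\gamma$. Here I would invoke the Bowditch--Mosher theorem (Theorem \ref{thm:mosher stable teich}): a line $\ell\subseteq Z$ in a cobounded region $Z$ is Hausdorff-equivalent to a cobounded Teichmüller geodesic if and only if the bundle $\cH_\ell$ is hyperbolic. Since $\gamma$ itself is such a geodesic, the pulled-back bundle $\cH_\gamma$ is hyperbolic; and since the portion of $X_L$ within bounded distance of $\gamma$ is Hausdorff-equivalent to $\gamma$, the corresponding piecewise-geodesic line $\ell\subseteq X_L$ has $\cH_\ell$ quasi-isometric to $\cH_\gamma$, hence hyperbolic, hence $\ell$ is Hausdorff-close to a cobounded geodesic — which must be $\gamma$. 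Now the elements of $L$ realizing the coarse translation along $\ell$ generate (a finite-index overgroup of) a cyclic subgroup; after passing to a large power I would get an element $f\in L$ of infinite order translating along $\ell$, hence along $\gamma$ up to finite Hausdorff distance. Then $f$ is a pseudo-Anosov: an infinite-order element of $\MCG(S)$ whose orbit in $\cT$ fellow-travels a cobounded geodesic cannot be reducible (reducible infinite-order elements have orbits that wander into the thin part) and cannot be finite order, so it is pseudo-Anosov by the Nielsen--Thurston classification, with axis $\gamma_f$ at finite Hausdorff distance from $\gamma$.

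Finally I would compare $g$ and $f$. Both $\gamma$ (the axis of $g$) and $\gamma_f$ (the axis of $f$) are cobounded Teichmüller geodesics at finite Hausdorff distance from one another. By Lemma \ref{lem:finite h.d.}, two cobounded Teichmüller geodesics with disjoint endpoint pairs cannot be at finite Hausdorff distance; hence $\{u,v\}=\{u',v'\}$, i.e. $\Fix{g}=\Fix{f}$, so $E(g)=E(f)$ and Lemma \ref{lem:endpts of leaves disjoint} gives $g\in\Comm(\langle f\rangle)$, with $f$ and $g$ sharing an axis. By McCarthy's lemma the setwise stabilizer $\Comm(\langle f\rangle)$ contains $\langle f\rangle$ as a finite-index subgroup, so $\langle f,g\rangle$ is virtually cyclic; in particular some power $g^n$ lies in $\langle f\rangle\leq L$, which is the desired conclusion. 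The main obstacle I anticipate is the middle step — producing an actual element of $L$, rather than just a coarse quasi-axis, translating along $\gamma$, and verifying it is pseudo-Anosov; this is exactly where the Bowditch--Mosher stable-Teichmüller-geodesic machinery and the cocompactness of the $L$-action on $X_L$ have to be combined carefully, and where one must be sure the periodicity of $L$ along the coarse line upgrades to an infinite-order mapping class rather than, say, a sequence with no single generator.
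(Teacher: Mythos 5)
Your proposal reaches the right conclusion, but it takes a much longer route than the paper, and the middle step you yourself flag as the weak point is a genuine gap that the rest of your argument does not close.

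The paper's proof is very short and purely coarse-geometric: since $d_\Haus(\langle g\rangle\cdot x,\gamma)<\infty$ and $\gamma\subseteq N_B(X_L)$ with $X_L$ Hausdorff-equivalent to $L\cdot x$, one has $\langle g\rangle\cdot x\subseteq N_C(L\cdot x)$ for some $C$; because the orbit map $\MCG(S)\rightarrow\cT$, $h\mapsto h\cdot x$, is a coarse embedding, this translates into $\langle g\rangle\subseteq N_{B'}(L)$ inside the group $\MCG(S)$; and then \cite[Cor.\ 2.14]{mosher2011quasiactions} (a coarse-commensurability statement, morally a pigeonhole over the finitely many elements of word-length $\le B'$) immediately yields $g^n\in L$ for some $n$. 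There is no need for Bowditch--Mosher, no need to manufacture an element $f\in L$ with a nearby axis, and no need for the Nielsen--Thurston classification.

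By contrast, the step where you write that ``the elements of $L$ realizing the coarse translation along $\ell$ generate (a finite-index overgroup of) a cyclic subgroup'' and that from this you ``would get an element $f\in L$ of infinite order translating along $\ell$'' is not justified. What you can extract from coboundedness of $\gamma$ and cocompactness of $L\curvearrowright X_L$ is a \emph{sequence} $g_i\in L$ with $g_i\sigma_0$ tracking $\gamma$, but there is no a priori reason for these $g_i$ to lie in, or generate, a virtually cyclic subgroup, nor for any single $g_i$ to coarsely translate along $\gamma$. If you try to fix this — e.g.\ compare $g^j\sigma_0$ with the nearby $g_{i(j)}\sigma_0$, note that $g_{i(j)}^{-1}g^j$ moves $\sigma_0$ a bounded amount, and invoke proper discontinuity to find $j\neq k$ with $g_{i(j)}^{-1}g^j=g_{i(k)}^{-1}g^k$, hence $g^{j-k}\in L$ — you have simply reproduced the paper's coarse-embedding-plus-pigeonhole argument, at which point the entire detour through Theorem~\ref{thm:mosher stable teich}, the hyperbolicity of $\cH_\ell$, the verification that $f$ is pseudo-Anosov, Lemma~\ref{lem:finite h.d.}, Lemma~\ref{lem:endpts of leaves disjoint} and McCarthy's lemma becomes unnecessary. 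Those tools are genuinely needed elsewhere in this appendix (e.g.\ Propositions~\ref{prop:endpts of leaves preserved} and \ref{prop:qigp injective}) but not for this proposition, which is really just a statement about coarse containment of $\langle g\rangle$ in $L$.

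The final comparison paragraph (deducing $\Fix{g}=\Fix{f}$, then $g\in\Comm(\langle f\rangle)$, then $g^n\in\langle f\rangle\le L$ by finite index) is correct reasoning \emph{given} a pseudo-Anosov $f\in L$ whose axis fellow-travels $\gamma$. But you should be aware that producing such an $f$ is exactly as hard as the proposition itself — so the argument as written is circular in spirit unless the middle step is supplied by an independent mechanism, which it is not.
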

\begin{proof}
For any $x\in X_L$,  $d_\Haus(\langle g\rangle\cdot x,\gamma)<\infty$ so that $\langle g\rangle\cdot x \subseteq N_C(L\cdot x)$ for some $C\geq 0$. Since the map $\MCG(S)\rightarrow \cT$ given by $f\mapsto f\cdot x$ is a coarse embedding, we see that there is some $B'$ such that $\langle g\rangle \subseteq N_{B'}(L)$ in $\MCG(S)$. Thus $g^n\in  L$ for some $n$ sufficiently large by \cite[Corollary 2.14]{mosher2011quasiactions}. 
\end{proof}

Our first step in proving Theorem \ref{thm:qirigidity surface group extension} is showing that quasi-isometries of $\cH_L$ are fibre-preserving. This follows from Theorem \ref{thm:main fib bundle} and Lemma \ref{lem:fibre-preserving qi imp qi between base spaces}. This is the only point where we use the fact that $L$ is infinite ended and of type $F_3$.
\begin{prop}\label{prop:surface fibpreserve}
For every $K\geq 1$ and $A\geq 0$, there exist constants $B\geq 0$,  $K'\geq 1$ and $A'\geq 0$ such that the following holds. For every $(K,A)$-quasi-isometry $f:\cH_L\rightarrow \cH_L$, there exists  a $(K',A')$-quasi-isometry $\hat f:X_L\rightarrow X_L$ such that for every $\sigma\in X_L$, $d_\Haus(f(D_\sigma),D_{\hat{f}(\sigma)})\leq B$.
\end{prop}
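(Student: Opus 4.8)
The strategy is to recognize $p_L : \cH_L \to X_L$ as a coarse bundle satisfying the hypotheses of Theorem \ref{thm:main fib bundle}, and then apply that theorem together with Lemma \ref{lem:fibre-preserving qi imp qi between base spaces}. First I would verify that $p_L:\cH_L\to X_L$ is a quasi-homogeneous coarse bundle with fibre $\Hyp$: the control space is $X_L$, the fibres $D_\sigma = p_L^{-1}(\sigma)$ are each quasi-isometric to $\mathbb{H}^2$ (this is built into the construction in \cite{farbmosher02convexcocompact}, where the piecewise Riemannian metric is chosen so that fibres are uniformly quasi-isometrically embedded hyperbolic planes and nearby fibres are uniformly Hausdorff-close), and quasi-homogeneity follows because $\Gamma_L$ acts properly discontinuously and cocompactly on $\cH_L$ by fibre-preserving isometries covering the cocompact action of $L$ on $X_L$. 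Thus $\cH_L$ is quasi-isometric to $\Gamma_L$, and the fibre bundle is quasi-homogeneous.

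Next I would check the three numbered hypotheses of Theorem \ref{thm:main fib bundle}. Since $\Gamma_L$ is of type $F_3$ (by Brown's criterion, as noted in the excerpt) and $\cH_L$ is quasi-isometric to $\Gamma_L$, Theorem \ref{thm:finitenesscoarse} gives that $\cH_L$ is coarsely uniformly $2$-acyclic, so we take $n=2$. The base $X_L$ is quasi-isometric to $L$, which is infinite-ended, hence has at least three ends (recall by Hopf's theorem / Corollary \ref{cor:ends hopf} an infinite-ended finitely generated group has infinitely many ends). Finally the fibre $F = \mathbb{H}^2$ is a coarse $PD_2$ space, being the universal cover of a closed aspherical surface. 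With all hypotheses verified, Theorem \ref{thm:main fib bundle} applies: there is a subgroup $H \alnorm \Gamma_L$ (which must be $d_\Haus$-close to $f_0(\pi_1(S))$ for a fixed quasi-isometry $f_0 : \Gamma_L \to \cH_L$, so $H$ is commensurable to a conjugate of $\pi_1(S)$), and for the fixed $K, A$ there is a constant $C$ so that every $(K,A)$-quasi-isometry $f : \cH_L \to \cH_L$ sends each fibre $D_\sigma$ to within Hausdorff distance $C$ of some fibre $D_\mu$; that is, every such $f$ is $C$-fibre-preserving.

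Having established that $f$ is $C$-fibre-preserving, Lemma \ref{lem:fibre-preserving qi imp qi between base spaces} (applied with both coarse bundles equal to $p_L : \cH_L \to X_L$) produces the induced quasi-isometry $\hat f : X_L \to X_L$ with $d_\Haus(f(D_\sigma), D_{\hat f(\sigma)}) \leq C$, and with quasi-isometry constants $K', A'$ depending only on $K$ and $A$ (tracing through: the fibre-preservation constant $C$ from Theorem \ref{thm:main fib bundle} depends only on $K,A$ and the fixed bundle data, and then $K', A'$ from Lemma \ref{lem:fibre-preserving qi imp qi between base spaces} depend only on $C$ and the bundle constants). Setting $B := C$ completes the proof. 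The only mild subtlety is making sure the constant $C$ from Theorem \ref{thm:main fib bundle} is uniform over \emph{all} $(K,A)$-quasi-isometries $f : \cH_L \to \cH_L$ rather than depending on the individual $f$ — but this uniformity is exactly what Theorem \ref{thm:main fib bundle} asserts (the family $\cF$ of all $(K,A)$-quasi-isometries forms a uniform $PD_n$-set, and Lemma \ref{lem:unif set close to fibre} gives a single $R$ working for the whole family), so there is no real obstacle here; it is just a matter of quoting the theorem in the correct quantified form.
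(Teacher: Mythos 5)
Your proposal is correct and matches the paper's intended argument, which the paper states in a single sentence immediately before the proposition: the result ``follows from Theorem \ref{thm:main fib bundle} and Lemma \ref{lem:fibre-preserving qi imp qi between base spaces},'' and this is ``the only point where we use the fact that $L$ is infinite ended and of type $F_3$.'' You have simply unpacked that sentence: verify that $p_L:\cH_L\to X_L$ is a quasi-homogeneous coarse bundle with coarse $PD_2$ fibre $\Hyp$ and base having at least three ends, note $\cH_L$ is coarsely uniformly $2$-acyclic (indeed $2$-connected) via $\Gamma_L$ being type $F_3$, apply Theorem \ref{thm:main fib bundle} (more precisely, its engine Lemma \ref{lem:unif set close to fibre}, which gives the fibre-preservation constant uniform over all $(K,A)$-quasi-isometries of $\cH_L$, once one observes that $\{f(D_\sigma)\mid f \text{ a } (K,A)\text{-QI},\ \sigma\in X_L\}$ is a uniform $PD_n$-set), and then apply Lemma \ref{lem:fibre-preserving qi imp qi between base spaces} to produce $\hat f$.
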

Each quasi-isometry $f:\cH_L\rightarrow \cH_L$ induces a map $\theta(f)\in \QSym$ as follows. We fix a fibre $D_0\subseteq \cH_L$. By Proposition \ref{prop:surface fibpreserve}, $f(D_0)$ has finite Hausdorff distance from some fibre $D_x\subseteq \cH_L$. However, since $D_x$ has finite Hausdorff distance from $D_0$, there is a quasi-isometry $f_0:D_0\rightarrow D_0$ such that $\sup_{y\in D_0} d(f_0(y),f(y))<\infty$. This map is coarsely well-defined, so induces a well-defined map $\theta(f)\in \QSym$, where $S^1$ is identified with the Gromov boundary of $D_0$. This map is independent of the choice of fibre $D_0$, so  $\theta:\QI(\cH_L)\rightarrow \QSym$ is a well-defined homomorphism.

\begin{prop}\label{prop:qigp injective}
For any $K\geq 1$ and $A\geq 0$, there exists a  $B\geq 0$ such that whenever $f:\cH_L\rightarrow \cH_L$ is a $(K,A)$-quasi-isometry with $\theta(f)=\id_{S^1}$, $\sup_{x\in \cH_L}d(x,f(x))\leq B$. In particular, $\theta:\QI(\cH_L)\rightarrow \QSym$ is injective.
\end{prop}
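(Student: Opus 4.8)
The plan is to show that a quasi-isometry $f:\cH_L\rightarrow\cH_L$ with $\theta(f)=\id_{S^1}$ must be uniformly close to the identity, with the bound $B$ depending only on $(K,A)$. First I would use Proposition~\ref{prop:surface fibpreserve} to obtain the induced quasi-isometry $\hat f:X_L\rightarrow X_L$ with $d_\Haus(f(D_\sigma),D_{\hat f(\sigma)})\leq B_0$ for all $\sigma\in X_L$, where $B_0,K',A'$ depend only on $K,A$. The key is to leverage the hypothesis $\theta(f)=\id_{S^1}$ to pin down $\hat f$ itself. Fix a basepoint $\sigma_0\in X_L$ with fibre $D_0$. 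The map $\theta(f)$ is built by first replacing $f|_{D_0}$ by a self-quasi-isometry $f_0:D_0\rightarrow D_0$ (using that $D_{\hat f(\sigma_0)}$ is at finite Hausdorff distance from $D_0$, since $X_L$ sits coarsely inside $\cT$ and all fibres are mutual bounded Hausdorff distance \emph{only up to the metric on $X_L$} — here one must be careful: fibres over far-apart points of $X_L$ are far apart, so I actually need that $\theta(f)=\id$ forces $\hat f(\sigma_0)$ to be \emph{boundedly close} to $\sigma_0$).

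The heart of the argument is therefore: if $\theta(f)=\id_{S^1}$ then $\hat f$ is uniformly close to $\id_{X_L}$. Here I would argue via pseudo-Anosov axes. For $g\in L$ a pseudo-Anosov with axis $\gamma\subseteq X_L$ (up to bounded Hausdorff distance; such $g$ exist in abundance by Ivanov's theorem since $L$ is infinite and irreducible), the subspace $\cH_\gamma$ is Gromov hyperbolic by Theorem~\ref{thm:mosher stable teich}, and its boundary circle is identified with $S^1=\partial D_\sigma$ for fibres $D_\sigma$ along $\gamma$. Conjugating $f$ by the $\Gamma_L$-action and using that $\theta$ intertwines the $\QSym$-action, the hypothesis $\theta(f)=\id$ says that $f$ fixes the endpoint pair $E(g)\subseteq S^1$ of every such axis; by Lemma~\ref{lem:endpts of leaves disjoint} distinct commensurability classes of pseudo-Anosovs in $L$ give \emph{disjoint} endpoint pairs, and there are enough of these (e.g. coming from a convex-cocompact free subgroup $\langle g^n,h^n\rangle\le L$) that the induced action of $\hat f$ on the space of coarse axes is trivial. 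Combined with Lemma~\ref{lem:finite h.d.} — two cobounded geodesics with distinct endpoint pairs are at infinite Hausdorff distance — this forces $\hat f(\gamma)$ to be Hausdorff-close to $\gamma$ for a family of axes that coarsely fill $X_L$, hence $\hat f$ is close to $\id_{X_L}$ with uniform constants. Then $f$ sends each fibre $D_\sigma$ to within bounded distance of $D_\sigma$ itself.

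Finally, with $\hat f$ controlled, I would run a fibrewise argument: on each fibre $D_\sigma\cong\Hyp$, Proposition~\ref{prop:fibreqi induces qi between fibres} produces a self-quasi-isometry $f_\sigma:D_\sigma\rightarrow D_\sigma$ uniformly close to $f|_{D_\sigma}$, with constants depending only on $K,A$; the hypothesis $\theta(f)=\id$ together with the identification of $\partial D_\sigma$ with $\partial D_0$ (transported along the bundle, which is coarsely $\Gamma_L$-equivariant) gives $\Lambda(f_\sigma)=\id_{S^1}$, so by part~(4) of Theorem~\ref{thm:qsymvsqi} each $f_\sigma$ is $B_1$-close to $\id_{D_\sigma}$ for a uniform $B_1=B_1(K,A)$. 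Patching over the base $X_L$, which is itself coarsely covered by the axes just discussed, yields $\sup_{x\in\cH_L}d(x,f(x))\leq B$ for some $B=B(K,A)$. Injectivity of $\theta:\QI(\cH_L)\rightarrow\QSym$ is then immediate: if $\theta([f])=\id$ then $f$ is close to $\id_{\cH_L}$, so $[f]=[\id]$.

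\textbf{Main obstacle.} The delicate step is transferring the triviality of $\theta(f)$ on a single boundary circle $\partial D_0$ to \emph{simultaneous} control of $\hat f$ on the base and of all the $f_\sigma$ on every fibre — i.e. checking that the boundary identifications $\partial D_\sigma\cong\partial D_0$ used along different fibres are compatible up to the ambiguity that $\theta$ already quotients out, and that "enough" pseudo-Anosov axes of $L$ pass near every point of $X_L$ to pin $\hat f$ down. This is where the $F_3$/infinite-ended hypotheses on $L$ (via Proposition~\ref{prop:surface fibpreserve}) and the Bowditch–Mosher stability theorem do the real work.
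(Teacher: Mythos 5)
Your plan follows the same broad strategy as the paper's proof and correctly identifies most of the key tools: Proposition~\ref{prop:surface fibpreserve} to produce $\hat f$, control of $\hat f$ via pseudo-Anosov axes and Lemmas~\ref{lem:endpts of leaves disjoint} and~\ref{lem:finite h.d.}, then the fibrewise argument via Proposition~\ref{prop:fibreqi induces qi between fibres} and part~(4) of Theorem~\ref{thm:qsymvsqi}. The final fibrewise step is essentially identical to the paper's.

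However, there is a genuine gap in the step you flag yourself as the main obstacle, and the fix you sketch rests on a false statement. You assert that the Gromov boundary of $\cH_\gamma$ ``is identified with $S^1=\partial D_\sigma$'': it is not. The inclusion $D_\sigma\hookrightarrow\cH_\gamma$ extends to a Cannon--Thurston map $\partial D_\sigma\to\partial\cH_\gamma$ which is a continuous surjection but decidedly not injective (it collapses endpoints of leaves of the stable and unstable foliations), so $\partial\cH_\gamma$ is a quotient circle, not a copy. Consequently ``$\theta(f)=\id$ fixes $E(g)$'' does not by itself tell you anything about where $\hat f$ sends the axis of $g$, and the implication you want --- that $\hat f(\ell_g)$ is Hausdorff-close to $\ell_g$ --- does not follow from this boundary identification. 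The paper closes exactly this gap with Proposition~\ref{prop:endpts of leaves preserved}, which is proved from Farb--Mosher's Proposition~5.2 about stable/unstable foliations in the singular $\Solv$ model $\cH_\gamma^\Solv$; that is where the real input comes from, and it is not reducible to a boundary-circle identification.

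There are two further, smaller omissions. First, before Proposition~\ref{prop:endpts of leaves preserved} can be applied you must know that $\hat f(\ell_g)$ is Hausdorff-close to the coarse axis of \emph{some} pseudo-Anosov $g'\in L$: the chain Lemma~\ref{lem:image of line} $\to$ Theorem~\ref{thm:mosher stable teich} $\to$ Proposition~\ref{prop:axes preserved} $\to$ Proposition~\ref{prop:pA in subgroup} accomplishes this and is absent from your plan. Second, your phrase ``a family of axes that coarsely fill $X_L$'' should be replaced by the paper's cleaner device: choose a single conjugacy class $\cC$ of pseudo-Anosovs with equivariantly chosen coarse axes, pick two axes $\ell_g,\ell_h$ with $\Fix{g}\cap\Fix{h}=\emptyset$, and use coboundedness of the $L$-orbit together with a quantitative version of Lemma~\ref{lem:finite h.d.} to bound $d(\hat f(\sigma),\sigma)$ uniformly for every $\sigma\in X_L$. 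With these points filled in your approach matches the paper's.
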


To prove this, we make use of another result from \cite{farbmosher2002surfacebyfree}.

\begin{prop}\label{prop:endpts of leaves preserved}
Let $g,g'\in L$ be two pseudo-Anosovs with coarse axes $\ell_g ,\ell_{g'}\subseteq X_L$. Suppose that  $f:\cH_L \rightarrow \cH_L$ is a quasi-isometry such that $d_\Haus(\hat f(\ell_{g}),\ell_{g'})<\infty$, with $\hat f$ as in Proposition \ref{prop:surface fibpreserve}. Then $\theta(f)(E(g))=E(g')$.
\end{prop}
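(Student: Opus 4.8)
The plan is to translate the hypothesis $d_\Haus(\hat f(\ell_g),\ell_{g'})<\infty$ into a statement about fibres, then push it to the boundary circle of a single fibre using the canonical identification $\QIsom(\Hyp)\cong\QSym$. First I would use Theorem \ref{thm:mosher stable teich} (the Mosher--Bowditch stability theorem) together with Proposition \ref{prop:pA in subgroup}: since $\ell_g\subseteq N_B(X_L)$ is a coarse axis of the pseudo-Anosov $g\in L$, it is Hausdorff-close to the genuine Teichm\"uller geodesic $\gamma_g=\geodesic{[\cF_x]}{[\cF_y]}$ where $\Fix{g}=\{[\cF_x],[\cF_y]\}$; likewise $\ell_{g'}$ is Hausdorff-close to $\gamma_{g'}$. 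Restricting the canonical hyperbolic plane bundle over these geodesics, $\cH_{\gamma_g}$ carries the singular $\Solv$ metric $ds^2=e^{-2t}dx^2+e^{2t}dy^2+dt^2$ with the transverse-measure directions $dx,dy$ recording exactly the lifted foliations $\widetilde{\cF_x},\widetilde{\cF_y}$, whose leaf endpoints on $S^1$ are by definition $E([\cF_x])$ and $E([\cF_y])$, so $E(g)=E([\cF_x])\sqcup E([\cF_y])$ is visibly the ``$\Solv$ slope data'' attached to a fibre $D_\sigma\cong\Hyp$.

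Next I would set up the boundary map carefully. Fix a fibre $D_\sigma$ with $\sigma\in\ell_g$ (so $\sigma$ is close to $\gamma_g$). By Proposition \ref{prop:surface fibpreserve} the quasi-isometry $f$ is fibre-preserving, so $f(D_\sigma)$ is within bounded Hausdorff distance of $D_{\hat f(\sigma)}$, and $\hat f(\sigma)$ lies within bounded distance of $\ell_{g'}$, hence of $\gamma_{g'}$. As in the paragraph defining $\theta$, restricting $f$ to $D_\sigma$ and post-composing with the coarse projection onto $D_{\hat f(\sigma)}$ (both fibres being copies of $\Hyp$) yields a quasi-isometry $\Hyp\to\Hyp$ whose induced boundary homeomorphism is $\theta(f)\in\QSym$, independent of the chosen fibre by Theorem \ref{thm:qsymvsqi}(2),(4). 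The key geometric input is then: inside $\cH_{\gamma_g}$, a horizontal leaf of $\widetilde{\cF_x}$ in the fibre $D_\sigma$, flowed forward by the $\Solv$-geodesic flow, stays at uniformly bounded distance in $\cH_{\gamma_g}$ (the $e^{-2t}dx^2$ term contracts), while flowing backward it exponentially contracts in the $y$-direction — this is precisely the stable/unstable dichotomy of $\Solv$. Since the identity $\cH_{\gamma_g}\to\cH_{\gamma_g}^{\Solv}$ is a quasi-isometry (Proposition 4.4 of \cite{farbmosher2002surfacebyfree}) and $f$ conjugates the coarse geometry of $\cH_{\gamma_g}$ (via $\ell_g$) to that of $\cH_{\gamma_{g'}}$ (via $\ell_{g'}$), $f$ must send leaves asymptotic to a point of $E(g)$ to quasi-geodesics asymptotic to a point of $E(g')$; reading this off on $\partial D_\sigma=S^1$ gives $\theta(f)(E(g))\subseteq E(g')$, and applying the same argument to a coarse inverse of $f$ gives equality.

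The main obstacle I anticipate is making the ``leaves go to leaves'' step rigorous at the level of the boundary circle, rather than just ``coarsely.'' Concretely: $\theta(f)$ is defined only up to the identification $\QIsom(\Hyp)\cong\QSym$ via a single fibre $D_\sigma$, whereas $E(g)$ is a subset of $S^1$ that is a priori tied to the specific geometry of the bundle $\cH_{\gamma_g}$ over all of $\gamma_g$. I would bridge this by noting that a point $\xi\in E(g)$ is characterised intrinsically inside $D_\sigma$: it is the endpoint of a leaf $\lambda_\xi$ of the foliation, and this leaf is distinguished among geodesics of $D_\sigma$ by the coarse condition that $\lambda_\xi$ (suitably flowed in $\cH_{\gamma_g}$, or equivalently $N_r(\lambda_\xi)$ in $D_\sigma$) bounds a quasi-isometrically embedded half-$\Solv$-plane inside $\cH_{\gamma_g}$ — a condition manifestly preserved by the quasi-isometry $f$. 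Once this intrinsic coarse characterisation is in place, Theorem \ref{thm:qsymvsqi}(1),(4) guarantees that $\theta(f)$ carries $\partial\lambda_\xi$ to the endpoint of the corresponding leaf over $\gamma_{g'}$, i.e. to a point of $E(g')$; bi-directionality (using $\overline f$) and the fact that $\lvert E(g)\rvert=\lvert E(g')\rvert$ give the reverse inclusion, completing the proof. I would lean on \cite{farbmosher2002surfacebyfree} (especially the analysis around Proposition \ref{prop:axes preserved} and Proposition 4.4 there) for the technical statement that half-$\Solv$-planes are the coarsely-invariant objects detecting leaf endpoints.
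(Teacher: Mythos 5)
Your proposal takes essentially the same route as the paper: pass from the coarse axis $\ell_g$ to the genuine Teichm\"uller axis $\gamma$ of $g$, restrict $f$ to a fibre-preserving quasi-isometry $\cH_{\gamma}^{\Solv}\to\cH_{\gamma'}^{\Solv}$, and argue that this quasi-isometry must carry the stable/unstable foliation data over $\gamma$ to that over $\gamma'$; reading this off on a single fibre gives $\theta(f)(E(g))=E(g')$.

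The step you flag as the main obstacle --- rigorously showing that the quasi-isometry sends leaf endpoints to leaf endpoints, via a coarse characterisation of leaves by half-$\Solv$-plane rigidity --- is precisely the content of Proposition 5.2 of \cite{farbmosher2002surfacebyfree}, which states that a fibre-preserving quasi-isometry of singular $\Solv$-bundles over Teichm\"uller geodesics coarsely respects the stable and unstable foliations. The paper simply invokes this result, whereas you sketch a re-derivation of it; the sketch (leaves bound vertical hyperbolic flats / half-$\Solv$-planes, and these are coarsely detectable) is in the right spirit but not carried out, and there is no need to carry it out: citing Farb--Mosher's result closes the gap immediately and then the statement follows by restricting to a fibre, exactly as in your final paragraph.
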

\begin{proof}
Let $\gamma$ be the axis of $g$ and let $\cF_x$ and $\cF_y$ be the associated transverse measured foliations.  Then the bundle $\cH_\gamma^\Solv$ has   \emph{stable and unstable foliations} as defined in Section 5 of \cite{farbmosher2002surfacebyfree}. These are defined so that the intersection of the stable (resp. unstable) foliation with each fibre $D_{\sigma}$ is  $\widetilde{\cF_y}$ (resp. $\widetilde{\cF_x}$). If $\gamma'$ is  the axis of $g'$, then  $\cH_{\gamma'}^\Solv$ also has stable and unstable foliations.

Since $d_\Haus(\hat f(\ell_{g}),\ell_{g'})<\infty$, there is a fibre-preserving quasi-isometry $f_\gamma:\cH_{\gamma}^\Solv\rightarrow \cH_{\gamma'}^\Solv$ such that $\sup_{x\in \cH_{\gamma}^\Solv}d(f_\gamma(x),f(x))<\infty$.
 Proposition 5.2 of \cite{farbmosher2002surfacebyfree} then says that $f_\gamma$ coarsely respects the stable and unstable foliations. In particular, by restricting to a fibre of $\cH_{\gamma}^\Solv$ and using the definition of $\theta(f)$, we deduce that $\theta(f)(E(g))=E(g')$ as required.
\end{proof}

We also need the following elementary connect-the-dots argument:
\begin{lem}\label{lem:image of line}
For every $K\geq 1$ and $A\geq 0$, there is a constant $B$ such that for every $(K,A)$-quasi-isometry $f:X_L\rightarrow X_L$ and every line $\ell\subseteq X_L$,  there is a line $\ell'\subseteq X_L$ with $d_\Haus(f(\ell),\ell')\leq B$. 
\end{lem}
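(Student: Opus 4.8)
The statement is a purely coarse-geometric ``connect-the-dots'' assertion: any quasi-isometry of $X_L$ sends a (piecewise-geodesic) line to within bounded Hausdorff distance of another such line. I would first recall the structure of $X_L$: it is the Cayley graph of $L$ with respect to a finite generating set, embedded in $\cT$ as a piecewise-geodesic complex, and in particular it is a discrete geodesic (or quasi-geodesic) metric space with uniformly bounded ``edge length.'' A \emph{line} $\ell\subseteq X_L$ is by definition the image of a piecewise-geodesic map $\bR\to X_L$; up to replacing $\ell$ by a nearby genuine path in $X_L$ (at bounded Hausdorff distance, using that $X_L$ is a connected graph and $\ell$ is $X_L$-valued), I may assume $\ell$ is the image of a $1$-Lipschitz (or coarsely Lipschitz) map $c:\bR\to X_L$.

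The plan is then: given a $(K,A)$-quasi-isometry $f:X_L\to X_L$, consider the integer sample points $x_n:=c(n)$ for $n\in\bZ$. Then $d(x_n,x_{n+1})\leq C_0$ for a uniform $C_0$ depending only on $X_L$, so $d(f(x_n),f(x_{n+1}))\leq KC_0+A=:C_1$. Since $X_L$ is a geodesic (graph) metric space, for each $n$ pick a geodesic segment $\gamma_n$ in $X_L$ from $f(x_n)$ to $f(x_{n+1})$; it has length $\leq C_1$, hence Hausdorff distance $\leq C_1/2$ from its endpoints. Set $\ell':=\bigcup_{n\in\bZ}\gamma_n$, a piecewise-geodesic bi-infinite path in $X_L$, i.e.\ a line. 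I would then verify $d_\Haus(f(\ell),\ell')\leq B$ for $B:=B(K,A)$ computed explicitly as follows: any point of $f(\ell)$ is of the form $f(c(t))$ with $n\leq t\leq n+1$ for some $n$, so $d(f(c(t)),f(x_n))\leq K\cdot C_0+A=C_1$, and $f(x_n)\in\ell'$; conversely any point of $\gamma_n$ lies within $C_1/2$ of $f(x_n)\in f(\ell)$. Thus $B:=C_1$ works. One should also check $\ell'$ is a genuine \emph{line} in the sense used in the paper, i.e.\ that it is the image of a piecewise-geodesic $\bR\to X_L$ that is proper (goes to infinity in both directions): this follows because $f$ is a quasi-isometry, so $d(f(x_n),f(x_0))\geq \tfrac1K d(x_n,x_0)-A\to\infty$ as $|n|\to\infty$ (using that $c$ is proper, which holds for any line — if $c$ were not proper the ``line'' would be bounded and the statement is trivial, or one restricts to proper lines as is implicit in the context of coarse axes).

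I do not anticipate a serious obstacle here; the only mild subtlety is reconciling the two relevant notions of ``line'' — a piecewise-geodesic \emph{subset} of $X_L$ versus the image of a piecewise-geodesic map — and making sure the sampling/reparametrization step does not change Hausdorff distance by more than a constant. A second minor point is that $X_L$ as embedded in $\cT$ carries a path metric in which edges may not have length exactly $1$, but since the embedding is $L$-equivariant and $L$ acts cocompactly, edge lengths are uniformly bounded above and below, so all constants remain uniform; alternatively, by Remark \ref{rem:remetrise} one may replace $X_L$ by a quasi-isometric discrete geodesic metric space and absorb the change into $K$ and $A$. The constant $B$ produced depends only on $K$, $A$, and the fixed geometry of $X_L$, as required.
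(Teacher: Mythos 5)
Your proposal is correct and matches the paper's argument almost word for word: the paper also reparametrises $\ell$ to be $1$-Lipschitz, samples at integer times, and joins $f(r(n))$ to $f(r(n+1))$ by (reparametrised Teichm\"uller) geodesic segments in $X_L$ to produce the line $\ell'$, observing that the Hausdorff distance is bounded by a constant depending only on $K$ and $A$. Your additional remarks about the two notions of ``line,'' the uniformity of edge lengths in $X_L$, and the properness of $\ell'$ are reasonable but merely make explicit points the paper leaves implicit; there is no difference in approach.
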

\begin{proof}
As $\ell$ is a line, there is a piecewise geodesic map $r:\bR\rightarrow \cT$ with image $\ell$. By reparametrising $\bR$, we may assume that $r$ is $1$-Lipschitz. We define a map $r':\bR\rightarrow \cT$ such that for all $n\in \bZ$, $r'|_{[n,n+1]}$ is a reparametrised Teichm\"uller geodesic segment in $X_L$ from $f(r(n))$ to $f(r(n+1))$. Thus the image of $r'$ is a line $\ell'\subseteq X_L$  with $d_\mathrm{Haus}(f(\ell),\ell')\leq B$ for some suitable $B$.
\end{proof}

\begin{proof}[Proof of Proposition \ref{prop:qigp injective}]
Suppose $f:\cH_L\rightarrow \cH_L$ is a $(K,A)$-quasi-isometry such that $\theta(f)=\id_{S^1}$. Let $\hat{f}:X_L\rightarrow X_L$ be the induced quasi-isometry as in Proposition \ref{prop:surface fibpreserve}, which can be taken to be a $(K',A')$-quasi-isometry for some $K'\geq 1$ and $A'\geq 0$ depending only on $K$ and $A$. 
Let $g\in L$ be a pseudo-Anosov with coarse axis $\ell_g\subseteq X_L$ and axis $\gamma_g$. By Lemma \ref{lem:image of line}, there is a line $\ell'_g\subseteq \cT$ such that $d_\mathrm{Haus}(\hat f(\ell_g),\ell_g')\leq B$ for some $B=(K',A')$. Since $\ell_g$ is a coarse axis and so has finite Hausdorff distance from the geodesic $\gamma_g$, Theorem \ref{thm:mosher stable teich} says that $\cH_{\ell_g}$ is $\delta$-hyperbolic for some $\delta$.   
Thus $f$ induces a quasi-isometry $\cH_{\ell_g}\rightarrow \cH_{\ell'_g}$, so that $\cH_{\ell'_g}$ is $\delta'=\delta'(K,A,\delta)$-hyperbolic.  Theorem \ref{thm:mosher stable teich} implies $\ell'_g$ has finite Hausdorff distance from some Teichm\"uller geodesic $\gamma'_g$. Since $f$ induces a fibre-preserving quasi-isometry $\cH_{\gamma_g}\rightarrow \cH_{\gamma'_g}$, Proposition \ref{prop:axes preserved} then says that $\gamma'_g$ is the axis of a pseudo-Anosov $g'\in \MCG(S)$. By Proposition \ref{prop:pA in subgroup},  we can assume that $g'\in L$ and so $\ell'_{g}$ is a coarse axis of $g'$. Thus Proposition \ref{prop:endpts of leaves preserved} says that $E(g')=\theta(f)(E(g))=E(g)$ and so Lemma \ref{lem:endpts of leaves disjoint} ensures that $g$ and $g'$ share an axis. Thus $\gamma_g=\gamma'_g$ and so $d_\Haus(\ell_g,\widehat f(\ell_g))<\infty$.

 We now fix some conjugacy class $\cC$ of pseudo-Anosovs in $L$ and equivariantly pick coarse axes $\{l_g\subseteq X_L\mid g\in \cC\}$. Thus there is a $\delta$ such that $\cH_{\ell_g}$  is $\delta$-hyperbolic for every $g\in \cC$. Hence there is $\delta'=\delta'(\delta,K,A)$ and $B$ such that for all $g\in \cC$, there is a line $\ell'_g$ such that $d_\mathrm{Haus}(\hat f(\ell_g),\ell'_g)\leq B$ and $\cH_{\ell'_g}$ is $\delta'$-hyperbolic. Theorem \ref{thm:mosher stable teich} then says that there is some $C=C(\delta,\delta', L)$ such that for all $g\in \cC$, both $\ell_g$ and $\ell'_g$ have Hausdorff distance at most $C$ from a Teichm\"uller geodesic. By the argument in the previous paragraph and Lemma \ref{lem:finite h.d.}, we  deduce $\ell_g$ and $\ell'_g$ have Hausdorff distance at most $C$ from the same Teichm\"uller geodesic, and so there is a $D\geq 0$ such that $d_\Haus(\ell_g,\hat f(\ell_g))\leq D$ for every $g\in \cC$.

Let $g,h\in \cC$ be two pseudo-Anosovs such that $\Fix{g}\cap \Fix{h}=\emptyset$. We fix some bounded $\Omega\subseteq \cT$ such that $X_L\subseteq L\Omega$ and pick $r$ large enough so that $d(\omega,\ell_g),d(\omega,\ell_h)\leq r$ for all $\omega\in \Omega$.  By Lemma \ref{lem:finite h.d.} there is a constant $R$ such that if $d(\sigma,\ell_g),d(\sigma,\ell_h)\leq K'r+A'+D$ for some $\sigma\in \cT$, then $d(\sigma,\omega)\leq R$ for all $\omega\in \Omega$. 
Let $\sigma\in X_L$. Pick $k\in L$ and $\omega\in \Omega$ such that $\sigma=k\omega$. Thus $d(\sigma,k\ell_g), d(\sigma,k\ell_h)\leq r$. Notice that $k\ell_g=\ell_{kgk^{-1}}$ and $k\ell_h=\ell_{khk^{-1}}$.  Then $d(\hat f(\sigma),k\ell_g),d(\hat f(\sigma),k\ell_h)\leq K'r+A'+D$ so that $d(\hat f(\sigma),\sigma)\leq R$ for all $\sigma\in X_L$.

It follows from the definition of $\hat f$ that there is a constant $E=E(K,A,R)$ such that for every fibre $D_\sigma\subseteq \cH_L$, $d_\Haus(D_\sigma,f(D_\sigma))\leq E$. Proposition \ref{prop:fibreqi induces qi between fibres} tells us that there exists $K''\geq 1$ and $A''\geq 0$ such that for every $\sigma\in X_L$, there is a $(K'',A'')$-quasi-isometry $f_\sigma:D_\sigma\rightarrow D_\sigma$ such that $\sup_{x\in D_\sigma}d(f_\sigma(x),f(x))\leq E$. Since $\theta(f)=\id_{S^1}$, the induced boundary homeomorphism $\partial f_\sigma:\partial D_\sigma\rightarrow \partial D_\sigma$ is the identity. Hence by Theorem \ref{thm:qsymvsqi}, there is an $F=F(K'',A'')$ such that $\sup_{x\in D_\sigma}d(x,f_\sigma(x))\leq F$. Thus $\sup_{x\in \cH_L}d(x,f(x))\leq E+F$. Notice that the constants $E$, $F$ depend only on the constants $K$ and $A$. (They also depend on the choice of conjugacy class $\cC$, the elements $g,h\in \cC$ and the choice of coarse axes, but  these can be fixed as we  vary $f$ over all $(K,A)$-quasi-isometries.)
\end{proof}

Let $g\in L$ be a pseudo-Anosov with axis $\gamma$. Recall that $\Isom_h(\cH_\gamma^\Solv)$ is the subgroup of isometries of $\cH_\gamma^\Solv$ that preserve each fibre. For ease of notation, let us denote $\Isom_h(\cH_\gamma^\Solv)$ by $\Pi_g$. Choosing some $\sigma\in \gamma$,  $\Pi_g$ restricts to a group of isometries $\Isom(D_\sigma)$. Since $D_\sigma$ has finite Hausdorff distance from $D_0$, we have an induced  homomorphism $\Pi_g\rightarrow \QSym$, where $S^1$ is identified with the Gromov boundary of $D_0$. Any isometry of $D_\sigma\cong \mathbb{H}^2$ that extends to the identity map of $S^1$ must be the identity on $D_\sigma$. Thus $\Pi_g\rightarrow \QSym $ is injective and so $\Pi_g$ can be regarded as a subgroup of $\QSym$. 

Let $\Pi\coloneqq \cap \{\Pi_g\mid g\in L \text{ is a pseudo-Anosov}\}.$ We recall that each $\Pi_g$ contains $\pi_1(S)$ as a finite index subgroup. Hence $\Pi$ can be identified with a subgroup of $\Isom(\Hyp)$ that contains $\pi_1(S)$ as a finite index subgroup. Let $\cO$ denote the quotient orbifold $\Pi\bs\Hyp$ with orbifold fundamental group $\Pi$. Then there is a finite cover $q:S\rightarrow \cO$ which induces the  inclusion $q_*:\pi_1(S)\rightarrow\Pi$. As remarked in \cite{farbmosher2002surfacebyfree} and \cite{mosher2003fiber}, the preceding theory generalises to the  mapping class group and Teichm\"uller space of $\cO$. Let $\cT_\cO$ denote the Teichm\"uller space of $\cO$.

\begin{lem}\label{lem:sbgp descend}
Recall the surface group extension $\Gamma_L$ can be naturally identified with a subgroup of $\Aut(\pi_1(S))$. Then there is a unique map $\phi:\Gamma_L\rightarrow \Aut(\Pi)$ such that:
\begin{enumerate}
\item $\phi(\alpha)|_{\pi_1(S)}=\alpha$;
\item there is a  monomorphism $\overline \phi: L\hookrightarrow \Out(\Pi)$   such that the following diagram commutes
\[\begin{tikzcd}
\Gamma_L \arrow[r,"\pi"]\arrow[d,"\phi"] &L\arrow[hookrightarrow]{d}{\overline \phi}\\
\Aut(\Pi)\arrow[r,"\pi'"] & \Out(\Pi)
\end{tikzcd}\]
where $\pi$ and $\pi'$ are quotient maps.

\end{enumerate}
\end{lem}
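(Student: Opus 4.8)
The plan is to construct $\phi$ at the circle at infinity, exploiting that $\Pi$ is a cocompact Fuchsian group. Applying Corollary~\ref{cor:homeodeterminedbyconj} with $\Pi$ in place of $\Gamma$, conjugation gives an isomorphism $\Norm_{\QSym}(\Pi)\xrightarrow{\sim}\Aut(\Pi)$, and Lemma~\ref{lem:MCGinjective} embeds $\Aut(\pi_1(S))$, hence $\Gamma_L$, into $\Norm_{\QSym}(\pi_1(S))\le\QSym$ via $\theta$. So the crux is the claim that $\theta(\Gamma_L)\le\Norm_{\QSym}(\Pi)$; granting it, I define $\phi\colon\Gamma_L\to\Aut(\Pi)$ to be $\theta$ followed by conjugation, so that $\phi(\alpha)$ is conjugation by $\theta(\alpha)$ restricted to $\Pi$.

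For the claim, fix $\alpha\in\Gamma_L$ with image $\ell=\pi(\alpha)\in L$ and a pseudo-Anosov $g\in L$ with axis $\gamma$. The element $\alpha$ acts isometrically and fibre-preservingly on $\cH_L$, covering the action of $\ell$ on $X_L$; since $g\in L$ the axis $\gamma$ is uniformly close to $X_L$, so $\alpha$ carries the sub-bundle over $\gamma$ to the sub-bundle over $\ell\gamma$, the axis of the pseudo-Anosov $\ell g\ell^{-1}\in L$, inducing on each fibre a map within bounded distance of $\theta(\alpha)$. Passing to the $\Solv$ models and using Proposition~\ref{prop:axes preserved}, conjugation by $\theta(\alpha)$ carries $\Pi_g$ (the restriction of $\Isom_h(\cH_\gamma^{\Solv})$ to a fibre) onto $\Pi_{\ell g\ell^{-1}}$. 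As $g\mapsto\ell g\ell^{-1}$ permutes the pseudo-Anosovs of $L$, conjugation by $\theta(\alpha)$ permutes $\{\Pi_g\}$, hence fixes $\Pi=\bigcap_g\Pi_g$. This proves the claim and defines $\phi$.

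Conditions (1), (2) and uniqueness are then formal. Since $\theta(\alpha)$ normalises $\pi_1(S)$, $\phi(\alpha)$ preserves $\pi_1(S)\le\Pi$ and restricts there to $\alpha$ by the defining relation of $\theta$ (identity~\eqref{eqn:identityconj}), giving (1). For $h\in\pi_1(S)=\ker\pi$ we have $\theta(h)=h\in\pi_1(S)\le\Pi$, so $\phi(h)$ is inner; hence $\phi(\pi_1(S))\subseteq\mathrm{Inn}(\Pi)$, and $\phi$ descends to $\overline\phi\colon L\to\Out(\Pi)$ making the square commute, giving (2). Uniqueness holds since two automorphisms of $\Pi$ agreeing on the finite-index subgroup $\pi_1(S)$ differ by one that corresponds, under $\Aut(\Pi)\cong\Norm_{\QSym}(\Pi)$, to an element of $\QSym$ centralising the lattice $\pi_1(S)$, hence is trivial as in the proof of Lemma~\ref{lem:homeodeterminedbyconj}.

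I expect the main obstacle to be the injectivity of $\overline\phi$, equivalently $\theta(\Gamma_L)\cap\Pi=\pi_1(S)$. If $\overline\phi(\ell)=1$, pick $\alpha\in\pi^{-1}(\ell)$; then $\phi(\alpha)=\mathrm{inn}_p$ with $p\in\Pi$, and comparing the two conjugation descriptions forces $\theta(\alpha)=p\in\Pi$ (triviality of the centraliser of the lattice $\Pi$). So $\alpha$ is conjugation by $p\in\Norm_\Pi(\pi_1(S))$ restricted to $\pi_1(S)$, whence $\ell$ lies in the image in $\MCG(S)=\Out(\pi_1(S))$ of the finite group $\Norm_\Pi(\pi_1(S))/\pi_1(S)$; thus $\ker\overline\phi$ is a finite normal subgroup of $L$ whose elements preserve $\Fix{g}\subseteq\PMF$ for every pseudo-Anosov $g\in L$, because $p$ lies in every $\Pi_g$ and so respects the transverse-foliation data. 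The final step — using the irreducibility of $L$ and the richness of pseudo-Anosov fixed-point pairs in an irreducible subgroup to conclude that no nontrivial mapping class can do this, so that $\ker\overline\phi=1$ — is of the type established by Mosher in \cite{mosher2003fiber}, from which I would quote the needed statement.
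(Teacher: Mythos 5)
Your construction of $\phi$ is, in substance, the same as the paper's: the key step in both is that $\alpha\in\Gamma_L$ acts as a fibre-preserving isometry on the $\Solv$-bundle over the axis of each pseudo-Anosov $g\in L$, and hence conjugates $\Pi_g$ to $\Pi_{\pi(\alpha)g\pi(\alpha)^{-1}}$, so that $\Pi=\bigcap_g\Pi_g$ is normalised. The paper phrases this directly in $\Aut(\Pi)$ while you route it through $\theta$ and $\Norm_{\QSym}(\Pi)$, but these are equivalent packagings of one argument, and your treatment of condition (1), the descent to $\ol\phi$, and uniqueness is correct.

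The genuine gap is the injectivity of $\ol\phi$, which is not a side issue --- it is what the word ``monomorphism'' in item (2) asserts. You correctly observe that if $\ol\phi(\ell)=1$ then $\ell$ is represented by conjugation by some $p\in\Norm_\Pi(\pi_1(S))$, so $\ker\ol\phi$ is a quotient of the finite group $\Norm_\Pi(\pi_1(S))/\pi_1(S)$ and is thus finite; but finiteness alone does not give triviality, and your proposed route to triviality (such $p$ ``respects the transverse-foliation data'' of every pseudo-Anosov in $L$, hence $\ell$ must be trivial by ``a result of the type established by Mosher'') is neither proved nor pinned down to a citable statement. In particular the intermediate claim that elements of $\ker\ol\phi$ preserve $\Fix{g}$ for every pseudo-Anosov $g\in L$ needs justification --- a nontrivial $\ell\in L$ conjugating $g$ to $\ell g\ell^{-1}$ sends $\Fix{g}$ to $\Fix{\ell g\ell^{-1}}$, so you would need to argue these coincide, and even then conclude $\ell=1$. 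The paper's proof closes this gap with a short, self-contained topological argument: $\ol\phi(\pi(h))=1$ means $\phi(h)$ is realised by an orbifold homeomorphism $B$ of $\cO$ isotopic to the identity; lifting that isotopy along $S\to\cO$ produces a homeomorphism $A$ of $S$ that is simultaneously isotopic to the identity and a lift of $B$, hence represents $h$, forcing $h$ to be inner in $\pi_1(S)$ and $\pi(h)=1$. This isotopy-lifting argument (via Dehn--Nielsen--Baer for the orbifold) is the missing idea; you should supply it rather than deferring to an unspecified result.
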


\begin{proof}
We first note that Lemma \ref{lem:MCGinjective} and Corollary \ref{cor:homeodeterminedbyconj} imply that any automorphism of $\Gamma$ that restricts to an automorphism of $\pi_1(S)$ is determined by its restriction. Thus if $\phi$ exists, it is unique.
Suppose $h\in \Gamma_L$ and $g\in L$ is a pseudo-Anosov with axis $\gamma$. Then $\pi(h)\gamma$ is the axis of the pseudo-Anosov $\pi(h)g\pi(h)^{-1}$. Thus $h$ restricts to a fibre-preserving isometry $\cH_\gamma^\Solv\rightarrow \cH_{\pi(h)\gamma}^\Solv$, hence $h$ conjugates $\Pi_g$ to $\Pi_{\pi(h)g\pi(h)^{-1}}$. This holds for every pseudo-Anosov $g\in L$, so we deduce that $h$ defines an automorphism of $\Pi$ that we denote $\phi(h)$.

Suppose there exist elements $g,h\in \Gamma_L$ such that $\pi(g)=\pi(h)$. Then $g^{-1}h$ is an inner automorphism of $\pi_1(S)$, so clearly extends to an inner automorphism of $\Pi$. Thus $\pi'\circ\phi(g)=\pi'\circ\phi(h)$, hence $\phi$ induces a well-defined homomorphism $\overline{\phi}:L\rightarrow \Out(\Pi)$ such that $\pi'\circ \phi=\overline \phi\circ \pi$. 

Finally, we claim that $\overline{\phi}$ is injective. 
Suppose that $\pi(h)\in \ker(\overline{\phi})$. Then $\phi(h)$ is an inner automorphism of $\Pi$, so via the the Dehn--Nielsen--Baer theorem, corresponds to an orbifold homeomorphism $B$ of $\cO$ that is isotopic to the identity. We can  lift this isotopy to an isotopy of $S$ from the identity to a lift $A$ of $B$. Thus $A$ is a homeomorphism of $S$ that represents the trivial mapping class. Since $A$ is a lift of $B$, it follows that $A$ is a homeomorphism of $S$ that represents $h$, and so $h$ must be an inner automorphism of $\pi_1(S)$ via the the Dehn--Nielsen--Baer theorem.
\end{proof}

Lemma \ref{lem:sbgp descend} has a topological interpretation. Via the Dehn--Nielsen--Baer theorem, the image of $\ol \phi$ can be naturally identified with a subgroup of $\MCG(\cO)$. It follows from Lemma \ref{lem:sbgp descend} and the Dehn--Nielsen--Baer theorem that  for each $l\in L$, there are homeomorphisms $A$ of $S$ and $B$ of $\cO$ representing $l$ and $\ol \phi(l)$ respectively, such that $A$ is a lift of $B$. In the terminology of \cite{mosher2003fiber}, Lemma \ref{lem:sbgp descend} says that $L\leq \MCG(S)$ \emph{descends to a subgroup of $\MCG(\cO)$}. It is shown in \cite{mosher2003fiber} that $\cO$ is the smallest orbifold such that $L$ descends.

We let $L'\coloneqq \im(\overline \phi)\cong L$ and $\Gamma_{L'}\coloneqq\pi'^{-1}(L')$. As  the inclusion $\pi_1(S)\rightarrow \Pi$ has finite index image, we see that $\Gamma_L$ maps to a  finite index subgroup of $\Gamma_{L'}$, which we identify with $\Gamma_L$. We observe that there is an embedding $\cT_\cO\rightarrow \cT_S=\cT$. This is because a hyperbolic structure on $\cO$ can be lifted to a hyperbolic structure on $S$. (Alternatively, a discrete faithful representation $\Pi\rightarrow \PSL(2,\bR)$ restricts to a discrete  faithful representation $\pi_1(S)\rightarrow \PSL(2,\bR)$.) Teichm\"uller's theorem implies that this in fact an isometric embedding, since Teichm\"uller geodesics in $\cT_\cO$ map to Teichm\"uller geodesics in $\cT_S$. We thus identify $\cT_\cO$ with a subset of $\cT$. Since $L'\leq \Out(\Pi)\cong \MCG(\cO)$, we can always ensure that $X_L$, as defined  above, is a subset of $\cT_\cO$. The canonical hyperbolic plane bundle $\cH_\cO\rightarrow \cT_\cO$ associated to $\cO$ is thus a pullback of the canonical hyperbolic plane bundle $\cH\rightarrow \cT$ associated to $S$.

  Let $M=\Comm_{\MCG(\cO)}(L')$ and let $\Gamma_M$ be the corresponding subgroup of $\Aut(\Pi)\cong \widetilde \MCG(\cO)$. For every  $g\in M$,   $d_\Haus(L',gL')<\infty$ by Proposition \ref{prop:coset characterisation}, from which it follows that $d_\Haus(\Gamma_{L'},h\Gamma_{L'})<\infty$ for all $h\in \Gamma_M$. Since $\Gamma_L$ is a finite index subgroup of $\Gamma_{L'}$, we see that $d_\Haus(\Gamma_{L},h\Gamma_{L})<\infty$ for all $h\in \Gamma_M$. Thus for each $h\in \Gamma_M$, left multiplication by $h$ followed by closest point projection defines a quasi-isometry  $\Gamma_{L}\rightarrow \Gamma_{L}$. This gives a homomorphism $\Psi:\Gamma_M\rightarrow \QIsom(\Gamma_L)$. We  classify all quasi-isometries of $\Gamma_L$ as follows:

\begin{thm}
The map $\Psi:\Gamma_M\rightarrow \QIsom(\Gamma_L)$ is an isomorphism.
\end{thm}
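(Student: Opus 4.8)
The plan is to show $\Psi:\Gamma_M\rightarrow \QIsom(\Gamma_L)$ is injective, surjective, and a homomorphism (homomorphism being essentially formal from the description above, since composing two "left-multiply-then-project" quasi-isometries is close to left-multiplication by the product), and then identify the target $\QIsom(\Gamma_L)$ with $\QI(\cH_L)$ since $\Gamma_L$ acts geometrically on $\cH_L$.

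\textbf{Injectivity.} Suppose $h\in\Gamma_M$ with $\Psi(h)=\id$ in $\QIsom(\Gamma_L)$, i.e. left multiplication by $h$ is bounded distance from the identity on $\Gamma_L$. Transferring to $\cH_L$ via the geometric action, $h$ acts as a quasi-isometry of $\cH_L$ at bounded distance from the identity; in particular $\theta(h)=\id_{S^1}$, where $\theta:\QI(\cH_L)\rightarrow\QSym$ is the boundary-of-fibre homomorphism. I will need to identify the $\QSym$-element coming from $h\in\Gamma_M\leq\Aut(\Pi)$ with $\theta$ of the associated quasi-isometry; this is where Lemma \ref{lem:sbgp descend} and the identification of $\Aut(\Pi)$ inside $\QSym$ (Lemma \ref{lem:MCGinjective}, Corollary \ref{cor:homeodeterminedbyconj}) are used — the action of $h$ on a fibre $D_0$ is the Möbius-like action of the corresponding element of $\Aut(\Pi)$. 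Then $\theta(h)=\id$ forces, via Lemma \ref{lem:MCGinjective} (injectivity of $\theta:\Aut\Gamma\rightarrow\QSym$), that the automorphism piece of $h$ is trivial, and since $h$ normalizes $\Pi$ with trivial boundary action, $h$ lies in $\Pi$ itself and acts trivially on $S^1$, hence $h$ is trivial in $\Gamma_M$ up to the finite kernels one must carefully track. I should be careful: $\Gamma_M$ maps to $\Aut(\Pi)$, which sits inside $\QSym$, so injectivity of $\Gamma_M\rightarrow\QSym$ composed with $\Gamma_M\xrightarrow{\Psi}\QI(\cH_L)\xrightarrow{\theta}\QSym$ agreeing with the inclusion is the crux.

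\textbf{Surjectivity.} This is the main obstacle. Given an arbitrary quasi-isometry $f:\Gamma_L\rightarrow\Gamma_L$, transfer to $f:\cH_L\rightarrow\cH_L$. By Proposition \ref{prop:surface fibpreserve} (which rests on Theorem \ref{thm:main fib bundle}, using that $L$ is infinite ended of type $F_3$), $f$ is fibre-preserving, so it induces $\theta(f)\in\QSym$. I must show $\theta(f)$ normalizes $\Pi\leq\QSym$, i.e. $\theta(f)\in\Norm_{\QSym}(\Pi)$, and moreover lands in the image of $\Gamma_M$. The argument: for each pseudo-Anosov $g\in L$ with coarse axis $\ell_g$, Lemma \ref{lem:image of line}, Theorem \ref{thm:mosher stable teich}, Proposition \ref{prop:axes preserved}, Proposition \ref{prop:pA in subgroup} and Proposition \ref{prop:endpts of leaves preserved} (exactly as in the proof of Proposition \ref{prop:qigp injective}) show that $\hat f$ sends $\ell_g$ to within finite Hausdorff distance of the coarse axis of another pseudo-Anosov $g'\in L$, and $\theta(f)(E(g))=E(g')$. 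Conjugation by $\theta(f)$ thus carries each $\Pi_g$ to $\Pi_{g'}$ (via the induced fibre-preserving isometry $\cH_\gamma^\Solv\rightarrow\cH_{\gamma'}^\Solv$ from Proposition \ref{prop:axes preserved}), hence carries $\Pi=\bigcap_g\Pi_g$ into itself; applying the same to $f^{-1}$ gives $\theta(f)\Pi\theta(f)^{-1}=\Pi$. Then Corollary \ref{cor:homeodeterminedbyconj} (applied with $\Pi$ in place of $\Gamma$, i.e. $\Norm_{\QSym}(\Pi)=\Aut(\Pi)$) lets me write $\theta(f)=\theta(\alpha)$ for a unique $\alpha\in\Aut(\Pi)$. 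The induced outer class $\pi'(\alpha)\in\Out(\Pi)\cong\MCG(\cO)$ must commensurate $L'$ — because $\hat f$ permutes coarse axes of pseudo-Anosovs in $L$ and these track $L'$-conjugacy classes of pseudo-Anosovs in $\MCG(\cO)$, so $\pi'(\alpha)$ conjugates enough of $L'$ into $L'$ to lie in $\Comm_{\MCG(\cO)}(L')=M$; hence $\alpha\in\Gamma_M$. Finally $\Psi(\alpha)$ and $f$ have the same boundary-of-fibre data and are both fibre-preserving with $\hat f$ close to the translation by $\pi'(\alpha)$ on $X_L$, so $\theta(\Psi(\alpha)^{-1}f)=\id$ and injectivity (Proposition \ref{prop:qigp injective}) gives $\Psi(\alpha)^{-1}f$ is close to the identity, i.e. $[f]=\Psi(\alpha)$.

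\textbf{Homomorphism and wrap-up.} That $\Psi$ is a homomorphism follows because for $h_1,h_2\in\Gamma_M$, the composition of closest-point-projections after left multiplication by $h_1$ and by $h_2$ is at bounded distance from closest-point-projection after left multiplication by $h_1h_2$ (using $d_\Haus(\Gamma_L, h\Gamma_L)<\infty$ for all $h\in\Gamma_M$ and that projections are coarsely well-defined). Combining injectivity, surjectivity and the homomorphism property yields that $\Psi$ is an isomorphism. The single hardest point is verifying $\theta(f)$ normalizes $\Pi$ and descends to an element of $\Comm_{\MCG(\cO)}(L')$ rather than merely to some element of $\Aut(\Pi)$ — this is precisely the content that forces the target group to be $\Gamma_M$ and no larger, and it requires carefully combining the axis-tracking machinery of \cite{farbmosher2002surfacebyfree}/\cite{mosher2003fiber} with the minimality of $\cO$.
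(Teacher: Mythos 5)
Your proposal follows essentially the same route as the paper: injectivity via Lemma \ref{lem:MCGinjective} together with Proposition \ref{prop:qigp injective}, and surjectivity by transferring to $\cH_L$, applying Proposition \ref{prop:surface fibpreserve} to obtain a fibre-preserving $\hat f$, running the axis-tracking machinery (Lemma \ref{lem:image of line}, Theorem \ref{thm:mosher stable teich}, Propositions \ref{prop:axes preserved}, \ref{prop:pA in subgroup}, \ref{prop:endpts of leaves preserved}), and then invoking Corollary \ref{cor:homeodeterminedbyconj} to recover $\psi_f\in\Aut(\Pi)$ with $\theta(\psi_f)=\theta(f)$. The paper packages surjectivity as the quantitative Lemma \ref{lem:quant mos fibre preserve}, which keeps the constant $C=C(K,A)$ uniform over all $(K,A)$-quasi-isometries; you do not stress this, but the theorem itself does not require it (the uniformity is what the subsequent proof of Theorem \ref{thm:qirigidity surface group extension} needs).

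Two places to tighten. First, your argument that $\pi'(\alpha)\in\Comm_{\MCG(\cO)}(L')$ is vague as stated: saying $\pi'(\alpha)$ ``conjugates enough of $L'$ into $L'$'' does not by itself give commensuration. The clean justification, which the paper makes only implicitly through the observation $\phi_f\cdot X_L\subseteq N_{D+E}(X_L)$, is geometric: since $\hat f$ is a quasi-isometry of $X_L$, $\hat f(X_L)$ is at finite Hausdorff distance from $X_L$; since the isometry $\pi'(\alpha)$ is within bounded distance of $\hat f$ on $X_L$, also $\pi'(\alpha)\cdot X_L$ is at finite Hausdorff distance from $X_L$; and because the orbit map $\MCG(\cO)\rightarrow\cT_\cO$ is a coarse embedding and $X_L$ is coarsely an $L'$-orbit, this says $\pi'(\alpha)L'$ and $L'$ are at finite Hausdorff distance in the word metric, whence $\pi'(\alpha)\in\Comm_{\MCG(\cO)}(L')=M$ by Proposition \ref{prop:coset characterisation}. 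Second, your injectivity paragraph is unnecessarily tangled (``the automorphism piece of $h$ is trivial'', ``$h$ lies in $\Pi$ itself'', ``up to finite kernels one must carefully track''). Since $\Gamma_M\leq\Aut(\Pi)$ and Lemma \ref{lem:MCGinjective} gives injectivity of $\theta:\Aut(\Pi)\rightarrow\QSym$, once you check the compatibility $\theta\circ\Psi=\theta|_{\Gamma_M}$ (the boundary map of $\Psi(h)$ on a fibre is the boundary map of $h\in\Aut(\Pi)$), the vanishing $\theta(\Psi(h))=\id$ directly forces $h=\id$; there is no residual finite kernel to track.
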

For injectivity we note that  Lemma \ref{lem:MCGinjective} ensures two distinct elements $g,h\in\Gamma_M\leq \widetilde \MCG(\cO)\cong \Aut(\Pi)$ induce different elements of $\QSym$, hence Proposition \ref{prop:qigp injective} ensures that $\Psi(g)\neq \Psi(h)$. Surjectivity follows immediately  from the following lemma.

\begin{lem}\label{lem:quant mos fibre preserve}
Let $K\geq 1$ and $A\geq 0$. Then there is a constant $C=C(K,A)$ such that whenever $f$ is a $(K,A)$-quasi-isometry $f:\Gamma_{L}\rightarrow \Gamma_{L}$, there exists a $\psi_f\in \Gamma_M$ such that for all $x\in \Gamma_L$, \[d_{\widetilde{\mathrm{MCG}}(\mathcal{O})}(\psi_f x,f(x))\leq C.\]
\end{lem}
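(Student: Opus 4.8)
The statement to prove is Lemma \ref{lem:quant mos fibre preserve}: every $(K,A)$-quasi-isometry $f:\Gamma_L\to\Gamma_L$ is, uniformly, within bounded distance of the action of an element $\psi_f\in\Gamma_M$. The strategy is to convert the algebraic quasi-isometry $f$ into a quasi-isometry of the geometric model $\cH_L$, extract a quasi-symmetric homeomorphism of $S^1$, show this homeomorphism normalises $\Pi$ inside $\QSym$ (so it comes from $\Aut(\Pi)\cong\widetilde{\MCG}(\cO)$), and then identify the resulting automorphism with an element of $\Gamma_M$ by showing it fixes $L'$ up to conjugacy. Finally, Proposition \ref{prop:qigp injective} upgrades the boundary information back to a uniform bound on displacement in $\cH_L$, hence in $\Gamma_L$.

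\textbf{Step 1 (pass to $\cH_L$ and get a boundary map).} Since $\Gamma_L$ acts properly cocompactly on $\cH_L$, the orbit map $\Gamma_L\to\cH_L$ is a quasi-isometry with constants we may fix. Conjugating $f$ by this map yields a $(K',A')$-quasi-isometry $\tilde f:\cH_L\to\cH_L$ with $K',A'$ depending only on $K,A$. By Proposition \ref{prop:surface fibpreserve}, $\tilde f$ is fibre-preserving, and by the construction following it we obtain $\theta(\tilde f)\in\QSym$. By Proposition \ref{prop:qigp injective}, $\theta:\QI(\cH_L)\to\QSym$ is injective, so it suffices to locate $\psi_f\in\Gamma_M$ with $\theta(\Psi(\psi_f))=\theta(\tilde f)$ and then invoke the uniform bound in Proposition \ref{prop:qigp injective} applied to $\Psi(\psi_f)^{-1}\circ\tilde f$.

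\textbf{Step 2 (the boundary map normalises $\Pi$).} This is the crux. Running the argument of Proposition \ref{prop:qigp injective} without the hypothesis $\theta(f)=\id_{S^1}$: for each pseudo-Anosov $g\in L$ with coarse axis $\ell_g\subseteq X_L$, Lemma \ref{lem:image of line}, Theorem \ref{thm:mosher stable teich}, Proposition \ref{prop:axes preserved} and Proposition \ref{prop:pA in subgroup} together show $\hat{\tilde f}(\ell_g)$ has finite Hausdorff distance from a coarse axis $\ell_{g'}$ of a pseudo-Anosov $g'\in L$, and Proposition \ref{prop:endpts of leaves preserved} gives $\theta(\tilde f)(E(g))=E(g')$. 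Now for each such $g$, the quasi-isometry $\tilde f$ restricts (up to bounded error) to a fibre-preserving quasi-isometry $\cH_{\gamma_g}^\Solv\to\cH_{\gamma_{g'}}^\Solv$, which by Proposition \ref{prop:axes preserved} is close to an isometry; that isometry conjugates $\Pi_g$ to $\Pi_{g'}$ inside $\QSym$ via $\theta(\tilde f)$. Intersecting over all pseudo-Anosovs $g\in L$ (using that $g'$ again ranges over all pseudo-Anosovs of $L$, by conjugation-equivariance and surjectivity of $g\mapsto g'$ at the level of conjugacy classes) yields $\theta(\tilde f)\,\Pi\,\theta(\tilde f)^{-1}=\Pi$. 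By Corollary \ref{cor:homeodeterminedbyconj} (applied with $\Gamma$ replaced by $\Pi$, i.e. the orbifold version of Lemmas \ref{lem:MCGinjective}--\ref{lem:homeodeterminedbyconj} that the paper invokes for $\cO$), there is $\alpha\in\Aut(\Pi)\cong\widetilde{\MCG}(\cO)$ with $\theta(\alpha)=\theta(\tilde f)$.

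\textbf{Step 3 ($\alpha$ lies in $\Gamma_M$) and conclusion.} The outer class $\overline\alpha\in\Out(\Pi)\cong\MCG(\cO)$ satisfies $\overline\alpha(E(g))=E(g')$ for all pseudo-Anosovs $g\in L'$ with $g'\in L'$; equivalently $\overline\alpha$ conjugates the set of axes of $L'$-pseudo-Anosovs to itself up to the action of $L'$, which (by the characterisation of $\Comm_{\MCG(\cO)}(L')$ via commensuration of axis-sets, cf. Lemma \ref{lem:endpts of leaves disjoint} and \cite{mccarthy1994normalizers}) forces $\overline\alpha\in M=\Comm_{\MCG(\cO)}(L')$. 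Hence $\alpha\in\Gamma_M$; set $\psi_f:=\alpha$. Then $\theta(\Psi(\psi_f))=\theta(\alpha)=\theta(\tilde f)$, so $\Psi(\psi_f)^{-1}\circ\tilde f$ is a $(K'',A'')$-quasi-isometry of $\cH_L$ inducing $\id_{S^1}$, with $K'',A''$ depending only on $K,A$; Proposition \ref{prop:qigp injective} bounds its displacement by $B=B(K,A)$. Translating back through the orbit quasi-isometry $\Gamma_L\to\cH_L$ gives $d_{\widetilde{\MCG}(\cO)}(\psi_f x, f(x))\le C$ for $C=C(K,A)$, as required. I expect \textbf{Step 2} — verifying that the boundary homeomorphism conjugates $\Pi$ to itself by carefully tracking which pseudo-Anosov axes are preserved and patching together the fibrewise isometries from Proposition \ref{prop:axes preserved} with uniform constants — to be the main obstacle; the bookkeeping of uniformity of all constants as $f$ varies (parallel to the parenthetical remark at the end of Proposition \ref{prop:qigp injective}'s proof) is the delicate part.
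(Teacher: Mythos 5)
Your proposal follows the same overall strategy as the paper's proof: conjugate $f$ by the orbit quasi-isometry to obtain $\tilde f:\cH_L\rightarrow\cH_L$, extract $\theta(\tilde f)\in\QSym$ via Proposition~\ref{prop:surface fibpreserve}, use the axis-tracking chain (Lemma~\ref{lem:image of line}, Theorem~\ref{thm:mosher stable teich}, Propositions~\ref{prop:axes preserved}, \ref{prop:pA in subgroup}, \ref{prop:endpts of leaves preserved}) to show $\theta(\tilde f)$ normalises $\Pi$ inside $\QSym$, invoke Corollary~\ref{cor:homeodeterminedbyconj} to produce $\alpha\in\Aut(\Pi)\cong\widetilde\MCG(\cO)$, and finish with Proposition~\ref{prop:qigp injective}. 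Steps~1 and~2 match the paper closely.

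However, there is a genuine gap at the end of Step~3. You assert that $\Psi(\psi_f)^{-1}\circ\tilde f$ is a $(K'',A'')$-quasi-isometry of $\cH_L$ with $K'',A''$ depending only on $K,A$, and then apply Proposition~\ref{prop:qigp injective}. But $\Psi(\psi_f)$ is left multiplication by $\psi_f$ followed by a closest-point projection onto $\Gamma_L$ (equivalently, onto $\cH_L\subseteq\cH_\cO$), and the quasi-isometry constants of that projection depend on $d_\Haus(\psi_f\cdot\cH_L,\cH_L)$, which a priori could grow without bound as $f$ ranges over $(K,A)$-quasi-isometries. You have not shown this distance is uniformly controlled, so you cannot yet apply Proposition~\ref{prop:qigp injective} with constants depending only on $K,A$. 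This is precisely what the second and third paragraphs of the paper's proof establish: fix a single conjugacy class $\cC$ of pseudo-Anosovs in $L$, choose coarse axes $\{\ell_g\}_{g\in\cC}$ equivariantly so all $\cH_{\ell_g}$ are $\delta$-hyperbolic with one $\delta$, use Theorem~\ref{thm:mosher stable teich} to get a uniform Hausdorff bound between $\hat f(\ell_g)$ and $\phi_f(\ell_g)$, and deduce $\phi_f\cdot X_L\subseteq N_{D+E}(X_L)$ with $D+E$ depending only on $K,A$; only then, after composing with the closest-point projection $u:N_{D+E}(\cH_L)\rightarrow\cH_L$, does $\bar f\circ u\circ\psi_f$ have uniform constants and Proposition~\ref{prop:qigp injective} applies. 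You flag this bookkeeping as ``the delicate part'' but do not carry it out --- and it is the substantive content of the lemma, since without the quantitative uniformity the statement reduces to Mosher's non-quantitative classification. A secondary point: your route to $\alpha\in\Gamma_M$ via a ``characterisation of $\Comm_{\MCG(\cO)}(L')$ via commensuration of axis-sets'' is not something established in the paper and would require its own justification, whereas in the paper the membership $\psi_f\in\Gamma_M$ drops out of the containment $\phi_f\cdot X_L\subseteq N_{D+E}(X_L)$ (and the analogous containment for $\phi_f^{-1}$) for free.
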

\begin{proof}

The argument is similar to the proof of Proposition \ref{prop:qigp injective}. It is enough to show the following: there is a constant $C=C(K,A)$ such that whenever $f$ is a $(K,A)$-quasi-isometry $f:\cH_{L}\rightarrow \cH_{L}$, there exists a $\psi_f\in \Gamma_M$ such that for all $x\in \Gamma_L$, \[d_{\cH_\cO}(\psi_f x,f(x))\leq C.\] This is because $\Gamma_L$ acts properly discontinuously and cocompactly on $\cH_L$, and the orbit map $\widetilde \MCG(\cO)\rightarrow \cH_\cO$ is a coarse embedding.

Suppose $f:\cH_L\rightarrow \cH_L$ is a $(K,A)$-quasi-isometry. Proposition \ref{prop:surface fibpreserve} ensures there are constants $K', A'$ and $B$ such that there is a $(K',A')$-quasi-isometry $\hat f:X_L\rightarrow X_L$ such that $d_\Haus(f(D_\sigma),D_{\hat f(\sigma)})\leq B$ for all $\sigma\in X_L$.
Let $g\in L$ be a pseudo-Anosov with coarse axis $\ell_g\subseteq X_L$ and axis $\gamma_g\subseteq \cT_\cO\subseteq \cT_S$. By Lemma \ref{lem:image of line}, there is a line $\ell'_g\subseteq X_L$ such that $d_\mathrm{Haus}(\hat f(\ell_g),\ell_g')\leq C$ for some $C=(K',A')$. Since $\ell_g$ is a coarse axis, $\cH_{\ell_g}$ is $\delta$-hyperbolic for some $\delta$.   
Thus $f$ induces a quasi-isometry $f_g:\cH_{\ell_g}\rightarrow \cH_{\ell'_g}$, so that $\cH_{\ell'_g}$ is $\delta'$-hyperbolic and therefore has finite Hausdorff distance from some Teichm\"uller geodesic $\gamma'_g$. 

By Propositions \ref{prop:axes preserved} and \ref{prop:pA in subgroup}, we deduce that $\gamma'_g$ is the axis of a  pseudo-Anosov $g'\in L$ and that there is a fibre-preserving isometry $\cH_{\ell_g}\rightarrow \cH_{\ell'_g}$. This isometry conjugates $\Isom_h(\cH_{\ell_g})$ to $\Isom_h(\cH_{\ell'_{g}})$, so that $\theta(f)$ conjugates $\Pi_g$ to $\Pi_{g'}$ in $\QSym$. Since this holds for each pseudo-Anosov $g\in L$, $\theta(f)$ conjugates $\Pi$ in $\QSym$. Thus Corollary \ref{cor:homeodeterminedbyconj} and Proposition \ref{prop:qigp injective} ensures there is an element $\psi_f\in \widetilde\MCG(\cO)\cong \Aut(\Pi)$ such that $\sup_{x\in X_L}d_{\cH_\cO}(f(x),\psi_f\cdot x)<\infty$. 

To complete the proof, we need only show that $\sup_{x\in X_L}d_{\cH_\cO}(f(x),\psi_f\cdot x)$ can be bounded by a constant that depends only on $K$ and $A$ and not the quasi-isometry $f$. We argue similarly to the proof of Proposition \ref{prop:qigp injective}. We fix a conjugacy class $\cC$ of pseudo-Anosovs in $L$ and choose coarse axes $\{\ell_g\mid g\in \cC\}$ equivariantly. Thus there is a $\delta$ such that $\cH_{\ell_g}$ is $\delta$-hyperbolic for every $g\in \cC$. There is therefore a $\delta'$ such that $\cH_{\ell'_g}$ is $\delta'$-hyperbolic for every $g\in \cC$. 

Let $\phi_f\coloneqq \pi'(\sigma_f)\in L'$, where $\pi':\Aut(\Pi)\rightarrow \Out(\Pi)\cong \MCG(\cO)$ is the quotient map. As $\phi_f$ acts by isometries  on $\cT_\cO$, $\phi_f\ell_g$ is a hyperbolic line in $\cT_\cO$ such that $\cH_{\phi_f\ell_g}=\psi_f\cH_{\ell_g}$ is $\delta$-hyperbolic.  Since $\sup_{x\in X_L}d_{\cH_\cO}(f(x),\psi_f\cdot x)<\infty$, it follows that $d_\mathrm{Haus}d_{\cT_\cO}(\hat{f}(\ell_g),\phi_f(\ell_g))<\infty$.  Using Theorem \ref{thm:mosher stable teich}, there is a constant $D=D(\delta,\delta')$ such that $d_\mathrm{Haus}(\hat{f}(\ell_g),\phi_f(\ell_g))\leq D$ for each $g\in \cC$.  Furthermore, there is a constant $E$ such that every point $x\in X_L$ satisfies $x\in N_E(\ell_g)$ for some $g\in \cC$. Thus $\phi_f\cdot X_L\subseteq N_{D+E}(X_L)$ and so $\psi_f\cdot \cH_L\subseteq N_{D+E}(\cH_L)$. As usual, the constants $D$ and $E$ don't depend on $f$, only the constants $K$ and $A$

To  conclude, we define a closest point projection map $u:N_{D+E}(\cH_L)\rightarrow \cH_L$ whose quasi-isometry constants depend only on $D+E$. Thus there exist constants $K''$ and $A''$ such that for every $(K,A)$-quasi-isometry $f:\cH_L\rightarrow \cH_L$, the composition $\overline f\circ u \circ \psi_f:\cH_L\rightarrow \cH_L$ is a $(K'',A'')$-quasi-isometry, where $\overline f$ is a coarse inverse to $f$.
Thus Proposition \ref{prop:qigp injective} ensures that there is a constant $F$ such that $\sup_{\cH_L}d(\overline f\circ u \circ \psi_f(x),x)\leq F$ for every $(K,A)$-quasi-isometry $f:\cH_L\rightarrow \cH_L$. The result immediately follows.
\end{proof}

We deduce Theorem \ref{thm:qirigidity surface group extension} from Lemma \ref{lem:quant mos fibre preserve} using an argument similar to that found in \cite{schwartz95r1lattices}.

\qirsurfex
\begin{proof}
We  fix an irreducible, infinite-ended subgroup $L\leq \mathrm{MCG}(S)$ of type $F_3$. Suppose  $f:G\rightarrow \Gamma_L$ is a quasi-isometry, where $G$ is any finitely generated group. Let $\overline{f}$ be a coarse inverse to $f$ and for each $g\in G$, let $L_g:G\rightarrow G$ be left multiplication by $g$. 
Then $f_g\coloneqq f\circ L_g\circ \overline{f}$ is a quasi-isometry of $\Gamma_L$. Since $L_g$ is an isometry of $\Gamma_L$, $\{f_g\mid g\in G\}$ is a uniform set of quasi-isometries, i.e. there exists $K\geq 1$ and $A\geq 0$ such that each $f_g$ is a $(K,A)$-quasi-isometry. We may also assume that $f$ and $\overline{f}$ are $(K,A)$-quasi-isometries. 
Thus Lemma \ref{lem:quant mos fibre preserve} ensures there is a $C\geq 0$ such that for every $g\in G$ there exists an $\psi_g\in \Gamma_M$ with \begin{align}
d(\psi_g\cdot x,f_g(x))\leq C
\end{align}
for all $x\in \Gamma_L$. We claim that the map $\Psi:G\rightarrow \Gamma_M$ given by $g\mapsto \psi_g$ has finite kernel and image commensurable to $\Gamma_L\leq \Gamma_M$. This map is well-defined  and a homomorphism.

To shown $\ker(\Psi)$ is finite,  we pick $x_0\in G$ such that $d(f(x_0),e)\leq A$ and suppose that $g\in \ker(\Psi)$. Then $d(e,f_g(e))\leq C$. Thus \begin{align*}
\frac{1}{K}d(x_0,gx_0)-A&\leq d(f(x_0),f(gx_0))\\
&\leq d(f(x_0),e)+d(e,f_g(e))+d(f_g(e),f(gx_0))\\
&\leq  Kd(\overline{f}(e),x_0)+2A+C.
 \end{align*}
 There are only finitely many such $g\in G$, thus $\ker(\Psi)$ is finite.

We now show $\im({\Psi})$ has finite Hausdorff distance from  $\Gamma_L$. For each $x\in \Gamma_L$,  there exists some $k_x\in G$ with $d(f(k_x),x)\leq A$. Let $g_x\coloneqq k_x\overline{f}(e)^{-1}$. Then $f_{g_x}(e)=f(g_x\overline{f}(e))=f(k_x)$ and so 
\[d(\psi_{g_x},x)\leq d(\psi_{g_x}\cdot e,f_{g_x}(e))+d(f(k_x),x)\leq C+A,\] thus $\Gamma_L\subseteq N_{A+C}(\im{\Psi})$. Conversely for each $\psi_g\in \im{\Psi}$, $d(\psi_g,f_g(e))\leq C$, so that $\im{\Psi}\subseteq N_C(\Gamma_L)$.
\end{proof}

\section{Groups quasi-isometric to products}\label{sec:gpsqitoprods}
In this appendix we demonstrate further instances in which Question \ref{ques:alnormal subgp qiinvariant} is true, even if Theorem \ref{thm:main intro} doesn't necessarily apply. Theorem \ref{thm:qi to product contains alnorm} is not a new result in and of itself, rather it is a combination of existing results --- namely \cite{kapovich1998derahm} and \cite{mosher2003quasi} --- formulated in the language of almost normal subgroups and quotient spaces. Theorem \ref{thm:qi to product contains alnorm}  can be used to deduce Proposition \ref{prop:gps qi to TxT}.

The  following theorem says that under some fairly general coarse non-positive curvature assumptions, quasi-isometries preserve direct products. We refer to \cite{kapovich1998derahm} for a definition of coarse type I and II.
\begin{thm}[{\cite[Theorem B]{kapovich1998derahm}}]\label{thm:dehrahm}
Suppose $X=Z\times \Pi_{i=1}^k X_i$ is a geodesic metric space such that the asymptotic cone of $Z$ is homeomorphic to $\mathbb{R}^n$ and each $X_i$ is either of coarse type I or II. Let $p_i:X\rightarrow X_i$ be the projection map. Then for every $K\geq 1$ and $A\geq 0$, there exist constants $K'\geq 1$ and $A',D\geq 0$ such that the following holds: 

Whenever $f:X\rightarrow X$ is a $(K,A)$-quasi-isometry, there is a $\sigma_f\in\mathrm{Sym}(k)$ so that for each $i$, there exists a  $(K',A')$-quasi-isometry $f_i$ such that the following diagram commutes up to error at most $D$.
\[
\begin{tikzcd}  
X\arrow{r}{f}\arrow{d}{p_i}& X\arrow{d}{p_{\sigma_f(i)}}\\
X_i\arrow{r}{f_i}& X_{\sigma_f (i)}
\end{tikzcd}
\]
\end{thm}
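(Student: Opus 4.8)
The statement to prove is Theorem B of \cite{kapovich1998derahm} (the coarse de Rham decomposition theorem), quoted here as Theorem \ref{thm:dehrahm}. Since this is an externally cited result, my "proof proposal" is really a sketch of how the argument goes — I would not reprove it in full, but here is the plan I would follow if forced to reconstruct it.

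\medskip

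The plan is to work in the \emph{asymptotic cone} $\mathrm{Cone}_\omega(X)$, where the quasi-isometry $f$ becomes a genuine bi-Lipschitz homeomorphism $f_\omega$. First I would observe that taking asymptotic cones commutes with finite products, so $\mathrm{Cone}_\omega(X) \cong \mathrm{Cone}_\omega(Z) \times \prod_{i=1}^k \mathrm{Cone}_\omega(X_i)$, and by hypothesis $\mathrm{Cone}_\omega(Z) \cong \mathbb{R}^n$ while each $\mathrm{Cone}_\omega(X_i)$ carries the structure coming from coarse type I or II (a single-point set of directions, respectively a tree-graded / branching structure) that rigidifies the product decomposition. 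The key topological input is a de Rham–type uniqueness statement: a metric space that is a product of a Euclidean factor with factors that are each either ``irreducible'' in an appropriate sense (type II pieces behave like $\mathbb{R}$-trees or higher-rank buildings with no Euclidean de Rham factor, type I pieces have a canonical product-indecomposable structure detected by local topology, e.g.\ by looking at local homology or at the space of directions) has its decomposition into these factors uniquely determined up to permutation of isomorphic factors and re-coordinatisation of the Euclidean factor. Then $f_\omega$, being bi-Lipschitz, must permute the factors: there is $\sigma_f \in \mathrm{Sym}(k)$ with $f_\omega$ carrying the $i$-th factor's foliation to the $\sigma_f(i)$-th, inducing bi-Lipschitz maps $(f_\omega)_i : \mathrm{Cone}_\omega(X_i) \to \mathrm{Cone}_\omega(X_{\sigma_f(i)})$.

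\medskip

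The second half is the standard ``undoing the ultralimit'' / ultrafilter-compactness argument to get the quantitative statement back on $X$ itself. I would argue by contradiction: if the conclusion fails for a fixed pair $(K,A)$, there is a sequence of $(K,A)$-quasi-isometries $f_m$ for which no choice of $\sigma$, no $(K',A')$-quasi-isometries $f_i$ (with $K',A'$ growing, $D$ growing), make the diagram commute up to error $D$. Feeding the $f_m$ (with suitably chosen basepoints and scales) into an asymptotic cone construction along a non-principal ultrafilter produces a limiting bi-Lipschitz map, to which the de Rham rigidity of the previous paragraph applies; this contradicts the failure along the sequence, since the coarse-scale product-respecting behaviour of $f_\omega$ must already be visible, uniformly, at finite scale (only finitely many permutations $\sigma \in \mathrm{Sym}(k)$ are available, and for one of them the projections $p_{\sigma(i)} \circ f$ and the ``induced'' maps $f_i \circ p_i$ stay within bounded distance by a compactness/Arzelà–Ascoli argument on the coarse level). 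The induced maps $f_i$ are then quasi-isometries with constants depending only on $(K,A)$ because the original $f$ is coarsely surjective, forcing each $f_i$ to be coarsely surjective onto $X_{\sigma_f(i)}$, and the lower Lipschitz bound transfers from $f$ through the product projections.

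\medskip

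The main obstacle — and the genuine content of \cite{kapovich1998derahm} — is the de Rham rigidity in the asymptotic cone: proving that a bi-Lipschitz self-homeomorphism of $\mathbb{R}^n \times \prod \mathrm{Cone}_\omega(X_i)$ must respect the product decomposition up to permutation of isometric factors. This requires (i) a local-topological invariant that pins down the factors of the cone — in the type I case one uses that the $X_i$ are ``coarsely non-Euclidean and irreducible'' so their cones have no further Euclidean or product splitting, detected via local homology or the topological structure of small metric spheres; in the type II case one uses the tree-like / branching geometry directly — and (ii) an argument, going back to the classical de Rham decomposition theorem and its generalisations (Foertsch–Lytchak for metric spaces), that such a decomposition is canonical. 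The subtlety is handling the interaction between the Euclidean factor $Z$ (which can be re-coordinatised freely, so $f$ need not preserve it ``rigidly'' but only up to an affine-like map) and the non-Euclidean factors, which must be permuted rigidly; this is precisely why the statement carries a $\sigma_f \in \mathrm{Sym}(k)$ only for the $X_i$ and not for $Z$. Everything after this rigidity input is, as sketched, a routine ultralimit-and-compactness packaging that I would carry out without difficulty.
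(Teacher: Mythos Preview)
The paper does not prove this statement at all: Theorem~\ref{thm:dehrahm} is quoted verbatim from \cite{kapovich1998derahm} and used as a black box in Appendix~\ref{sec:gpsqitoprods}, with no proof or sketch provided. Your sketch of the asymptotic-cone-plus-de-Rham-rigidity argument is a reasonable outline of how the original Kapovich--Kleiner--Leeb proof proceeds, but there is nothing in the present paper to compare it against.
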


We recall the definition of a quasi-action:
\begin{defn}
Let $G$ be a group and $X$ be a metric space. A \emph{quasi-action} of $G$ on $X$ associates to each $g\in G$ a quasi-isometry $f_g:X\rightarrow X$ such that:
\begin{enumerate}
\item there are constants $K\geq 1$ and $A\geq 0$ such that each $f_g$ is a $(K,A)$-quasi-isometry;
\item for all $g,h\in G$ and $x\in X$, $d\big(f_h(f_g(x)),f_{hg}(x)\big)\leq A$;
\item for all $x\in X$, $d(f_e(x),x)\leq A$.
\end{enumerate}
For $(K,A)$ as above, we say that $\{f_g\}_{g\in G}$ is a $(K,A)$-quasi-action.
\end{defn}

\begin{lem}\label{lem:qa on direct factor}
Suppose a finitely generated group $G$ quasi-acts on a metric space $X=Z\times \Pi_{i=1}^k X_i$ as in Theorem \ref{thm:dehrahm}. Then for each $i$, there is a finite index subgroup $G_0\leq G$ such that for $1\leq i\leq k$, $G_0$ admits a quasi-action on $X_i$.
\end{lem}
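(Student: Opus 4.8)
The plan is to combine Theorem~\ref{thm:dehrahm} with a standard argument for promoting a quasi-action on a product to quasi-actions on the factors, the only subtlety being the permutation $\sigma_f\in \mathrm{Sym}(k)$ attached to each quasi-isometry. First I would observe that the quasi-action $\{f_g\}_{g\in G}$ supplies, for each $g$, a $(K,A)$-quasi-isometry $f_g:X\to X$ with uniform constants; applying Theorem~\ref{thm:dehrahm} to each $f_g$ yields a permutation $\sigma_g\in \mathrm{Sym}(k)$ together with $(K',A')$-quasi-isometries $(f_g)_i:X_i\to X_{\sigma_g(i)}$, with uniform constants $K',A',D$, making the relevant squares commute up to error $D$.

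The key step is to check that $g\mapsto \sigma_g$ is (coarsely) a homomorphism $G\to\mathrm{Sym}(k)$. From the quasi-action axiom $d\big(f_h(f_g(x)),f_{hg}(x)\big)\le A$ and the bounded-error commutativity of the squares, one gets that $p_{\sigma_h\sigma_g(i)}\circ f_h\circ f_g$ and $p_{\sigma_{hg}(i)}\circ f_{hg}$ are within bounded distance on $X$; since the projections $p_j$ are coarsely surjective and ``coarsely distinguish'' the factors (distinct factors are at infinite Hausdorff distance in $X$, being unbounded in independent directions), this forces $\sigma_{hg}=\sigma_h\sigma_g$ exactly. Hence $g\mapsto\sigma_g$ is an honest homomorphism; let $G_0\coloneqq\ker(\sigma_\bullet)$, which is finite index in $G$ since $\mathrm{Sym}(k)$ is finite. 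For $g\in G_0$ we have $\sigma_g=\id$, so $(f_g)_i:X_i\to X_i$ for every $i$.

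It then remains to verify that for each fixed $i$, the family $\{(f_g)_i\}_{g\in G_0}$ is a quasi-action on $X_i$. The uniform quasi-isometry constants $K',A'$ give axiom~(1). For axiom~(2), combine the bounded-error commutativity of the squares for $g$, $h$ and $hg$ with the quasi-action relation for $\{f_g\}$, then project to $X_i$ and use that $p_i$ is coarsely Lipschitz: one obtains $d\big((f_h)_i((f_g)_i(x_i)),(f_{hg})_i(x_i)\big)\le A''$ for a uniform $A''$, after enlarging the constant to absorb the error $D$ and the defect in the choice of $(f_g)_i$. Axiom~(3) follows from $d(f_e(x),x)\le A$ projected to $X_i$. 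I expect the main obstacle to be purely bookkeeping: tracking the several additive error constants through the compositions and justifying that distinct product factors cannot be coarsely matched (so that the permutations are genuinely well-defined and multiplicative); both points are routine given Theorem~\ref{thm:dehrahm} but require care in the order the estimates are assembled.
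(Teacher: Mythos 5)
Your proposal is correct and follows essentially the same route as the paper: apply Theorem~\ref{thm:dehrahm} to each $f_g$, observe that $g\mapsto\sigma_{f_g}$ is a homomorphism to $\mathrm{Sym}(k)$, take $G_0$ to be its kernel, and check that $\{(f_g)_i\}_{g\in G_0}$ is a quasi-action on $X_i$. The paper states these steps more tersely; your additional justification that the permutations multiply correctly (via the quasi-action relation and the coarse distinguishability of the factors) is the same argument made explicit.
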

\begin{proof}
Let $\{f_g\}_g\in G$ be the set of quasi-isometries associated to the quasi-action of $G$ on $X$. We apply Theorem \ref{thm:dehrahm} to each $f_g$. The map $f_g\mapsto \sigma_{f_g}$ defines a homomorphism $G\rightarrow \mathrm{Sym}(k)$ whose kernel we denote $G_0$. Then the set $\{(f_g)_i:X_i\rightarrow X_i\mid g\in G_0\}$ defines a quasi-action of $G_0$ on $X_i$. Note that $G=G_0$ if none of the $X_i$ are quasi-isometric to one another. 
\end{proof}

Given two quasi-actions $\{f_g\}_{g\in G}$ and $\{k_g\}_{g\in G}$ on metric spaces $X$ and $Y$, a quasi-conjugacy is a quasi-isometry $r:X\rightarrow Y$ such that there exists a constant $A\geq 0$ with $d_Y((k_g\circ r)(x),(r\circ f_g)(x))\leq A$ for all $x\in X$ and $g\in G$. A \emph{bushy} tree is one in which every point of $T$ has uniformly bounded distance from a vertex with at least three unbounded complementary components. We can now state the following theorem of Mosher--Sageev--Whyte:

\begin{thm}[\cite{mosher2003quasi}]\label{thm:mswthm1}
Let $G$ be a group admitting a cobounded quasi-action  on a bounded valence bushy tree $T$. Then $G$ acts isometrically on a bounded valence, bushy tree $T'$ and there is a quasi-conjugacy $r:T'\rightarrow T$.
\end{thm}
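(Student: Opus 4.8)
The plan is to follow the strategy of Mosher--Sageev--Whyte: extract from the coarse separation geometry of $T$ a $G$--invariant combinatorial pocset, dualise it to an honest $G$--tree, and compare the two by an explicit quasi-conjugacy. Write $\{f_g\}_{g\in G}$ for the given $(K,A)$--quasi-action. First I would introduce the pocset of \emph{coarse halfspaces}: for an edge $e$ of $T$ such that both components of $T$ minus $e$ are deep (unbounded), these components are the two \emph{halfspaces based at $e$}; let $\mathcal{H}$ be the set of all such subsets, taken up to finite Hausdorff distance, equipped with the partial order $\leq$ induced by coarse inclusion and the complementation $\mathfrak{h}\mapsto \mathfrak{h}^{*}$ exchanging the two sides of an edge. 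The key structural input is a coarse separation lemma for bounded valence bushy trees: every coarsely separating subset $W\subseteq T$ lies at finite Hausdorff distance from a halfspace (or, if $W$ is not coarsely connected, a union of halfspaces), with the Hausdorff-distance bound controlled by the separation constant. Consequently any $(K,A)$--quasi-isometry of $T$ coarsely permutes $\mathcal{H}$, preserving $\leq$ and $*$: it sends a coarsely separating coarsely connected set to one of the same type. Because $T$ is a tree no two of its halfspaces cross, so $\mathcal{H}$ is a \emph{nested} pocset; bounded valence of $T$ makes it discrete (each interval $\{\mathfrak{h}:\mathfrak{h}_0\leq\mathfrak{h}\leq\mathfrak{h}_1\}$ is finite) and of bounded multiplicity. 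Thus $\{f_g\}$ descends to an honest action of $G$ on $\mathcal{H}$ by order- and complementation-preserving bijections.

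Next I would dualise. Applying Sageev's construction to the nested $G$--pocset $\mathcal{H}$ yields a $1$--dimensional CAT(0) cube complex, that is, a tree $T'$, on which $G$ acts by isometries; the vertices of $T'$ are the consistent orientations (ultrafilters) on $\mathcal{H}$. I would then verify that $T'$ has bounded valence --- this reflects the bounded multiplicity of $\mathcal{H}$, hence the bounded valence of $T$ --- and that $T'$ is bushy: a uniformly spaced branch vertex of $T$ with at least three deep bushy complementary directions gives a vertex of $T'$ from which at least three halfspaces diverge, and $T'$ has no long dead branches since only halfspaces with deep complement were admitted. Finally I would build the quasi-conjugacy $r\colon T'\to T$ by sending a vertex $v$ of $T'$, i.e.\ a consistent orientation $\omega$, to any point $r(v)\in\bigcap_{\mathfrak{h}\in\omega}\mathfrak{h}$; this coarse intersection is coarsely a single point because $\mathcal{H}$ separates points of $T$ at a definite scale. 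One checks $r$ is coarsely Lipschitz (adjacent vertices of $T'$ differ by flipping one halfspace, moving $r$ a bounded amount), quasi-injective (distinct vertices are separated by some $\mathfrak{h}\in\mathcal{H}$, so their images lie on opposite sides of $\mathfrak{h}$), coarsely surjective (here coboundedness of the quasi-action is used, together with the fact that every point of $T$ lies in a vertex region of bounded diameter cut out by the minimal halfspaces around it --- this is where bushiness of $T$ enters), and coarsely $G$--equivariant, since the $G$--action on $\mathcal{H}$ was precisely the coarse action of $\{f_g\}$ on halfspaces. A coarse inverse is obtained symmetrically, sending $x\in T$ to the orientation $\{\mathfrak{h}\in\mathcal{H}\,:\,x\in\mathfrak{h}\}$, which completes the quasi-conjugacy.

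The main obstacle is the coarse separation lemma for bounded valence bushy trees, together with the verification that the dual tree $T'$ is genuinely bushy with constants depending only on $(K,A)$ and the valence and bushiness constants of $T$. One must show that coarsely separating subsets of $T$ really are coarsely halfspaces, handling the non-bushy bounded-branch parts of $T$ with care (halfspaces based at edges leading into such branches are shallow and must be discarded, so that $\mathcal{H}$ is not polluted by spurious elements), and one must check that bushiness of $T$ survives the passage to $\mathcal{H}$, i.e.\ that enough triples of pairwise transverse deep halfspaces persist so that $T'$ still has uniformly dense branching of valence at least three. Once this uniform coarse-geometric control is in place, the dualisation and the construction and verification of $r$ are essentially formal.
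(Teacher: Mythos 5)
Note first that the paper does not prove this theorem; it is imported verbatim from Mosher--Sageev--Whyte \cite{mosher2003quasi}, so there is no in-paper argument to compare against. Their published proof also takes a genuinely different route from the one you sketch: rather than extracting a $G$--invariant pocset of halfspaces from the coarse geometry of $T$, they first promote the cobounded quasi-action to an honest cocompact, properly discontinuous $G$--action on a bounded-geometry, uniformly $1$--connected $2$--complex $X$ quasi-isometric to $T$, and then apply Dunwoody's track theorem (the result quoted in this paper as Theorem~\ref{thm:dunwoody accessibility}) to $X$. The tree $T'$ is the tree dual to the resulting $G$--invariant pattern of tracks. This sidesteps the need to match coarsely separating subsets of $T$ canonically with halfspaces: the tracks are manufactured inside a genuine $G$--complex by Dunwoody's machinery, not extracted coarse-geometrically from $T$.

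There is also a concrete gap in your outline, and it sits exactly at the step you flag as the ``main obstacle.'' The coarse separation lemma as you state it is false: a coarsely connected, deep, coarsely separating subset of a bushy tree need not lie within bounded Hausdorff distance of a single halfspace. At a valence-four vertex $v$ with complementary branches $H_1,\dots,H_4$, the set $W=H_1\cup H_2$ is deep, has deep complement $H_3\cup H_4$, has bounded coarse boundary $\{v\}$, and is coarsely connected (any two of its points are joined by a $2$--chain through $v$), yet $W$ is not Hausdorff-close to any halfspace of $T$. Moreover such a $W$ can perfectly well arise as $f(\mathfrak{h})$ for a quasi-isometry $f$ and a genuine halfspace $\mathfrak{h}$: $f$ sends the midpoint of an edge to a bounded set of possibly large diameter, and the two sides of the edge to the two sides of that set, each of which is in general a union of several branches. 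So $G$ does not act on $\mathcal{H}$ as you define it. Nor can you simply enlarge $\mathcal{H}$ to all ``coarse halfspaces'' (deep subsets with deep complement and bounded coarse boundary): that family fails to be nested --- in the same example $H_1\cup H_2$ and $H_2\cup H_3$ cross, all four corner sets $H_2$, $H_1$, $H_3$ and $H_4\cup\{v\}$ being deep --- so Sageev dualization no longer yields a tree, contradicting your claim that the dualization and the construction of $r$ are ``essentially formal.'' This uncanonicity is precisely what the $2$--complex/tracks approach is engineered to absorb, and I do not see how to repair the pocset outline without in effect reproducing it.
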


Proposition \ref{prop:gps qi to TxT} can be deduced as a special case of the following:
\begin{thm}\label{thm:qi to product contains alnorm}
Let $G$ be a finitely generated group quasi-isometric to a metric space $X=Z\times \Pi_{i=1}^k X_i$ as in Theorem \ref{thm:dehrahm}. Suppose that $X_1$ is a bounded valence bushy tree. Then there exists a finite index subgroup $G_0\leq G$ that contains an almost normal subgroup $H$ quasi-isometric to $Z\times \Pi_{i=2}^k X_i$, such that the quotient space $G_0/H$ is quasi-isometric to a finite valence bushy tree.
\end{thm}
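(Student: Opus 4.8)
The plan is to combine Theorem \ref{thm:dehrahm} (the coarse de Rham decomposition), Theorem \ref{thm:mswthm1} (Mosher--Sageev--Whyte), and the pullback machinery for coarse bundles developed in Section \ref{sec:coarse bundles}. First I would observe that $G$ quasi-acts on $X = Z \times \prod_{i=1}^k X_i$ by precomposing a fixed quasi-isometry $G \to X$ with left translations and postcomposing with a coarse inverse; this is a $(K,A)$-quasi-action for uniform $(K,A)$. Applying Lemma \ref{lem:qa on direct factor}, pass to a finite index subgroup $G_0 \leq G$ which quasi-acts on each factor $X_i$; in particular $G_0$ quasi-acts on the bushy tree $X_1$. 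I would also need to check this quasi-action on $X_1$ is cobounded --- this follows because the original quasi-action of $G_0$ on $X$ is cobounded (the orbit map $G_0 \to X$ is a quasi-isometry) and the projection $X \to X_1$ is coarsely surjective, so the composite orbit map $G_0 \to X_1$ is coarsely surjective.

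Next, by Theorem \ref{thm:mswthm1}, $G_0$ acts isometrically and coboundedly on a bounded valence bushy tree $T$, with a quasi-conjugacy $r : T \to X_1$ intertwining the isometric action and the quasi-action. Since the composite $G_0 \to X \to X_1$ is a coarse map that is equivariant up to bounded error with respect to the $T$-action via $r$, I would build a coarse bundle structure $p : G_0 \to T$ as follows: define $p$ to be (a coarse version of) the composition of the quasi-isometry $G_0 \to X$, the projection $X \to X_1$, and a coarse inverse $\overline r : X_1 \to T$. Using Proposition \ref{prop:coarse bundle QI} and the fact that the projection $X \to X_1$ is a coarse bundle with fibre $Z \times \prod_{i=2}^k X_i$, together with the pullback construction (Lemma \ref{lem:technical pullback}), one sees $p : G_0 \to T$ is a coarse bundle with fibre quasi-isometric to $Z \times \prod_{i=2}^k X_i$. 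The fibre over a basepoint vertex $v_0 \in T$ is a coarsely embedded subset $W \subseteq G_0$; since the $G_0$-action on $T$ is by isometries and $W$ corresponds (up to finite Hausdorff distance) to the preimage of the stabiliser $\mathrm{Stab}_{G_0}(v_0)$-region, a little care shows $W$ can be taken at finite Hausdorff distance from a coset structure. More precisely, the edge stabilisers of the $G_0$-action on $T$ are subgroups of $G_0$, and I would argue that taking $H$ to be an appropriate edge (or vertex) stabiliser --- or rather the subgroup at finite Hausdorff distance from $W$ --- gives an almost normal subgroup: $gH$ corresponds under $p$ to $gv_0$, which lies at bounded Hausdorff distance from $v_0$ only in the trivial sense, but across the tree all fibres $D_v$ are at finite Hausdorff distance iff... wait, in a bushy tree distinct fibres are unbounded apart.

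Let me correct this: $H$ itself should be the fibre subgroup, i.e. a subgroup with $d_\Haus(H, W) < \infty$ where $W = p^{-1}(N_E(v_0))$. Such an $H$ need not be almost normal with quotient space all of $T$; rather, $H$ is almost normal precisely when all its $G_0$-conjugates are commensurable, which corresponds to all fibres $D_v$ being pairwise at finite Hausdorff distance --- false in a bushy tree. The resolution is that $T$ is \emph{not} the quotient space $G_0/H$ in general; instead, one takes $H$ to be the stabiliser of an edge of $T$, which \emph{is} commensurated because the $G_0$-action on $T$ permutes edges and edge stabilisers of a cocompact action on a tree are commensurable to their conjugates exactly when... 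Here I would invoke the structure: since $G_0$ acts coboundedly on the bushy tree $T$ with fibre $Z \times \prod_{i \geq 2} X_i$, the relevant subgroup $H$ (the coarse fibre) is coarsely characteristic-like inside each vertex group, and one shows directly using the quasi-conjugacy that $gH$ is at finite Hausdorff distance from $H$ for all $g \in G_0$ because $p(gH)$ and $p(H)$ are both single points of $T$ and the fibres are all quasi-isometric via the quasi-action --- this requires that the quasi-action moves any fibre to any other fibre with uniform constants, which is exactly quasi-homogeneity, giving $d_\Haus(D_v, D_w) < \infty$ only after remembering these are \emph{all} the same abstract subgroup's cosets. By Proposition \ref{prop:coset characterisation}, $H \alnorm G_0$; and then by Corollary \ref{cor:ends} and Lemma \ref{lem:fibre-preserving qi imp qi between base spaces}, the quotient space $G_0/H$ is quasi-isometric to $T$, a finite valence bushy tree, while by Proposition \ref{prop:fibreqi induces qi between fibres} the fibre $H$ is quasi-isometric to $Z \times \prod_{i=2}^k X_i$.

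\textbf{Main obstacle.} The hard part will be producing the \emph{subgroup} $H$ rather than merely a coarse fibre, and verifying it is genuinely almost normal. The cleanest route is: the coarse bundle $p : G_0 \to T$ is quasi-homogeneous (the $G_0$-action supplies fibre-preserving quasi-isometries taking any point to any other), so all fibres $D_v$ are uniformly quasi-isometric; one then identifies a single fibre $D_{v_0}$ with a subgroup $H$ using that $v_0$ can be chosen $G_0$-fixed up to the edge/vertex-stabiliser action --- concretely, take $H := \mathrm{Stab}_{G_0}(e)$ for a suitable edge $e$ and check via the quasi-conjugacy $r$ that $d_\Haus(H \cdot x_0, D_{v_0}) < \infty$ for a basepoint $x_0$. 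Almost normality of $H$ then reduces to the standard fact that in a cocompact group action on a tree, an edge stabiliser $H_e$ satisfies $d_\Haus(g H_e, H_e) < \infty$ for all $g$ precisely when $gH_e g^{-1}$ is commensurable to $H_e$, which here holds because all edge stabilisers are quasi-isometric to the same space $Z \times \prod_{i\geq 2} X_i$ and, being at bounded distance from coarsely-dense fibres related by the quasi-action, are forced to be commensurable by a packing-type argument (or directly: the action is cobounded with bushy quotient tree, so the edge groups form a single commensurability class). Once $H \alnorm G_0$ is established, the remaining identifications --- $G_0/H \simeq T$ bushy of finite valence, and $H \simeq Z \times \prod_{i=2}^k X_i$ --- are immediate from Corollaries \ref{cor:ends}, \ref{cor:ends hopf} and Propositions \ref{prop:fibreqi induces qi between fibres}, \ref{prop:coarse bundle QI}.
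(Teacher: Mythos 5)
Your overall route mirrors the paper's proof exactly: conjugate the left $G$-action by a quasi-isometry to get a quasi-action on $X$, pass to $G_0$ via Lemma~\ref{lem:qa on direct factor}, apply Mosher--Sageev--Whyte (Theorem~\ref{thm:mswthm1}) to upgrade the quasi-action on $X_1$ to an isometric action on a bounded-valence bushy tree $T$, take $H$ to be an edge stabiliser, and identify $G_0/H$ with $T$ and $H$ with $Z\times\prod_{i\geq 2}X_i$. However, your justification of the central step --- that $H$ is almost normal --- is confused and contains statements that are simply false.

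Specifically, you assert that ``all fibres $D_v$ being pairwise at finite Hausdorff distance [is] false in a bushy tree,'' but this is always true in any coarse bundle: condition~(\ref{defn:coarse bundle 3}) of Definition~\ref{defn:coarse bundle} gives $d_\Haus(D_b,D_{b'})\leq Kd_B(b,b')+A$, which is finite for every pair. You are conflating the metric on the base $T$ (where far-apart vertices exist) with the Hausdorff metric on fibres inside the total space (where everything is a finite distance apart). The claim that $T$ ``is not the quotient space $G_0/H$ in general'' is also wrong; the whole point is that it is. Separately, your ``packing-type argument'' --- that the edge stabilisers are commensurable because they are quasi-isometric --- does not work: quasi-isometric subgroups of a common ambient group need not be commensurable, and Theorem~\ref{thm:packing} requires one to already be in a neighbourhood of the other. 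The clean argument, which the paper states in one line and you never quite isolate, is this: $T$ has bounded valence, so if $e_1$ and $e_2$ are edges sharing a vertex $v$, then $G_v$ acts on the finite set of edges at $v$ and hence both $G_{e_1}$ and $G_{e_2}$ have finite index in $G_v$, making them commensurable; by connectivity of $T$ this propagates to all edges, and in particular to all $G_0$-conjugates of $H$. No coarse-bundle machinery is required for this step, and bushiness plays no role in it --- bushiness is only used to make the resulting quotient space have the claimed form.
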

\begin{proof}
Suppose $f:G\rightarrow X$ is a quasi-isometry. Conjugating the left action of $G$ on itself by $f$ defines a quasi-action of $G$ on $X$. By Lemma \ref{lem:qa on direct factor}, we deduce that a subgroup  $G_0\leq G$ of index at most $k!$ admits a quasi-action on $X_1$. By Theorem \ref{thm:mswthm1}, $G_0$ acts on isometrically on a tree $T$ and this action is quasi-conjugate to the quasi-action of $G_0$ on $X_1$. Let $H$ be a stabiliser of an edge of $T$. Since $T$ is a bounded valence tree, every conjugate of $H$ is commensurable to $H$ and so $H\alnorm G_0$. The quotient space $G_0/H$ is quasi-isometric to $T$. Using the quasi-conjugacy $T\rightarrow X_1$ and Theorem \ref{thm:dehrahm}, we deduce $f(H)$ has finite Hausdorff distance from $p_1^{-1}(x)$ for some $x\in X$, demonstrating that $H$ is quasi-isometric to $Z\times \Pi_{i=2}^k X_i$.
\end{proof}
\bibliography{bibliography/bibtex} 
\bibliographystyle{amsalpha}
\end{document}